\newtheorem{thm}{Theorem}[section]
\newtheorem{cor}[thm]{Corollary}
\newtheorem{claim}[thm]{Claim}
\newtheorem{fact}[thm]{Fact}
\newtheorem{lemma}[thm]{Lemma}
\newtheorem{prop}[thm]{Proposition}
\theoremstyle{definition}
\newtheorem{definition}[thm]{Definition}
\newtheorem{ex}[thm]{Example}
\newtheorem{remark}[thm]{Remark}
\newtheorem{question}[thm]{Question}
\newcommand{\supp}{\operatorname{supp}}
\title{Embeddings into Thompson's groups from quasi-median geometry}
\date{\today}
\author{Anthony Genevois}
\begin{document}

\maketitle

\begin{abstract}
The main result of this article is that any braided (resp. annular, planar) diagram group $D$ splits as a short exact sequence $1 \to R \to D \to S \to 1$ where $R$ is a subgroup of some right-angled Artin group and $S$ a subgroup of Thompson's group $V$ (resp. $T$, $F$). As an application, we show that several braided diagram groups embeds into Thompson's group $V$, including Higman's groups $V_{n,r}$, groups of quasi-automorphisms $QV_{n,r,p}$, and generalised Houghton's groups $H_{n,p}$. 
\end{abstract}

\tableofcontents

\section{Introduction}

Historically, Thompson's groups $F$, $T$ and $V$ are important: $F$ was the first example of a torsion-free group of type $F_{\infty}$ which does not admit a finite classifying space, and $T$ and $V$ were the first examples of infinite finitely presented simple groups. Since then hundreds of articles have been written on these groups, however many problems remain open, and constructing Thompson-like groups is still an active area of research. The goal of this article is to exploit one of these families of Thompson-like groups, namely Guba and Sapir's braided diagram groups, in order to produce new constructions of embeddings into Thompson's groups. The motivation is to better understand which groups arise as subgroups of Thompson's groups, especially $V$. For background on this question, see for instance \cite{HigmanBook, RoverThesis, FreeProductsInV, CorwinThesis, ObstructionSubV, BMN}. 

A consequence of our work is that an investigation of braided diagram groups leads to new examples of groups embedding into Thompson's groups. More precisely, our main result is:

\begin{thm}\label{thm:mainsequence}
Any braided (resp. planar, annular) diagram group $D$ decomposes as a short exact sequence
$$1 \to R \to D \to S \to 1$$
for some subgroup $R$ of a right-angled Artin group and some subgroup $S$ of Thompson's group $V$ (resp. $F$, $T$). 
\end{thm}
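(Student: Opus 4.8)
The plan is to exploit the action of a braided diagram group on the quasi-median geometry associated with its Guba--Sapir diagram groupoid. Recall that a braided diagram group $D = D_b(\mathcal{P}, w)$ is the vertex group at a base word $w$ of the groupoid generated by a semigroup presentation $\mathcal{P}$; the elements are braided semigroup diagrams, and composition is concatenation followed by reduction. To such a groupoid one associates a CAT(0) cube complex (more precisely a quasi-median graph) $\X$ on which $D$ acts, where the vertices record the reduced words reachable from $w$ together with the combinatorial data of how one got there, and the hyperplanes correspond to the cells (relations) of $\mathcal{P}$. The braiding is precisely the feature that distinguishes the $V$-type situation from the $F$-type (planar, no crossings) and $T$-type (annular, cyclic order) situations.

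First I would identify the relevant quotient $S$. The group $D$ permutes the set of ``strands'' or ``slots'' in the diagrams, and forgetting everything except how the output strands are matched to the input strands (together with the combinatorial type of the rearrangement) should define a homomorphism $D \to S$ where $S$ is a group of rearrangements of a Cantor-set-like object: for braided diagrams this lands in (a subgroup of) $V$, for planar diagrams in $F$, and for annular diagrams in $T$. Concretely I expect $S$ to be realised as the image of $D$ acting on the ``boundary at infinity'' or on the set of infinite reduced expansions, which is exactly the kind of action Thompson's groups are defined by; the cube-complex picture makes this action transparent because the set of ends / ultrafilters on the hyperplane poset carries the requisite tree-like branching structure. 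Second, I would analyse the kernel $R$. The kernel consists of those diagrams that induce the trivial rearrangement of strands, i.e. that act trivially on $S$-level data; geometrically these are the elements acting on $\X$ without ``permuting the branches,'' so they act on a product-like or tree-like subcomplex in a way that factors through a right-angled Artin group. The key input here is the general principle (from quasi-median geometry, presumably established in earlier sections of the paper) that a group acting on a quasi-median graph with trivial action on the ``pieces'' — no inversions, stabilisers of the clique-factors behaving nicely — embeds into the RAAG on the crossing graph of the hyperplanes; one applies this to $R$ acting on $\X$, using that by construction $R$ has killed exactly the permutational part.

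The main obstacle, I expect, is two-fold. First, correctly setting up the homomorphism $D \to S$ and checking its image genuinely lies in Thompson's $V$ (resp. $F$, $T$) rather than in some larger topological full group: this requires showing the rearrangements coming from diagrams are always ``finitary'' / given by prefix-replacement rules, which should follow from finiteness of reduced diagrams but needs the combinatorics spelled out, and in the annular case requires care with the cyclic ordering to land in $T$ rather than something bigger. Second, showing that $R$ embeds in a RAAG rather than merely acts on the cube complex: a priori the action of $R$ on $\X$ could have global fixed points or could factor badly, so one must verify the hypotheses of the embedding criterion — in particular that $R$ acts freely enough on the hyperplane structure, which is where the choice to quotient out by exactly the strand-permutation part pays off, since the remaining elements are detected by the ``wall coordinates'' alone. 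Once both ends are in place, exactness of $1 \to R \to D \to S \to 1$ is essentially by construction: $S$ is defined as the image, $R$ as the kernel, and the only content is the two embedding statements. Finally I would treat the planar and annular cases as restrictions of the braided case, observing that the diagram combinatorics in those settings forces the rearrangement to preserve order (linear or cyclic), hence lands in $F$ or $T$ respectively, while the RAAG kernel statement is unchanged.
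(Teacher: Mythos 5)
There is a genuine gap, and it sits at both ends of your sequence, most seriously at the kernel. First, the quotient: ``forgetting everything except how the output strands are matched to the input strands'' does not define a homomorphism to $V$ for a general braided diagram group, because the wires pass through transistors, so a $(w,w)$-diagram carries no canonical matching of strands (and such data would not be invariant under dipole reduction); likewise, an action on ``infinite reduced expansions'' is not available for an arbitrary semigroup presentation, since nothing forces the rewriting system of $\mathcal{P}$ to be complete, so there is no Cantor set of expansions on which $D$ acts by prefix replacements. The paper does not construct the map $D\to V$ intrinsically: it first embeds $D=D_b(\mathcal{P},w)$ into a \emph{universal braided picture product} $\mathscr{V}_\kappa=D_b(\langle x\mid x=x^2\rangle,\{\mathbb{F}_\kappa\},x)$ (Theorem \ref{thm:universal}), by relabelling every wire by $x$ and replacing each transistor by an explicit block made of two combs together with one extra wire labelled by the corresponding relation viewed as a generator of $\mathbb{F}_\kappa$; the map to $V$ is the projection of $\mathscr{V}_\kappa$ onto its subgroup $D_b(\langle x\mid x=x^2\rangle,x)\simeq V$, restricted to $D$.

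Second, and this is the essential point, the ``general principle'' you invoke for the kernel --- that a group acting on a quasi-median graph with trivial action on the pieces embeds into the RAAG on the crossing graph of the hyperplanes --- is not available and cannot hold in that generality: braided diagram groups, including $V$ itself, act freely on their (median) Farley complexes, yet $V$ embeds into no right-angled Artin group, for instance because it is not residually finite. So no property of the action of your kernel $R$ on the cube complex attached to $D$ alone can yield the RAAG embedding. In the paper the RAAG appears only because of the enlargement of $D$: the free-group labels on the wires of $\mathscr{V}_\kappa$ create the \emph{linear} hyperplanes, whose rotative stabilisers are copies of $\mathbb{F}_\kappa$; under the technical condition (+) the action is $\mathcal{J}$-rotative (Proposition \ref{prop:rotative}), and the decomposition theorem proved by ping-pong (Theorem \ref{thm:splittingthmQm}, applied as in Theorem \ref{thm:split}) shows that the subgroup $\Gamma$ generated by these stabilisers is a graph product of free groups, hence a right-angled Artin group, with $\mathscr{V}_\kappa=\Gamma\rtimes V$. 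The kernel of $D\to V$ is then $D\cap\Gamma$, a subgroup of that RAAG, and exactness is indeed by restriction, as you say. Without an analogue of this enlargement (or some other concrete source of a RAAG containing the kernel), your second step cannot be completed as sketched; the same remark applies verbatim to the planar and annular cases.
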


It is worth noticing that the statement has been proved in \cite{MR2193191} for planar diagram groups using completely different methods. However, since our applications deal with braided diagram groups and Thompson's group $V$, they do not follow from Guba and Sapir's work. In several situations, the group $R$ in the previous statement turns out to be trivial, so that the diagram group $D$ embeds into the corresponding Thompson's group. For instance:

\begin{prop}
Every simple subgroup of a braided (resp. planar, annular) diagram group is isomorphic to a subgroup of Thompson's group $V$ (resp. $F$, $T$).
\end{prop}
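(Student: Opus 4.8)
The plan is to feed a simple subgroup $H\le D$ into the short exact sequence provided by Theorem~\ref{thm:mainsequence} and to exploit the fact that a simple group has no proper nontrivial quotient. Write the sequence as $1\to R\to D\xrightarrow{\,q\,}S\to 1$, where $R$ is a subgroup of a right-angled Artin group $A$ and $S\le V$ (resp. $F$, $T$). Restricting $q$ to $H$ gives a homomorphism $H\to S$ whose kernel is $H\cap R$, a normal subgroup of $H$. Since $H$ is simple, either $H\cap R=1$ or $H\cap R=H$.

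In the first case $q$ restricts to an injection $H\hookrightarrow S\le V$ (resp. $F$, $T$), which is exactly the conclusion sought. So the entire point is to handle the second case, in which $H$ is isomorphic to a subgroup of the right-angled Artin group $A$. Here I would invoke two classical facts about right-angled Artin groups: they are residually finite and they are torsion-free. Residual finiteness passes to arbitrary subgroups, so $H$ is residually finite; but a residually finite simple group is trivial or finite (given $g\ne 1$, a finite quotient separating $g$ from the identity has kernel a proper normal subgroup of $H$, hence trivial by simplicity, so $H$ embeds in a finite group), and a finite subgroup of the torsion-free group $A$ is trivial. Thus $H=1$, which is again a subgroup of $V$ (resp. $F$, $T$). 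If instead one adopts the convention that simple groups are nontrivial, this second case simply never occurs and one concludes directly that $H\hookrightarrow S$.

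I do not expect a real obstacle: the only step needing any thought is the second alternative, and it reduces to the elementary observation above together with residual finiteness and torsion-freeness of right-angled Artin groups (one could equivalently use their linearity, or their free action on a CAT(0) cube complex, but residual finiteness is the most economical route). The mild subtlety to flag is that $H$ need not be finitely generated, so one should rely on residual finiteness of right-angled Artin groups being inherited by \emph{all} subgroups rather than on Mal'cev-type results for finitely generated linear groups.
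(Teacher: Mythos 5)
Your argument is correct and is essentially the paper's own proof: both feed the simple subgroup $H$ into the short exact sequence $1\to R\to D\to S\to 1$ of Theorem \ref{thm:sequence} and use simplicity to conclude that $H$ embeds either into $S\leq V$ (resp. $F$, $T$) or into the right-angled Artin group, the latter being impossible for a non-trivial simple group. The only minor difference is the fact about right-angled Artin groups used to exclude the second case: the paper notes that non-trivial subgroups of right-angled Artin groups surject onto $\mathbb{Z}$, whereas you use residual finiteness together with torsion-freeness; both work, and your version conveniently avoids any finite-generation caveat.
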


As an application of our construction, we find several examples of subgroups of Thompson's groups. We refer to Section \ref{section:ex} for precise definitions, and to Remark \ref{remark:distortion} for information about the distortions of the embeddings.

\begin{cor}
For every integers $n \geq 2$, $r \geq 1$ and $p \geq 0$,
\begin{itemize}
	\item the generalised Thompson's group $F_{n,r}$ (resp. $T_{n,r}$) embeds into $F$ (resp. $T$);
	\item Higman's group $V_{n,r}$ embeds into $V$;
	\item the group of quasi-automorphisms $QV_{n,r,p}$ embeds into $V$;
	\item the generalised Houghton's group $H_{r,p}$ embeds into $V$. 
\end{itemize}
\end{cor}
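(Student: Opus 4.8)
The plan is to exhibit each of the listed groups as a braided, planar, or annular diagram group over an explicit semigroup presentation, to apply Theorem~\ref{thm:mainsequence} in order to obtain a short exact sequence $1 \to R \to D \to S \to 1$, and finally to verify that the kernel $R$ --- which sits inside a right-angled Artin group --- is trivial; the group $D$ will then be isomorphic to $S$, and hence embed into the appropriate Thompson's group.

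First I would assemble the diagram-group models, all of which are recorded in Section~\ref{section:ex}. The generalised Thompson's groups $F_{n,r}$, $T_{n,r}$ and $V_{n,r}$ are, respectively, the planar, annular and braided diagram groups over the semigroup presentation $\mathcal{P}_n = \langle a \mid a = a^n \rangle$ based at the word $a^r$: the classical identification of $F$, $T$ and $V$ with the diagram groups over $\mathcal{P}_2$ extends to all $n \geq 2$ and $r \geq 1$ with no essential change. For $QV_{n,r,p}$ and $H_{r,p}$ one keeps the splitting relation $a = a^n$ but adjoins $p$ further letters together with relations recording the extra moves --- free shuffling of the decorated leaves in the case of $QV_{n,r,p}$, and the unbounded shifts along the rays in the case of $H_{r,p}$. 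Checking that the braided diagram group over the enlarged presentation, suitably based, is the group in question is a routine comparison of the semigroup-diagram normal form with the standard ``tree-pair plus decoration'' description of each group.

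Next I would apply Theorem~\ref{thm:mainsequence} and show that $R = 1$ in every case. The cleanest argument is uniform and takes place inside the proof of that theorem: for each of the presentations above one checks that the essential obstruction $\Obsess$ --- which governs the right-angled Artin factor --- is empty, and inspecting the construction shows that $\Obsess = \emptyset$ forces the quotient map $D \to S$ to be injective, whence $R = 1$ and $D \cong S$. For $\mathcal{P}_n$ this reduces to the remark that the unique relation has two distinct sides and produces no closed reduction loop; for the enlarged presentations one must additionally check that the letters and relations added for $QV_{n,r,p}$ and $H_{r,p}$ contribute nothing new to $\Obsess$. When $D$ happens to be virtually simple --- as for $V_{n,r}$, which by Higman~\cite{HigmanBook} contains a simple subgroup of index at most two --- there is a shortcut coinciding with the preceding Proposition: since $R$ lies inside a right-angled Artin group it is torsion-free, so it meets the simple finite-index subgroup trivially, hence injects into a finite group and must be trivial.

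The main obstacle is the verification that $\Obsess$ is empty for the enlarged presentations. This requires understanding precisely which configurations of pairwise-disjoint cells in a braided semigroup diagram produce a genuine free (right-angled Artin) direction and which are already retained by the Thompson quotient $S$, and then confirming that the letters and relations encoding the leaf-shuffles, respectively the ray-shifts, introduce no such direction. Once this is settled --- or, where it applies, the virtual-simplicity shortcut is invoked --- all four embeddings of the statement follow at once, and the distortion of each is read off from the construction as indicated in Remark~\ref{remark:distortion}.
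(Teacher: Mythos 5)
There is a genuine gap at the decisive step, namely the proof that the kernel $R$ is trivial. For $V_{n,r}$ your shortcut is fine and is essentially the paper's argument: $R$ lies in a right-angled Artin group, hence is torsion-free; it meets Higman's simple subgroup $S_{n,r}$ in a normal subgroup of $S_{n,r}$, hence either trivially (so $R$ injects into the finite quotient $V_{n,r}/S_{n,r}$ and, being torsion-free, is trivial) or entirely (impossible, since $S_{n,r}$ contains torsion). But for all the other families your proposed route rests on an ``essential obstruction $\Obsess$'' whose emptiness is supposed to force injectivity of $D \to S$; no such invariant is defined in the paper (or in your proposal), no criterion of the form ``$\Obsess=\emptyset$ implies $R=1$'' is available, and you explicitly defer its verification as the ``main obstacle''. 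Triviality of $R$ is not read off the presentation in any such mechanical way. The paper instead derives it from group-theoretic properties of $D$ itself, via two criteria (Propositions \ref{prop:embed1} and \ref{prop:embed2}): if every non-trivial normal subgroup of $D$ contains a non-trivial simple subgroup, or contains torsion, then $R=1$, because non-trivial subgroups of right-angled Artin groups surject onto $\mathbb{Z}$ and are torsion-free. These hypotheses are then checked case by case: for $F_{n,r}$ and $T_{n,r}$ by Brown's theorems that every non-trivial normal subgroup contains the (second) commutator subgroup, which is simple; for $QV_{n,r,p}$ and $H_{n,p}$ by producing, inside any non-trivial normal subgroup, a non-trivial element $g^{-1}\sigma^{-1}g\sigma$ with $\sigma$ of finite support, which has finite order. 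None of these verifications appears in your proposal, so the embeddings of $F_{n,r}$, $T_{n,r}$, $QV_{n,r,p}$ and $H_{r,p}$ are not established by it.

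A secondary inaccuracy: the diagram-group models for $QV_{n,r,p}$ and $H_{r,p}$ are not obtained by ``keeping the relation $x=x^n$ and adjoining $p$ further letters with extra relations''. One has $QV_{n,r,p}\simeq D_b(\langle x,a\mid x=x^na\rangle, x^ra^p)$ and $H_{n,p}\simeq D_b(\langle a,r,x_1,\ldots,x_n\mid r=x_1\cdots x_n,\ x_1=ax_1,\ldots,x_n=ax_n\rangle, ra^p)$; the parameter $p$ enters only through the baseword, not through additional generators or relations. This does not affect the overall strategy, but it would matter for any attempted presentation-level computation of the kind you propose.
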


In the opposite direction, Theorem \ref{thm:mainsequence} can be used to prove that some Thompson-like groups are quite different from diagram groups. For instance:

\begin{cor}
For every $n \geq 2$, the Brin-Thompson group $nV$ does not embed into any braided diagram group.
\end{cor}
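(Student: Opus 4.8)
The plan is to reduce the statement to the fact, known but non-trivial, that the Brin--Thompson group $nV$ does not embed into Thompson's group $V$ once $n \geq 2$, and then to invoke that fact. Suppose for contradiction that $nV$ embeds into some braided diagram group $D$. The groups $nV$ are simple (Brin for $n=2$, and the argument carries over to all $n \geq 2$), so the preceding proposition applies directly and already yields an embedding $nV \hookrightarrow V$.

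If one unwinds this, the reduction is the following purely formal argument. Write $1 \to R \to D \to S \to 1$ as provided by Theorem~\ref{thm:mainsequence}, with $R$ a subgroup of a right-angled Artin group and $S$ a subgroup of $V$. The intersection of the image of $nV$ in $D$ with $R$ is a normal subgroup of a group isomorphic to $nV$, hence by simplicity it is either trivial or the whole of that image. It cannot be the whole image: otherwise $nV$ would embed into a right-angled Artin group, which is impossible since $nV$ contains torsion (for instance the involution transposing two disjoint basic clopen subsets of the Cantor set on which $nV$ acts) while right-angled Artin groups are torsion-free. Hence the intersection is trivial, the composition $nV \hookrightarrow D \twoheadrightarrow S$ is injective, and $nV$ embeds into $S \subseteq V$.

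It therefore remains to know that $nV$ does not embed into $V$ for $n \geq 2$, and this is the step I expect to carry all the difficulty: it lies entirely outside the scope of Theorem~\ref{thm:mainsequence} and depends on the fine structure of $V$. I would either quote this non-embedding result from the literature, or, for a self-contained account, establish it by exhibiting an invariant of ``prefix-replacement'' actions on Cantor sets --- a notion of dimension recording how clopen pieces get displaced by families of commuting elements --- that is bounded on subgroups of $V$ but is exceeded by the natural action of $nV$ on the product of $n$ copies of the Cantor set. The reduction above is formal; essentially all of the content of the corollary is concentrated in this last point.
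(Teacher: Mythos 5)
Your reduction is exactly the paper's: by simplicity of $nV$ \cite{HigherDimThompson}, Proposition \ref{prop:simpleembed} (equivalently, your unwinding of Theorem \ref{thm:mainsequence}) shows that an embedding of $nV$ into a braided diagram group would force an embedding $nV \hookrightarrow V$. Your variant of the unwinding is fine; where the paper rules out the kernel case by noting that non-trivial subgroups of right-angled Artin groups surject onto $\mathbb{Z}$, you use torsion-freeness of right-angled Artin groups against torsion in $nV$, which works equally well. So the first half of the proposal is correct and is the same approach.

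The gap is in the half you yourself flag as carrying all the content: the non-embedding $nV \not\hookrightarrow V$ for $n \geq 2$ is never actually established. Saying you would ``quote this from the literature'' without identifying what you would quote, or alternatively invent a ``dimension of prefix-replacement actions'' invariant, does not close the argument: no such invariant is constructed or shown to be bounded on subgroups of $V$, and it is not a known argument that you can appeal to. The paper closes this step concretely: $nV$ contains a copy of the free product $\mathbb{Z}^2 \ast \mathbb{Z}$ (originally \cite{CorwinThesis}, with later proofs in \cite{RAAGnV} for $n \geq 5$ and \cite{KatonV} for all $n \geq 2$), whereas $\mathbb{Z}^2 \ast \mathbb{Z}$ is not a subgroup of $V$ by the Bleak--Salazar-D\'iaz classification of free products embeddable in $V$ \cite[Theorem 1.5]{FreeProductsInV}. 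To make your proposal into a proof you should either supply this (or an equivalent) citation chain, or give an actual argument for $nV \not\hookrightarrow V$; as written, the corollary is reduced to an unproved assertion.
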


Our strategy to prove Theorem \ref{thm:mainsequence} is the following. First, we generalise Guba and Sapir's diagram product \cite{MR1725439} by defining braided (resp. planar, annular) picture products, and we notice that there exists a family of braided (resp. planar, annular) picture products $\{ \mathscr{V}_{\alpha} \}$ (resp. $\{ \mathscr{F}_{\alpha} \}$, $ \{ \mathscr{T}_{\alpha} \}$) such that any braided (resp. planar, annular) diagram group embeds into some of these products. Next, following \cite{Qm}, we make picture products act on quasi-median graphs, and we use this action to decompose $\mathscr{V}_{\alpha}$ (resp. $\mathscr{F}_{\alpha}$, $\mathscr{T}_{\alpha}$) as a semidirect product between a right-angled Artin group and Thompson's group $V$ (resp. $F$, $T$). Finally, Theorem \ref{thm:mainsequence} follows easily.

The article is organised as follows. Section \ref{section:braideddiaggroups} is dedicated to the formalism of braided diagram groups, introduced in \cite{MR1725439} and further studied in \cite{FarleyPicture}. Next, we define picture products in Section \ref{section:pictureproducts} and we study their quasi-median geometry. In Section \ref{section:universalpictureproducts}, we show that diagram groups embed into specific picture groups. Finally, Section \ref{section:applications} is dedicated to applications of our constructions.


\section{Braided diagram groups}\label{section:braideddiaggroups}

\subsection[Thompson's groups F, T, V]{Thompson's groups $F$, $T$, $V$}\label{section:Thompson}

\noindent
In this section, our goal is to motivate the formalism of \emph{braided semigroup diagrams} studied in the next sections by introducing Thompson's groups $F$, $T$ and $V$. More information on these groups can be found in \cite{ThompsonNotes}. We begin with Thompson's group $V$. 

\begin{definition}
Consider $\mathbb{S}^1$ as the interval $[0,1]$ with the endpoints identified. Thompson's group $V$ is the group of right-continuous bijections of $\mathbb{S}^1$ which map dyadic rational numbers to dyadic rational numbers, which are differentiable exactly at finitely many dyadic rational numbers, and such that, on each maximal interval on which the function is differentiable, the function is linear with derivative a power of two.  
\end{definition}

\noindent
An element of $V$ can be naturally defined by a \emph{pair of trees} whose leaves are numbered: the two trees describe dyadic subdivisions of the domain and of the image (respectively called the domain tree and the image tree), and the numbers indicate to which subinterval of the image is sent a subinterval of the domain. See Figure \ref{figure3}. It is worth noticing that an element of $V$ is never represented by a unique pair of trees: you can always add a caret below two leaves labelled by the same number and next associate the left (resp. right) leaf of the new caret in the domain tree to the left (resp. right) leaf of the new caret in the image tree. Nevertheless, a pair of trees is said \emph{reduced} if this phenomenon does not occur, and any element of $V$ turns out to be represented by a unique reduced pair of trees. 
\begin{figure}
\begin{center}
\includegraphics[trim={0 19cm 27cm 0},clip,scale=0.45]{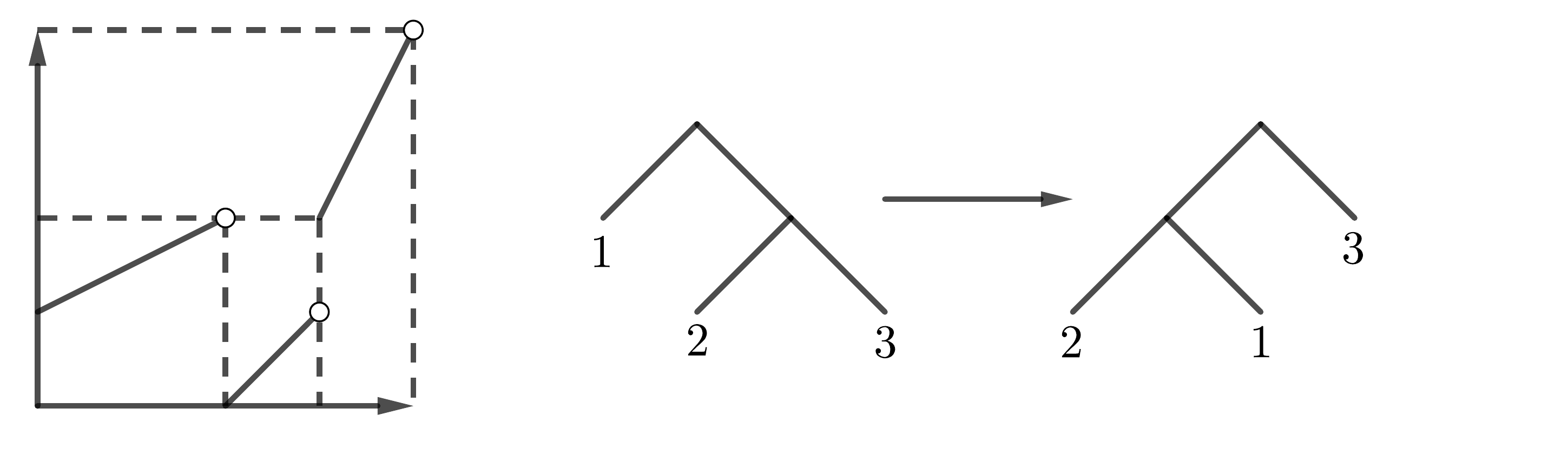}
\caption{An element of $V$ and its associated pair of trees.}
\label{figure3}
\end{center}
\end{figure}
\begin{figure}
\begin{center}
\includegraphics[trim={0 11cm 40cm 0},clip,scale=0.3]{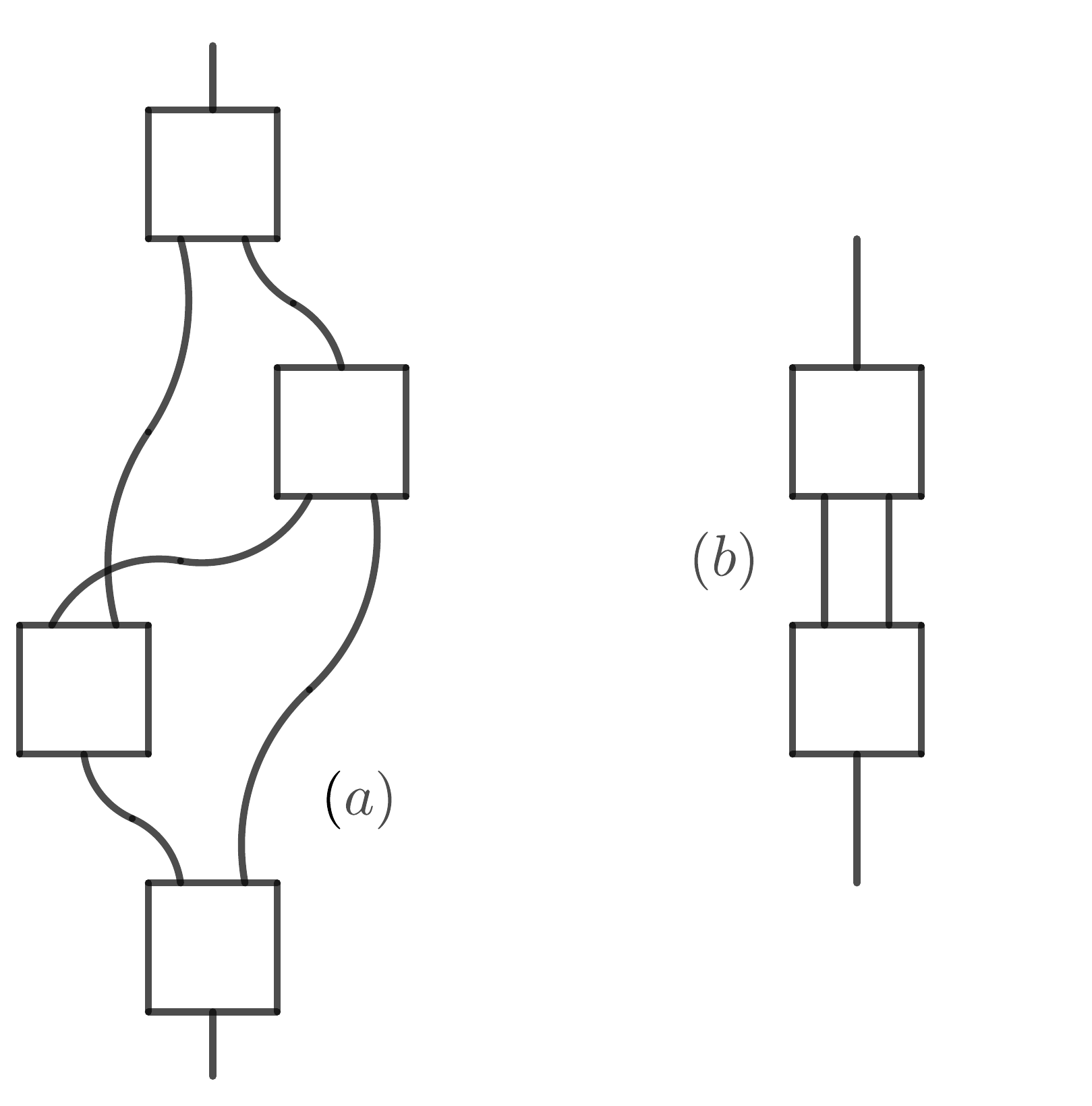}
\caption{(a) Braided diagram; (b) Dipole.}
\label{figure4}
\end{center}
\end{figure}

\medskip \noindent
Alternatively, the elements of $V$ may be represented by \emph{braided diagrams}, constructed in the following way. Consider a pair of trees representing a given element of $V$. Now, turn the image tree upside down and place it below the domain tree. Next, link any two leaves which have the same number. See Figure \ref{figure4} (a). There, the vertices of the trees are drawn as \emph{transistors} to be distinguished from the intersections of the \emph{wires} connecting the leaves of the trees. Notice that, if our initial pair of trees is not reduced, then the corresponding braided diagram contains a \emph{dipole}, ie., a subdiagram as depicted by Figure \ref{figure4} (b).

\medskip \noindent
Next, we turn to Thompson's group $T$. 

\begin{definition}
Thompson's group $T$ is the group of piecewise linear homeomorphisms of $\mathbb{S}^1$ which map dyadic rational numbers to dyadic rational numbers, which are differentiable except at finitely many dyadic rational numbers, and whose derivates on intervals of differentiability are powers of two.
\end{definition}

\noindent
Clearly, $T$ is a subgroup of $V$, so that the elements of $T$ may similarly be represented by pairs of trees. In fact, $T$ coincides with the elements of $V$ represented by pairs of trees satisfying the following condition: if the leaves of the domain tree are numbered by following the left-to-right ordering, then the leaves of the image tree are numbered by following a cyclic shift of the left-to-right ordering. As a consequence, it is sufficient to indicate where the leftmost leaf of the domain tree is sent in the image tree in order to determine the whole pair of trees. See Figure \ref{figure5}.
\begin{figure}
\begin{center}
\includegraphics[trim={0 19cm 27cm 0},clip,scale=0.45]{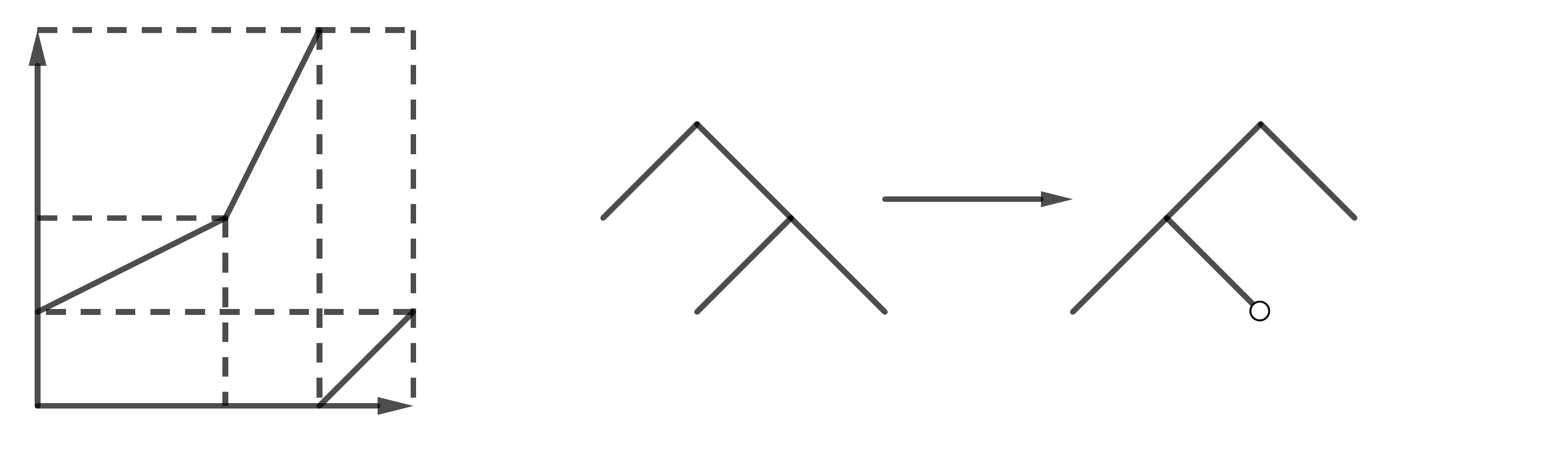}
\caption{An element of $T$ and its associated pair of trees.}
\label{figure5}
\end{center}
\end{figure}
\begin{figure}
\begin{center}
\includegraphics[trim={0 0cm 29cm 0},clip,scale=0.25]{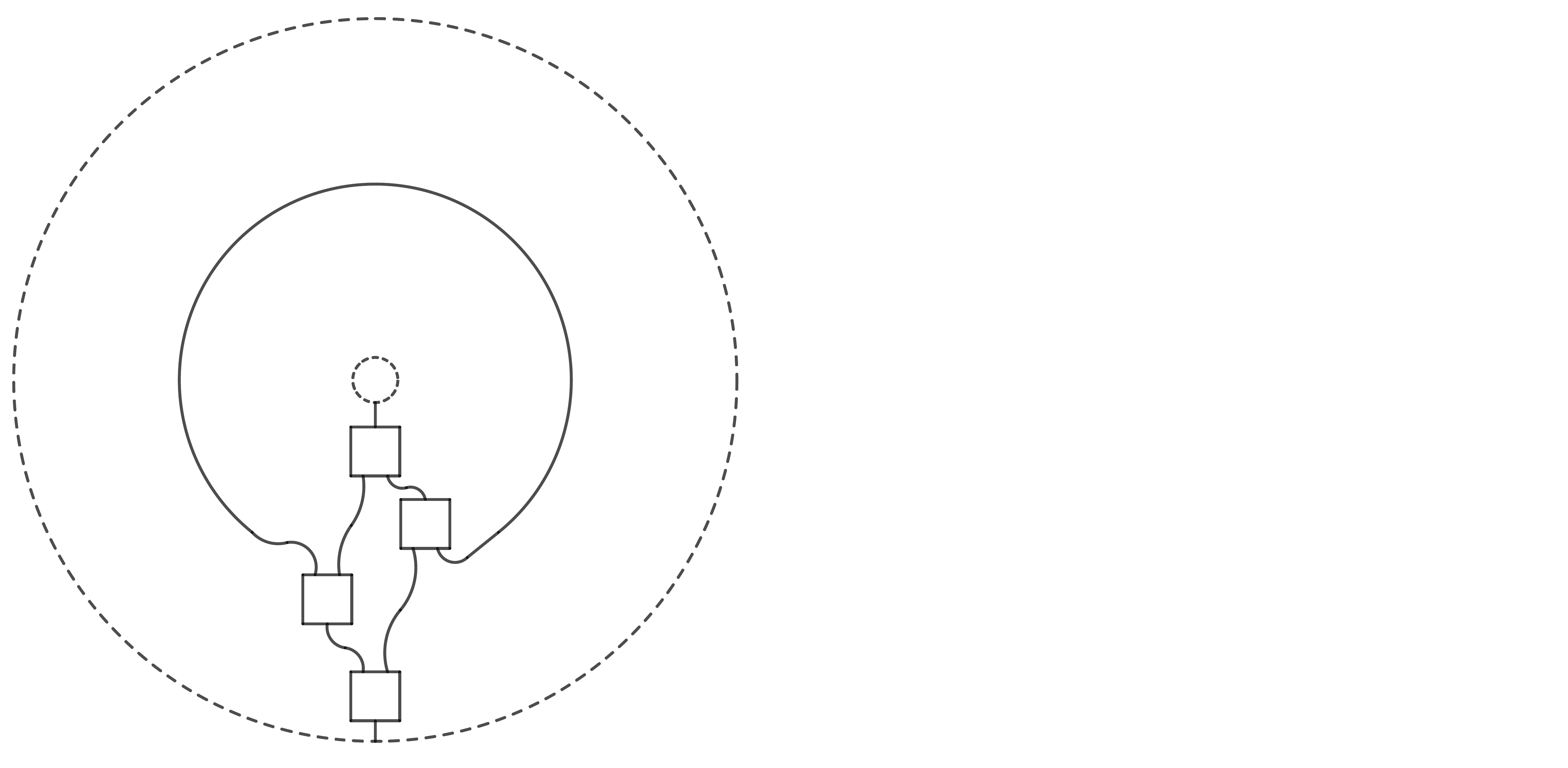}
\caption{Annular diagram.}
\label{figure6}
\end{center}
\end{figure}

\medskip \noindent
The braided diagram associated to a pair of trees representing an element of $T$ satisfies the following property: it embeds into an annulus, ie., it can be drawn on an annulus so that its wires are pairwise disjoint. For instance, Figure \ref{figure6}. Such a diagram is called \emph{annular}. 

\medskip \noindent
Finally, let us introduce Thompson's group $F$. 

\begin{definition}
Thompson's group $F$ is the group of piecewise linear homeomorphisms of $[0,1]$ which are differentiable except at finitely many dyadic rational numbers, and such that, on intervals of differentiability, the derivatives are powers of two. 
\end{definition}

\noindent
By looking at the restrictions of the elements of $F$ to $[0,1)$, Thompson's group $F$ naturally embeds into $V$, so that the elements of $F$ may also be represented by pairs of trees. In fact, $F$ coincides with the elements of $V$ represented by pairs of trees whose sets of leaves are both numbered from left to right. Therefore, it is not necessary to number the leaves of the pairs of trees. See Figure \ref{figure7}. A consequence of this description is that $F$ is naturally a subgroup of $T$, so that $F \subset T \subset V$. 
\begin{figure}
\begin{center}
\includegraphics[trim={0 19cm 27cm 0},clip,scale=0.45]{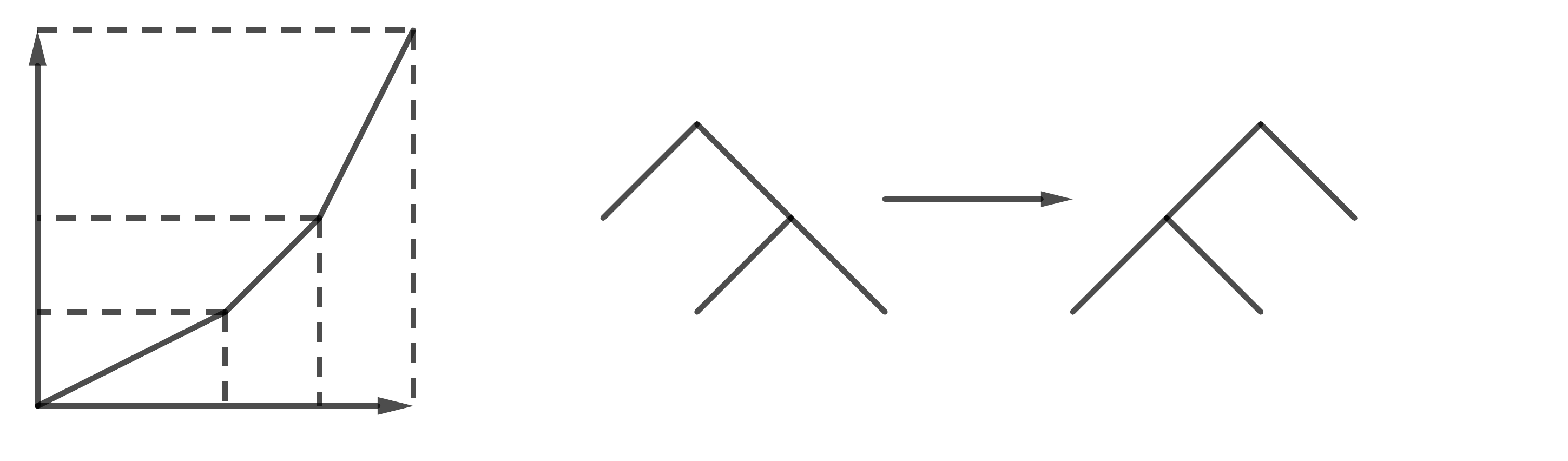}
\caption{An element of $F$ and its associated pair of trees.}
\label{figure7}
\end{center}
\end{figure}

\medskip \noindent
The braided diagram associated to a pair of trees representing an element of $F$ satisfies the following property: it embeds into the plane, ie., it can be drawn on the plane so that its wires are pairwise disjoint. For instance, Figure \ref{figure8}. Such a diagram is called \emph{planar}. 
\begin{figure}
\begin{center}
\includegraphics[trim={0 11cm 50cm 0},clip,scale=0.3]{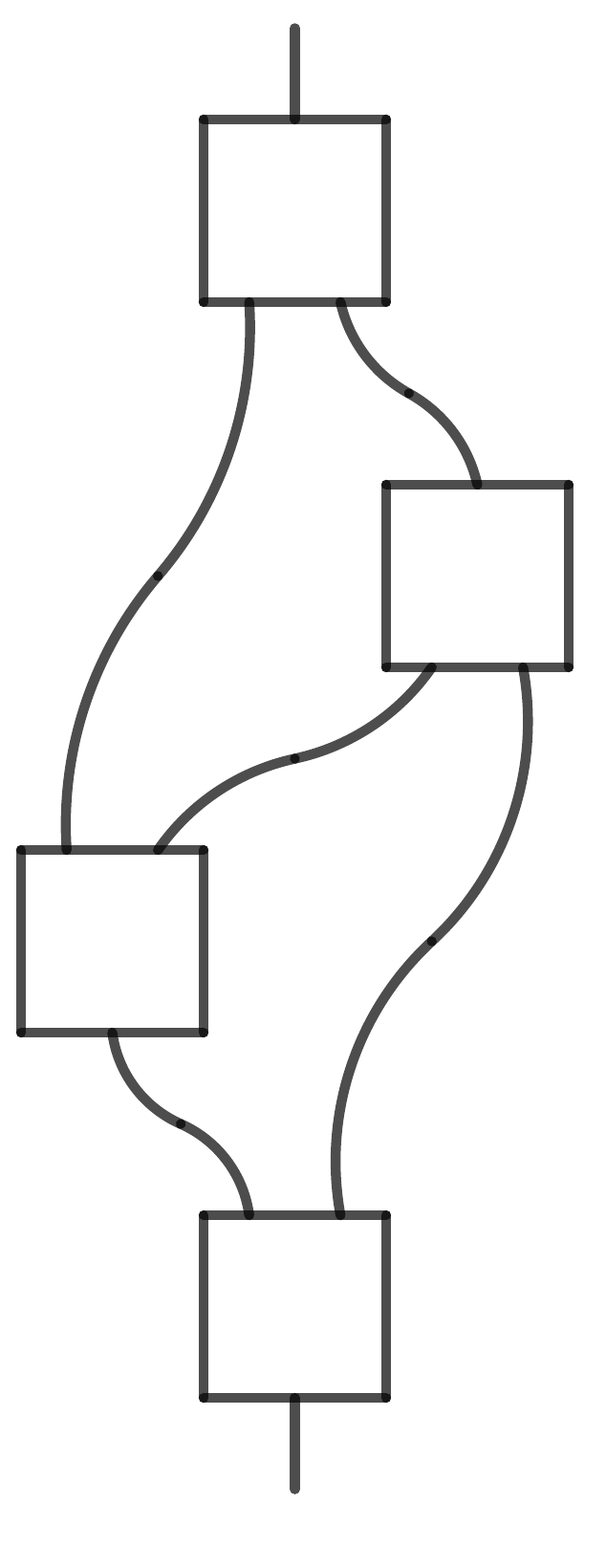}
\caption{Planar diagram.}
\label{figure8}
\end{center}
\end{figure}

\subsection{Braided semigroup diagrams}\label{section:diagrams}

\noindent
In this section, we introduce the main definitions related to \emph{braided semigroup diagrams}, firstly introduced in \cite{MR1396957} and further studied in \cite{FarleyPicture}. We refer to Example \ref{ex:diagram} below to get a good picture to keep in mind when reading the definition (notice, however, that on our pictures the frames are not drawn). 

\medskip \noindent
We begin by defining the fundamental pieces which we will glue together to construct pictures.
\begin{itemize}
	\item A \emph{wire} is a homeomorphic copy of $[0,1]$. The point $0$ is the \emph{bottom} of the wire, and the point $1$ its \emph{top}.
	\item A \emph{transistor} is a homeomorphic copy of $[0,1]^2$. One says that $[0,1] \times \{1 \}$ (resp. $[0,1] \times \{ 0 \}$, $\{0\} \times [0,1]$, $\{1 \} \times [0,1]$) is the \emph{top} (resp. \emph{bottom}, \emph{left}, \emph{right}) \emph{side} of the transistor. Its top and bottom sides are naturally endowed with left-to-right orderings.
	\item A \emph{frame} is a homeomorphic copy of $\partial [0,1]^2$. It has \emph{top}, \emph{bottom}, \emph{left} and \emph{right sides}, just as a transistor does, and its top and bottom sides are naturally endowed with left-to-right orderings.
\end{itemize}
Fix a semigroup presentation $\mathcal{P}= \langle \Sigma \mid \mathcal{R} \rangle$. In all our paper, we follow the convention that, if $u=v$ is a relation of $\mathcal{R}$, then $v=u$ does not belong to $\mathcal{R}$; as a consequence, $\mathcal{R}$ does not contain relations of the form $u=u$. A \emph{braided diagram over $\mathcal{P}$} is a labelled oriented quotient space obtained from
\begin{itemize}
	\item a finite non-empty collection $W(\Delta)$ of wires;
	\item a labelling function $\ell : W(\Delta) \to \Sigma$;
	\item a finite (possibly empty) collection $T(\Delta)$ of transistors;
	\item and a frame,
\end{itemize}
which satisfies the following conditions:
\begin{itemize}
	\item each endpoint of each wire is attached either to a transistor or to the frame;
	\item the bottom of a wire is attached either to the top of a transistor or to the bottom of the frame;
	\item the top of a wire is attached either to the bottom of a transistor or the top of the frame;
	\item the images of two wires in the quotient must be disjoint;
	\item if $\mathrm{top}(T)$ (resp. $\mathrm{bot}(T)$) denotes the word obtained by reading from left to right the labels of the wires which are connected to the top side (resp. the bottom side) of a given transistor $T$, then either $\mathrm{top}(T)= \mathrm{bot}(T)$ or $\mathrm{bot}(T)= \mathrm{top}(T)$ belongs to $\mathcal{R}$;
	\item if, given two transistors $T_1$ and $T_2$, we write $T_1 \prec T_2$ when there exists a wire whose bottom contact is a point on the top side of $T_1$ and whose top contact is a point on the bottom side of $T_2$, and if we denote by $<$ the transitive closure of $\prec$, then $<$ is a strict partial order on the set of the transistors. 
\end{itemize}
Two braided diagrams are \emph{equivalent} if there exists a homeomorphism between them which preserves the labellings of wires and all the orientations (left-right and top-bottom) on all the transistors and on the frame. From now on, every braided diagram will be considered up to equivalence. This allows us to represent a diagram in the plane so that the frame and the transistors are straight rectangles such that their left-right and top-bottom orientations coincide with a fixed orientation of the plane, and so that wires are vertically monotone. 

\medskip \noindent
Given a braided diagram $\Delta$, one defines its \emph{top label} (resp. its \emph{bottom label}), denoted by $\mathrm{top}(\Delta)$ (resp. $\mathrm{bot}(\Delta)$), as the word obtained by reading from left to right the labels of the wires connected to the top (resp. the bottom) of the frame. A \emph{braided $(u,v)$-diagram} is a braided diagram whose top label is $u$ and whose bottom label is $v$; a \emph{braided $(u, \ast)$-diagram} is a braided diagram whose top label is $u$. 

\medskip \noindent
Fixing two braided diagrams $\Delta_1$ and $\Delta_2$ satisfying $\mathrm{top}(\Delta_2)= \mathrm{bot}(\Delta_1)$, one can define their \emph{concatenation} $\Delta_1 \circ \Delta_2$ by gluing bottom endpoints of the wires of $\Delta_1$ which are connected to the bottom side of the frame to the top endpoints of the wires of $\Delta_2$ which are connected to the top side of the frame, following the left-to-right ordering, and by identifying, and next removing, the bottom side of the frame of $\Delta_1$ with the top side of the frame of $\Delta_2$. Loosely speaking, we ``glue'' $\Delta_2$ below $\Delta_1$.

\medskip \noindent
Given a braided diagram $\Delta$, a \emph{dipole} in $\Delta$ is the data of two transistors $T_1,T_2$ satisfying $T_1 \prec T_2$ such that, if $w_1, \ldots, w_n$ denotes the wires connected to the top side of $T_1$, listed from left to right:
\begin{itemize}
	\item the top endpoints of the $w_i$'s are connected to the bottom of $T_2$ with the same left-to-right order, and no other wires are attached to the bottom of $T_2$;
	\item the top label of $T_2$ is the same as the bottom label of $T_1$. 
\end{itemize}
Given such a dipole, one may \emph{reduce} it by removing the transistors $T_1, T_2$ and the wires $w_1, \ldots, w_n$, and connecting the top endpoints of the wires which are connected to the top side of $T_2$ with the bottom endpoints of the wires which are connected to the bottom side of $T_1$ (preserving the left-to-right orderings). 

\medskip \noindent
A braided diagram without dipoles is \emph{reduced}. Clearly, any braided diagram can be transformed into a reduced one by reducing its dipoles, and according to \cite[Lemma 2.2]{FarleyPicture}, the reduced diagram we get does not depend on the order we choose to reduce its dipoles. We refer to this reduced diagram as the \emph{reduction} of the initial diagram. Two diagrams are the same \emph{modulo dipoles} if they have the same reduction. 

\begin{definition}
For every $w \in \Sigma^+$, the \emph{braided diagram group} $D_b(\mathcal{P},w)$ is the set of all the braided $(w,w)$-braided semigroup diagrams over $\mathcal{P}$, modulo dipoles, endowed with the concatenation. 
\end{definition}

\noindent
This is indeed a group according to \cite{FarleyPicture}.

\begin{ex}\label{ex:diagram}
Consider the semigroup presentation 
$$\mathcal{P}= \langle a,b,c \mid ab=ba, ac=ca, bc=cb \rangle.$$
Figure \ref{figure9} shows the concatenation of two braided diagrams over $\mathcal{P}$, and the reduction of the resulting braided diagram. 
\begin{figure}
\begin{center}
\includegraphics[trim={0 11cm 20cm 0},clip,scale=0.35]{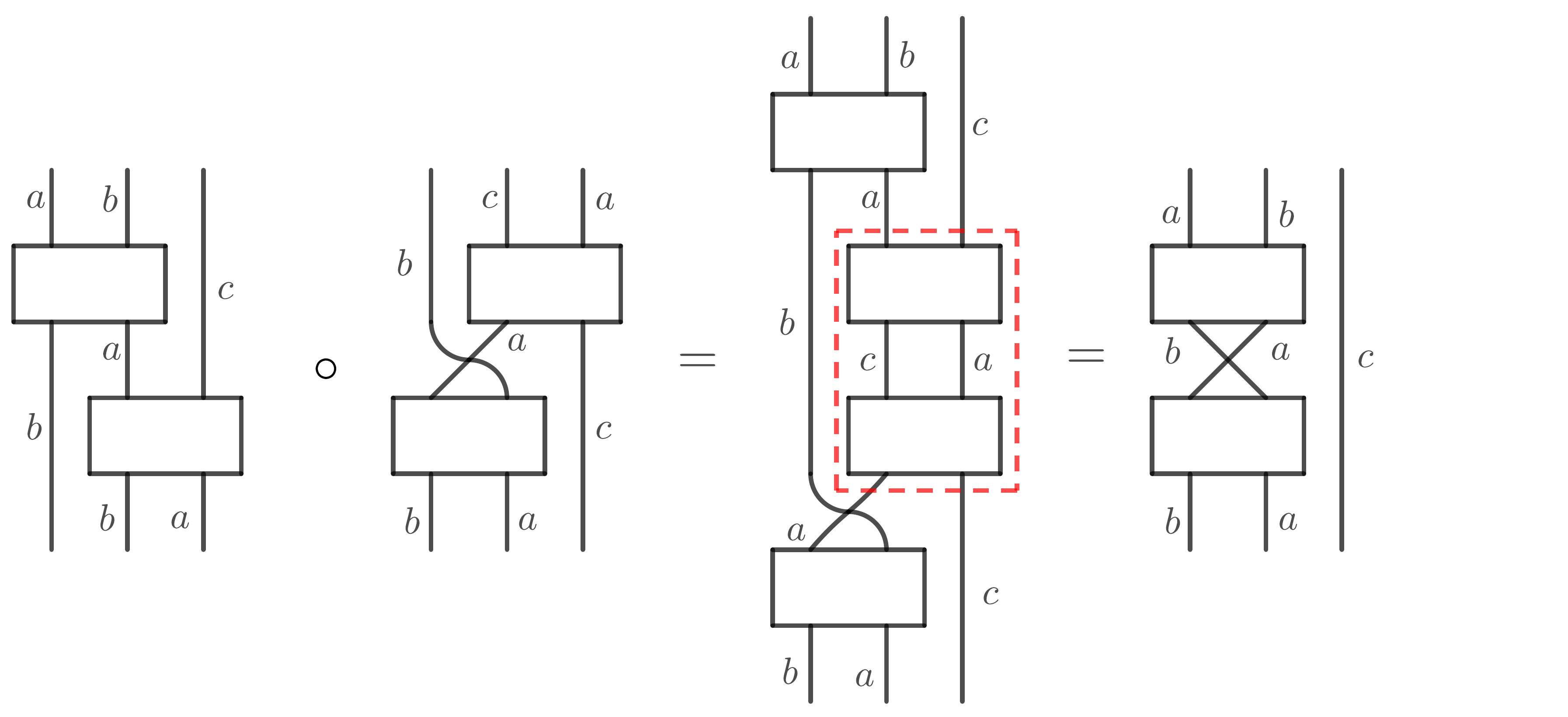}
\caption{}
\label{figure9}
\end{center}
\end{figure}
\end{ex}

\begin{ex}\label{ex:V}
As suggested by the discussion of the previous section, the braided diagram group $D_b(\mathcal{P},x)$ associated to the semigroup presentation $\mathcal{P} = \langle x \mid x=x^2 \rangle$ is isomorphic to Thompson's group $V$. See \cite[Example 16.6]{MR1396957} for a proof. 
\end{ex}

\subsection{Planar and annular diagram groups}

\noindent
In this section, we fix a semigroup presentation $\mathcal{P}= \langle \Sigma \mid \mathcal{R} \rangle$ and a baseword $w \in \Sigma^+$. Below, we define two variations of braided semigroup diagrams. Our first variation is the main topic of \cite{MR1396957}.

\begin{definition}
A braided diagram over $\mathcal{P}$ is \emph{planar} if there exists an embedding $\Delta \to \mathbb{R}^2$ which preserves the left-to-right orderings and the top-bottom orientations on the transistors and the frame. The \emph{planar diagram group} $D(\mathcal{P},w)$ is the subgroup of $D_b(\mathcal{P},w)$ consisting of all planar diagrams.
\end{definition}

\begin{ex}\label{ex:F}
As suggested by the discussion in Section \ref{section:Thompson}, the planar diagram group $D(\mathcal{P},x)$ associated to the semigroup presentation $\mathcal{P} = \langle x \mid x=x^2 \rangle$ is isomorphic to Thompson's group $F$. See \cite[Example 6.4]{MR1396957} for a proof. 
\end{ex}

\noindent
Our second variation was also introduced in \cite{MR1396957}, and further studied in \cite{FarleyPicture}. Keep in mind Figure \ref{figure6} when reading the definition.

\begin{definition}
A braided diagram $\Delta$ is \emph{annular} if it can be embedded into an annulus by preserving the left-to-right orderings and the top-bottom orientations on the transistors and the frame. More precisely, suppose that we replace the frame of $\Delta$ with a pair of disjoint circles, both endowed with the counterclockwise orientation in place of the previous left-to-right orderings of the top and bottom sides of the frame; we also fix a basepoint on each circles (which will be disjoint from the wires). Transistors and wires are defined as before, and their attaching maps are subject to the same conditions as before, where the inner (resp. outer) circle of the frame plays the role of the top (resp. bottom) side of the frame. The resulting diagram is \emph{annular} if it embeds into the plane by preserving the left-to-right orderings on the transistors. The \emph{annular diagram group} $D_a(\mathcal{P},w)$ is the set of annular $(w,w)$-diagrams over $\mathcal{P}$, modulo dipoles, endowed with the concatenation.
\end{definition}

\noindent
All the definitions introduced in the previous section naturally generalise to planar and annular semigroup diagrams. Nevertheless, we mention that, when concatenating two annular diagrams, ie., when identifying the top circle of the second diagram with the bottom circle of the second one, we have to match the basepoints on the different circles. 

\begin{ex}\label{ex:T}
As suggested by the discussion in Section \ref{section:Thompson}, the annular diagram group $D_a(\mathcal{P},x)$ associated to the semigroup presentation $\mathcal{P} = \langle x \mid x=x^2 \rangle$ is isomorphic to Thompson's group $T$. See \cite[Example 16.5]{MR1396957} for a proof. 
\end{ex}

\noindent
It is worth noticing that a planar diagram is an annular diagram as well, so that, in the same way that $F \subset T \subset V$, one has
$$D(\mathcal{P},w) \subset D_a(\mathcal{P},w) \subset D_b(\mathcal{P},w)$$
for every semigroup presentation $\mathcal{P}= \langle \Sigma \mid \mathcal{R} \rangle$ and every baseword $w \in \Sigma^+$.

\subsection{Examples of braided diagram groups}\label{section:ex}

\noindent
In this section, we mention the examples of braided diagram groups which we will consider in Section \ref{section:applications}. In fact, only few explicit braided diagram groups are known, and it would be an interesting problem to investigate this class of groups. 

\medskip \noindent
Our first family of examples comes from a natural generalisation of Thompson's groups $F \subset T \subset V$. 

\begin{ex}\label{ex:ThompsonsVariations}
Let $n \geq 2$ and $r \geq 1$ be two integers. As noticed in \cite[Section 16]{MR1396957}, the braided diagram group $D_b(\mathcal{P}_n,x^r)$, where $\mathcal{P}_n$ denotes the semigroup presentation $\langle x \mid x=x^n \rangle$, is isomorphic to Higman's group $V_{n,r}$ introduced in \cite{HigmanBook}. It is also clear that the planar diagram group $D(\mathcal{P}_n,x^r)$ and the annular diagram group $D_a(\mathcal{P}_n,x^r)$ are respectively isomorphic to the groups $F_{n,r}$  and $T_{n,r}$ introduced in \cite[Section 4]{BrownFiniteness}.
\end{ex}

\noindent
Our second family of examples is a variation of Thompson's groups, obtained by mimicking their actions on rooted trees with \emph{quasi-automorphisms}.

\begin{ex}
Let $n \geq 2$, $r \geq1$, $p \geq 0$ be three integers. We denote by $QV_{n,r,p}$ the \emph{quasi-automorphism group} of the union $\mathcal{T}=\mathcal{T}_{n,r,p}$ of $r$ infinite rooted $n$-regular trees with $p$ isolated vertices (that is, $p$ rooted trees without children), ie., the group of bijections $\mathcal{T}^{(0)} \to \mathcal{T}^{(0)}$ between the vertices of $\mathcal{T}$ which preserve adjacency and the left-to-right ordering of the children of a given vertex for all but finitely many vertices. In particular, $QV_{2,0,0}$ and $QV_{2,0,1}$ are respectively the groups $QV$ and $\bar{Q}V$ studied in \cite{QuasiAuto}. As noticed in \cite[Section 5]{FarleyQuasiAuto}, $QV_{n,r,p}$ is isomorphic to the braided diagram group $D_b(\mathcal{P}_n,x^ra^p)$ where $\mathcal{P}_n$ denotes the semigroup presentation $\langle x,a \mid x=x^na \rangle$.

\medskip \noindent
It is possible to introduce $QF_{n,r,p}$ and $QT_{n,r,p}$ by analogy with $F$ and $T$, but these groups turn out to be different from $D(\mathcal{P}_n,x^ra^p)$ and $D_a(\mathcal{P}_n,x^ra^p)$. Indeed, planar diagram groups are torsion-free and annular diagram groups do not contain non-cyclic finite subgroups whereas any finite group embeds into $QF_{n,r,p}$ and $QT_{n,r,p}$. However, these groups can naturally be thought of as kinds of braided diagrams; see \cite{FarleyQuasiAuto} for more information. 
\end{ex}

\noindent
Our third and last family of examples are Houghton's groups, introduced in \cite{Houghton}.

\begin{ex}
Let $n \geq 1$ be an integer. Let $R_n$ denote the graph which is the disjoint union of $n$ rays $\mathbb{N}= \{0,1,2, \ldots \}$. The \emph{$n$th Houghton's group} $H_n$ is the group of bijections $R_n^{(0)} \to R_n^{(0)}$ preserving adjacency for all but finitely many vertices. According to \cite[Example 4.3]{FarleyHughes}, $H_n$ is isomorphic to the braided diagram group $D_b(\mathcal{P}_n,r)$ where $\mathcal{P}_n$ denotes the semigroup presentation 
$$\langle a,r,x_1, \ldots, x_n \mid r=x_1 \cdots x_n, x_1=ax_1, x_2=ax_2, \ldots, x_n=ax_n \rangle.$$
Similarly, for every $p \geq 0$, the braided diagram group $D_b(\mathcal{P}_n,ra^p)$ coincides with the group $H_{n,p}$ of bijections $R_{n,p}^{(0)} \to R_{n,p}^{(0)}$ preserving adjacency for all but finitely many vertices, where $R_{n,p}$ denotes the union of $R_n$ with $p$ isolated vertices. 
\end{ex}

\section{Picture products}\label{section:pictureproducts}

\subsection{Generalized braided diagrams}\label{section:diagramsPG}

\noindent
In this section, we introduce a variation of braided semigroup diagrams in the spirit of \cite[Section 10.1]{Qm}, the goal being to define a kind of product of groups similar to \emph{diagram products} \cite{MR1725439}, we call \emph{picture products}. The basic idea is to label the wires of our diagrams by both letters and group elements (coming from the factors of our product), and next to adapt the definitions of dipoles and concatenation to this context. Example \ref{ex:diagramsPG} and Figure \ref{figure10} below will illustrate the corresponding definitions which we give now. 

\medskip \noindent
Let $\mathcal{P}= \langle \Sigma \mid \mathcal{R} \rangle$ be a semigroup presentation and $\mathcal{G}= \{ G_s \mid s \in \Sigma \}$ a collection of groups indexed by the alphabet $\Sigma$. We set a new alphabet
$$\Sigma(\mathcal{G})= \{ (s,g) \mid s \in \Sigma, \ g \in G_s \}$$
and a new set of relations $\mathcal{R}(\mathcal{G})$ containing
$$ (u_1,g_1) \cdots (u_n,g_n) = (v_1,h_1) \cdots (v_m,h_m) $$
for every $u_1 \cdots u_n=v_1 \cdots v_m \in \mathcal{R}$ and $g_1 \in G_{u_1}, \ldots, g_n \in G_{u_n}$, $h_1 \in G_{v_1}, \ldots, h_n \in G_{u_m}$. We get a new semigroup presentation $\mathcal{P}(\mathcal{G})= \langle \Sigma(\mathcal{G}) \mid \mathcal{R} (\mathcal{G}) \rangle$. A \emph{braided diagram over $(\mathcal{P}, \mathcal{G})$} is a braided semigroup presentation over $\mathcal{P}(\mathcal{G})$. If $\Delta$ is such a diagram, we denote by $\mathrm{top}^-(\Delta)$ (resp. $\mathrm{bot}^-(\Delta)$) the image of $\mathrm{top}(\Delta)$ (resp. $\mathrm{top}(\Delta)$) under the natural projection $\Sigma(\mathcal{G})^+ \to \Sigma^+$ (which ``forgets'' the second coordinate). If $u,v \in \Sigma^+$ are words, a \emph{$(u,v)$-diagram over $(\mathcal{P}, \mathcal{G})$} is a diagram $\Delta$ satisfying $\mathrm{top}^-(\Delta)=u$ and $\mathrm{bot}^-(\Delta)=v$; we say that $\Delta$ is a \emph{$(u,\ast)$-diagram over $(\mathcal{P}, \mathcal{G})$} if we do not want to mention $v$. 

\medskip \noindent
All the vocabulary introduced in Section \ref{section:diagrams} applies to braided diagrams over $(\mathcal{P}, \mathcal{G})$ thought of as braided semigroup diagrams over $\mathcal{P}(\mathcal{G})$, except the concatenation and the dipoles which we define now. 

\medskip \noindent
If $\Delta_1$ and $\Delta_2$ are two braided diagrams over $(\mathcal{P}, \mathcal{G})$ satisfying $\mathrm{top}^-(\Delta_2)= \mathrm{bot}^-(\Delta_1)$, we define the \emph{concatenation} of $\Delta_1$ and $\Delta_2$ as the braided diagram over $(\mathcal{P}, \mathcal{G})$ obtained in the following way. Write $\mathrm{bot}(\Delta_1) = (w_1,g_1) \cdots  (w_n,g_n)$ and $\mathrm{top}(\Delta_2)= (w_1,h_1) \cdots (w_n,h_n)$ for some $w_1, \ldots, w_n \in \Sigma$ and $g_1, \ldots, g_n,h_1, \ldots, h_n \in \bigsqcup\limits_{G \in \mathcal{G}} G$. Now, 
\begin{itemize}
	\item we glue the top endpoints of the wires of $\Delta_2$ connected to the top side of the frame to the bottom endpoints of the wires of $\Delta_1$ connected to the bottom side of the frame (respecting the left-to-right ordering);
	\item we label these wires from left to right by $(w_1,g_1h_1), \ldots, (w_n,g_nh_n)$;
	\item we identify, and then remove, the bottom side of the frame of $\Delta_1$ and the top side of the frame of $\Delta_2$.
\end{itemize}
The braided diagram we get is the \emph{concatenation} $\Delta_1 \circ \Delta_2$.

\medskip \noindent
Given a braided diagram $\Delta$ over $(\mathcal{P}, \mathcal{G})$, a \emph{dipole} in $\Delta$ is the data of two transistors $T_1,T_2$ satisfying $T_1 \prec T_2$ such that, if $w_1, \ldots, w_n$ denote the wires connected to the top side of $T_1$, listed from left to right:
\begin{itemize}
	\item the top endpoints of the $w_i$'s are connected to the bottom of $T_2$ with the same left-to-right order, and no other wires are attached to the bottom of $T_2$;
	\item the labels $\mathrm{top}^-(T_2)$ and $\mathrm{bot}^-(T_1)$ are the same;
	\item the wires $w_1, \ldots, w_n$ are labeled by letters of $\Sigma(\mathcal{G})$ with trivial second coordinates.
\end{itemize}
Given such a dipole, one may \emph{reduce} it by 
\begin{itemize}
	\item removing the transistors $T_1,T_2$ and the wires $w_1, \ldots, w_n$;
	\item connecting the top endpoints of the wires $a_1, \ldots, a_m$ (from left to right) which are connected to the top side of $T_2$ with the bottom endpoints of the wires $b_1, \ldots, b_m$ (from left to right) which are connected to the bottom side of $T_1$ (preserving the left-to-right orderings);
	\item and labelling the new wires by $( \ell_1, g_1h_1), \ldots, (\ell_m, g_mh_m)$ from left to right, if $a_i$ is labelled by $(\ell_i,h_i)$ and $b_i$ by $(\ell_i,g_i)$ for every $1 \leq i \leq m$.
\end{itemize}
A braided diagram over $(\mathcal{P}, \mathcal{G})$ which does not contain any dipole is \emph{reduced}. Of course, any braided diagram can be transformed into a reduced one by reducing its dipoles, and the same argument as \cite[Lemma 2.2]{FarleyPicture} shows that the reduced diagram we get does not depend on the order we choose to reduce its dipoles. We refer to this reduced diagram as the \emph{reduction} of the initial diagram. Two diagrams are the same \emph{modulo dipoles} if they have the same reduction. 

\begin{definition}
For every $w \in \Sigma^+$, the \emph{braided picture product} $D_b(\mathcal{P}, \mathcal{G},w)$ is the set of all the braided $(w,w)$-diagrams $\Delta$ over $(\mathcal{P},\mathcal{G})$, modulo dipoles, endowed with the concatenation. 
\end{definition}

\noindent
This is indeed a group, for the same reason that a braided diagram group turns out to be a group.

\begin{ex}\label{ex:diagramsPG}
Consider the semigroup presentation $\mathcal{P}= \langle a,b,p \mid a=ap, b=pb \rangle$ and the collection of groups $\mathcal{G}= \{ G_a=G_b=G_p= \mathbb{Z} \}$. Figure \ref{figure10} shows the concatenation of two braided diagrams over $(\mathcal{P}, \mathcal{G})$, and the reduction of the resulting diagram. 
\begin{figure}
\begin{center}
\includegraphics[trim={0 4cm 15cm 0},clip,scale=0.3]{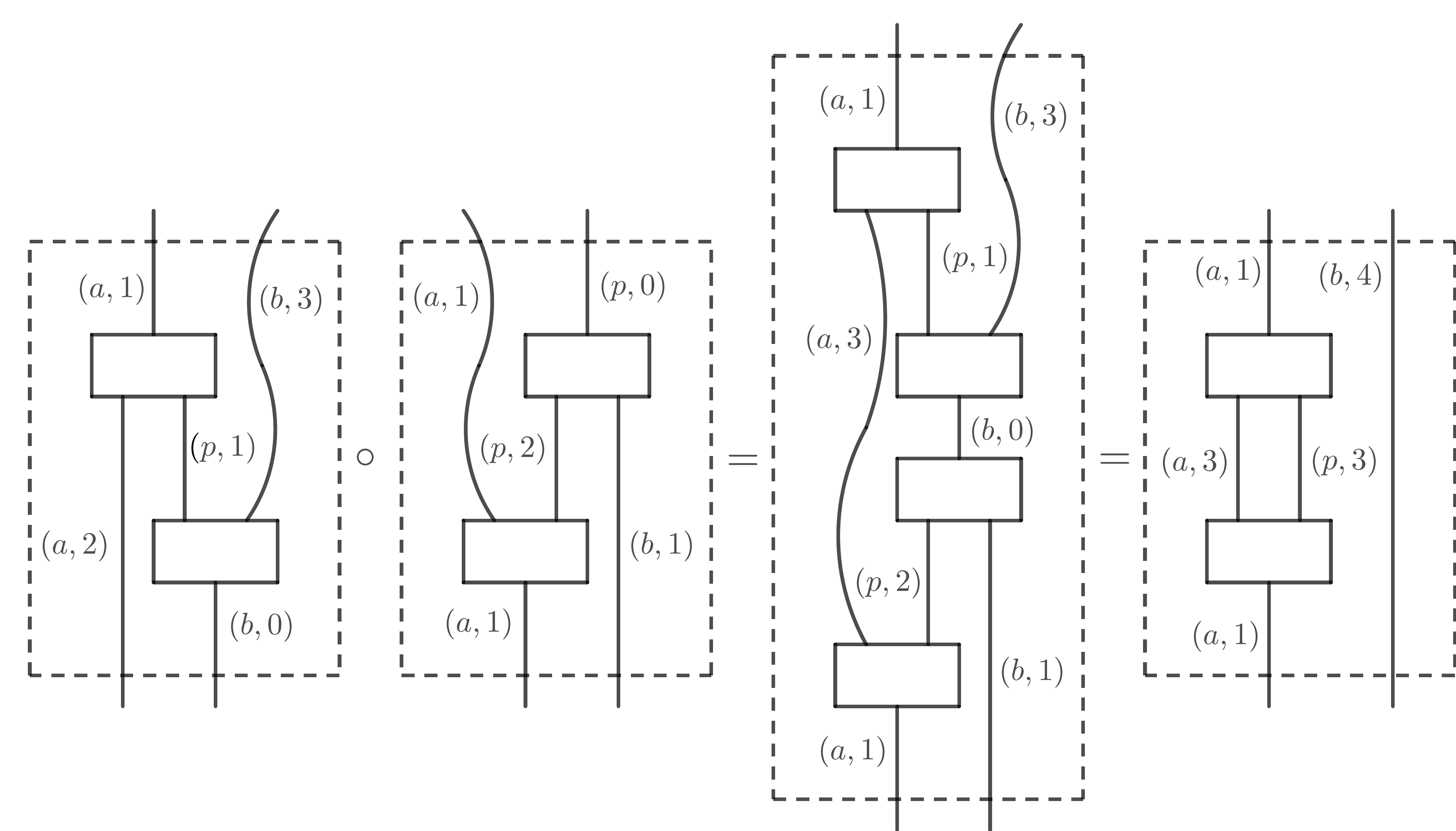}
\caption{Concatenation and reduction of two braided diagrams.}
\label{figure10}
\end{center}
\end{figure}
\end{ex}

\noindent
It is worth noticing that any braided diagram over $(\mathcal{P}, \mathcal{G})$ decomposes as a concatenation of ``elementary diagrams'', which we now define.

\begin{definition}
A \emph{permutation diagram} is a diagram which does not contain any transistor and whose wires are labelled by letters of $\Sigma(\mathcal{G})$ with trivial second coordinates. A \emph{transistor diagram} is a planar diagram which contains a single transistor and whose wires are labelled by letters of $\Sigma(\mathcal{G})$ with trivial second coordinates. A \emph{linear diagram} is a planar diagram which does not contain any transistor and which contains a single wire labelled by a letter of $\Sigma(\mathcal{G})$ with a non-trivial second coordinate. Transistor and linear diagrams are \emph{unitary diagrams}. 
\end{definition}

\noindent
We defined the concatenation $\circ$ on the set of braided diagrams. For convenience, we introduce the following notation:

\begin{definition}
Let $\Delta_1,\Delta_2$ be two braided diagrams over $(\mathcal{P}, \mathcal{G})$. The reduction of the concatenation $\Delta_1 \circ \Delta_2$ is denoted by $\Delta_1 \cdot \Delta_2$. 
\end{definition}

\noindent
Also, we will need another operation between diagrams, a \emph{sum}.

\begin{definition}
Let $\Delta_1,\Delta_2$ be two braided diagrams over $(\mathcal{P}, \mathcal{G})$. The sum $\Delta_1+ \Delta_2$ is the braided diagrams over $(\mathcal{P}, \mathcal{G})$ obtained by gluing, and next removing, the right side of the frame of $\Delta_1$ and the left side of the frame of $\Delta_2$. See Figure \ref{figure11}.
\end{definition}
\begin{figure}
\begin{center}
\includegraphics[trim={0 13cm 15cm 0},clip,scale=0.35]{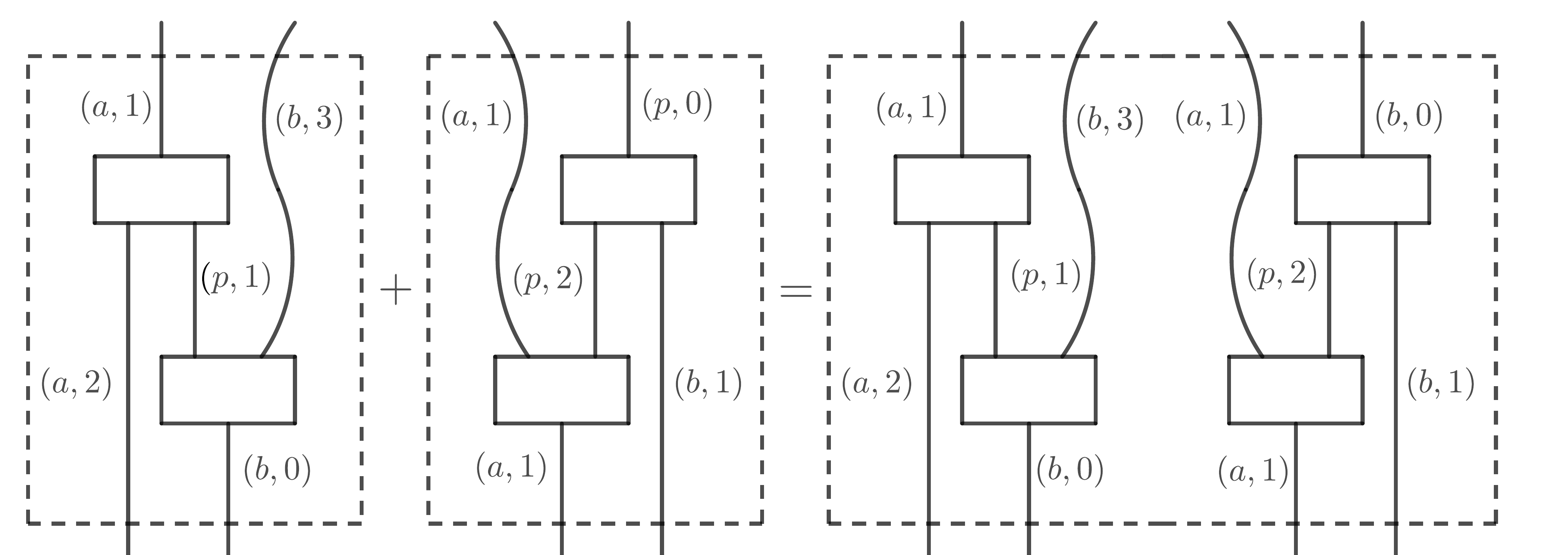}
\caption{Sum of two braided diagrams.}
\label{figure11}
\end{center}
\end{figure}

\noindent
For convenience, we introduce the following notation. If $u_1, \ldots, u_n \in \Sigma(\mathcal{G})$ are letters, we denote by $\epsilon(u_1 \cdots u_n)$ the planar diagram which does not contain any transistor and whose wires are labelled (from left to right) by $u_1, \ldots, u_n$. If $s_1, \ldots, s_m \in \Sigma$, we set $\epsilon(s_1 \cdots s_m)= \epsilon( (s_1,1) \cdots (s_m,1))$. For instance, notice that a linear diagram decomposes as a sum $\epsilon(a)+ \epsilon(u)+ \epsilon(b)$ for some words $a,b \in \Sigma^+$ and some letter $u \in \Sigma(\mathcal{G})$.

\subsection{Quasi-median geometry}\label{section:QMgeom}

\noindent
In order to study the algebraic properties of picture products of groups, we want to make such products act on \emph{quasi-median graphs}. We begin by defining these graphs. (For us, a graph does not contain multiple edges nor loops.)

\begin{definition}
A graph is \emph{weakly modular} if it satisfies the following two conditions:
\begin{description}
	\item[(triangle condition)] for any vertex $u$ and any two adjacent vertices $v,w$ at distance $k$ from $u$, there exists a common neighbor $x$ of $v,w$ at distance $k-1$ from $u$;
	\item[(quadrangle condition)] for any vertices $u,z$ at distance $k$ apart and any two neighbors $v,w$ of $z$ at distance $k-1$ from $u$, there exists a common neighbor $x$ of $v,w$ at distance $k-2$ from $u$.
\end{description}
A graph is \emph{quasi-median}\index{Quasi-median graphs} if it weakly modular and does not contain $K_4^-$ and $K_{3,2}$ as induced subgraphs (see Figure \ref{figure12}).
\end{definition}
\begin{figure}
\begin{center}
\includegraphics[trim={0 22cm 22cm 0},clip,scale=0.5]{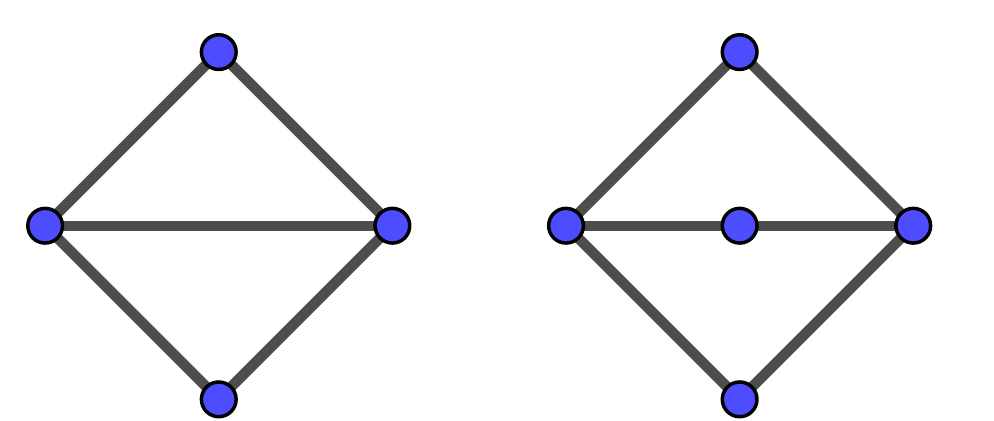}
\end{center}
\caption{From left to right, the graphs $K_4^-$ and $K_{3,2}$.}
\label{figure12}
\end{figure}
\noindent
It is worth noticing that the class of quasi-median graphs includes median graphs. A graph $X$ is \emph{median} if any triple of vertices $x,y,z \in X$ admits a unique vertex $m \in X$ which belongs to the intersection of three geodesics respectively between $x$ and $y$, $y$ and $z$, and $x$ and $z$. Actually, as a consequence of \cite[Proposition 3]{BC}, a graph is median if and only if it is quasi-median and triangle-free. For more information about quasi-median graphs, we refer to \cite{Qm} and references therein. 

\medskip \noindent
The main construction of our section is the following.

\medskip \noindent
Let $\mathcal{P}= \langle \Sigma \mid \mathcal{R} \rangle$ be a semigroup presentation, $\mathcal{G}$ a collection of groups indexed by $\Sigma$, and $w \in \Sigma^+$ a baseword. We define $X_b(\mathcal{P}, \mathcal{G})$ as the graph whose vertices are the classes
$$[\Delta]= \{ \Delta \cdot P \mid P \ \text{permutation diagram} \}$$
where $\Delta$ is a braided diagram over $(\mathcal{P}, \mathcal{G})$; and whose edges link two classes $[\Delta_1]$ and $[\Delta_2]$ if there exists a unitary diagram $U$ satisfying $\Delta_2'= \Delta_1' \cdot U$ for some $\Delta_1' \in [\Delta_1]$ and $\Delta_2' \in [\Delta_2]$. We denote by $X=X_b(\mathcal{P}, \mathcal{G},w)$ the connected component of $X_b(\mathcal{P}, \mathcal{G})$ containing the trivial diagram $\epsilon(w)$. Notice that the vertices of $X$ are precisely the braided $(w,\ast)$-diagrams over $(\mathcal{P}, \mathcal{G})$. As a consequence, the braided picture product $D_b(\mathcal{P}, \mathcal{G},w)$ acts on $X$ by left-multiplication.

\medskip \noindent
Formally, the set of all braided diagrams over $(\mathcal{P}, \mathcal{G})$, modulo dipoles, endowed with the concatenation is a groupoid, and $X_b(\mathcal{P}, \mathcal{G})$ is the coset graph of this groupoid associated to the subgroupoids containing only permutation diagrams and to the generating set defined as the set of unitary diagrams. An easy consequence of this point of view is that our groupoid acts vertex-transitively on $X_b(\mathcal{P}, \mathcal{G})$, so that it is always possible to choose our basepoint. More precisely, if $\Delta \in X_b(\mathcal{P},\mathcal{G},w)$ is a basepoint, then conjugating by $\Delta$ sends $\Delta$ to a trivial diagram, but which does not necessarily belong to $X_b(\mathcal{P},\mathcal{G},w)$: it will belong to $X_b(\mathcal{P},\mathcal{G}, \mathrm{bot}^-(\Delta))$. At the same time, the braided picture product $D_b(\mathcal{P},\mathcal{G},w)$ becomes $D_b(\mathcal{P}, \mathcal{G},\mathrm{bot}^-(\Delta))$. Therefore, we get a commutative diagram
\begin{displaymath}
\xymatrix{ D_b(\mathcal{P},\mathcal{G},w) \ar[rr]^{\text{isomorphism}} \ar[d] & & D_b(\mathcal{P}, \mathcal{G}, \mathrm{bot}(\Delta)) \ar[d] \\ X_b(\mathcal{P},\mathcal{G}, w) \ar[rr]^{\text{isometry}} & & X_b(\mathcal{P}, \mathcal{G},\mathrm{bot}(\Delta)) }
\end{displaymath}
Thus, we may always suppose that a fixed basepoint is a trivial diagram up to changing the base word, which does not disturb the group nor the associated graph.

\medskip \noindent
The main result of this section is that our graph turns out be quasi-median.

\begin{thm}\label{thm:XQM}
$X_b(\mathcal{P}, \mathcal{G},w)$ is a quasi-median graph.
\end{thm}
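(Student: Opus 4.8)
The plan is to follow the scheme of \cite[Section~10]{Qm} for diagram products, adapted to the decorated diagrams over $(\mathcal{P},\mathcal{G})$. By the vertex-transitivity of the groupoid action noted above, it suffices to check each defining property of a quasi-median graph --- the triangle condition, the quadrangle condition, and the absence of induced $K_4^-$ and $K_{3,2}$ --- under the normalisation that one of the vertices involved is a trivial diagram $\epsilon(w_0)$, and I use this freely.

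The first and central step is a combinatorial description of distances and geodesics. For a reduced braided diagram $\Delta$ over $(\mathcal{P},\mathcal{G})$, let $\|\Delta\|$ denote the number of transistors of $\Delta$ plus the number of wires of $\Delta$ whose label has non-trivial second coordinate. Decomposing an arbitrary diagram into unitary and permutation diagrams and using the confluence of reduction (the analogue of \cite[Lemma~2.2]{FarleyPicture} recalled above), one shows that concatenating a single unitary diagram changes $\|\cdot\|$ by exactly $\pm 1$; it follows that $d([\Delta_1],[\Delta_2]) = \|\overline{\bar\Delta_1^{-1}\bar\Delta_2}\|$ and, crucially, that the geodesics from $[\epsilon(w_0)]$ to $[\Delta]$ are exactly the maximal chains of \emph{prefix-classes}, i.e.\ chains $[\epsilon(w_0)]=[\Delta_0],[\Delta_1],\dots,[\Delta_k]=[\Delta]$ with $[\Delta_{i+1}]=[\Delta_i\cdot U_i]$ for a unitary diagram $U_i$ and $\|\bar\Delta_{i+1}\|=\|\bar\Delta_i\|+1$. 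The same bookkeeping gives the link of a vertex: the neighbours of $[\Delta]$ obtained by a linear diagram acting on a fixed bottom wire with label in $\{(s,\cdot)\}$ form a clique isomorphic to $K_{|G_s|}$, and every triangle of $X$ has all three of its edges of this form --- no transistor edge lies in a triangle, because concatenating a unitary diagram onto a transistor diagram never reduces, modulo dipoles and permutations, to a single unitary diagram.

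Granting this, weak modularity is a finite case analysis. For the triangle condition, normalise the edge $v\sim w$ to $v=[\epsilon(w_0)]$ and $w=[U]$ with $U$ unitary; since $v$ and $w$ are equidistant from $u$, the prefix-chain picture forces $U$ to be a linear diagram, say on the wire $\iota$ of $w_0$ (a transistor edge is never equidistant from a third vertex). Writing $u=[\Delta]$ and fixing a prefix-chain geodesic from $u$ to $v$, one splits into the case where its first unitary step also acts on $\iota$ --- then merge that step with $U$ --- and the case where it is independent of $\iota$ --- then commute it past $U$; either way one obtains the required common neighbour of $v$ and $w$ at distance one less from $u$. The quadrangle condition is entirely parallel: normalise $z=[\epsilon(w_0)]$ and write $v=[U_1]$, $w=[U_2]$ with $U_1,U_2$ unitary; the hypotheses force $U_1$ and $U_2$ to act on disjoint wires/subwords, and then $x:=[U_1\cdot U_2]=[U_2\cdot U_1]$ is a common neighbour at the required distance, as one checks against a prefix-chain geodesic issued from $u$.

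For the forbidden subgraphs: an induced $K_4^-$ on $\{a,b,c,d\}$ with $cd$ missing would have its two triangles $abc$, $abd$ sitting inside cliques of the form $K_{|G_s|}$, but the clique through the edge $ab$ is unique --- determined by the wire along which $a$ and $b$ differ --- so $c,d$ lie in it and $c\sim d$, a contradiction; and an induced $K_{3,2}$ with parts $\{a_1,a_2,a_3\}$, $\{b_1,b_2\}$, normalised so that $b_1=[\epsilon(w_0)]$, would have $a_i=[U_i]$ for unitary diagrams with pairwise non-adjacent classes, while $b_2$ is at distance $2$ from $b_1$ and admits each $[U_i]$ as a length-one prefix-class --- impossible, since a reduced diagram of length $2$ has at most two distinct length-one prefix-classes. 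I expect the main obstacle to be the first step: proving that $\|\cdot\|$ computes the graph distance and that geodesics are precisely the prefix chains. This rests on a confluence analysis that must handle \emph{simultaneously} the reduction of dipoles and the multiplication of group elements along wires, and it is exactly what makes the transistor edges behave like edges of a bipartite graph; once it is in place, the remaining steps reduce to the finite checks sketched above, governed by the three local move types (apply a relation, apply a relation backwards, multiply a wire label by a group element).
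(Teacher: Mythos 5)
Your overall architecture is the same as the paper's: a length function counting transistors plus non-trivially labelled wires, the identification of the distance with the length of the difference diagram and of geodesics with absolutely reduced (``prefix-chain'') factorisations (Lemma \ref{lem:Xgeod}), the confinement of triangles to the cliques coming from linear moves (Lemmas \ref{lem:cosetinter} and \ref{lem:triangleincoset}), and the exclusion of $K_{3,2}$ via the bound of at most two geodesics between vertices at distance two (Claim \ref{claim:atmosttwogeod}). However, your stated engine for the first step is false: it is not true that concatenating a single unitary diagram changes $\|\cdot\|$ by exactly $\pm 1$. For a \emph{linear} diagram multiplying a wire whose label already has non-trivial second coordinate $h$ by some $g\neq h^{-1}$, the length is unchanged (this is the middle case of Lemma \ref{lem:lengthrightmult}). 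Your claim even contradicts your own description of the link of a vertex: if every edge changed the distance to the base vertex $[\epsilon(w)]$ by $\pm 1$, the graph would contain no triangles, whereas the pins are cliques $K_{|G_s|}$ and contain triangles as soon as some $|G_s|\geq 3$. What is true, and what the distance formula and the prefix-chain description actually need, is only the weaker statement that each edge corresponds to a difference diagram of length one (so each step changes the length by at most one), combined with subadditivity of the length and the explicit path realising $\#(\Delta_1^{-1}\Delta_2)$; the paper proves Lemma \ref{lem:Xgeod} this way, without your dichotomy.

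This is not merely a cosmetic slip, because the $0$-change case you have excluded is precisely the configuration at the heart of the triangle condition: equidistance of the two endpoints of a linear edge from the apex holds exactly when the relevant wire of the apex diagram carries a non-trivial label $h$ with $h\neq$ the multiplier, and the common neighbour is then produced by the canonical label adjustment (replace the label at that position by $h$, equivalently multiply by $h^{-1}$), not by the ``merge the first step / commute it past $U$'' recipe you sketch, which as written does not always output a vertex adjacent to both endpoints. The correct trichotomy of Lemma \ref{lem:lengthrightmult} (transistor: $\pm1$; linear: $-1$, $0$, or $+1$) is exactly what drives both the triangle condition and the case analysis in the quadrangle condition (where your ``$[U_1\cdot U_2]$'' also needs the sum-with-permutation bookkeeping of Lemma \ref{lem:square}/the paper's proof, since $\mathrm{top}^-(U_2)$ need not equal $\mathrm{bot}^-(U_1)$). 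So: replace the $\pm1$ claim by the three-case lemma, redo the triangle condition with the explicit label-adjustment vertex, and the rest of your outline goes through essentially as in the paper.
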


\noindent
We follow closely the proof of \cite[Proposition 10.12]{Qm}. We begin by studying the geodesics of $X$. 

\begin{definition}
The \emph{length} of a braided diagram $\Delta$ over $(\mathcal{P}, \mathcal{G})$ is the sum of the number of transistors of the reduction of $\Delta$ with its number of wires labelled by letters of $\Sigma(\mathcal{G})$ with non-trivial second coordinates. 
\end{definition}

\noindent
For example, if $\Delta$ denotes the leftmost braided diagram of Figure \ref{figure10}, then $\# \Delta= 6$. Notice that two vertices $[\Delta_1], [\Delta_2] \in X$ are adjacent if and only if $\# \Delta_1^{-1} \Delta_2 = 1$.

\begin{definition}
Let $A_1, \ldots, A_n$ be a sequence of braided diagrams such that the concatenation $A_1 \circ \cdots \circ A_k$ is well-defined for every $1 \leq k \leq n$. If
$$\# \left( A_1 \circ \cdots \circ A_n \right)= \# A_1+ \cdots + \# A_n,$$
we say that the concatenation $A_1 \circ \cdots \circ A_n$ is \emph{absolutely reduced}. (Equivalently, such a concatenation is absolutely reduced if it is reduced and if two wires, respectively connected to the bottom of the frame of some $A_i$ and to the top of the frame of some $A_j$, which are glued together in $A_1 \circ \cdots \circ A_n$ are never both labelled by letters of $\Sigma(\mathcal{G}$ with non-trivial second coordinates.) If a braided diagram $\Delta$ decomposes as an absolutely reduced concatenation $A \circ B$, we say that $A$ is a \emph{prefix} of $\Delta$, written $A \leq \Delta$, and that $B$ is a \emph{suffix} of $\Delta$. 
\end{definition}

\begin{lemma}\label{lem:Xgeod}
Let $[\Delta_1],[\Delta_2] \in X$ be two vertices. Write a braided diagram $\Delta_1^{-1} \Delta_2 \in X$ as an absolutely reduced concatenation $U_1 \circ P_1 \circ \cdots \circ  U_n \circ P_n$ where the $U_i$'s are unitary diagrams and where the $P_i$'s are permutation diagrams. Then
$$[\Delta_1], \ [\Delta_1 \cdot U_1], \ [ \Delta_1 \cdot U_1 \circ P_1 \circ U_2], \ldots, \ [ \Delta_1 \cdot U_1 \circ P_1 \circ \cdots \circ U_n]$$
defines a geodesic from $[\Delta_1]$ to $[\Delta_2]$ in $X$. Conversely, if
$$[\Delta_1], \ [\Delta_1 \cdot A_1], \ [ \Delta_1 \cdot A_1 \cdot A_2], \ldots, \ [\Delta_1 \cdot A_1 \cdots A_n]$$
is a geodesic in $X$ from $[\Delta_1]$ to $[\Delta_2]$ for some diagrams $A_i = U_i \circ P_i$ with $U_i$ unitary and $P_i$ permutational, then there exists a permutation diagram $P$ such that $\Delta_1^{-1} \Delta_2$ decomposes as an absolutely reduced concatenation $A_1 \circ \cdots \circ A_n \circ P$. 
\end{lemma}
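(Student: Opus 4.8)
\section*{Proof proposal (plan)}

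The plan is to prove that the combinatorial distance in $X$ between $[\Delta_1]$ and $[\Delta_2]$ equals $\#(\Delta_1^{-1}\Delta_2)$, and in the course of this to identify the geodesics with the sequences of partial products of absolutely reduced decompositions. By the vertex-transitivity of the groupoid action recorded before Theorem~\ref{thm:XQM} I may assume $\Delta_1=\epsilon(w)$, so that the problem becomes that of understanding paths issuing from the basepoint. The argument rests on two elementary observations. First, a path $[\Delta_1]=[\Theta_0],\dots,[\Theta_k]=[\Delta_2]$ in $X$ is the same datum as a factorization $\Delta_1^{-1}\Delta_2 = Q_0\cdot V_1\cdot Q_1\cdots V_k\cdot Q_k$ into reduced concatenations of unitary diagrams $V_i$ and permutation diagrams $Q_i$: given the path one chooses representatives inductively, using that the edge $[\Theta_i]\sim[\Theta_{i+1}]$ provides a permutation diagram and a unitary diagram realizing it; conversely the partial products of such a factorization form a path (here one uses $[\Theta]=[\Theta\cdot P]$ for a permutation diagram $P$, and the criterion recorded just before the Lemma that $[\Theta]\sim[\Theta']$ if and only if $\#(\Theta^{-1}\Theta')=1$). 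Second, $\#$ is subadditive under concatenation of unitary and permutation diagrams, $\#(A_1\circ\cdots\circ A_m)\le \#A_1+\cdots+\#A_m$ — transistor counts add, and when two glued wires carry labels $(s,g)$ and $(s,h)$ the merged wire $(s,gh)$ is nontrivially labelled only if $g$ or $h$ already was — with equality exactly when the concatenation is absolutely reduced, this being the content of the parenthetical reformulation of that notion. Recall that a unitary diagram has length $1$ and a permutation diagram length $0$.

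These two facts give the lower bound at once: the factorization attached to a path of length $k$ has $k$ unitary factors and only permutation factors besides, so $\#(\Delta_1^{-1}\Delta_2)\le k$, whence $d([\Delta_1],[\Delta_2])\ge\#(\Delta_1^{-1}\Delta_2)$. The first assertion of the Lemma now follows: if $\Delta_1^{-1}\Delta_2=U_1\circ P_1\circ\cdots\circ U_n\circ P_n$ is absolutely reduced, then $\#(\Delta_1^{-1}\Delta_2)=n$ by the equality case, and the partial products
$$[\Delta_1],\ [\Delta_1\cdot U_1],\ [\Delta_1\cdot U_1\circ P_1\circ U_2],\ \dots,\ [\Delta_1\cdot U_1\circ P_1\circ\cdots\circ U_n]$$
form a path of length $n$ from $[\Delta_1]$ to $[\Delta_2]$; since its length equals $\#(\Delta_1^{-1}\Delta_2)\le d([\Delta_1],[\Delta_2])$, it is a geodesic.

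For the reverse bound, and hence for the converse statement, I use the observation that every braided diagram over $(\mathcal{P},\mathcal{G})$ decomposes as a concatenation of elementary diagrams (unitary and permutation diagrams): applied to the reduced diagram $\Delta_1^{-1}\Delta_2$, and noting that because it is reduced one may carry out the decomposition so that no two glued wires both carry nontrivial group labels — peeling off one linear diagram or one transistor at a time — it produces an absolutely reduced decomposition with exactly $\#(\Delta_1^{-1}\Delta_2)$ unitary factors, hence by the first fact a path of that length, so $d([\Delta_1],[\Delta_2])\le\#(\Delta_1^{-1}\Delta_2)$. Thus $d([\Delta_1],[\Delta_2])=\#(\Delta_1^{-1}\Delta_2)$. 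Finally, if $[\Delta_1],[\Delta_1\cdot A_1],\dots,[\Delta_1\cdot A_1\cdots A_n]=[\Delta_2]$ is a geodesic with $A_i=U_i\circ P_i$ (each $A_i$ reduced of length $1$, since appending a permutation diagram below a unitary one creates no dipole and kills no nontrivial label), then $\Delta_1\cdot A_1\cdots A_n$ and $\Delta_2$ represent the same vertex, so there is a permutation diagram $P$ with $\Delta_1^{-1}\Delta_2=A_1\cdot A_2\cdots A_n\cdot P$; the length of the concatenation $A_1\circ\cdots\circ A_n\circ P$ equals $\#(\Delta_1^{-1}\Delta_2)=n=\#A_1+\cdots+\#A_n+\#P$, so by the equality case of the second fact this concatenation is absolutely reduced, in particular reduced, hence equal to $\Delta_1^{-1}\Delta_2$, which is precisely the asserted decomposition.

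The logical skeleton is short; the genuine work lies in the second fact — tracking $\#$ through concatenation and reduction and recognising its equality case as the ``absolutely reduced'' condition — and in the existence of an absolutely reduced decomposition of a reduced diagram into elementary pieces. Both amount to a short induction on $\#$ and are exactly what the definitions surrounding the Lemma (the parenthetical characterisation of ``absolutely reduced'' and the remark on elementary decompositions) are designed to trivialise, so I expect this to be the only mildly delicate point; everything else is bookkeeping following \cite[Proposition 10.12]{Qm}.
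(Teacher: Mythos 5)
Your proposal is correct and follows essentially the same route as the paper: the path of partial products has length $\sum(\#U_i+\#P_i)=\#(\Delta_1^{-1}\Delta_2)$, any path gives the lower bound $d\geq\#$ (which the paper treats as clear and you justify via subadditivity of $\#$), and the converse follows from the equality case $\#(\Delta_1^{-1}\Delta_2)=\sum\#A_i+\#P$ forcing the concatenation $A_1\circ\cdots\circ A_n\circ P$ to be absolutely reduced. The only difference is that you additionally sketch the existence of an absolutely reduced elementary decomposition, which the lemma's statement simply takes as given.
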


\begin{proof}
The path $[\Delta_1], \ [\Delta_1 \cdot U_1], \ [ \Delta_1 \cdot U_1 \circ P_1 \circ U_2], \ldots, \ [ \Delta_1 \cdot U_1 \circ P_1 \circ \cdots \circ U_n]$ has length 
$$n = \sum\limits_{i=1}^n ( \#U_i + \# P_i ) = \# \Delta_1^{-1} \Delta_2$$
as $\# U_i=1$ and $\#P_i=0$ for every $1 \leq i \leq n$.
On the other hand, it is clear that the length of any path in $X$ from $[\Delta_1]$ to $[\Delta_2]$ must be at least $\# \Delta_1^{-1} \Delta_2$. This implies that the previous path turns out to be a geodesic, and that the distance from $[\Delta_1]$ to $[\Delta_2]$ is equal to $\# \Delta$. Now, fix a geodesic 
$$[\Delta_1], \ [\Delta_1 \cdot A_1], \ [ \Delta_1 \cdot A_1 \cdot A_2], \ldots, \ [\Delta_1 \cdot A_1 \cdots A_n]$$
from $[\Delta_1]$ to $[\Delta_2]$ for some diagrams $A_i = U_i \circ P_i$ with $U_i$ unitary and $P_i$ permutational. Because $\Delta_2 \in [\Delta_1 \cdot A_1 \cdots A_n]$, there must exist some permutation diagram $P$ such that $\Delta_2 = \Delta_1 \cdot A_1 \cdots A_n \cdot P$. Notice that $\#P=0$ and $\#A_i=1$ for every $1 \leq i \leq n$. From the inequalities
$$d([\Delta_1],[\Delta_2])= \# \Delta_1^{-1} \Delta_2 \leq \sum\limits_{i=1}^n \# A_i + \#P =n=d([\Delta_1],[\Delta_2]),$$
we deduce that $\# \Delta_1^{-1} \Delta_2 = \sum\limits_{i=1}^n \# A_i + \# P$. Therefore, the concatenation $\Delta_1^{-1} \Delta_2 = A_1 \circ \cdots \circ A_n \circ P$ is absolutely reduced. 
\end{proof}

\noindent
A direct consequence of Lemma \ref{lem:Xgeod} (which was proved during the previous proof in fact) is the following expression of the distance in our graph $X$:

\begin{cor}
The distance between two vertices $[\Delta_1], [\Delta_2] \in X$ is equal to $\# \left( \Delta_1^{-1} \Delta_2 \right)$. 
\end{cor}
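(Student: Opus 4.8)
The plan is to read off the corollary from the two halves of Lemma~\ref{lem:Xgeod}, which together bound the distance from above and from below. For the upper bound I would start by writing the reduced diagram $\Delta_1^{-1}\Delta_2$ as an absolutely reduced concatenation $U_1 \circ P_1 \circ \cdots \circ U_n \circ P_n$ with the $U_i$ unitary and the $P_i$ permutational; such a decomposition exists because every braided diagram over $(\mathcal{P},\mathcal{G})$ is a concatenation of unitary and permutation diagrams and one may gather the permutation pieces so that the concatenation stays absolutely reduced, in which case $n=\sum_{i=1}^n(\#U_i+\#P_i)=\#(\Delta_1^{-1}\Delta_2)$. By the first assertion of Lemma~\ref{lem:Xgeod} the sequence $[\Delta_1],\,[\Delta_1\cdot U_1],\,\ldots,\,[\Delta_1\cdot U_1\circ P_1\circ\cdots\circ U_n]$ is then a path in $X$ of length $n$ from $[\Delta_1]$ to $[\Delta_2]$, so $d([\Delta_1],[\Delta_2])\leq\#(\Delta_1^{-1}\Delta_2)$.

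For the lower bound I would take an arbitrary path $[\Delta_1]=[E_0],[E_1],\ldots,[E_m]=[\Delta_2]$ in $X$. Each edge gives $\#(E_{i-1}^{-1}E_i)=1$, which is precisely the characterisation of adjacency recorded after the definition of length. Telescoping, $\Delta_1^{-1}\Delta_2$ coincides, up to pre- and post-composition by permutation diagrams (an operation that leaves $\#$ unchanged, since permutation diagrams have length $0$ and create no dipoles), with the product $(E_0^{-1}E_1)\cdot(E_1^{-1}E_2)\cdots(E_{m-1}^{-1}E_m)$. Since reduction of a concatenation never increases the length, $\#$ is subadditive for the operation $\cdot$, whence $\#(\Delta_1^{-1}\Delta_2)\leq\sum_{i=1}^m\#(E_{i-1}^{-1}E_i)=m$. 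Taking $m=d([\Delta_1],[\Delta_2])$ yields the reverse inequality, and combining the two bounds proves the corollary.

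The only point that is not purely formal is the subadditivity $\#(A\cdot B)\leq\#A+\#B$, which I would justify by inspecting a single dipole reduction: it deletes two transistors together with the connecting wires and merges the surviving wire labels by multiplication in the factor groups, hence it can only decrease the transistor count and can only decrease (never increase) the number of wires carrying a non-trivial group label. Beyond that, everything is a direct transcription of the computation already carried out inside the proof of Lemma~\ref{lem:Xgeod}, so I do not anticipate any real obstacle; indeed, as remarked after that lemma, the corollary was effectively proved there.
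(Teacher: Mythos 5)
Your proof is correct and follows essentially the same route as the paper: the upper bound comes from the geodesic built in Lemma~\ref{lem:Xgeod}, and the lower bound is exactly the observation (stated as ``clear'' in the paper's proof of that lemma) that any path has length at least $\#\left(\Delta_1^{-1}\Delta_2\right)$, which you justify by subadditivity of $\#$ under concatenation-and-reduction. The paper indeed remarks that the corollary was already established inside the proof of Lemma~\ref{lem:Xgeod}, so your write-up is just a more detailed transcription of that argument.
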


\noindent
Our next lemma, which explains how the length of a braided diagram behaves after right-multiplication by a unitary diagram, will be fundamental in the sequel.

\begin{lemma}\label{lem:lengthrightmult}
Let $\Delta$ be a braided diagram and $U$ a unitary diagram such that the concatenation $\Delta \circ U$ is well-defined. If $U$ is a transistor diagram, then
$$\#( \Delta \cdot U)= \left\{ \begin{array}{cl} \# \Delta +1 & \text{if $\Delta \circ U$ is reduced} \\ \# \Delta -1 & \text{otherwise} \end{array} \right.$$
Otherwise, if 
$$U= \epsilon( (w_1,1) \cdots (w_{i-1},1) (w_i,g) (w_{i+1},1) \cdots (w_n,1))$$ 
where $\mathrm{bot}(\Delta)= (w_1,h_1) \cdots (w_n,h_n)$ for some $h_1 \in G_{w_1}, \ldots, h_n \in G_{w_n}$, then
$$\# ( \Delta \cdot U ) = \left\{ \begin{array}{cl} \# \Delta -1 & \text{if $g=h_i^{-1}$ and $h_i \neq 1$} \\ \# \Delta & \text{if $g \neq h_i^{-1}$ and $h_i \neq 1$} \\ \# \Delta+1 & \text{if $h_i=1$} \end{array} \right.$$
\end{lemma}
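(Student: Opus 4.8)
The plan is to reduce to the case where $\Delta$ is reduced, introduce an auxiliary ``raw length'' $\#'\Delta$ on arbitrary (not necessarily reduced) diagrams equal to the number of transistors plus the number of wires with non-trivial second coordinate (so that $\#\Delta=\#'(\mathrm{red}(\Delta))$), and then analyse the two types of unitary diagram separately. Before that I would record two monotonicity facts. First, forming a concatenation only glues wires together in pairs and multiplies their second coordinates; since merging two wires into one never increases the number of wires with non-trivial second coordinate, while the numbers of transistors add, one gets $\#'(\Delta_1\circ\Delta_2)\le \#'\Delta_1+\#'\Delta_2$. Second, reducing a single dipole removes exactly two transistors and, by the same merging argument together with the fact that the wires joining the two transistors of a dipole carry trivial second coordinates, does not increase the number of non-trivial wires; hence $\#'$ is non-increasing along dipole reductions, so $\#\Delta\le\#'\Delta$ and, combining with the previous inequality, $\#(\Delta_1\cdot\Delta_2)\le \#'\Delta_1+\#'\Delta_2$. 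Applying this to $\Delta=(\Delta\cdot U)\cdot U^{-1}$ and using $\#'(U^{-1})=\#'(U)=1$ gives the lower bound $\#(\Delta\cdot U)\ge\#\Delta-1$, which will be used to exclude spurious further reductions. Finally, by the order-independence of dipole reduction (the analogue of \cite[Lemma 2.2]{FarleyPicture} quoted above), $\#(\Delta\cdot U)$ and $\#\Delta$ depend only on $\mathrm{red}(\Delta)$, so I may assume $\Delta$ is reduced.

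Suppose first that $U$ is a linear diagram, so $U$ has no transistor and differs from $\epsilon(\mathrm{bot}^-(\Delta))$ only in the non-trivial second coordinate $g\neq 1$ of its $i$-th wire. Then inside $\Delta\circ U$ the transistors of $\Delta$, the wires between them, their labels, and the incidences of wires to transistor sides are all exactly as in $\Delta$; the only change is that the wire of $\Delta$ meeting the $i$-th slot of the bottom of the frame now carries second coordinate $h_ig$ instead of $h_i$. A wire attached to the bottom of the frame is never a connecting wire of a dipole, nor does its label affect which wires are incident to the bottom side of a transistor, so $\Delta\circ U$ has precisely the same dipoles as $\Delta$, hence is reduced. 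Thus $\Delta\cdot U=\Delta\circ U$ and the length changes only through the $i$-th frame wire: if $h_i=1$ it becomes non-trivial ($h_ig=g\neq 1$), giving $\#\Delta+1$; if $h_i\neq 1$ and $g=h_i^{-1}$ it becomes trivial, giving $\#\Delta-1$; and if $h_i\neq 1$ and $g\neq h_i^{-1}$ it stays non-trivial, giving $\#\Delta$.

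Now suppose $U$ is a transistor diagram, with transistor $T_U$, all of whose wires have trivial second coordinates. As in the linear case the internal structure of $\Delta$ is unchanged inside $\Delta\circ U$ (the wires of $\Delta$ meeting its bottom frame are merely extended through $U$ with unchanged labels), so any dipole of $\Delta\circ U$ must involve $T_U$; and since $T_U$ is the unique $\prec$-minimal transistor, such a dipole has the form $(T_U,T_2)$ where $T_2$ is forced to be the transistor to whose bottom side all the top wires of $T_U$ are attached, so there is at most one dipole. If there is none, $\Delta\circ U$ is reduced and $\#(\Delta\cdot U)=\#'(\Delta\circ U)=\#\Delta+1$, as $U$ contributes one transistor and no non-trivial wire. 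If there is one, reducing it yields a diagram $\Delta'$ with two fewer transistors than $\Delta\circ U$, i.e. with $\#\Delta-1$ transistors, and with the same non-trivial wires as $\Delta$ — here one uses that the wires attached to the bottom side of $T_U$ are the trivial bottom wires of $U$, so the merging step of the dipole reduction only slides the wires above $T_2$ downward without altering any label — whence $\#'\Delta'=\#\Delta-1$. If $\Delta'$ were not reduced, one further reduction would force $\#(\Delta\cdot U)=\#'(\mathrm{red}(\Delta'))\le\#\Delta-2$, contradicting the lower bound $\#(\Delta\cdot U)\ge\#\Delta-1$; hence $\Delta'$ is reduced and $\#(\Delta\cdot U)=\#\Delta-1$.

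The main obstacle is the bookkeeping in the transistor case: one must be certain that stacking a single transistor beneath a reduced diagram creates at most one dipole and that clearing it leaves nothing new. The cleanest way to secure the second point is not a direct combinatorial study of the reconnected wires but the soft lower bound $\#(\Delta\cdot U)\ge\#\Delta-1$ coming from subadditivity, which I would therefore establish first; everything else is the routine verification that concatenation and dipole reduction only merge wires and that merging never increases the number of non-trivial second coordinates.
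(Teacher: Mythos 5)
Your proof is correct, and it is in fact more careful than the paper's own argument, which dismisses the linear case as clear and, in the transistor case, simply asserts that $\Delta\cdot U$ is obtained from $\Delta\circ U$ by reducing the single dipole formed with the transistor of $U$. The underlying bookkeeping is the same in both proofs: concatenating $U$ below $\Delta$ leaves the internal wires, labels and incidences of $\Delta$ untouched, so any dipole of $\Delta\circ U$ must involve the transistor $T_U$ of $U$, its partner is then forced, and the wires attached to the bottom side of $T_U$ have trivial second coordinates, so only the transistor count changes. What you do differently is to introduce the raw length $\#'$, prove its subadditivity under concatenation and its decrease by at least $2$ under each dipole reduction, and deduce the lower bound $\#(\Delta\cdot U)\geq \#\Delta-1$ from $\Delta=(\Delta\cdot U)\cdot U^{-1}$; this soft bound is precisely what certifies that after clearing the one visible dipole no cascade of further reductions can occur, a point the paper leaves implicit (it can also be checked directly: deleting the upper transistor of the dipole and rerouting its top wires to the frame changes no attachment or label relevant to a dipole between two transistors of the reduced diagram $\Delta$). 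Two small remarks: your assertion that $T_U$ is the \emph{unique} $\prec$-minimal transistor of $\Delta\circ U$ need not hold, but your argument only uses that nothing lies $\prec$-below $T_U$, which is true because the wires attached to its bottom side end on the frame; and your reduction to reduced $\Delta$ tacitly re-reads the dichotomy ``$\Delta\circ U$ reduced or not'' for the reduced representative, which is the same convention the paper itself adopts when applying the lemma to vertices of $X$.
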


\begin{proof}
The second assertion of our lemma is clear. So suppose that $U$ is a transistor diagram. If $\Delta \circ U$ is reduced, then this braided diagram is obtained from $\Delta$ by adding a transistor $T$. Notice that the wires of $\Delta \circ U$ which do not belong to $\Delta$ are precisely those which are connected to the bottom side of $T$; these wires are labelled by letters of $\Sigma(\mathcal{G})$ with trivial second coordinates. Therefore, $\# ( \Delta \cdot U)= \# \Delta +1$. Otherwise, if $\Delta \circ U$ is not reduced, then $\Delta \cdot U$ is obtained from $\Delta \circ U$ by reducing a dipole, and so from $\Delta$ by removing a transistor $T$. Notice that, during this process, the wires which are removed are those which are connected to the bottom side of $T$, and they are labelled by letters of $\Sigma (\mathcal{G})$ with trivial second coordinates since $T$ must define a dipole with the transistor of $U$ in $\Delta \circ U$. Therefore, $\# ( \Delta \cdot U)= \# \Delta - 1$. 
\end{proof}

\noindent
Finally, before proving Theorem \ref{thm:XQM}, we need to understand the triangles of $X$. For this purpose, we consider particular complete subgraphs of $X$, which we call \emph{pins}, and which will turn out to be cliques of $X$. (Recall that a \emph{clique} of a graph is a maximal complete subgraph.)

\begin{definition}\label{def:coset}
A \emph{pin} of $X_b(\mathcal{P}, \mathcal{G},w)$ is a complete subgraph generated by the vertices
$$[ \Delta \cdot \epsilon( (w_1,1) \cdots (w_{i-1},1) (w_i,g) (w_{i+1},1) \cdots (w_n,1) )], \ g \in G_{w_i},$$
where $\Delta$ is a fixed braided diagram which satisfies $\mathrm{bot}^-(\Delta)= w_1 \cdots w_n$. 
\end{definition}

\begin{lemma}\label{lem:cosetinter}
The intersection between two distinct pins is either empty or a single vertex.
\end{lemma}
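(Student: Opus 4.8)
The plan is to describe a pin concretely as the set of vertices reachable from a fixed base vertex by right-multiplying by the linear diagrams that insert a group element at one fixed position $i$, and then to show that knowing any single such vertex, together with the edges emanating from it inside the pin, recovers both the base vertex $[\Delta]$ (up to permutation diagrams, and up to the element already sitting at position $i$) and the position $i$. Concretely, a pin is determined by the following data: the vertex $[\Delta \cdot \epsilon((w_1,1)\cdots(w_i,1)\cdots(w_n,1))]$ (which is the vertex of the pin corresponding to $g$ being the element already present at position $i$ in $\mathrm{bot}(\Delta)$, but since we work modulo permutation diagrams we should think of the pin as parametrised by all of $G_{w_i}$ via the formula in Definition \ref{def:coset}), together with the choice of index $i \in \{1, \ldots, n\}$ where $n = |\mathrm{bot}^-(\Delta)|$. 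So suppose two pins $\mathcal{C}_1, \mathcal{C}_2$ share a vertex $[\Lambda]$; I want to show they are equal.

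First I would normalise: since the groupoid acts vertex-transitively on $X_b(\mathcal{P},\mathcal{G})$, I may conjugate so that $[\Lambda] = [\epsilon(u)]$ is a trivial diagram, where $u = u_1 \cdots u_n \in \Sigma^+$; this changes the base word but not the combinatorics, and both pins now contain $[\epsilon(u)]$. By Definition \ref{def:coset}, a pin through $[\epsilon(u)]$ must be of the form $\{ [\epsilon((u_1,1)\cdots(u_{i-1},1)(u_i,g)(u_{i+1},1)\cdots(u_n,1))] : g \in G_{u_i} \}$ for some $i$: indeed the base diagram $\Delta$ of the pin satisfies $[\Delta \cdot L_g] = [\epsilon(u)]$ for the linear diagram $L_g$ corresponding to the identity element $g=1$ (or more precisely to the element of $\mathrm{bot}(\Delta)$ at position $i$), and since right-multiplying by a linear diagram with trivial-enough data and then forgetting permutation diagrams forces $\mathrm{bot}^-(\Delta) = u$, one checks $[\Delta]$ itself lies in the pin's description and the pin coincides with the displayed set. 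Thus $\mathcal{C}_1$ corresponds to an index $i_1$ and $\mathcal{C}_2$ to an index $i_2$, both built on the base vertex $[\epsilon(u)]$.

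It remains to show $i_1 = i_2$, for then the two displayed sets are literally the same and $\mathcal{C}_1 = \mathcal{C}_2$. This is the crux. The neighbours of $[\epsilon(u)]$ inside $\mathcal{C}_k$ are the vertices $[\epsilon((u_1,1)\cdots(u_{i_k},g)\cdots(u_n,1))]$ with $g \neq 1$ (after reducing, using Lemma \ref{lem:lengthrightmult}, third case: since the element at position $i_k$ in $\mathrm{bot}(\epsilon(u))$ is trivial, right-multiplication by such a linear diagram increases length by one, so these are genuine neighbours and distinct from $[\epsilon(u)]$). If $i_1 \neq i_2$ then the linear diagram inserting $g$ at position $i_1$ and the linear diagram inserting $h$ at position $i_2$ commute (they are sums of diagrams supported on disjoint sets of wires), so $[\epsilon(u)]$, its neighbour in $\mathcal{C}_1$, its neighbour in $\mathcal{C}_2$, and the vertex inserting both $g$ at $i_1$ and $h$ at $i_2$ form an induced $4$-cycle with no chord through the ``both'' vertex — but in fact I should instead argue directly that a vertex of $\mathcal{C}_1$ distinct from $[\epsilon(u)]$ is never adjacent to a vertex of $\mathcal{C}_2$ distinct from $[\epsilon(u)]$: the difference diagram between $[\epsilon((u_1,1)\cdots(u_{i_1},g)\cdots(u_n,1))]$ and $[\epsilon((u_1,1)\cdots(u_{i_2},h)\cdots(u_n,1))]$ is (modulo permutation diagrams) a sum of two linear diagrams at distinct positions $i_1, i_2$, which has length $2$, hence by the Corollary the two vertices are at distance $2$, not adjacent. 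Therefore the intersection $\mathcal{C}_1 \cap \mathcal{C}_2$, if it properly contains more than the single vertex $[\epsilon(u)]$, would force $i_1 = i_2$; and if $i_1 = i_2$ the pins coincide entirely. This gives the dichotomy: either $\mathcal{C}_1 \cap \mathcal{C}_2 = \{[\Lambda]\}$ or $\mathcal{C}_1 = \mathcal{C}_2$.

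The main obstacle I anticipate is the bookkeeping in the normalisation step — verifying carefully that every pin passing through a trivial diagram $[\epsilon(u)]$ really does have the ``standard form'' displayed above, i.e. that the freedom in the choice of the base diagram $\Delta$ in Definition \ref{def:coset} collapses (modulo permutation diagrams and modulo the element already at position $i$) to exactly the $n$ standard pins at $[\epsilon(u)]$. Once that is pinned down, the length computation via Lemma \ref{lem:lengthrightmult} and the distance formula of the Corollary make the separation of distinct-index pins essentially automatic.
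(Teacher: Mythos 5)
Your argument is correct and follows essentially the same route as the paper: base both pins at the common vertex (the paper does this directly with $[\Delta]$, without your normalisation to a trivial diagram), note that each pin then takes the standard form for some position index, and conclude that equal indices give equal pins while distinct indices leave only the one common vertex --- your distance-$2$ computation via Lemma \ref{lem:lengthrightmult} and the distance formula merely makes explicit what the paper dismisses as ``clearly''. The only slight imprecision is the parenthetical claim that $\mathrm{bot}^-(\Delta)=u$ (it is in general only a rearrangement of $u$, so the pin's index may get permuted under the normalisation), but this does not affect the argument.
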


\begin{proof}
Let $C_1,C_2$ be two intersecting pins. Fix some $[\Delta] \in C_1 \cap C_2$ and write $\mathrm{bot}(\Delta)=(w_1,g_1) \cdots (w_n,g_n)$. So
$$C_1= \{ \Delta \cdot \epsilon( (w_1,1) \cdots (w_{i-1},1) (w_i,g) (w_{i+1},1) \cdots (w_n,1) ), \ g \in G_{w_i} \}$$
and 
$$C_2 = \{ \Delta \cdot \epsilon( (w_1,1) \cdots (w_{j-1},1) (w_j,g) (w_{j+1},1) \cdots (w_n,1) ), \ g \in G_{w_j} \}$$
for some $1 \leq i,j \leq n$. Clearly, if $i=j$ then $C_1=C_2$, and if $i \neq j$ then $[\Delta]$ is the single vertex in the intersection $C_1 \cap C_2$. 
\end{proof}

\begin{lemma}\label{lem:triangleincoset}
Any triangle of $X$ is contained into some pin. 
\end{lemma}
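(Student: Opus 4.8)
The plan is to take a triangle in $X$, say on vertices $[\Delta_0], [\Delta_1], [\Delta_2]$, and show that all three lie in a common pin. First I would use vertex-transitivity of the groupoid action (the commutative diagram before Theorem \ref{thm:XQM}) to reduce to the case where one vertex, say $[\Delta_0]$, is the trivial diagram $\epsilon(w_0)$ for a suitable base word $w_0$; this simply relabels the base word and does not change anything. Then $[\Delta_1]$ and $[\Delta_2]$ are each adjacent to $[\epsilon(w_0)]$, so by Lemma \ref{lem:Xgeod} (in the case $n=1$) each of them is represented by a diagram $\Delta_i = \epsilon(w_0)\cdot U_i = U_i$ for some unitary diagram $U_i$, i.e.\ $U_i$ is either a transistor diagram or a linear diagram with top label $w_0$.

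The key step is then a case analysis on the types of $U_1$ and $U_2$, using Lemma \ref{lem:lengthrightmult}. Since $[\Delta_1]$ and $[\Delta_2]$ are also adjacent, $\#(\Delta_1^{-1}\Delta_2) = 1$; equivalently $\#(U_1^{-1}\circ U_2)$ reduces to length $1$. If $U_1$ (or $U_2$) is a transistor diagram, then $\Delta_1^{-1}\Delta_2$ would have to contain the inverse of a transistor composed with something of length $\le 1$, and a length computation via Lemma \ref{lem:lengthrightmult} forces a contradiction unless actually $[\Delta_1] = [\Delta_0]$ or $[\Delta_1]=[\Delta_2]$ — so one shows no triangle can contain an edge coming from a transistor diagram out of the trivial vertex; more precisely, if both $U_1,U_2$ are linear they must be $\epsilon(a)+\epsilon(w_0,g)+\epsilon(b)$ and $\epsilon(a')+\epsilon(w_0,h)+\epsilon(b')$ for the same letter of $\Sigma$ sitting in the same position (otherwise $U_1^{-1}\circ U_2$ reduces to a diagram of length $2$, not $1$), hence with $a=a'$, $b=b'$ and the modified letter $w_i$ the same; and if one of them is a transistor diagram the length bookkeeping gives a contradiction with the third edge. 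Consequently $[\Delta_0],[\Delta_1],[\Delta_2]$ are exactly three of the vertices
$$[\Delta\cdot\epsilon((w_1,1)\cdots(w_{i-1},1)(w_i,g)(w_{i+1},1)\cdots(w_n,1))],\quad g\in G_{w_i},$$
with $\Delta$ the appropriate trivial diagram, which is by definition a pin.

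Finally I would unwind the reduction: conjugating back by the basepoint diagram (the isometry in the commutative diagram carries pins to pins, since the groupoid acts by left multiplication and pins were defined via right multiplication by the $\epsilon$-diagrams), so the original triangle lies in a pin of $X_b(\mathcal{P},\mathcal{G},w)$.

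**Main obstacle.** I expect the delicate part to be the length bookkeeping that rules out a triangle edge arising from a transistor diagram, and more generally pinning down that two unitary diagrams $U_1,U_2$ out of the trivial vertex with $\#(U_1^{-1}\circ U_2)=1$ must be linear diagrams modifying the \emph{same} letter in the \emph{same} position: one has to be careful that reducing the concatenation $U_1^{-1}\circ U_2$ (possibly after composing with permutation diagrams, since vertices are classes $[\Delta]$) does not create unexpected dipoles, and to track exactly which wires carry non-trivial second coordinates. Lemma \ref{lem:lengthrightmult} is precisely the tool for this, but applying it in the transistor/linear mixed cases and the transistor/transistor case requires carefully checking when $\Delta\circ U$ is or is not reduced.
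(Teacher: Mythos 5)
Your proposal is correct and follows essentially the same route as the paper: translate (via the groupoid action, changing the base word) so that one vertex of the triangle is a trivial diagram, represent the other two by unitary diagrams, and then do the case analysis via length bookkeeping (Lemma \ref{lem:lengthrightmult}) to rule out transistor diagrams and to force the two linear diagrams to modify the same wire, placing the triangle in a pin. The only cosmetic slip is writing $\epsilon(w_0,g)$ for the modified wire, where the label should be a single letter $w_i$ of the base word rather than the whole word; your final displayed formula has it right.
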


\begin{proof}
Given a triangle, up to translating, we may suppose without loss of generality that $[\epsilon(w)]$ is one of its vertices; let $[E],[F]$ denote its other two vertices, where $E$ and $F$ are unitary. 

\medskip \noindent
Suppose by contradiction that $E$ and $F$ are both transistor diagrams. Two cases may happen: either $E^{-1}F$ contains two transistors, which is impossible since otherwise we would have $d([E],[F])\# E^{-1}F \geq 2$; or $E^{-1}F$ contains no transistors, so that, since its wires are labelled by letters of $\Sigma(\mathcal{G})$ with trivial second coordinates, we have $d([E],[F])= \# E^{-1}F =0$, ie., $[E]=[F]$. 

\medskip \noindent
Next, suppose by contradiction that $E$ is a transistor diagram and $F$ a linear diagram. Then $d([E],[F]) = \# E^{-1}F =2$, so that $[E]$ and $[F]$ cannot be adjacent. By symmetry, the same conclusion holds if $E$ is a linear diagram and $F$ a transistor diagram.

\medskip \noindent
Consequently, $E$ and $F$ must be both linear diagrams. So, if $w=w_1 \cdots w_n$ where $w_1, \ldots, w_n \in \Sigma$, then there exist some $1 \leq i,j \leq n$ and $g \in G_{w_i}$, $h \in G_{w_j}$ such that
$$E= \epsilon( (w_1,1) \cdots (w_{i-1},1) (w_i,g) (w_{i+1},1) \cdots (w_n,1) )$$
and
$$F = \epsilon( (w_1,1) \cdots (w_{j-1},1) (w_j,h) (w_{j+1},1) \cdots (w_n,1) ).$$
Since $1= d([E],[F])= \# E^{-1}F$, necessarily $i=j$, so that our triangle must be included into the pin
$$\{ \epsilon( (w_1,1) \cdots (w_{i-1},1) (w_i,g) (w_{i+1},1) \cdots (w_n,1) ) \mid g \in G_{w_i} \},$$
which concludes the proof of our lemma. 
\end{proof}

\begin{proof}[Proof of Theorem \ref{thm:XQM}.]
We begin by proving the triangle condition. So let $[\Delta], [\Gamma],[\Xi] \in X$ be three vertices such that $[\Delta]$ and $[\Gamma]$ are adjacent, and such that the distance from $[\Xi]$ to $[\Delta]$ is equal to the distance from $[\Xi]$ to $[\Gamma]$. Up to conjugating by $\Xi^{-1}$, we may suppose without loss of generality that $\Xi= \epsilon(w)$. So our previous equality between distances can be written equivalently as $\# \Delta= \# \Gamma$. Because $[\Delta]$ and $[\Gamma]$ are adjacent, there must exist some $\Delta' \in [\Delta]$ and some unitary diagram $U$ such that $\Gamma= \Delta' \cdot U$. By noticing that
$$\# \Delta' \cdot U = \# \Gamma = \# \Delta = \# \Delta',$$
we deduce from Lemma \ref{lem:lengthrightmult} that we can write $\mathrm{bot}(\Delta')=(w_1,g_1) \cdots (w_n,g_n)$ and
$$U= \epsilon( (w_1,1) \cdots (w_{i-1},1) (w_i,h) (w_{i+1},1) \cdots (w_n,1)),$$
where $g_i \neq 1$ and $h \neq g_i^{-1}$. Therefore, setting
$$\Phi = \Delta' \cdot \epsilon( (w_1,1) \cdots (w_{i-1},1) (w_i, g_i^{-1}) (w_{i+1},1) \cdots (w_n,1),$$
we have $\# \Phi = \# \Delta'-1 = \# \Delta-1$ and $[\Phi]$ is adjacent to $[\Delta]$ and $[\Gamma]$. Thus, $[\Phi]$ is the vertex we are looking for.

\medskip \noindent
Now, we want to prove the quadrangle condition. So let $[A],[B],[C],[D] \in X$ be four pairwise distinct vertices such that $[B]$ is adjacent to both $[A]$ and $[C]$, such that distance from $[D]$ to $[A]$ is equal to the distance from $[D]$ to $[C]$, say $\ell$, and such that distance from $[D]$ to $[B]$ is equal to $\ell+1$. Up to conjugating by $D^{-1}$, we may suppose without loss of generality that $D=\epsilon(w)$. As a consequence, the equalities between the previous distances can be written as $\# A = \# C= \ell$ and $\# B=\ell-1$. Because $[A]$ and $[B]$ are adjacent, there exist $B' \in [B]$ and some unitary diagram $U_1$ such that $A= B' \cdot U_1$. Similarly, because $[B]$ and $[C]$ are adjacent, there exist $B'' \in [B]$ and some unitary diagram $U_2$ such that $C= B'' \cdot U_2$. We distinguish four cases, depending on whether $U_1,U_2$ are transistor or linear diagrams.

\medskip \noindent
Suppose that $U_1$ and $U_2$ are both transistor diagrams. From the equalities
$$\# B' \cdot U_1 = \# A = \ell = \# B -1 = \# B' - 1$$
and 
$$\# B'' \cdot U_2 = \# C = \ell = \# B - 1 = \# B'' - 1,$$
we deduce thanks to Lemma \ref{lem:lengthrightmult} that the concatenations $B' \circ U_1$ and $B'' \circ U_2$ are not reduced. Therefore, the transistor of $U_1$ in $B' \circ U_1$ must define a dipole together with some transistor $T_1$ of $B$; similarly, the transistor of $U_2$ in $B'' \circ U_2$ must define a dipole together with some transistor $T_2$ of $B$. Notice that $T_1 \neq T_2$, since otherwise we would have $[A]=[C]$. As a consequence, if we fix some $B''' \in [B]$ so that the wires of $B'''$ connected to the bottom side of $T_1$ (resp. $T_2$) are consecutive in left-to-right ordering of the wires connected to the bottom side of the frame, then there exist two words $x,y \in \Sigma^+$ such that $\mathrm{bot}^-(B''')=xy$ and two transistor diagrams $X,Y$ such that $[B''' \cdot (X+\epsilon(y))] = [A]$ and $[B''' \cdot (\epsilon(x)+ Y)]= [C]$. (So, in the concatenation $B''' \circ (X+ \epsilon(y))$, the transistor of $X$ must define a dipole with the transistor $T_1$ of $B$, so that the reduction of this dipole provide an element of $[A]$; and similarly for $T_2$.) Now, set $\Phi = B''' \cdot (X+Y)$. By construction, $[\Phi]$ is adjacent to both $[A]$ and $[C]$, and $\# \Phi = \#B'''-2= \#B -2 = \ell-1$. Therefore, $[\Phi]$ is the vertex we are looking for.

\medskip \noindent
Next, suppose that $U_1$ is a linear diagram and $U_2$ a transistor diagram. From the equality
$$\# B' \cdot U_1 = \# A = \ell - 1 = \#B -1 = \# B' - 1,$$
we deduce thanks to Lemma \ref{lem:lengthrightmult} that $B'$ has a wire $w$ connected to the bottom side of the frame, say labelled by $(m,g) \in \Sigma(\mathcal{G})$, such that $g \neq 1$ and such that $A$ is obtained from $B'$ by replacing the label of $w$ with $(m,1)$. Similarly, we deduce from the equality
$$\# B'' \cdot U_2 = \# C = \ell = \# B-1 = \# B''-1$$
that the transistor of $U_2$ in the concatenation $B'' \circ U_2$ must define a dipole with some transistor $T$ of $B$. A fortiori, the wires which are connected to the bottom side of $T$ must be labelled by letters of $\Sigma(\mathcal{G})$ with trivial second coordinates, so that $w$ cannot be one of these wires. As a consequence, $w$ is also a wire of $C$ (since the wires of $C$ are those of $B''$ (or equivalently, of $B$ or of $B'$) which are not connected to the bottom side of $T$, together with the new wires which are connected to the bottom side of the transistor of $U_2$).  Let $\Phi$ denote the braided diagram obtained from $C$ by replacing the label of $w$ with $(m,1)$. By construction, $[\Phi]$ is adjacent to both $[A]$ and $[C]$, and $\# \Phi = \# C -1 = \ell -1$. Therefore, $[\Phi]$ is the vertex we are looking for.

\medskip \noindent
The case where $U_1$ is a transistor diagram and $U_2$ a linear diagram is symmetric to the previous one.

\medskip \noindent
Finally, suppose that $U_1$ and $U_2$ are both linear diagrams. In this case, since a linear diagram ``commutes'' with any permutation, we may suppose without loss of generality that $B'=B''=B$. From the equalities
$$\# B \cdot U_1 = \# A = \ell = \# B -1$$
and 
$$\# B'' \cdot U_2 = \# C = \ell = \# B - 1,$$
we deduce from Lemma \ref{lem:lengthrightmult} that $B$ has two wires $w_1,w_2$ connected to the bottom side of the frame, say labelled by $(m_1,g_1), (m_2,g_2)$ respectively, such that $g_1 \neq 1, g_2 \neq 1$, such that $A$ is obtained from $B$ by replacing the label of $w_1$ with $(m_1,1)$, and such that $C$ is obtained from $B$ by replacing the label of $w_2$ with $(m_2,1)$. Notice that, since $[A] \neq [C]$, necessarily $w_1 \neq w_2$. Let $\Phi$ denote the braided diagram obtained from $B$ by replacing the labels of $w_1$ and $w_2$ with $(m_1,1)$ and $(m_2,1)$ respectively. Then $[\Phi]$ is adjacent to both $[A]$ and $[C]$, and $\# \Phi = \# B -2 = \ell -1$. Thus, $[\Phi]$ is the vertex we are looking for.

\medskip \noindent
This concludes the proof of the quadrangle condition. To conclude the proof of our theorem, it remains to show that $X$ does not contain induced subgraphs isomorphic to either $K_{2,3}$ or $K_4^-$. 

\medskip \noindent
Notice from Lemmas \ref{lem:triangleincoset} and \ref{lem:cosetinter} that two triangles intersecting along an edge must be included into a complete subgraph. Therefore, $X$ does not contain induced subgraphs isomorphic to $K_4^-$. Next, in order to prove that $X$ does not contain induced subgraphs isomorphic to $K_{3,2}$, it is sufficient to show that there exist at most two geodesics between two vertices at distance two apart. This is a consequence of the following claim:

\begin{claim}\label{claim:atmosttwogeod}
Let $[\Delta] \in X$ be a vertex where $\# \Delta = 2$. There exist at most two geodesics in $X$ from $[\epsilon(w)]$ to $[\Delta]$. 
\end{claim}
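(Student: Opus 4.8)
The plan is to parametrise the geodesics from $[\epsilon(w)]$ to $[\Delta]$ by their middle vertices. Since such a geodesic has length two it is determined by its middle vertex $[\Gamma]$, and $[\Gamma]$ occurs in this way precisely when $\# \Gamma = 1$ (so that $[\Gamma]$ is adjacent to $[\epsilon(w)]$) and $\#( \Gamma^{-1}\Delta) = 1$ (so that $[\Gamma]$ is adjacent to $[\Delta]$). By Lemma \ref{lem:Xgeod} applied with $\Delta_1 = \epsilon(w)$ and $\Delta_2 = \Delta$, this pair of conditions means exactly that (a representative of) the reduction $\overline{\Delta}$ decomposes, up to multiplying on the right by a permutation diagram, as an absolutely reduced concatenation $\Gamma_0 \circ P \circ W$ in which $\Gamma_0 \in [\Gamma]$ is unitary (a transistor diagram or a linear diagram), $W$ is unitary, and $P$ is a permutation diagram. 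So it suffices to show that at most two classes $[\Gamma]$ arise as such a leading unitary piece of $\overline{\Delta}$.

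The first thing I would record is that, because an absolutely reduced concatenation is in particular reduced, the transistors occurring in $\Gamma_0 \circ P \circ W$ are exactly those of $\overline{\Delta}$, and likewise its wires carrying a non-trivial second coordinate are exactly those of $\overline{\Delta}$; hence the two unitary pieces realise, between them, exactly the one or two transistors and non-trivial wires of $\overline{\Delta}$. The second is a criterion for a unitary diagram to be a legitimate leading piece, obtained from the fact that the portion of the decomposition lying below $\Gamma_0$ can modify only the wires lying below $\Gamma_0$, together with the fact that absolute reducedness forbids gluing two wires both carrying non-trivial second coordinates: a transistor diagram realising a transistor $T$ of $\overline{\Delta}$ qualifies if and only if $T$ is $\prec$-maximal and every wire attached to the top side of $T$ carries a trivial second coordinate in $\overline{\Delta}$, while a linear diagram qualifies if and only if the (unique) wire on which it carries a non-trivial element is attached to the top side of the frame of $\overline{\Delta}$.

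Then I would run the case analysis according to the form of $\overline{\Delta}$, which by the definition of length is one of: (a) no transistor and two non-trivial wires; (b) one transistor $T$ and one non-trivial wire $\omega$; (c) two transistors $T_1, T_2$ and no non-trivial wire. In case (a) the leading piece must be the linear diagram creating one of the two non-trivial labels, so there are at most two. In case (c) it must be a transistor diagram realising $T_1$ or $T_2$; if $T_1 \prec T_2$ (or vice versa) then only the $\prec$-maximal one is $\prec$-maximal and hence qualifies, so there is exactly one, whereas if $T_1$ and $T_2$ are $\prec$-incomparable both are $\prec$-maximal and qualify, so there are two. In case (b) one distinguishes according to how $\omega$ meets $T$: if the bottom of $\omega$ is attached to the top of $T$, then $\omega$ is a top-side wire of $T$ carrying a non-trivial label, so the transistor diagram is disqualified and only the linear diagram on $\omega$ remains; if the top of $\omega$ is attached to the bottom of $T$, then $\omega$ runs down to the bottom of the frame and is not attached to its top, so only the transistor diagram remains; and if $\omega$ runs directly between the top and bottom sides of the frame, bypassing $T$, then both qualify. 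In every case there are at most two admissible leading pieces, hence at most two geodesics.

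The bulk of the work is the verification of this last paragraph, and the point needing most care is that a vertex of $X$ may have wires attached to the top of its frame carrying non-trivial second coordinates: this is what permits the wire $\omega$ of case (b) to be attached to the top of the frame and makes the linear diagram on $\omega$ a genuine leading piece there, and it is also what must be tracked when checking, in case (c) with $\prec$-comparable transistors, that only the $\prec$-maximal transistor can lead — an exclusion which ultimately rests on the observation that an absolutely reduced decomposition is reduced, so that the transistor set of $\Gamma_0 \circ P \circ W$ is forced to coincide with that of $\overline{\Delta}$.
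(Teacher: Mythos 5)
Your overall strategy is the same as the paper's: a length-two geodesic is determined by its middle vertex, Lemma \ref{lem:Xgeod} converts a middle vertex into an absolutely reduced decomposition of the reduction $\overline{\Delta}$ whose first step is unitary up to permutation diagrams, and one concludes using $\#\Delta=2$. But as written your count does not close. Your case analysis bounds the number of transistors, respectively non-trivially labelled wires, of $\overline{\Delta}$ that can be realised by the leading unitary piece; the claim, however, is about the number of classes $[\Gamma]$, i.e.\ of middle vertices. To pass from one to the other you must show that the choice of a transistor $T$ (resp.\ of a non-trivially labelled wire $\omega$) determines the class $[\Gamma]$: concretely, that $[\Gamma]$ is pinned down by the top and bottom labels of $T$ together with the places along the top side of the frame where the wires attached to the top side of $T$ are connected (resp.\ by the place along the top side of the frame where $\omega$ is attached together with its group label, the latter being forced by the additivity of $\#$ in an absolutely reduced concatenation), and that all these data are read off from $\overline{\Delta}$ once $T$ (resp.\ $\omega$) is chosen. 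This verification is precisely the content of the paper's proof and is absent from yours; without it, two distinct geodesics could a priori have first steps realising the same transistor, and "at most two admissible leading pieces" would not yield "at most two geodesics". Once this determination is in place, your characterisation of which transistors or wires can occur first ($\prec$-maximality, triviality of the top-side labels, attachment to the top of the frame) is not needed for the upper bound: it suffices that every geodesic's first step realises some transistor or non-trivially labelled wire of $\overline{\Delta}$, and that there are exactly two such objects in total.

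A smaller point: you cannot in general take $\Gamma_0\in[\Gamma]$ to be unitary. In the braided setting the wires attached to the top side of the transistor of the first step may reach the top of the frame at non-consecutive or permuted positions, so the first step only has the form $P_1\circ U\circ P_2$ with $U$ unitary and $P_1,P_2$ permutation diagrams, as in the paper's proof. Your qualification criterion (which imposes no condition on the positions of the attachments) is the one adapted to this more general form, so the set-up should be stated with the leading permutation allowed; otherwise the parametrisation misses those geodesics whose middle-vertex class contains no unitary representative.
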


\noindent
Fix a geodesic
$$[\epsilon(w)], \ [A_1], \ [A_1 \cdot A_2]$$
from $[\epsilon(w)]$ to $[\Delta]$ in $X$. According to Lemma \ref{lem:Xgeod}, there exists a permutation diagram $P$ such that $\Delta$ decomposes as the absolutely reduced concatenation $A_1 \circ A_2 \circ P$. Notice that $\# A_1 =1$, so we can write $A_1= P_1 \circ A_1' \circ P_2$ for some unitary diagram $A_1'$. Notice that the concatenation $\Delta = P_1 \circ A_1' \circ P_2 \circ A_2 \circ P$ is also absolutely reduced. We distinguish two cases.

\medskip \noindent
Suppose first that $A_1'$ is a transistor diagram. Observe that the class $[P_1 \circ A_1']$ is uniquely determined by $\mathrm{top}^-(P_1)$, the top and bottom labels of the transistor of $A_1'$, and the places along the top side of the frame where the wires which are connected to the top side of the transistor of $A_1'$ are connected. On the other hand, all these data are uniquely determined by the choice of a transistor of $\Delta$. Because $\#\Delta=2$, we know that there exist at most two such transistors, so we have at most two choices for $[P_1 \circ A_1']=[A_1]$. But this class is also the unique interior vertex of our geodesic, so that this vertex uniquely determines the geodesic itself.

\medskip \noindent
Next, suppose that $A_1'$ is a linear diagram. Observe that the class $[P_1 \circ A_1']$ is uniquely determined by $\mathrm{top}^-(P_1)$, the place along the top side of the frame where the unique wire labelled by a letter of $\Sigma(\mathcal{G})$ with a non-trivial second coordinate is connected, and the label of this wire. On the other hand, all these data are uniquely determined by the choice of a wire of $\Delta$ which is labelled by a letter of $\Sigma(\mathcal{G})$ with non-trivial second coordinate. But, since $\# \Delta=2$, we know that there exist at most two such wires, so we have at most two choices for $[P_1 \circ A_1]=[A_1]$. Because this class is also the unique interior vertex of our geodesic, this vertex uniquely determines the geodesic itself. 

\medskip \noindent
This concludes the proof of our claim, and finally of our theorem.
\end{proof}

\noindent
It is worth noticing that, when all the groups of $\mathcal{G}$ are trivial, then the braided picture product $D_b(\mathcal{P}, \mathcal{G},w)$ coincides with the braided diagram group $D_b(\mathcal{P}, w)$. Therefore, it follows that any braided diagram group acts on a quasi-median graph. In fact, in this context, the graph $X_b(\mathcal{P},\mathcal{G},w)$ turns out to be precisely the one-skeleton of the cube complex constructed in \cite{FarleyPicture}. We recover from Theorem \ref{thm:XQM} that this cube complex is CAT(0).

\begin{cor}
If all the groups of $\mathcal{G}$ are trivial, $X(\mathcal{P}, \mathcal{G},w)$ is a median graph. 
\end{cor}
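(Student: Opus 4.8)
The plan is to show that the quasi-median graph $X_b(\mathcal{P}, \mathcal{G},w)$ becomes triangle-free when all the groups of $\mathcal{G}$ are trivial, and then invoke the already-quoted characterisation (from \cite{BC}) that a triangle-free quasi-median graph is median. Since Theorem \ref{thm:XQM} gives us that $X_b(\mathcal{P},\mathcal{G},w)$ is quasi-median in general, only the triangle-free part needs argument.

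First I would inspect the pins. By Definition \ref{def:coset}, a pin is a complete subgraph whose vertices are indexed by $g \in G_{w_i}$; when $G_{w_i}$ is trivial this set is a single vertex, so every pin degenerates to a point. By Lemma \ref{lem:triangleincoset}, every triangle of $X$ is contained in some pin; hence $X$ contains no triangle at all. (Alternatively, and more hands-on: a triangle would force, by the case analysis in the proof of Lemma \ref{lem:triangleincoset}, that two of its vertices differ by a linear diagram $\epsilon(\cdots (w_i,g) \cdots)$ with $g \neq 1$, which is impossible once $G_{w_i} = \{1\}$; the transistor-diagram cases were already ruled out there independently of $\mathcal{G}$.) With $X$ triangle-free and quasi-median, \cite[Proposition 3]{BC} as cited in the excerpt gives that $X$ is median, which is exactly the claim.

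The only subtlety — and the one place I would be slightly careful — is that Lemma \ref{lem:triangleincoset} reduces an arbitrary triangle to one with $[\epsilon(w)]$ as a vertex "up to translating", i.e.\ using the vertex-transitive action of the underlying groupoid discussed after the statement of Theorem \ref{thm:XQM}. One must check that this translation move does not leave the hypothesis that all groups of $\mathcal{G}$ are trivial (it does not: $\mathcal{G}$ is fixed throughout, and conjugation only changes the baseword, not the collection of vertex groups), so the reduction is legitimate and no triangle survives. This is really a triviality, so I do not expect a genuine obstacle here; the proof is essentially a two-line citation once Lemma \ref{lem:triangleincoset} is in hand.

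I would therefore write: \emph{Proof.} By Theorem \ref{thm:XQM}, $X = X_b(\mathcal{P}, \mathcal{G},w)$ is a quasi-median graph. If all the groups of $\mathcal{G}$ are trivial, then every pin of $X$ consists of a single vertex, so it follows from Lemma \ref{lem:triangleincoset} that $X$ contains no triangles. As a consequence of \cite[Proposition 3]{BC}, a triangle-free quasi-median graph is median, hence $X$ is a median graph. $\square$
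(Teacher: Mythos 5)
Your proof is correct and follows exactly the paper's argument: when the groups of $\mathcal{G}$ are trivial every pin degenerates to a single vertex, so Lemma \ref{lem:triangleincoset} forces $X$ to be triangle-free, and the conclusion follows from the cited fact that triangle-free quasi-median graphs are median. The extra remark about translating via the groupoid action is fine but not needed beyond what the paper already does.
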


\begin{proof}
If all the groups of $\mathcal{G}$ are trivial, then every pin of $X$ is reduced to a single vertex. It follows from Lemma \ref{lem:triangleincoset} that $X$ must be triangle-free. The conclusion follows since a triangle-free quasi-median graph turns out to be median (as a consequence of \cite[Proposition 3]{BC}; see also \cite[Corollary 2.92]{Qm}). 
\end{proof}

\subsection{A decomposition theorem}\label{section:DecompositionTheorem}

\noindent
In this section, our goal is to state and prove a general result allowing us to decompose a given group which acts on a quasi-median graph in a specific way. We begin by giving a few general definitions related to quasi-median graphs.

\medskip \noindent
Given a graph $X$, a vertex $x \in X$ and subgraph $Y \subset X$, a vertex $y \in Y$ is a \emph{gate} in $Y$ for $x$ if, for every $z \in Y$, there exists a geodesic from $x$ to $z$ passing through $y$. If any vertex of $X$ admits a gate in $Y$, then $Y$ is a \emph{gated} subgraph. It is worth noticing that, when it exists, the gate of $x$ in $Y$ coincides with the unique vertex of $Y$ which minimises the distance to $x$. As a consequence, the gate of $x$ in $Y$ will be also refer to as the \emph{projection} of $x$ onto $Y$. 

\medskip \noindent
Gatedness can be thought of as a strong convexity property. As shown in \cite[Theorem 1]{quasimedian}, cliques of quasi-median graphs turn out to be gated. We will often use this fact in the sequel without mentioning it.

\medskip \noindent
Another fundamental tool used to study the geometry of quasi-median graph is the notion of \emph{hyperplane}. 

\begin{definition}
Let $X$ be a quasi-median graph. A \emph{hyperplane} is an equivalence class of edges with respect to the transitive closure of the relation identifying two edges whenever they belong to a common clique or whenever they are opposite edges in some square of $X$. The \emph{neighborhood} of a hyperplane $J$, denoted by $N(J)$, is the subgraph of $X$ generated by the edges of $J$. A \emph{fiber} (resp. a \emph{sector}) of $J$ is a connected component of the graph $\partial J$ (resp. $X \backslash \backslash J$) obtained from $N(J)$ (resp. from $X$) by removing the interiors of the edges of $J$. Finally, two hyperplanes $J_1$ and $J_2$ are \emph{transverse} if there exist a prism $C_1 \times C_2$ in $X$ such that $C_1 \subset J_1$ and $C_2 \subset J_2$.
\end{definition}

\noindent
Recall that a \emph{clique} is a maximal complete subgraph, and that a \emph{prism} is a subgraph which decomposes as the Cartesian product of two cliques. 

\medskip \noindent
In the sequel, the following result about hyperplanes in quasi-median graphs will be needed. We refer to the appendix for a proof. 

\begin{thm}\label{thm:MainQM}
Let $X$ be a quasi-median graph and $J$ a hyperplane. The neighborhood and the fibers of $J$ are gated, every geodesic of $X$ crosses $J$ at most once, and $X \backslash \backslash J$ is disconnected. More precisely, if $C$ is a clique contained into $J$ and if $p : X \to C$ denotes the projection onto $C$, then $\{ p^{-1}(x) \mid x \in C\}$ is the collection of the connected components of $X \backslash \backslash J$.
\end{thm}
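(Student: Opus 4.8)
The plan is to prove Theorem \ref{thm:MainQM} by exhibiting the sectors of $J$ explicitly as the fibers of the projection onto a chosen clique $C\subset J$, and deducing gatedness from the general machinery of quasi-median graphs. First I would recall (from \cite{Qm}, or reprove in the appendix) that in a quasi-median graph the neighborhood $N(J)$ of a hyperplane $J$ is gated, and that each fiber of $J$ --- being an intersection of $N(J)$ with the preimage of a vertex under the gate map $N(J)\to C$ for a clique $C\subset J$ --- is gated as well, using that an intersection of gated subgraphs is gated and that gate maps between gated subgraphs are well-behaved. The statement that every geodesic crosses $J$ at most once follows from the characterization of hyperplane edges: if a geodesic crossed $J$ twice, the two crossing edges would lie in a common ``combinatorial strip'' forced by the triangle/quadrangle conditions and the absence of $K_4^-$, contradicting that the path is geodesic; concretely, one pushes one crossing edge along squares/cliques toward the other and finds a shortcut.

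The core of the argument is the description of $X\backslash\backslash J$. Fix a clique $C\subset J$ and let $p:X\to C$ be the projection onto $C$ (well-defined since $C$ is gated by \cite{quasimedian}). I would show that for each $x\in C$ the fiber $p^{-1}(x)$ is connected --- this is where gatedness of $p^{-1}(x)$ (equivalently, convexity) does the work, since a gated subgraph is in particular connected and isometrically embedded --- and that distinct fibers $p^{-1}(x)$, $p^{-1}(y)$ with $x\ne y$ in $C$ are separated by $J$, i.e. any edge joining them is an edge of $J$. For the latter: an edge $e$ from $a\in p^{-1}(x)$ to $b\in p^{-1}(y)$ with $x\ne y$ maps under $p$ to an edge or non-edge of $C$; using that $p$ is $1$-Lipschitz and does not increase distances, and that $p(a)=x$, $p(b)=y$ are adjacent in the clique $C$, one shows $a,b,x,y$ span a square or a triangle all of whose ``vertical'' edges (the ones changing the $C$-coordinate) belong to $J$ by definition of the hyperplane equivalence relation; hence $e\in J$. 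Conversely every edge of $J$ joins two distinct fibers by definition of $N(J)$ and the fact that cliques in $J$ project bijectively onto $C$. This simultaneously proves $X\backslash\backslash J$ is disconnected (there are at least $|C|\ge 2$ components) and that its components are exactly the $p^{-1}(x)$.

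The main obstacle I expect is proving connectedness of each fiber $p^{-1}(x)$ and, relatedly, that removing the open edges of $J$ does not accidentally disconnect a fiber or merge two of them; this is precisely the content that makes hyperplanes ``separating'' and it requires the full weak modularity plus the forbidden-subgraph hypotheses rather than a soft argument. The cleanest route is to first establish that $N(J)$ is gated and splits as a ``Cartesian-like'' product $\partial J \times C$ at the level of the projection $p|_{N(J)}$ --- an analogue of the carrier-splitting for hyperplanes in CAT(0) cube complexes, proved in \cite{Qm} --- and then extend $p$ from $N(J)$ to all of $X$ by composing the gate map $X\to N(J)$ with $p|_{N(J)}$; connectedness of $p^{-1}(x)$ in $X$ then reduces to connectedness of the corresponding fiber inside the gated (hence isometrically embedded, hence ``locally convex'') subgraph, together with the fact that any geodesic from a point of $p^{-1}(x)$ to $N(J)$ stays in $p^{-1}(x)$ since it never crosses $J$. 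Assembling these pieces, Theorem \ref{thm:MainQM} follows; the verification that the fibers are gated is then immediate from ``intersection of gated subgraphs is gated,'' since $p^{-1}(x)$ is the preimage of the gated vertex $\{x\}$ under a composition of gate maps.
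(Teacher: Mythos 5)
Your overall architecture matches the paper's (identify the components of $X \backslash\backslash J$ with the fibres of the projection $p\colon X \to C$, prove connectedness of the fibres plus separation, deduce gatedness of the fibres from gatedness of $N(J)$, and rule out double crossings by pushing a crossing edge along squares to produce a shortcut), but the two steps carrying the real content are not actually established. First, the separation step: for an edge $[a,b]$ with $p(a)=x\neq y=p(b)$ you claim that ``$a,b,x,y$ span a square or a triangle'', which is simply false when $a$ and $b$ are far from $C$. What is needed is the genuine characterisation that an edge is dual to $J$ \emph{if and only if} its endpoints have distinct gates in $C$ (Proposition \ref{prop:RefHypProj}, resting on \cite[Corollary 4.5(i)]{QMandTreeLike} or on an inductive ladder-of-squares argument); the same characterisation is also behind your unproved assertion that every clique of $J$ projects bijectively onto $C$. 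Second, your treatment of the fibres is circular as written: you obtain connectedness of $p^{-1}(x)$ from its gatedness, and gatedness from ``an intersection of gated subgraphs is gated'' applied to the preimage of a vertex under the gate map $N(J)\to C$ --- but that such preimages are gated is precisely (part of) what is being proved. Your fallback is to import gatedness of $N(J)$ and the splitting $N(J)\cong \partial J\times C$ from \cite{Qm}; that would indeed make the remaining reductions work (including your correct observation that a geodesic from a vertex to its gate in $N(J)$ meets $N(J)$ only at the gate), but it amounts to citing the very statement the appendix is written to reprove from published sources (the theorem is \cite[Propositions 2.15 and 2.30, Corollary 2.21]{Qm}), so it cannot stand as a proof here.

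Note also that connectedness of the sectors needs none of this machinery: since $x$ is the unique vertex of $C$ closest to any $a\in p^{-1}(x)$, every vertex on every geodesic from $a$ to $x$ still has gate $x$, so the whole geodesic stays in $p^{-1}(x)$. The paper then gets gatedness of a sector from Chepoi's criterion (connected, locally convex, contains its triangles; Lemma \ref{lem:GatedChepoi}), proves $N(J)$ gated by the same criterion using the triangle and quadrangle conditions together with Lemma \ref{lem:AppendixSquare}, and obtains gatedness of the fibres as sectors inside the gated subgraph $N(J)$ via Corollary \ref{cor:GatedInGated} --- no carrier splitting required. If you replace your square-or-triangle claim by Proposition \ref{prop:RefHypProj} and prove (rather than cite) gatedness of $N(J)$, your plan becomes essentially the paper's argument.
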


\noindent
An easy consequence of this statement is:

\begin{cor}\label{cor:projseparate}
Let $X$ be a quasi-median graph, $Y \subset X$ a gated subgraph and $x \in X$ a vertex. Any hyperplane separating $x$ from its projection onto $Y$ separates $x$ from $Y$. 
\end{cor}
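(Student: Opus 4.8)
The plan is to reduce the statement to the separation property of hyperplanes given by Theorem \ref{thm:MainQM}. Let $y$ denote the projection of $x$ onto $Y$, and let $J$ be a hyperplane separating $x$ from $y$; I want to show that $J$ in fact separates $x$ from every vertex of $Y$, i.e.\ that all of $Y$ lies in the sector of $X \backslash\backslash J$ not containing $x$. Since $J$ separates $x$ from $y$ and $X \backslash\backslash J$ is disconnected with its components being exactly the fibers $p^{-1}(z)$, $z \in C$, for a clique $C \subset J$ and $p : X \to C$ the projection onto $C$, it suffices to show that $Y$ is contained in a single sector, and then that this sector is the one containing $y$ rather than $x$.

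First I would argue that $Y$ meets only one sector of $X \backslash\backslash J$. Indeed, for any $z \in Y$, Theorem \ref{thm:MainQM} says every geodesic crosses $J$ at most once; if some $z \in Y$ lay in the same sector as $x$, then a geodesic from $x$ to $z$ would not cross $J$, whereas a geodesic from $x$ to $y$ (which passes through $y$ into $Y$, and then one can continue via a geodesic inside the gated — hence convex — subgraph $Y$ from $y$ to $z$) does cross $J$. The point is that $y$ is the gate: for every $z \in Y$ there is a geodesic from $x$ to $z$ through $y$, so $d(x,z) = d(x,y) + d(y,z)$, and concatenating a geodesic $[x,y]$ crossing $J$ once with a geodesic $[y,z]$ inside $Y$ gives a geodesic $[x,z]$; since this geodesic already crosses $J$ on the sub-segment $[x,y]$, it cannot cross $J$ again, so $[y,z]$ stays in the sector of $y$. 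Hence every $z \in Y$ lies in the sector containing $y$, which is a different sector from the one containing $x$ (as $J$ separates $x$ from $y$). Therefore $J$ separates $x$ from all of $Y$.

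The only mild subtlety — and the step I'd expect to need the most care — is making sure the concatenation of $[x,y]$ with $[y,z]$ is genuinely a geodesic and that "crosses $J$ at most once" is being applied correctly; this uses precisely that $y$ is a gate (so distances add up) together with the "at most once" clause of Theorem \ref{thm:MainQM}, plus convexity of the gated subgraph $Y$ so that a geodesic from $y$ to $z$ can be chosen inside $Y$. Once these are in place the corollary is immediate, so the write-up is short.

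\begin{proof}
Let $y$ be the projection of $x$ onto $Y$ and let $J$ be a hyperplane separating $x$ from $y$. By Theorem \ref{thm:MainQM}, $X \backslash\backslash J$ is disconnected and every geodesic of $X$ crosses $J$ at most once; since $J$ separates $x$ from $y$, the vertices $x$ and $y$ lie in distinct sectors. Let $z \in Y$ be arbitrary. Since $y$ is a gate for $x$ in $Y$, there is a geodesic $\gamma$ from $x$ to $z$ passing through $y$; in particular $d(x,z) = d(x,y) + d(y,z)$, so the subpaths of $\gamma$ from $x$ to $y$ and from $y$ to $z$ are themselves geodesics, and we may take the latter to lie in $Y$ as $Y$ is gated, hence convex. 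The geodesic $\gamma$ crosses $J$ at most once; as $J$ already separates $x$ from $y$, it is crossed on the subpath from $x$ to $y$, so the subpath from $y$ to $z$ does not cross $J$. Thus $z$ lies in the same sector as $y$, which is distinct from the sector of $x$. Since $z \in Y$ was arbitrary, $J$ separates $x$ from $Y$.
\end{proof}
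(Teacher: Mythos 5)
Your proof is correct and follows essentially the same route as the paper: both rest on the gate property (a geodesic from $x$ to any $z \in Y$ through the projection) combined with the fact from Theorem \ref{thm:MainQM} that a geodesic crosses a hyperplane at most once, the paper merely phrasing it as a contradiction while you argue directly. (The appeal to convexity of $Y$ to place the subpath $[y,z]$ inside $Y$ is harmless but unnecessary.)
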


\begin{proof}
Suppose by contradiction that there exists a hyperplane $J$ separating $x$ and its projection $p$ onto $Y$ which intersects $Y$. In particular, $J$ must separate $p$ from some vertex $y \in Y$. As a consequence, if we fix a geodesic $\gamma$ between $x$ and $y$ passing through $p$, necessarily $J$ has to intersect $J$ at least twice. This contradicts Theorem \ref{thm:MainQM}.
\end{proof}

\noindent
The following general lemma will be also useful in the next section:

\begin{lemma}
Let $X$ be a quasi-median graph, $C$ a clique and $J$ the corresponding hyperplane. If an edge $e$ is dual to $J$, there exists a sequence of edges $e_1, \ldots, e_n$ such that $e_1=e$, $e_n \subset C$, and $e_i$ and $e_{i+1}$ are opposite sides of a square for every $1 \leq i \leq n-1$. 
\end{lemma}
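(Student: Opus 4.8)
The plan is to prove the lemma by induction on the distance between the edge $e$ and the clique $C$. Recall that a hyperplane $J$ is the equivalence class of $C$'s edges under the transitive closure of the relation ``belonging to a common clique or being opposite sides of a square''. So there is certainly \emph{some} sequence of edges from $e$ to an edge of $C$ in which consecutive edges lie in a common clique or are opposite sides of a square; the content of the lemma is that we can always arrange consecutive edges to be opposite sides of squares (no ``same clique'' step is needed), at the cost of possibly landing on a different edge of $C$ than one might first expect. I would set up the induction on $d(e,C):=\min\{d(x,y)\mid x\in e, y\in C\}$, or equivalently on $d(a,p)$ where $a$ is the endpoint of $e$ closest to $C$ and $p$ is its gate (projection) onto $C$, which exists since $C$ is gated by \cite{quasimedian}.

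First I would handle the base case, where $e$ already shares a vertex with $N(J)$ in a controlled way: if $d(e,C)=0$, then one endpoint of $e$ lies in $C$; since both endpoints of $e$ and their gates onto $C$ are relevant, the triangle/quadrangle conditions together with the structure of cliques (Lemma~\ref{lem:triangleincoset}-type local analysis, and the fact that $K_4^-$ is excluded) force $e$ and its neighboring edge in $C$ either to coincide or to span a square with an edge of $C$; here the key point is that two edges of $J$ sharing a vertex with a clique either lie in a clique together or bound a square, and the $K_4^-$-freeness prevents the degenerate configuration. Then for the inductive step, let $x$ be the endpoint of $e$ realizing $d(e,C)=d(x,C)=k\ge 1$, and let $p$ be its gate onto $C$. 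The edge $e$ is dual to $J$, hence $e$ itself crosses $J$; let $x'$ be the other endpoint of $e$, so $x'$ lies in a different sector of $J$ from $x$ (by Theorem~\ref{thm:MainQM}). Using the triangle condition applied to $x$, its neighbor $x'$ along $e$ (wait — we instead want to push $e$ toward $C$), I would take a neighbor $y$ of $x$ on a geodesic from $x$ to $p$ with $d(y,C)=k-1$. The pair $\{x,y\}$ is an edge $f$; I claim $f$ together with $e$ spans a square whose fourth vertex $y'$ is a neighbor of $x'$ with $d(y',C)=k-1$, and the opposite edge $e'=\{y,y'\}$ is again dual to $J$. Producing this square is the heart of the argument: starting from $x$ with the edge $e$ (dual to $J$) and an edge $f$ toward $C$, one uses the quadrangle condition (with the apex being a vertex at distance one less from an appropriate base vertex) to fill in $y'$, and one checks that the resulting $4$-cycle $x,x',y',y$ is an induced square rather than collapsing — here the exclusion of $K_4^-$ and $K_{3,2}$ is exactly what rules out the degenerate cases, and weak modularity supplies the vertex. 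Then $e'=\{y,y'\}$ is dual to $J$ (opposite sides of a square stay in the same hyperplane), and $d(e',C)=k-1$, so the inductive hypothesis gives a sequence of squares from $e'$ to an edge of $C$; prepending the square $(e,e')$ finishes the induction.

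The main obstacle I expect is the square-filling step: guaranteeing that the neighbor $y$ of $x$ chosen toward $C$ and the endpoint $x'$ of $e$ actually close up into a genuine (induced) square, with the ``new'' edge $e'$ still dual to the \emph{same} hyperplane $J$. Naively the quadrangle condition only applies when two neighbors of a vertex $z$ are at distance $k-1$ from a base point $u$, so I would need to pick the base point carefully — likely $u$ is a vertex of $C$, or the gate $p$ — and verify that $x'$ (or a suitable neighbor of it) is at the right distance. The distance bookkeeping is where the argument can go wrong: one must confirm $d(x',C)=d(x,C)$ (both endpoints of an edge dual to $J$ have the same distance to $C$, since $C\subset N(J)$ is gated and $J$ is crossed at most once, by Theorem~\ref{thm:MainQM} and Corollary~\ref{cor:projseparate}), and that moving $x$ to $y$ drops this distance by exactly one while moving $x'$ correspondingly. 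Once the distances are pinned down, weak modularity plus the forbidden-subgraph conditions should make the square unique and honest; the rest is a routine unwinding of the induction.
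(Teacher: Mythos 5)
Your approach is correct, and it is genuinely different from the paper's. The paper does not induct: it observes that the endpoints $a,b$ of $e$ and their gates $a',b'$ in $C$ satisfy $d(a,a')=d(b,b')$, so that all four vertices lie in the interval $I(a,b')$, which by \cite[Theorem 1]{quasimedian} is a median graph; since median graphs are triangle-free, hyperplane-equivalence there is generated by the square relation alone, and it only remains to check that $[a,b]$ and $[a',b']$ are dual to the same hyperplane of $I(a,b')$ (using that geodesics cross a hyperplane at most once). Your induction on $d(e,C)$ is more hands-on: it uses only the quadrangle condition, the exclusion of $K_4^-$, and the projection facts from Theorem~\ref{thm:MainQM} and Proposition~\ref{prop:RefHypProj}, and it produces an explicit chain of squares of length $d(e,C)$; the paper's proof is shorter but imports the result that intervals in quasi-median graphs are median together with the median-graph (CAT(0) cube complex) description of hyperplanes.

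The step you flag as the main obstacle does close, and the right basepoint is the gate of the \emph{other} endpoint. Keep your notation $e=\{x,x'\}$, $k=d(x,C)=d(x',C)$ (as you note, the two endpoints of an edge dual to $J$ are equidistant from $C$), let $p,p'$ be the gates of $x,x'$ in $C$ (distinct by Proposition~\ref{prop:RefHypProj}), and let $y$ be a neighbour of $x$ on a geodesic from $x$ to $p$; note that $p$ is also the gate of $y$, since $d(y,p)=k-1=d(y,C)$. Take $u=p'$. Then $d(u,x)=d(x,p)+d(p,p')=k+1$ by the gate property, while $d(u,x')=k$ and $d(u,y)=k$ (indeed $d(y,p')\le d(y,p)+1=k$ and $d(y,p')\ge d(x,p')-1=k$). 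So the quadrangle condition applies verbatim to $z=x$ with the two neighbours $x',y$, and yields a common neighbour $y'$ of $x'$ and $y$ with $d(u,y')=k-1$. The $4$-cycle $x,x',y',y$ is automatically induced: $x$ cannot be adjacent to $y'$ because their distances to $u$ differ by $2$, and if $x'$ were adjacent to $y$ the triangle $x,x',y$ would place $\{x,y\}$ in a common clique with $e$, hence in $J$, contradicting Proposition~\ref{prop:RefHypProj} since $x$ and $y$ have the same gate $p$. Hence $e'=\{y,y'\}$ is dual to $J$ and $d(e',C)=k-1$, which is exactly what your induction needs. Finally, the base case is simpler than you make it: if one endpoint of $e$ lies in $C$ then so does the other (equal distances to $C$), and the clique of $J$ containing $e$ meets $C$, so it equals $C$ by Lemma~\ref{lem:ACliques}; thus $e\subset C$ and no square is needed. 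Neither the $K_{3,2}$ exclusion nor the triangle condition is actually required anywhere in your argument.
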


\begin{proof}
Let $a,b \in X$ denote the endpoints of $e$. Because the clique of $X$ containing $e$ is contained into $J$, it follows from Theorem \ref{thm:MainQM} that $a$ and $b$ belong to distinct sectors delimited by $J$. Again according to Theorem \ref{thm:MainQM}, we know that $a$ and $b$ have distinct projections onto $C$, say $a'$ and $b'$ respectively. Because $a'$ is the unique vertex of $C$ minimising the distance to $a$, it follows that $d(a,a') \leq d(b,b')$. Similarly, we know that $d(b,b') \leq d(a,a')$, hence $d(a,a')=d(b,b')$. The consequence is that the four vertices $a,a',b,b'$ all belong to the union $I(a,b')$ of all the geodesic between $a$ and $b'$. According to \cite[Theorem 1]{quasimedian}, such a subgraph must be median (or equivalently \cite{mediangraphs} the one-skeleton of a CAT(0) cube complex). Because median graphs are triangle-free quasi-median graphs, it is sufficient to show that the two edges $[a,b]$ and $[a',b']$ are dual to the same hyperplane in $I(a,b')$ in order to deduce our lemma. The desired observation follows from Theorem \ref{thm:MainQM} as, in a median (quasi-)graph, a geodesic crosses each hyperplane at most once. 
\end{proof}

\noindent
Before stating the main result of this section, we need two more definitions:

\begin{definition}
Let $G$ be a group acting on a quasi-median graph $X$. The \emph{rotative stabiliser} of a hyperplane $J$ is
$$\mathrm{stab}_{\circlearrowleft}(J)= \bigcap \{ \mathrm{stab}(C) \mid \text{$C$ clique in $J$} \}.$$
Given a collection hyperplanes $\mathcal{J}$, the action $G \curvearrowright X$ is \emph{$\mathcal{J}$-rotative} if, for every hyperplane $J \in \mathcal{J}$, the rotative stabiliser $\mathrm{stab}_{\circlearrowleft}(J)$ acts freely and transitively on the set of fibers of $J$. 
\end{definition}

\begin{definition}
Let $X$ be a quasi-median graph, $\mathcal{J}$ a collection of hyperplanes and $x_0 \in X$ a base vertex. A subcollection $\mathcal{J}_0 \subset \mathcal{J}$ is $x_0$-peripheral if, for every $J \in \mathcal{J}_0$, there does not exist a hyperplane of $\mathcal{J}$ separating $J$ and $x_0$.
\end{definition}

\noindent
We are now ready to state our decomposition theorem, which is a slight variation of \cite[Theorem 10.54]{Qm}.

\begin{thm}\label{thm:splittingthmQm}
Let $G$ be a group acting $\mathcal{J}$-rotatively on a quasi-median graph $X$. Fix a basepoint $x_0 \in X$, and assume that, if $J_1,J_2 \in \mathcal{J}$ are two transverse hyperplanes, then any element of $\mathrm{stab}_{\circlearrowleft} (J_1)$ commutes with any element of $\mathrm{stab}_\circlearrowleft (J_2)$. If $Y \subset X$ denotes the intersection of the sectors containing $x_0$ which are delimited by a hyperplane of $\mathcal{J}$, then
$$G= \mathrm{Rot}(\mathcal{J}) \rtimes \mathrm{stab}(Y), \ \text{where} \ \mathrm{Rot}(\mathcal{J})= \langle  \mathrm{stab}_{\circlearrowleft}(J), \ J \in \mathcal{J} \rangle.$$
Moreover, if $\mathcal{J}_0 \subset \mathcal{J}$ denotes the unique maximal $x_0$-peripheral subcollection of $\mathcal{J}$, then $\mathrm{Rot}(\mathcal{J})$ decomposes as a graph product $\Delta \mathcal{G}$, where $\Delta$ is the graph whose vertices are the hyperplanes of $\mathcal{J}_0$ and whose edges link two hyperplanes which are transverse, and where $\mathcal{G}= \{ \mathrm{stab}_{\circlearrowleft}(J) \mid J \in \mathcal{J}_0 \}$.
\end{thm}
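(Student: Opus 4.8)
The statement is essentially \cite[Theorem 10.54]{Qm}, so the plan is to reduce to that result or to re-run its argument in the present context. The key geometric objects are the sectors delimited by hyperplanes of $\mathcal{J}$: by Theorem~\ref{thm:MainQM} each hyperplane $J \in \mathcal{J}$ disconnects $X$, and $Y$ is defined as the intersection of those sectors that contain the basepoint $x_0$. First I would check that $Y$ is gated: it is an intersection of gated subgraphs (each sector delimited by a hyperplane is gated, being a union of fibers, which are gated by Theorem~\ref{thm:MainQM}), and intersections of gated subgraphs are gated in a quasi-median graph. This makes $\mathrm{stab}(Y)$ a well-behaved subgroup and lets us speak of projections onto $Y$.

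\textbf{The semidirect product decomposition.} The core of the argument is to show $G = \mathrm{Rot}(\mathcal{J}) \cdot \mathrm{stab}(Y)$ with $\mathrm{Rot}(\mathcal{J}) \cap \mathrm{stab}(Y) = \{1\}$ and $\mathrm{Rot}(\mathcal{J})$ normal. For normality, observe that $\mathrm{Rot}(\mathcal{J})$ is generated by the rotative stabilisers $\mathrm{stab}_\circlearrowleft(J)$, and $g\, \mathrm{stab}_\circlearrowleft(J)\, g^{-1} = \mathrm{stab}_\circlearrowleft(gJ)$; since $\mathcal{J}$ is $G$-invariant (as it must be for the $\mathcal{J}$-rotativity hypothesis to make sense), this lands back in $\mathrm{Rot}(\mathcal{J})$. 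For the product decomposition: given $g \in G$, let $p$ be the projection of $g x_0$ onto $Y$. Using $\mathcal{J}$-rotativity, I want to build an element $r \in \mathrm{Rot}(\mathcal{J})$ with $r^{-1} g x_0 \in Y$ by an induction on $d(g x_0, Y)$: if $g x_0 \notin Y$, then by Corollary~\ref{cor:projseparate} some hyperplane $J \in \mathcal{J}$ separates $g x_0$ from $Y$ (in particular from $x_0$), and the rotative stabiliser of $J$ acts transitively on its fibers, so we can pick $\rho \in \mathrm{stab}_\circlearrowleft(J)$ moving $g x_0$ into the $x_0$-side fiber of $J$, strictly decreasing the distance to $Y$ (here one uses that the fibers of $J$ are gated and that $\rho$ fixes the $x_0$-side sector setwise only after passing $J$ — more precisely $\rho$ permutes the sectors freely and transitively, so exactly one choice of $\rho$ brings $gx_0$ to the correct fiber). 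Iterating gives $r$ with $r^{-1}g \cdot x_0 \in Y$; but $Y$ being gated and $G$ acting on $X$, the set-stabiliser relation forces $r^{-1}g \in \mathrm{stab}(Y)$ once one also checks $r^{-1}g$ maps $Y$ to a sector-intersection containing $x_0$, hence to $Y$ itself. Triviality of the intersection $\mathrm{Rot}(\mathcal{J}) \cap \mathrm{stab}(Y)$ follows because a nontrivial product of rotative-stabiliser elements must move $x_0$ out of $Y$ (each crossing of a hyperplane of $\mathcal{J}$ is recorded and cannot be undone, by the ``geodesics cross a hyperplane at most once'' part of Theorem~\ref{thm:MainQM} applied to a normal-form / reduced-word argument on $\mathrm{Rot}(\mathcal{J})$).

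\textbf{The graph product decomposition of $\mathrm{Rot}(\mathcal{J})$.} For the second half, let $\mathcal{J}_0$ be the maximal $x_0$-peripheral subcollection; one first checks this is well-defined and that every $J \in \mathcal{J}$ is $G$-translate-equivalent, along a path through $x_0$, to some hyperplane of $\mathcal{J}_0$, so that $\mathrm{Rot}(\mathcal{J})$ is already generated by $\{\mathrm{stab}_\circlearrowleft(J) : J \in \mathcal{J}_0\}$. Let $\Delta$ be the graph on $\mathcal{J}_0$ with edges given by transversality. The commutation relations of the graph product hold: if $J_1, J_2 \in \mathcal{J}_0$ are transverse, the hypothesis of the theorem says their rotative stabilisers commute; so there is a surjection $\Delta\mathcal{G} \twoheadrightarrow \mathrm{Rot}(\mathcal{J})$. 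To see it is injective, I would argue that a reduced word in the graph product that represents the identity would, via the action, push $x_0$ across a hyperplane without being able to return — using the disjointness-of-sectors structure and the fact that transversality is exactly what allows two hyperplane-crossings to be interchanged, which is precisely the combinatorics of reduced words in a graph product (this is the standard ``ping-pong on the Davis-type complex / normal form'' argument, cf.\ the proof of \cite[Theorem 10.54]{Qm}).

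\textbf{Main obstacle.} The routine bookkeeping (gatedness of $Y$, normality, the inductive construction of $r$) is straightforward. The genuinely delicate point — and the step I expect to cost the most — is the injectivity of $\Delta\mathcal{G} \to \mathrm{Rot}(\mathcal{J})$, i.e.\ showing there are no relations among the rotative stabilisers beyond the commutations coming from transversality. This requires a clean correspondence between reduced expressions in the graph product and minimal ``gallery'' paths in $X$ crossing the hyperplanes of $\mathcal{J}$, and it relies essentially on Theorem~\ref{thm:MainQM} (each hyperplane crossed at most once along a geodesic, sectors genuinely disconnected) together with the free-transitive action on fibers to identify the set of sectors reachable from $x_0$ with the set of vertices of the Cayley/Davis complex of $\Delta\mathcal{G}$. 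Since the statement only asks for a ``slight variation'' of a theorem already in the literature, in the write-up I would likely isolate exactly which hypotheses of \cite[Theorem 10.54]{Qm} are being weakened or re-packaged and invoke that proof for the parts that transfer verbatim, spelling out only the modifications.
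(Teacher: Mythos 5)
Your proposal is correct and follows essentially the same route as the paper: normality of $\mathrm{Rot}(\mathcal{J})$ from the $G$-invariance of $\mathcal{J}$, a distance-decreasing argument using the transitive action of rotative stabilisers on sectors to obtain $G=\mathrm{Rot}(\mathcal{J})\cdot \mathrm{stab}(Y)$, reduction of the generating set to the peripheral subcollection $\mathcal{J}_0$, and a ping-pong/normal-form argument (Proposition \ref{prop:pingpong}, taken from \cite[Proposition 8.44]{Qm}) which simultaneously yields the graph-product structure and, since a non-trivial element of $\mathrm{Rot}(\mathcal{J})$ sends $x_0$ outside $Y$, the triviality of $\mathrm{Rot}(\mathcal{J})\cap\mathrm{stab}(Y)$. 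The only point to tighten is the generation step: it is not enough that each $J\in\mathcal{J}$ be a $G$-translate of a hyperplane of $\mathcal{J}_0$ --- the translating element must itself lie in $\langle \mathrm{stab}_{\circlearrowleft}(J'),\ J'\in\mathcal{J}_0\rangle$, which is exactly what the paper's minimal-distance argument (and, once made precise, your induction pushing $J$ towards $x_0$ across separating peripheral hyperplanes) provides.
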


\noindent
Recall that, given a \emph{simplicial graph} $\Gamma$ and a collection of groups $\mathcal{G}=\{ G_u \mid u \in V(\Gamma) \}$ indexed by the vertices of $\Gamma$, the \emph{graph product} $\Gamma \mathcal{G}$ is the quotient $$\left( \underset{u \in V(\Gamma)}{\ast} G_u \right) / \langle \langle [g,h]=1, g \in G_u, h \in G_v \ \text{if} \ (u,v) \in E(\Gamma) \rangle \rangle.$$
Every group of $\mathcal{G}$, called a \emph{vertex-group}, naturally embeds into $\Gamma \mathcal{G}$. For convenience, we will identify each vertex-group with its image into the graph product.

\noindent
A \emph{word} in $\Gamma \mathcal{G}$ is a product $g_1 \cdots g_n$ for some $n \geq 0$ and, for every $1 \leq i \leq n$, $g_i \in G$ for some $G \in \mathcal{G}$; the $g_i$'s are the \emph{syllables} of the word, and $n$ is the \emph{length} of the word. Clearly, the following operations on a word do not modify the element of $\Gamma \mathcal{G}$ it represents:
\begin{description}
	\item[Cancellation:] delete the syllable $g_i=1$;
	\item[Amalgamation:] if $g_i,g_{i+1} \in G$ for some $G \in \mathcal{G}$, replace the two syllables $g_i$ and $g_{i+1}$ by the single syllable $g_ig_{i+1} \in G$;
	\item[Shuffling:] if $g_i$ and $g_{i+1}$ belong to two adjacent vertex-groups, switch them.
\end{description}
A word is \emph{reduced} if its length cannot be shortened by applying these elementary moves. Every element of $\Gamma \mathcal{G}$ can be represented by a reduced word, and this word is unique up to the shuffling operation. This allows us to define the \emph{length} of an element $g \in \Gamma \mathcal{G}$ as the length of any reduced word representing $g$; and its \emph{support}, denoted by $\supp(g)$, as the set of vertices of $\Gamma$ which corresponds exactly to the vertex-groups containing the syllables of $g$. For more information, we refer to \cite{GreenGP} (see also \cite{HsuWise, GPvanKampen}). The following definition will also be useful:

\begin{definition}\label{def:headtail}
Let $\Gamma$ be a simplicial graph, $\mathcal{G}$ a collection of groups indexed by $V(\Gamma)$, and $g \in \Gamma \mathcal{G}$ an element. The \emph{head} of $g$, denoted by $\mathrm{head}(g)$, is the set of syllables of some reduced word representing $g$ which appear as the first syllable of some reduced word representing $g$. 
\end{definition}

\noindent
In order to show that a group decompose as a graph products, a convenient method is to apply the following ping-pong result coming from \cite[Proposition 8.44]{Qm}, whose proof is reproduced below for the reader's convenience:

\begin{prop}\label{prop:pingpong}
Let $G$ be a group acting on a set $X$, $\Gamma$ a simplicial graph and $\mathcal{H}= \{H_v \mid v \in V(\Gamma) \}$ a collection of subgroups. Suppose that $\bigcup\limits_{v \in V(\Gamma)} H_v$ generates $G$ and that $g$ and $h$ commute for every $g \in H_u$ and $h \in H_v$ if $u$ and $v$ are two adjacent vertices of $\Gamma$. Next, suppose that there exist a collection $\{ X_v \mid v \in V(\Gamma) \}$ of subsets of $X$ and a point $x_0 \in X \backslash \bigcup\limits_{v \in V(\Gamma)} X_v$ satisfying:
\begin{itemize}
	\item if $u,v \in V(\Gamma)$ are adjacent, then $g \cdot X_u \subset X_u$ for every $g \in H_v \backslash \{ 1 \}$;
	\item if $u,v \in V(\Gamma)$ are not adjacent and distinct, then $g \cdot X_u \subset X_v$ for every $g \in H_v \backslash \{ 1 \}$;
	\item for every $u \in V(\Gamma)$ and $g \in H_u \backslash \{ 1 \}$, $g \cdot x_0 \in X_u$.
\end{itemize}
Then $G$ is isomorphic to the graph product $\Gamma \mathcal{H}$. 
\end{prop}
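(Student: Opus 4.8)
The plan is to verify the hypotheses of the standard ping-pong characterization of graph products, i.e.\ Proposition \ref{prop:pingpong}. Concretely, I would take $G = \Gamma\mathcal{H}$ itself (abstractly, the free product of the $H_v$ modulo the commutation relations given by the edges of $\Gamma$) and exhibit a suitable action on a set $X$ together with subsets $X_v$ and a basepoint $x_0$. The natural choice is to let $X$ be the set of elements of the graph product $\Gamma\mathcal{H}$, with $G$ acting by left multiplication, and $x_0 = 1$. For each vertex $v$, set $X_v$ to be the set of elements $g \in \Gamma\mathcal{H}$ admitting a reduced word representative whose first syllable lies in $H_v$ and is nontrivial; equivalently, $v \in \head(g)$ in the terminology of Definition \ref{def:headtail}. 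One then checks that the three ping-pong conditions hold with respect to this data, and concludes that the abstract graph product $\Gamma\mathcal{H}$ acts on a set satisfying the hypotheses, which forces the given group $G$ to be isomorphic to it. Alternatively, and more directly, one can run the ping-pong argument inside the given group $G$ using the word-length filtration on $X$ supplied by the hypotheses.

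The key steps, in order, are as follows. First, observe that by hypothesis $\bigcup_v H_v$ generates $G$ and the edge relations hold, so there is a canonical surjective homomorphism $\varphi \colon \Gamma\mathcal{H} \to G$; the content of the proposition is that $\varphi$ is injective. Second, take a nontrivial reduced word $g = g_1 \cdots g_n$ in $\Gamma\mathcal{H}$, say with $g_i \in H_{v_i}\setminus\{1\}$, and we must show $\varphi(g) \neq 1$, which it suffices to see by showing $\varphi(g)\cdot x_0 \neq x_0$. Third, argue by induction on the length $n$ that $\varphi(g_1)\cdots\varphi(g_n)\cdot x_0 \in X_{v_1}$ whenever $v_1 \in \head(g)$: the base case $n=1$ is the third bullet of the hypotheses, and for the inductive step one uses the shuffling operation to arrange that the chosen head syllable $g_1$ really is written first, then applies $\varphi(g_2)\cdots\varphi(g_n)\cdot x_0 \in X_{v_j}$ for an appropriate $j$ (the head of the shorter word $g_2\cdots g_n$) and pushes it forward by $\varphi(g_1)$ using either the first bullet (if $v_1$ is adjacent to $v_j$, so $X_{v_1}$ is preserved — but here one must instead track that $g_1$ cannot be amalgamated away) or the second bullet (if $v_1 \neq v_j$ are non-adjacent, landing in $X_{v_1}$). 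Since $x_0 \notin \bigcup_v X_v$, the containment $\varphi(g)\cdot x_0 \in X_{v_1}$ gives $\varphi(g)\cdot x_0 \neq x_0$, hence $\varphi(g)\neq 1$, so $\varphi$ is injective and therefore an isomorphism.

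The main obstacle is the bookkeeping in the inductive step, specifically the interaction between the \emph{shuffling} and \emph{amalgamation} moves and the choice of which syllable plays the role of the "head." The subtle case is when the first syllable $g_1$ of the reduced word lies in a vertex-group $H_{v_1}$ that is adjacent to the vertex-group of the head syllable of the tail $g_2\cdots g_n$: then the first bullet only gives $\varphi(g_2)\cdots\varphi(g_n)\cdot x_0 \mapsto$ something still inside the same $X_{v_j}$, and one has to verify that $v_1$ itself is in the head of the full word $g$ and that the element lands in $X_{v_1}$ rather than merely avoiding $x_0$ in some uncontrolled way. This is handled by choosing, among all reduced representatives of $g$, one whose first syllable is precisely in $H_{v_1}$ (possible exactly when $v_1 \in \head(g)$), and by inducting on pairs $(n, \text{something measuring how deeply } v_1 \text{ sits})$ so that one can always peel off a syllable of $H_{v_1}$ first; the commutation relations guarantee the remaining moves are legitimate. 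Everything else — the existence of $\varphi$, the fact that $x_0$ lies outside all $X_v$, and the final contradiction — is immediate from the hypotheses.
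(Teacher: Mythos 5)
Your core strategy is the same as the paper's (build the canonical surjection $\varphi\colon\Gamma\mathcal{H}\to G$ and show by a ping-pong induction on reduced words that a non-trivial word moves $x_0$ into some $X_v$, hence off $x_0$), but the inductive step as you describe it does not go through. First, a side remark: your opening plan (prove the proposition by applying the proposition to the left-multiplication action of $\Gamma\mathcal{H}$ on itself) is circular, and at best would show that $\Gamma\mathcal{H}$ satisfies the hypotheses, not that an arbitrary $G$ satisfying them is $\Gamma\mathcal{H}$; fortunately your "key steps" switch to the direct argument, so only that matters. The genuine gap is in the adjacent case of your induction. You want the invariant ``$\varphi(g_1)\cdots\varphi(g_n)\cdot x_0\in X_{v_1}$ whenever $v_1\in\supp(\head(g))$'', and when the head vertex $v_j$ of the tail $g_2\cdots g_n$ is adjacent to $v_1$ you invoke the first bullet, claiming ``$X_{v_1}$ is preserved''. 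That is not what the first bullet says: for $g_1\in H_{v_1}\setminus\{1\}$ and $v_j$ adjacent to $v_1$ it gives $g_1\cdot X_{v_j}\subset X_{v_j}$ (adjacency in a simplicial graph is between distinct vertices, so the hypothesis never says $H_{v_1}$ preserves $X_{v_1}$). So this step only lands you in $X_{v_j}$, not in $X_{v_1}$, and your proposed repair --- choose a reduced representative of $g$ beginning with a syllable of $H_{v_1}$ and induct on pairs $(n,\ \text{depth of }v_1)$ so as to ``peel off a syllable of $H_{v_1}$ first'' --- does not address this: peeling off $g_1$ first is exactly the move that produced the problematic configuration.

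There are two honest ways to close the gap. The paper's way is to weaken the inductive claim to ``$w\cdot x_0\in X_u$ for \emph{some} $u\in\supp(\head(w))$'', which is all that is needed since $x_0\notin\bigcup_v X_v$: in the adjacent case one lands in $X_{v_j}$ and then checks that $v_j\in\supp(\head(w))$ by writing $w=g_1hw''=hg_1w''$ with $h\in H_{v_j}$ a head syllable of the tail. Alternatively, your stronger claim is in fact true, but its proof requires a different maneuver in the adjacent case: commute the tail's head syllable $g_j\in H_{v_j}$ past $g_1$, apply the induction hypothesis to the shorter word $g_1w''$ (still reduced, since otherwise $w=g_j(g_1w'')$ would have length less than $n$), whose head contains $v_1$, and then use the first bullet in the correct direction --- $g_j$, lying in $H_{v_j}$ with $v_j$ adjacent to $v_1$, preserves $X_{v_1}$. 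In both arguments one also needs the observation, absent from your sketch but needed even to apply the second bullet, that $v_1\notin\supp(\head(g_2\cdots g_n))$, since otherwise a shuffle followed by an amalgamation would shorten $g$, contradicting reducedness. As written, your inductive step fails precisely where the paper's proof does its only non-routine work, so the proposal has a real gap rather than being a complete alternative proof.
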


\begin{proof}
By our assumptions on the subgroups of $\mathcal{H}$, we deduce that there exists a natural surjective morphism $\Gamma \mathcal{H} \twoheadrightarrow G$. In order to show that this morphism is also injective, we want to prove the following claim: for any non-empty reduced word $w$ of $\Gamma \mathcal{H}$, thought of as an element of $G$, $w \cdot x_0 \in X_u$ where $u$ is a vertex of $\Gamma$ which belongs to the support of the head of $w$. Notice that, since $x_0 \notin \bigcup\limits_{v \in V(\Gamma)} X_v$ by assumption, this implies that $w \cdot x_0 \neq x_0$. Proving this claim is sufficient to conclude the proof of our proposition. Also, an immediate consequence of the claim is the following fact, which we record for future use:

\begin{fact}\label{fact:pingpong}
For every non-trivial $g \in G$, we have $g \cdot x_0 \in \bigcup\limits_{u \in V(\Gamma)} X_u$.
\end{fact}

\noindent
So let us turn to the proof of our claim. We argue by induction on the length of $w$. If $w$ has length one, then $w \in H_u \backslash \{ 1 \}$ for some $u \in V(\Gamma)$. Our third assumption implies $w \cdot x_0 \in X_u$. Next, suppose that $w$ has length at least two. Write $w=gw'$ where $g$ is the first syllable of $w$, and $w'$ the rest of the word. Say $g \in H_u \backslash \{ 1 \}$. We know from our induction hypothesis that $w' \cdot x_0 \in X_v$ where $v$ is a vertex of $\Gamma$ which belongs to the support of the head of $w'$. Notice that $u \neq v$ since otherwise the word $gw'$ would not be reduced. Two cases may happen: either $u$ and $v$ are not adjacent, so that our second assumption implies that $w \cdot x_0 \in g \cdot X_v \subset X_u$; or $u$ and $v$ are adjacent, so that our first assumption implies that $w \cdot x_0 \in g \cdot X_v \subset X_v$. It is worth noticing that, in the former case, $u$ clearly belongs to the support of the head of $w$ since $g$ belongs to the head of $w$; in the latter case, if we write $w'=hw''$ where $h$ is a syllable of the head of $w'$ which belongs to $H_v$, then
$$w = gw'= g hw'' = hgw'',$$
so $h$ also belongs to the head of $w$, and a fortiori $v$ belongs to the support of the head of $w$. This concludes the proof.
\end{proof}

\noindent
Before turning to the proof of Theorem \ref{thm:splittingthmQm}, we need one last preliminary lemma:

\begin{lemma}\label{lem:rotativestab}
Let $G$ be a group acting on a quasi-median graph $X$ and $J_1,J_2$ two transverse hyperplanes. For any element $g \in \mathrm{stab}_\circlearrowleft (J_1)$ and any sector $S$ delimited by $J_2$, we have $g \cdot S = S$. 
\end{lemma}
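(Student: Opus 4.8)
The plan is to exploit the transversality of $J_1$ and $J_2$ to produce, inside $X$, a prism in which a clique of $J_1$ meets both $N(J_2)$ and a chosen sector $S$ of $J_2$, and then use the fact that $g$ stabilises every clique of $J_1$ to deduce that $g$ cannot carry $S$ off itself. More precisely, since $J_1$ and $J_2$ are transverse, by definition there is a prism $C_1 \times C_2 \subset X$ with $C_1 \subset J_1$ and $C_2 \subset J_2$. By Theorem \ref{thm:MainQM}, $X \backslash\backslash J_2$ is disconnected and its components are the sectors of $J_2$; the clique $C_1$ (being one of the ``$C_1 \times \{c\}$'' fibres of the prism) lies in the neighbourhood $N(J_2)$, and its vertices are distributed among the fibres of $J_2$. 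First I would fix the sector $S$ delimited by $J_2$ and argue that $C_1$ meets $N(J_2)$ in a way that touches $S$: concretely, translate the prism by an element of $\mathrm{stab}_\circlearrowleft(J_2)$ (which acts transitively on the fibres, hence on the sectors, by $\mathcal{J}$-rotativity) so that one vertex of the chosen clique $C_1' \subset J_1$ lands in $S$; this uses that $\mathrm{stab}_\circlearrowleft(J_2) \subseteq \mathrm{stab}_\circlearrowleft(J_1)$-commuting hypotheses are \emph{not} needed here, only that the configuration ``clique of $J_1$ transverse to $J_2$'' is preserved under such translation.

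Next, having a clique $C \subset J_1$ with at least one vertex, say $v$, in $S$, I would observe that $g \in \mathrm{stab}_\circlearrowleft(J_1)$ stabilises $C$ setwise (by definition of the rotative stabiliser), so $g \cdot v \in C$. Now $C$ is a clique of $J_1$ transverse to $J_2$, so $C \subset N(J_2)$ and, by Theorem \ref{thm:MainQM}, the projection $p : X \to C_2$ onto a clique $C_2 \subset J_2$ separates the sectors of $J_2$; since $C$ is entirely contained in one fibre of $J_2$ — indeed every edge of $C$ is dual to $J_1 \neq J_2$, so no edge of $C$ crosses $J_2$, meaning all vertices of $C$ lie in the same sector of $J_2$ — we get $C \subset S$. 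Then $g \cdot v \in g \cdot C = C \subset S$, and running this over every vertex of $S$ that can be reached this way, I would conclude $g \cdot S = S$. The key point to nail down carefully is that \emph{a clique transverse to $J_2$ lies inside a single sector of $J_2$}: this is because any two adjacent vertices of $X$ joined by an edge not dual to $J_2$ lie in the same component of $X \backslash\backslash J_2$, and within a clique all edges are dual to the same hyperplane $J_1 \neq J_2$.

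Finally, to pass from ``$g \cdot v \in S$ for the vertices $v$ produced above'' to ``$g \cdot S = S$'', I would use that $g$ is an isometry: $g$ permutes the sectors of $g \cdot J_2$; but $g$ fixes $J_1$ and every clique in it, hence $g$ fixes $C$, and since $C \subset N(J_2)$ touches $S$, the isometry $g$ must send the sector $S$ to the unique sector of $J_2$ touching $C$ on that side, which is $S$ again (here I use that $g \cdot J_2$ is a hyperplane, and that a hyperplane is determined by any clique it contains together with the side data). The main obstacle I anticipate is the bookkeeping in this last step — carefully justifying that $g$ fixes $J_2$ as a hyperplane, or at least that it fixes the partition of $X$ into the sectors of $J_2$ near $C$ — since $g$ is only assumed to stabilise $J_1$ and its cliques, not $J_2$ itself; the resolution is that $C \subset N(J_2)$ forces $g(N(J_2))$ to contain $g(C)=C$, and by the gatedness and uniqueness statements in Theorem \ref{thm:MainQM} the hyperplane through a given clique is unique, so $g$ must in fact fix $J_2$ as well. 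Everything else is a routine application of Theorem \ref{thm:MainQM} and the definitions of prism, sector, and rotative stabiliser.
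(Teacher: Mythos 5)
There are two genuine gaps here. First, your opening step invokes $\mathcal{J}$-rotativity: you translate the prism by an element of $\mathrm{stab}_{\circlearrowleft}(J_2)$, using that it acts transitively on the sectors of $J_2$. But Lemma \ref{lem:rotativestab} is stated for an arbitrary group acting on a quasi-median graph, with no rotativity hypothesis, so this is not available. It is also both unnecessary and harmful: unnecessary because, by Theorem \ref{thm:MainQM}, the sectors of $J_2$ are exactly the fibres $p^{-1}(x)$, $x \in C_2$, of the projection onto the clique $C_2$ of the prism, so $C_2$ already meets \emph{every} sector of $J_2$, and for the vertex $x = C_2 \cap S$ the parallel copy $C_1 \times \{x\}$ of $C_1$ inside the prism is a clique of $J_1$ containing $x$ (this is the paper's ``without loss of generality $C_1 \cap C_2 = \{x\}$''); harmful because a translate $h \cdot C_1$ with $h \in \mathrm{stab}_{\circlearrowleft}(J_2)$ is a clique of $h \cdot J_1$, not of $J_1$, so your element $g \in \mathrm{stab}_{\circlearrowleft}(J_1)$ no longer stabilises it (and arguing that $h$ preserves $J_1$ would be circular, being exactly this lemma with the roles of $J_1$ and $J_2$ exchanged).

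Second, and more seriously, your justification that $g$ permutes the sectors of $J_2$, i.e.\ that $g \cdot J_2 = J_2$, does not work. You deduce it from $g(C) = C \subset N(J_2)$ together with ``the hyperplane through a given clique is unique''. That uniqueness concerns the hyperplane a clique is \emph{dual} to, and your clique $C$ is dual to $J_1$, not $J_2$; a clique may perfectly well lie in the neighbourhoods of two distinct (transverse) hyperplanes, so $C \subset N(J_2) \cap N(g \cdot J_2)$ forces nothing. The correct route is to apply $g$ to a clique dual to $J_2$, namely $C_2$: each vertex $d \in C_2$ lies in the parallel copy $C_1 \times \{d\} \subset J_1$ of $C_1$ in the prism, which is $g$-invariant and (by your own correct middle observation, since none of its edges is dual to $J_2$) contained in the single sector $p^{-1}(d)$; hence $g \cdot d \in p^{-1}(d)$ for every $d \in C_2$, so every edge of the clique $g \cdot C_2$ joins distinct sectors of $J_2$ and is therefore dual to $J_2$, giving $g \cdot C_2 \subset J_2$ and $g \cdot J_2 = J_2$. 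Then $g$ permutes the sectors of $J_2$, and since $g \cdot d$ stays in $p^{-1}(d)$, it fixes each of them, which is the full statement. This is essentially the paper's proof, which phrases the sector-preservation step via projections ($g \cdot x \in g C_1 = C_1$ projects onto $C_2$ at $x$, hence $g \cdot x \in S$). Your middle observation is sound, but by itself it only gives $g \cdot v \in S$ for one vertex; without a valid argument that $g \cdot J_2 = J_2$ you cannot conclude $g \cdot S = S$.
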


\begin{proof}
Because $J_1$ and $J_2$ are transverse, there exists a prism $C_1 \times C_2$ such that $C_1 \subset J_1$ and $C_2 \subset J_2$. According to Theorem \ref{thm:MainQM}, there exists a vertex $x \in C_2$ such that $S=p^{-1}(x)$ where $p : X \to C_2$ denotes the projection onto $C_2$. Without loss of generality, we may suppose that $C_1 \cap C_2=\{x\}$. Fix an element $g \in \mathrm{stab}_\circlearrowleft (J_1)$. Notice that, because $g$ stabilises $C_1$, $g$ sends $C_2$ to another clique dual to $J_2$, hence $g \cdot J_2$. Therefore, we know that $g$ permutes the sectors delimited by $J_2$. On the other hand, because $g \cdot x$ belongs to $gC_1 = C_1$, the projection of $g \cdot x$ onto $C_2$ must be $x$, hence $g \cdot x \in S$. We conclude that $g \cdot S=S$ as desired. 
\end{proof}

\noindent
We are now ready to prove Theorem \ref{thm:splittingthmQm}. Our argument follows closely the proof of \cite[Theorem 10.54]{Qm}.

\begin{proof}[Proof of Theorem \ref{thm:splittingthmQm}.]
Notice that, since $\mathcal{J}$ is $G$-invariant, the subgroup $\mathrm{Rot}(\mathcal{J})$ is normal. 

\medskip \noindent
Let $g \in G$. Fix some $r \in \mathrm{Rot}(\mathcal{J})$ and suppose that $rg \cdot x_0 \notin Y$. Then there exists some $J \in \mathcal{J}$ which separates $rg \cdot x_0$ from $Y$. Since the action $\mathrm{stab}_{\circlearrowleft}(J) \curvearrowright \mathcal{S}(J)$ is transitive, there exists some $s \in \mathrm{stab}_{\circlearrowleft}(J)$ which sends the sector delimited by $J$ which contains $rg \cdot x_0$ to the sector delimited by $J$ which contains $x_0$. Let $a \in N(J)$ denote the projection of $rg \cdot x_0$ onto $N(J)$ and $C$ the clique dual to $J$ containing $a$. Notice that
$$\begin{array}{lcl} d(x_0,rg \cdot x_0) & = & d(rg \cdot x_0,a)+d(a,x_0) = d(srg \cdot x_0,s \cdot a) + d(x_0,s \cdot a)+1 \\ \\ & \geq & d(x_0,srg \cdot x_0)+1 \end{array}$$
Thus, if we choose some $r \in \mathrm{Rot}(\mathcal{J})$ such that
$$d(rg \cdot x_0,g \cdot x_0)= \min \{ d(sg \cdot x_0,g \cdot x_0) \mid s \in \mathrm{Rot}(\mathcal{J}) \},$$ 
we deduce from the previous observation that $rg \cdot x_0 \in Y$. On the other hand, $G$ permutes the connected components of $X$ cutting along the hyperplanes of $\mathcal{J}$, and $Y$ is precisely the connected component which contains $x_0$, so $rg \in \mathrm{stab}(Y)$. Therefore,
$$g \in r^{-1} \cdot \mathrm{stab}(Y) \subset \mathrm{Rot}(\mathcal{J}) \cdot \mathrm{stab}(Y).$$
Thus, we have proved that $G= \mathrm{Rot}(\mathcal{J}) \cdot \mathrm{stab}(Y)$. 

\medskip \noindent
Next, we want to apply Proposition \ref{prop:pingpong} in order to prove that $\mathrm{Rot}(\mathcal{J})$ is isomorphic to the graph product $\Delta \mathcal{G}$. 

\medskip \noindent
The first point to verify is that $\mathrm{Rot}(\mathcal{J})= \langle \mathrm{stab}_{\circlearrowleft}(J), \ J \in \mathcal{J}_0 \rangle$. Let $J_1 \in \mathcal{J}$. We want to prove that there exists some $r \in \langle \mathrm{stab}_{\circlearrowleft}(J), \ J \in \mathcal{J}_0 \rangle$ such that $rJ_1 \in \mathcal{J}_0$, which is sufficient to deduce the previous equality. 

\medskip \noindent
Fix some $r \in \langle \mathrm{stab}_{\circlearrowleft}(J), \ J \in \mathcal{J}_0 \rangle$, and suppose that $rJ_1 \notin \mathcal{J}_0$. Let $y_0$ denote the projection of $x_0$ onto $N(rJ_1)$. If no hyperplane of $\mathcal{J}$ separates $x_0$ and $y_0$, then $\mathcal{J}_0 \cup \{ rJ \}$ defines a new $x_0$-peripheral subcollection of $\mathcal{J}$, contradicting the maximality of $\mathcal{J}_0$. Therefore, there exists some hyperplane $J_2 \in \mathcal{J}$ separating $x_0$ and $y_0$. As a consequence of Corollary \ref{cor:projseparate}, $J_2$ also separates $rJ_1$ and $x_0$. Notice that, if $J_2 \notin \mathcal{J}_0$, then similarly there exists a third hyperplane separating $J_2$ and $x_0$, and so on. Since there exist only finitely many hyperplanes separating $rJ_1$ and $x_0$, we can suppose without loss of generality that $J_2 \in \mathcal{J}_0$. Let $s \in \mathrm{stab}_{\circlearrowleft}(J_2)$ be an element which sends the sector delimited by $J_2$ containing $rJ_1$ to the sector delimited by $J_2$ containing $x_0$. Notice that
$$\begin{array}{lcl} d(x_0,N(rJ_1)) & \geq & d(N(rJ_1),N(J_2)) + d(x_0,N(J_2)) +1 \\ \\ & \geq & d(N(srJ_1),N(J_2)) + d(x_0,N(J_2))+1 \\ \\ & \geq & d(x_0,N(srJ_1)) +1 \end{array}$$
Therefore, if we choose $r$ so that
$$d(x_0,N(rJ_1))= \min \left\{ d(x_0,N(sJ_1)) \mid s \in \langle  \mathrm{stab}_{\circlearrowleft}(J), \ J \in \mathcal{J}_0 \rangle \right\},$$
then $rJ_1 \in \mathcal{J}_0$. This concludes the proof of our first point.

\medskip \noindent
Next, notice that by assumption any element of $\mathrm{stab}_{\circlearrowleft}(J_1)$ commutes with any element of $\mathrm{stab}_{\circlearrowleft}(J_2)$ whenever $J_1,J_2 \in \mathcal{J}_0$ are adjacent in $\Delta$. Now, if $J \in \mathcal{J}_0$, let $X_J$ be the union of all the sectors which do not contain $x_0$. Notice that, for every $J \in \mathcal{J}_0$ and every $g \in \mathrm{stab}_{\circlearrowleft}(J) \backslash \{ 1 \}$, necessarily $g \cdot x_0 \in X_J$ since the action $\mathrm{stab}_{\circlearrowleft} (J) \curvearrowright \mathcal{S}(J)$ is free. Moreover, it follows from Lemma \ref{lem:rotativestab} that, if $J_1,J_2 \in \mathcal{J}_0$ are adjacent, then $g \cdot X_{J_1} \subset X_{J_1}$ for every $g \in \mathrm{stab}_{\circlearrowleft}(J_2) \backslash \{ 1 \}$. Finally, we need to verify that, for every $J_1, J_2 \in \mathcal{J}_0$ which are not transverse and for every $g \in \mathrm{stab}_{\circlearrowleft}(J_1) \backslash \{ 1 \}$, we have $g \cdot X_{J_2} \subset X_{J_1}$. Because $\mathcal{J}_0$ is a $x_0$-peripheral subcollection of $\mathcal{J}$, necessarily $X_{J_2}$ is contained into the sector $S$ delimited by $J_1$ which contains $x_0$. On the other hand, $g \cdot S \subset X_{J_1}$ because the action $\mathrm{stab}_{\circlearrowleft}(J_1) \curvearrowright \mathcal{S}(J_1)$ is free, hence
$$g \cdot X_{J_1} \subset g \cdot S \subset X_{J_2}.$$
Thus, all the hypotheses of Proposition \ref{prop:pingpong} are satisfied, and it follows that $\mathrm{Rot}(\mathcal{J})$ is isomorphic to $\Delta \mathcal{G}$.

\medskip \noindent
Since the hypotheses of Proposition \ref{prop:pingpong} hold, we also know from Fact \ref{fact:pingpong} that, if $g \in \mathrm{Rot}(\mathcal{J})$ is non-trivial, then $g \cdot x_0 \in X_{J}$ for some $J \in \mathcal{J}$. On the other hand, $g$ permutes the connected components of $X$ cutting along the hyperplanes of $\mathcal{J}$, $Y$ is precisely the connected component which contains $x_0$, and $X_J$ is a union of connected components, so $g \cdot Y \subset Y$. We deduce from  
$$g \cdot Y \cap Y \subset X_J \cap Y = \emptyset$$
that $g \cdot Y \cap Y \neq \emptyset$, so that $g$ does not stabilise the connected component $Y$, ie., $g \notin \mathrm{stab}(Y)$. Thus, we have proved that $\mathrm{Rot}(\mathcal{J}) \cap \mathrm{stab}(Y)= \{ 1 \}$. This concludes the proof of the decomposition $G = \mathrm{Rot}(\mathcal{J}) \rtimes \mathrm{stab}(Y)$.
\end{proof}

\subsection{Picture products as semidirect products}

\noindent
In this section, our goal is to exploit the actions of braided picture products on our quasi-median graphs in order to decompose them as semidirect products. More precisely, our decomposition theorem is:

\begin{thm}\label{thm:split}
Let $\mathcal{P}= \langle \Sigma \mid \mathcal{R} \rangle$ be a semigroup presentation, $\mathcal{G}$ a collection of groups indexed by $\Sigma$, and $w \in \Sigma^+$ a baseword. Suppose that the following technical condition is satisfied:
\begin{itemize}
	\item[\emph{(+)}]  Let $m \in \Sigma^+$ be a non-empty word and $\ell \in \Sigma$ a letter such that $G_{\ell}$ is non-trivial and such that there exists at least one braided $(w,m \ell)$-diagram. There does not exist a non-trivial braided $(m,m)$-diagram $P$ such that, for every word $z \in \Sigma^+$ and every braided $(m,z)$-diagram $U$, $U^{-1}PU$ is a permutation $(z,z)$-diagram.
\end{itemize}
Then the braided picture product $D_b(\mathcal{P}, \mathcal{G},w)$ splits as a semidirect product $\Gamma \rtimes D_b(\mathcal{P},w)$ where $\Gamma$ is a graph product of groups which belong to $\mathcal{G}$ and where $D_b(\mathcal{P},w)$ coincides with the subgroup of $D_b(\mathcal{P},\mathcal{G},w)$ whose braided diagrams have their wires labelled by letters of $\Sigma(\mathcal{G})$ with trivial second coordinates. 
\end{thm}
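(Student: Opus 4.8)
The plan is to apply the decomposition theorem (Theorem~\ref{thm:splittingthmQm}) to the action of $D:=D_b(\mathcal{P},\mathcal{G},w)$ by left-multiplication on the quasi-median graph $X:=X_b(\mathcal{P},\mathcal{G},w)$ (quasi-median by Theorem~\ref{thm:XQM}), taking for $\mathcal{J}$ the collection of hyperplanes dual to pins. First I would note, using Lemmas~\ref{lem:triangleincoset} and~\ref{lem:cosetinter}, that the cliques of $X$ are precisely the pins with at least two vertices together with the edges coming from transistor diagrams; thus $\mathcal{J}$ is intrinsically the set of hyperplanes whose cliques are pins, and it is visibly $D$-invariant. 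The heart of the argument is then to compute, for each $J\in\mathcal{J}$, the rotative stabiliser $\mathrm{stab}_{\circlearrowleft}(J)$ together with the set of fibers of $J$, and to check the commutation condition of Theorem~\ref{thm:splittingthmQm} for transverse hyperplanes.

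Fix $J\in\mathcal{J}$. Using vertex-transitivity of the diagram groupoid to change the baseword, I may assume $J$ is dual to the pin $C_{0}=\{[\epsilon((m,1)(\ell,g))]\mid g\in G_{\ell}\}$ over $\epsilon(m\ell)$, for some letter $\ell$ with $G_{\ell}$ nontrivial and some word $m$ admitting a braided $(w,m\ell)$-diagram --- exactly the situation in which hypothesis~(+) is imposed. A geometric identification of $J$ comes first: its cliques are exactly the $\ell$-pins of the diagrams obtained from $\epsilon(m\ell)$ by right-multiplying by unitary diagrams none of which feeds the last (``tracked'') wire into a transistor, the key point being the square $[\Delta],[\Delta\cdot\bar{g}],[\Delta\cdot V],[\Delta\cdot V\cdot\bar{g}]$ produced by the fact that a linear diagram $\bar{g}$ on the tracked wire commutes with any unitary diagram disjoint from it. From this one reads off that the fibers of $J$ are indexed by $G_{\ell}$ (the $g$-fiber being the classes $[D+\epsilon((\ell,g))]$ with $D$ a $(m,\ast)$-diagram), that the subgroup $\bar{G}_{\ell}\cong G_{\ell}$ of linear diagrams on the last wire is contained in $\mathrm{stab}_{\circlearrowleft}(J)$, and that left-multiplication by $\epsilon((m,1)(\ell,g_{0}))$ sends the $g$-fiber to the $g_{0}g$-fiber, so that $\bar{G}_{\ell}$ already acts freely and transitively on fibers. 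It remains to see that $\mathrm{stab}_{\circlearrowleft}(J)$ is no larger: if $r\in\mathrm{stab}_{\circlearrowleft}(J)$, then $r$ stabilises $C_{0}$, forcing $[r]\in C_{0}$, so after multiplying by an element of $\bar{G}_{\ell}$ we may take $[r]=[\epsilon(m\ell)]$ and hence $r$ a permutation $(m\ell,m\ell)$-diagram; stabilising all cliques of $J$ forces $r$ to fix the tracked wire, so $r=r_{0}+\epsilon(\ell)$ for a permutation $(m,m)$-diagram $r_{0}$; and stabilising the $\ell$-pin of $U+\epsilon(\ell)$ for every $(m,z)$-diagram $U$ says exactly that $U^{-1}r_{0}U$ is a permutation $(z,z)$-diagram for all such $U$. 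By~(+) no nontrivial $r_{0}$ has this property, so $r_{0}=1$ and $r\in\bar{G}_{\ell}$; hence $D\curvearrowright X$ is $\mathcal{J}$-rotative. \textbf{This computation of the rotative stabiliser --- in particular recognising that~(+) is precisely the condition excluding ``phantom'' permutation-like stabilisers --- is the step I expect to be the main obstacle.}

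Next I would check the transversality hypothesis: if $J_{1},J_{2}\in\mathcal{J}$ are transverse, a witnessing prism, after changing the baseword, becomes a trivial diagram $\epsilon(w')$ with two distinguished wires at distinct positions; by the computation above $\mathrm{stab}_{\circlearrowleft}(J_{1})$ and $\mathrm{stab}_{\circlearrowleft}(J_{2})$ are the groups of linear diagrams supported on these two wires, and linear diagrams supported on distinct wires commute. Theorem~\ref{thm:splittingthmQm} then gives $D=\mathrm{Rot}(\mathcal{J})\rtimes\mathrm{stab}(Y)$, where $Y$ is the intersection of the sectors delimited by hyperplanes of $\mathcal{J}$ and containing $[\epsilon(w)]$, and where $\mathrm{Rot}(\mathcal{J})$ decomposes as a graph product whose vertex groups are the $\mathrm{stab}_{\circlearrowleft}(J)\cong G_{\ell}\in\mathcal{G}$; this is the desired group $\Gamma$. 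Finally, to identify $\mathrm{stab}(Y)$ with $D_b(\mathcal{P},w)$, I would invoke Lemma~\ref{lem:Xgeod} and the ensuing distance formula: a geodesic from $[\epsilon(w)]$ to $[\Delta]$ decomposes through an absolutely reduced concatenation of unitary diagrams and crosses a pin-hyperplane exactly once for each wire of the reduction of $\Delta$ carrying a nontrivial group element. Hence $[\Delta]\in Y$ if and only if no such wire exists, i.e. $\Delta$ has all wires labelled by letters with trivial second coordinate; so $Y$ is exactly the set of these classes, and an element of $D$ stabilises $Y$ if and only if it is represented by a diagram with trivial second coordinates, which by definition is an element of $D_b(\mathcal{P},w)$. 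Thus $D_b(\mathcal{P},\mathcal{G},w)=\Gamma\rtimes D_b(\mathcal{P},w)$, as claimed.
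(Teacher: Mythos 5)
Your proposal is correct and follows essentially the same route as the paper: the same quasi-median graph $X_b(\mathcal{P},\mathcal{G},w)$, the same collection $\mathcal{J}$ of pin (linear) hyperplanes, the same application of Theorem \ref{thm:splittingthmQm}, with the hypothesis (+) entering exactly where you predict, namely in showing $\mathrm{stab}_{\circlearrowleft}(J)=\{\Delta\cdot(\epsilon(m)+\epsilon(\ell,g))\cdot\Delta^{-1}\mid g\in G_\ell\}$ (the paper's Proposition \ref{prop:rotative} and Fact \ref{fact:RotativeStabWithPlus}), followed by the same commutation check for transverse hyperplanes and the same identification of $\mathrm{stab}(Y)$ with the trivially-labelled subgroup $D_b(\mathcal{P},w)$. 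The one point you assert with only a one-line justification --- that the cliques dual to $J$ are \emph{exactly} the pins $C(U)$ with $U$ an $(m,\ast)$-diagram, which is needed to see that the linear diagrams on the tracked wire lie in $\mathrm{stab}_{\circlearrowleft}(J)$ and hence act transitively on fibers --- is the paper's Lemma \ref{lem:linearhyp}, whose converse direction is proved by propagating along a gallery of squares using the classification of squares in Lemma \ref{lem:square}; this is consistent with, and completes, your sketch.
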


\noindent
Our goal is to apply Theorem \ref{thm:splittingthmQm} to the braided picture group $D_b(\mathcal{P},\mathcal{G},w)$ acting on $X$ with respect to the set $\mathcal{J}$ of \emph{linear hyperplanes} of $X$, ie., hyperplanes dual to pins. Unfortunately, this action is not automatically $\mathcal{J}$-rotative (see Remark \ref{remark:notrotative} below). Nevertheless, it turns out to be $\mathcal{J}$-rotative under the additional technical condition (+) mentioned in our theorem. This condition will be easily satisfied when we will apply Theorem \ref{thm:split} in the proof of Theorem \ref{thm:sequence}. 

\begin{prop}\label{prop:rotative}
Let $\mathcal{P}= \langle \Sigma \mid \mathcal{R} \rangle$ be a semigroup presentation, $\mathcal{G}$ a collection of groups indexed by $\Sigma$, and $w \in \Sigma^+$ a baseword. Suppose that the following technical condition is satisfied:
\begin{itemize}
	\item[\emph{(+)}]  Let $m \in \Sigma^+$ be a non-empty word and $\ell \in \Sigma$ a letter such that $G_{\ell}$ is non-trivial and such that there exists at least one braided $(w,m \ell)$-diagram. There does not exist a non-trivial braided $(m,m)$-diagram $P$ such that, for every word $z \in \Sigma^+$ and every braided $(m,z)$-diagram $U$, $U^{-1}PU$ is a permutation $(z,z)$-diagram.
\end{itemize}
Then the action $D_b(\mathcal{P}, \mathcal{G},w) \curvearrowright X_b(\mathcal{P}, \mathcal{G},w)$ is $\mathcal{J}$-rotative.
\end{prop}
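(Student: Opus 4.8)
The plan is to verify directly that for every linear hyperplane $J \in \mathcal{J}$, the rotative stabiliser $\mathrm{stab}_{\circlearrowleft}(J)$ acts freely and transitively on the set of fibers of $J$. Recall that by definition the cliques of $X$ dual to elements of $\mathcal{J}$ are exactly the pins (Lemma \ref{lem:triangleincoset} identifies triangles with pins, and one checks pins are maximal, hence cliques). So first I would describe a pin combinatorially: by Definition \ref{def:coset}, a pin is carried by a braided $(w,\ast)$-diagram $\Delta$ together with a choice of wire position $i$ with $\mathrm{bot}^-(\Delta) = w_1 \cdots w_n$, and its vertices are $[\Delta \cdot \epsilon((w_1,1) \cdots (w_i,g) \cdots (w_n,1))]$ for $g \in G_{w_i}$. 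Using Theorem \ref{thm:MainQM}, the fibers of $J$ correspond bijectively to the vertices of one clique dual to $J$, i.e. to the elements of the group $G_{w_i}$, where $w_i = \ell$ is forced to have $G_\ell$ non-trivial for $J$ to be a genuine (non-degenerate) linear hyperplane.

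Next I would produce, for each such pin, a natural subgroup of $D_b(\mathcal{P},\mathcal{G},w)$ acting as ``rotations'': after conjugating so that the pin is anchored at a trivial diagram (using the vertex-transitivity discussion and the commutative diagram in Section \ref{section:QMgeom}), a pin at the end of a diagram $\Delta$ with $\mathrm{bot}^-(\Delta) = m\ell$ (where $m = w_1 \cdots w_{n-1}$, after permuting the distinguished wire to the right end) is acted on by the conjugate $\Delta \cdot (\epsilon(m) + L_g) \cdot \Delta^{-1}$, where $L_g = \epsilon((\ell,g))$ is the linear diagram with label $g$. The point is that this element stabilises every clique of $J$: it multiplies the last wire-label by $g$ on the $\Delta$-side of every representative, and since the cliques of $J$ all arise by varying the front part of representatives while keeping this terminal linear slot free, each clique is preserved and permuted within itself by right-translation on the last coordinate. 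Freeness and transitivity on fibers then follow from the group structure of $G_\ell$: transitivity because $g$ ranges over all of $G_\ell$ and right multiplication is transitive, freeness because $L_g$ fixes a fiber only if $g = 1$.

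The subtlety — and where condition (+) enters — is that $\mathrm{stab}_{\circlearrowleft}(J)$ could a priori be strictly larger than this copy of $G_\ell$, and a priori an element of $\mathrm{stab}_{\circlearrowleft}(J)$ need not be of the form $\Delta \cdot (\epsilon(m)+L_g) \cdot \Delta^{-1}$; it could combine such a rotation with something stabilising every clique of $J$ pointwise-up-to-permutation. Concretely, an element of $\mathrm{stab}_{\circlearrowleft}(J)$ acts on the diagram $\Delta$ carrying a clique by $\Delta \mapsto \Delta \cdot (P + \epsilon(\ell))$ (possibly followed by the terminal rotation), where $P$ is a braided $(m,m)$-diagram, and for this to stabilise \emph{all} cliques dual to $J$ — whose anchoring diagrams are of the form $\Delta \cdot U$ for $U$ ranging over braided $(m\ell, z\ell)$-diagrams not touching the last wire, essentially $U = V + \epsilon(\ell)$ with $V$ a braided $(m,z)$-diagram — we need $(V+\epsilon(\ell))^{-1}(P+\epsilon(\ell))(V+\epsilon(\ell))$ to be a permutation diagram for every such $V$, i.e. $V^{-1}PV$ is a permutation $(z,z)$-diagram for every $z$ and every braided $(m,z)$-diagram $V$. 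Condition (+) is precisely the hypothesis that the only such $P$ is trivial. So the plan for this step is: analyze $\mathrm{stab}_{\circlearrowleft}(J)$ by how it acts on a clique-anchoring diagram, peel off the terminal-slot rotation to land in $G_\ell$, and use (+) to conclude the remaining $(m,m)$-part $P$ must be trivial, giving $\mathrm{stab}_{\circlearrowleft}(J) \cong G_\ell$ acting freely transitively on fibers.

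I expect the main obstacle to be the bookkeeping in that last step: one must set up carefully how an arbitrary element of the stabiliser of a pin's clique can be written in the normal form $\Delta \cdot (\text{front part on } m) \cdot (\text{linear slot on } \ell)$, and verify that ``stabilises every clique of $J$'' translates exactly into the hypothesis negated in (+). The geometric input — that fibers $\leftrightarrow$ clique vertices $\leftrightarrow$ $G_\ell$, and that the $L_g$'s act freely transitively — is straightforward from Theorem \ref{thm:MainQM} and Lemma \ref{lem:lengthrightmult}; the diagram-combinatorial characterization of which diagrams stabilise a whole linear hyperplane (as opposed to a single clique) is the delicate part, and is exactly what condition (+) is tailored to control.
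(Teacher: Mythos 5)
Your plan is essentially the paper's own proof: identify the cliques dual to a linear hyperplane $J$ as the sets $C(U)=\{[\Delta\cdot (U+\epsilon(\ell,g))]\mid g\in G_\ell\}$ with $U$ ranging over braided $(m,\ast)$-diagrams, show that an element of $\mathrm{stab}_{\circlearrowleft}(J)$ has the normal form $\Delta\cdot (P+\epsilon(\ell,g))\cdot \Delta^{-1}$ with $U^{-1}PU$ a permutation diagram for every $(m,z)$-diagram $U$, invoke (+) to force $P$ to be trivial, and conclude that the surviving copy of $G_\ell$ acts freely and transitively on the fibers through its simply transitive action on one clique; your translation of ``stabilises every clique'' into the hypothesis negated by (+) is exactly the paper's Fact \ref{fact:rotativestab}, and the bookkeeping you flag (that stabilising a single clique forces the accompanying permutation $(z\ell,z\ell)$-diagram to split as $Q'+\epsilon(\ell)$, i.e.\ to fix the last wire) is precisely how the paper argues. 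The one step you under-estimate is the assertion that ``the cliques of $J$ all arise by varying the front part of representatives while keeping this terminal linear slot free'': this is not a definition-chase but the paper's Lemma \ref{lem:linearhyp}, whose proof requires classifying the squares of $X$ (Lemma \ref{lem:square}) and then propagating an edge of the hyperplane across squares by induction to show that the distinguished $\ell$-wire persists. This description is needed even for the ``easy'' inclusion: without it you cannot verify that your candidate rotations $\Delta\cdot(\epsilon(m)+\epsilon(\ell,g))\cdot\Delta^{-1}$ stabilise every clique dual to $J$, hence that they belong to $\mathrm{stab}_{\circlearrowleft}(J)$ at all, nor can you justify that an arbitrary element of the rotative stabiliser only sees cliques of the form $C(U)$ when you intersect the clique stabilisers. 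So the outline is correct and follows the paper's route, but a complete write-up must supply that hyperplane-description lemma rather than treat it as part of the setup.
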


\noindent
The key point to prove our proposition is to understand linear hyperplanes, and to understand these hyperplanes, the first step is to understand the squares of $X$. This is the goal of our next lemma. 

\begin{lemma}\label{lem:square}
Let $[A],[B],[D],[C]$ be the vertices of some induced square of $X$, in a cyclic order. There exist a permutation diagram $P$, two words $u,v \in \Sigma^+$ satisfying $\mathrm{bot}^-(P)=uv$, a unitary $(u,\ast)$-diagram $U$, and a unitary $(v, \ast)$-diagram $V$ such that, up to permuting $B$ and $C$, one has $[B]= [A \cdot P \cdot (U+ \epsilon(v))]$, $[C]=[A \cdot P \cdot (\epsilon(u)+V)]$, and $[D]= [A \cdot P \cdot (U+V)]$. 
\end{lemma}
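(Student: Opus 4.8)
The plan is to unwind the combinatorial meaning of ``adjacency'' in $X$ and then use the structure theorem for geodesics (Lemma~\ref{lem:Xgeod}) together with the length formula (Lemma~\ref{lem:lengthrightmult}) to identify the four vertices of an induced square. First I would translate the hypothesis: after translating by $A^{-1}$ we may assume $A = \epsilon(\mathrm{bot}^-(A))$ is a trivial diagram, so that the three vertices $[B], [C], [D]$ are themselves braided $(\mathrm{bot}^-(A),\ast)$-diagrams with $\# B = \# C = 1$ and $\# D = 2$, and each of $[B],[C]$ is adjacent to $[D]$. Since $[A]$ and $[B]$ are adjacent, there is $B' \in [B]$ and a unitary diagram so that $B'$ is obtained from a permutation diagram followed by a single unitary piece; writing $B' = P_B \circ U_B \circ Q_B$ with $U_B$ unitary and $P_B, Q_B$ permutational, and absorbing $Q_B$ into the class, we get $[B] = [A \cdot P_B \cdot U_B]$ for a permutation diagram $P_B$ and a unitary $U_B$, and symmetrically $[C] = [A \cdot P_C \cdot U_C]$.

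The heart of the argument is then a case analysis on whether $U_B$ and $U_C$ are transistor diagrams or linear diagrams, exactly paralleling the four cases in the proof of the quadrangle condition in Theorem~\ref{thm:XQM}. In each case one uses that $[D]$ sits at distance $2$ from $[A]$ and is adjacent to both $[B]$ and $[C]$: by Lemma~\ref{lem:Xgeod} applied to the geodesic $[A],[B],[D]$, the diagram $A^{-1}D = D$ decomposes as an absolutely reduced concatenation built from the unitary piece producing $[B]$ followed by the unitary piece producing $[D]$ from $[B]$ (up to a trailing permutation), and similarly via $[C]$. Because the two unitary operations — one coming through $B$, one through $C$ — both occur inside the length-$2$ reduced diagram $D$ and correspond to \emph{distinct} transistors or wires of $D$ (distinct since $[B]\ne[C]$, as the square is induced), they must act on disjoint stretches of the bottom of a common permutation representative. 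Concretely, I would choose a single permutation diagram $P$ and a representative $A\cdot P$ whose bottom word splits as $uv$ with the ``$B$-operation'' supported on the $u$-part and the ``$C$-operation'' supported on the $v$-part; this is where one invokes that a linear diagram commutes with permutations and that the two transistors/wires of $D$ can be made to occupy consecutive, non-overlapping blocks of wires. Then $U + \epsilon(v)$, $\epsilon(u) + V$, and $U + V$ give the three required classes, with $[D] = [A \cdot P \cdot (U+V)]$ forced because it is the unique common neighbour at distance $2$ (using that $X$ contains no induced $K_{3,2}$, from Theorem~\ref{thm:XQM}).

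The main obstacle I anticipate is the bookkeeping in the mixed cases, where $U_B$ is a transistor diagram and $U_C$ a linear diagram (or vice versa): one must check that the wire carrying the non-trivial group element for $C$ is \emph{not} among the wires attached to the bottom of the transistor introduced for $B$, so that the two operations genuinely commute and can be realized as $U + V$ on a split permutation representative — precisely the kind of argument made in the corresponding case of the quadrangle-condition proof. A secondary subtlety is the ``up to permuting $B$ and $C$'' clause: in the case where both $U_B, U_C$ are transistor diagrams, choosing which transistor of $D$ corresponds to $B$ and which to $C$ is exactly the ambiguity recorded there, and one simply fixes a choice. Once the disjoint-support claim is established in all four cases, producing $P$, $u$, $v$, $U$, $V$ and verifying $[B] = [A\cdot P\cdot(U+\epsilon(v))]$, $[C] = [A\cdot P\cdot(\epsilon(u)+V)]$, $[D] = [A\cdot P\cdot(U+V)]$ is a direct unwinding of the definitions of $\cdot$, $+$, and $\epsilon$.
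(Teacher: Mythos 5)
Your overall skeleton matches the paper's: translate so that $A$ is trivial, note $\#(A^{-1}D)=2$, and use Lemma \ref{lem:Xgeod} together with the ``at most two geodesics'' uniqueness reasoning (Claim \ref{claim:atmosttwogeod}) to pin down $[B]$ and $[C]$ as the two midpoints. But there is a genuine gap at the decisive step. You pass from ``the unitary pieces producing $[B]$ and $[C]$ correspond to distinct transistors or distinct non-trivially labelled wires of the length-two diagram $S=A^{-1}D$'' to ``hence they act on disjoint stretches of the bottom of a common permutation representative, so $S=P\cdot(U+V)\cdot Q$''. Distinctness does not give this: a length-two diagram can have its two ``units'' dependent, namely (a) two transistors that are $\prec$-comparable, or (b) a wire with non-trivial second coordinate attached to the top or to the bottom side of the unique transistor. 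In those configurations no sum decomposition $U+V$ exists at all, and the bulk of the paper's proof consists precisely in ruling them out: it shows that in each such configuration the class of the first factor of any absolutely reduced decomposition $S=S_1\circ S_2$ with $\#S_1=\#S_2=1$ is uniquely determined by data readable off $S$, so there is a \emph{unique} geodesic from $[A]$ to $[D]$, contradicting $[B]\neq[C]$. Your proposal never makes this argument.

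Concretely, your treatment of the two problematic situations is off in both. For the mixed case you say one ``must check that the wire carrying the non-trivial group element for $C$ is not among the wires attached to the \ldots transistor introduced for $B$'', appealing to the quadrangle-condition proof of Theorem \ref{thm:XQM}; but there the analogous fact is automatic because wires involved in a dipole have trivial second coordinates, whereas here the attached configuration is perfectly possible for a length-two diagram and must be shown \emph{incompatible with the existence of two distinct midpoints}, not verified by bookkeeping. For the case of two transistor diagrams you reduce the issue to the labelling ambiguity behind ``up to permuting $B$ and $C$'', which silently assumes the two transistors of $S$ are $\prec$-incomparable; the nested case is never excluded. Filling these holes essentially forces you to reproduce the paper's case analysis on the structure of $S$ (its cases (ii), (iii), (v)) and the uniqueness-of-geodesic argument, so as written the proposal is incomplete rather than an alternative route.
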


\begin{proof}
Because our square is induced, the distance between $[A]$ and $[D]$ is equal to two, so that it follows from Lemma \ref{lem:Xgeod} that $[D]=[A \cdot S]$ for some braided diagram $S$ satisfying $\# S=2$. Six cases may happen:
\begin{itemize}
	\item[(i)] $S$ contains two wires labelled by letters of $\Sigma(\mathcal{G})$ with non-trivial second coordinates;
	\item[(ii)] $S$ contains a transistor $T$ and a wire labelled by a letter of $\Sigma(\mathcal{G})$ with a non-trivial second coordinate which is connected to the top side of $T$;
	\item[(iii)]  $S$ contains a transistor $T$ and a wire labelled by a letter of $\Sigma(\mathcal{G})$ with a non-trivial second coordinate which is connected to the bottom side of $T$;
	\item[(iv)]  $S$ contains a transistor $T$ and a wire labelled by a letter of $\Sigma(\mathcal{G})$ with a non-trivial second coordinate which is disjoint from the top and bottom sides of $T$;
	\item[(v)] $S$ contains two transistors $T_1,T_2$ satisfying $T_2 \prec T_1$;
	\item[(vi)] $S$ contains two transistors which are not $\prec$-related.
\end{itemize}
In the cases $(i)$, $(iv)$ and $(vi)$, one can find two permutation diagrams $P,Q$, two words $u,v \in \Sigma^+$ satisfying $\mathrm{bot}^-(P)=uv$, a unitary $(u,\ast)$-diagram $U$, and a unitary $(v, \ast)$-diagram $V$ such that $S= P \cdot (U+V) \cdot Q$. Therefore,
$$[D]=[A \cdot P \cdot (U+V) \cdot Q] = [A \cdot P \cdot (U+V)].$$
Notice that 
$$[A], \ [A \cdot P \cdot (U+ \epsilon(v))], \ [D] \ \text{and} \ [A], \ [A \cdot P \cdot ( \epsilon(u)+V)], \ [D]$$
are two geodesics from $[A]$ to $[D]$, so, because there exist at most two geodesics between two vertices at distance two apart (as a consequence of Claim \ref{claim:atmosttwogeod}), it follows that, up to permuting $B$ and $C$, one has $[B]= [A \cdot P \cdot (U+ \epsilon(v))]$ and $[C]=[A \cdot P \cdot (\epsilon(u)+V)]$.

\medskip \noindent
Now, we claim that the cases $(ii)$, $(iii)$ and $(v)$ cannot occur, which concludes the proof of our lemma. More precisely, we want to show that, in theses cases, there exists a unique geodesic between $[A]$ and $[D]$. 

\medskip \noindent
Case $(i)$. Fix a geodesic from $[A]$ to $[D]=[A \cdot S]$. According to Lemma \ref{lem:Xgeod}, $A^{-1}D=S$ decomposes as an absolutely reduced concatenation $S_1 \circ S_2$ such that $\# S_1= \# S_2 = 1$ and such that $[A \cdot S_1]$ is the unique interior vertex of our geodesic. If $S_1$ contained a transistor, it would follow that the wires connected to the top side of the unique transistor of $S= S_1 \circ S_2$ are labelled by letters of $\Sigma(\mathcal{G})$ with trivial second coordinates, since the wires of $S_1$ are labelled by such letters, but this is false in our case. Therefore, $S_1$ does not contain any transistor. As a consequence, we can write $S_1= R \circ S_1'$ for some permutation diagram $R$ and some linear diagram $S_1'$. Because $A$ is fixed, the class $[A \cdot S_1]$ is uniquely determined by the class $[S_1]=[R \circ S_1']$, which is in turn uniquely determined by $\mathrm{top}^-(R)=\mathrm{top}^-(S)$ and the place along the top of the frame of $S$ where the unique wire of $S_1$ labelled by a letter of $\Sigma(\mathcal{G})$ with a non-trivial second coordinate is connected. But all these data can be read from $S$, which is fixed. Therefore, there is a unique possibility for the choice of $[A \cdot S_1]$, which implies that there exists a unique geodesic in $X$ from $[A]$ to $[D]$.

\medskip \noindent
Case $(iii)$. We recover case $(i)$ when $[A]=[D \cdot S^{-1}]$ and $[D]$ are switched and when $S$ is replaced with $S^{-1}$. Therefore, also in this case, there exists a unique geodesic in $X$ from $[A]$ to $[D]$.

\medskip \noindent
Case $(v)$. Fix a geodesic from $[A]$ to $[D]=[A \cdot S]$. According to Lemma \ref{lem:Xgeod}, $A^{-1}D=S$ decomposes as an absolutely reduced concatenation $S_1 \circ S_2$ such that $\# S_1= \# S_2 = 1$ and such that $[A \cdot S_1]$ is the unique interior vertex of our geodesic. Notice that, since $S$ does not contain wires labelled by letters of $\Sigma(\mathcal{G})$ with non-trivial coordinates, necessarily $S_1$ contains a transistor of $S$. If $S_1$ contained $T_2$, it would follow that all the wires of $S$ connected to the top side of $T_2$ are connected to top side of the frame of $S$, but this is not true in our case since $T_2 \prec T_1$. Therefore, $S_1$ contains $T_1$. Because $A$ is fixed, the class $[A \cdot S_1]$ is uniquely determined by the class $[S_1]$, which is in turn uniquely determined by $\mathrm{top}^-(S_1)= \mathrm{top}^-(S)$ and the places along the top side of the frame of $S$ at which the wires of $S$ which are connected to the top side of $T_1$ are connected. But all these data can be read from $S$, which is fixed. Therefore, there is a unique possibility for the choice of $[A \cdot S_1]$, which implies that there exists a unique geodesic in $X$ from $[A]$ to $[D]$.
\end{proof}

\noindent
Now we are ready to describe linear hyperplanes of $X$. 

\begin{lemma}\label{lem:linearhyp}
Let $J$ be a linear hyperplane. Fix a vertex $v \in N(J)$. There exist a diagram $\Delta$, a word $m \in \Sigma^+$, and a letter $\ell \in \Sigma$ satisfying $[\Delta]=v$ and $\mathrm{bot}^-(\Delta)= m \ell$ such that an edge of $X$ is dual to $J$ if and only if its endpoints have the form
$$[\Delta \cdot (U+ \epsilon(\ell,g))] \ \text{and} \ [\Delta \cdot ( U + \epsilon(\ell,h))]$$
for some distinct $g,h \in G_{\ell}$ and some $(m, \ast)$-diagram $U$. 
\end{lemma}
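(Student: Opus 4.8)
The plan is to verify the two implications of the stated equivalence by exhibiting, in each case, an explicit chain of squares relating the edge under consideration to an edge of the pin underlying $J$, using repeatedly the elementary fact that a wire carrying a non-trivial group element is \emph{inert}: it commutes with permutation diagrams, may be repositioned freely, and never enters a dipole.

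\emph{Normalization of $J$.} Since $J$ is linear it is dual to a pin, and since the two opposite edges of any square of $X$ are produced by the same unitary diagram --- as one reads off Lemma \ref{lem:square} --- every clique dual to $J$ is in fact a pin. Let $C$ be the clique dual to $J$ containing an edge through $v$; write $C = \{[\Delta_0 \cdot \epsilon((w_1,1)\cdots(w_i,k)\cdots(w_n,1))] \mid k \in G_{w_i}\}$ with $\mathrm{bot}^-(\Delta_0)=w_1\cdots w_n$ and $v$ the vertex $k=g_0$. Put $\ell := w_i$ and $m := w_1\cdots w_{i-1}w_{i+1}\cdots w_n$, let $\pi$ be a permutation diagram carrying position $i$ to the last position, and set $\Delta := \Delta_0 \cdot \epsilon((w_1,1)\cdots(w_i,g_0)\cdots(w_n,1)) \cdot \pi$. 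Using the commutation of linear and permutation diagrams one checks $[\Delta]=v$, $\mathrm{bot}^-(\Delta)=m\ell$, and $C = \{[\Delta \cdot (\epsilon(m)+\epsilon(\ell,g))] \mid g \in G_\ell\}$, with $v$ corresponding to $g=1$.

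\emph{Sufficiency.} Given an edge $e$ with endpoints $[\Delta \cdot (U+\epsilon(\ell,g))]$ and $[\Delta \cdot (U+\epsilon(\ell,h))]$, the plan is to link it to the edge $e_0 \subset C$ joining $[\Delta \cdot (\epsilon(m)+\epsilon(\ell,g))]$ and $[\Delta \cdot (\epsilon(m)+\epsilon(\ell,h))]$ by a strip of squares. As the $\ell$-wire is inert, the reduction of $U+\epsilon(\ell)$ has the form $U_0+\epsilon(\ell)$ with $U_0$ the reduction of $U$; decompose $U_0$ as an absolutely reduced concatenation of unitary and permutation diagrams, and, starting from $[\Delta \cdot (\epsilon(m)+\epsilon(\ell,g))]$, apply these moves to the $m$-part one at a time, keeping the terminal $\ell$-wire inert. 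This yields a path to $[\Delta \cdot (U+\epsilon(\ell,g))]$ --- here confluence of dipole reduction is what allows $U+\epsilon(\ell)$ to be replaced by $U_0+\epsilon(\ell)$ inside the concatenation --- and the parallel path for $h$; the two paths close up into a strip of squares with extreme rungs $e_0$ and $e$. Since $e_0 \subset C$ is dual to $J$ and opposite rungs of a square lie in a common hyperplane, $e$ is dual to $J$.

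\emph{Necessity.} Let $S$ be the set of all edges of the stated form; then $e_0 \in S$, and since $J$ is the equivalence class of $e_0$ under the relation generated by ``sharing a clique'' and ``being opposite sides of a square'', it suffices to prove $S$ stable under both moves. For the first this is immediate: the clique of an edge $\{[\Delta \cdot (U+\epsilon(\ell,g))],[\Delta \cdot (U+\epsilon(\ell,h))]\}$ is the pin $\{[\Delta \cdot (U+\epsilon(\ell,k))] \mid k \in G_\ell\}$, each of whose edges lies in $S$. For the second, take a square presented by Lemma \ref{lem:square} in cyclic order as $[A], [A\cdot P\cdot(U_1+\epsilon(v))], [A\cdot P\cdot(U_1+V_1)], [A\cdot P\cdot(\epsilon(u)+V_1)]$, and suppose one of its edges lies in $S$. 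Being a linear edge with letter $\ell$, this edge forces the relevant unitary ($U_1$ or $V_1$) to be a linear diagram of letter $\ell$; as both opposite edges of the square are then produced by that same linear diagram, the opposite edge is again a linear edge, and writing the endpoints of the $S$-edge as $[\Delta \cdot (U+\epsilon(\ell,g))]$, $[\Delta \cdot (U+\epsilon(\ell,h))]$ one computes the endpoints of the opposite edge to be $[\Delta \cdot (U+\epsilon(\ell,g))\cdot D]$, $[\Delta \cdot (U+\epsilon(\ell,h))\cdot D]$ for one and the same diagram $D$ (namely $P\cdot(\epsilon(u)+V_1)$ or $P\cdot(U_1+\epsilon(v))$), with the same pair $g,h$. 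Since the $\ell$-wire is inert it passes through $P$ and through the trivially-labelled diagram it meets next without change and re-emerges at the bottom, so $(U+\epsilon(\ell,\cdot))\cdot D$ agrees, up to a permutation diagram on the right, with $Y+\epsilon(\ell,\cdot)$ for a single $(m,\ast)$-diagram $Y$ not depending on $g$ or $h$; hence the opposite edge again lies in $S$. This gives $J \subseteq S$, and combined with sufficiency, $S=J$. The delicate step is this last computation --- isolating the inert $\ell$-wire inside the composite $(U+\epsilon(\ell,g))\cdot D$ and pulling it to the right to recover the normal form $Y+\epsilon(\ell,g)$ with the \emph{same} $\Delta$ --- which rests on the triviality of the group coordinates of $P$ and of the diagrams $\epsilon(u),\epsilon(v)$, on the fact that inert wires avoid dipoles, and on confluence of dipole reduction; keeping track of these is the main bookkeeping burden of the proof.
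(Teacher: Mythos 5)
Your proof is correct and follows essentially the same route as the paper: the same normalization of the pin at $v$, the same ladder of squares (obtained by decomposing $U$ into length-one pieces) for the ``if'' direction, and for the ``only if'' direction the same square-by-square propagation of the normal form via Lemma \ref{lem:square} together with the key ``pull the inert $\ell$-wire to the right'' manipulation (the paper's step $R\cdot(\epsilon(p\ell q)+F)=\left(R'\cdot(\epsilon(pq)+F)+\epsilon(\ell)\right)\cdot R''$). The only differences are organizational: you close the set $S$ under the two moves generating the hyperplane relation instead of inducting along a chain of squares, and you flag rather than execute the two computational assertions (that both endpoints of the opposite edge arise by right-multiplying the chosen representatives by one and the same diagram $D$, and the extraction of $Y+\epsilon(\ell,\cdot)$ with $Y$ independent of the group element), which are true but constitute exactly the chain of equalities the paper writes out.
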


\begin{proof}
We choose $\Delta$ so that the pin which is dual to $J$ and which contains $v$ has the form
$$\{ [ \Delta \cdot (\epsilon(m) + \epsilon(\ell,g))] \mid g \in G_{\ell} \}$$
for some diagram $\Delta$, some word $m \in \Sigma^+$ and some letter $\ell \in \Sigma$ satisfying $\mathrm{bot}^-(\Delta)=m \ell$. We want to prove that, if $e$ is an edge of $X$ dual to $J$, then its endpoints have the form
$$[\Delta \cdot (U+ \epsilon(\ell,g))] \ \text{and} \ [\Delta \cdot ( U + \epsilon(\ell,h))]$$
for some distinct $g,h \in G_{\ell}$ and some $(m, \ast)$-diagram $U$. According to \cite[Claim 10.29]{Qm}, there exists a sequence $f_0, \ldots, f_{n-1}, f_n=e$ of edges of $X$ such that $f_0$ belongs to our pin and, for every $1 \leq i \leq n-1$, the edges $f_i$ and $f_{i+1}$ are parallel in some square of $X$. We argue by induction over $n$. If $n=1$, there is nothing to prove. So suppose that $n \geq 2$. By applying our induction hypothesis, we know that the endpoints of $f_{n-1}$  have the form
$$[A]=[\Delta \cdot (U+ \epsilon(\ell,g))] \ \text{and} \ [B] = [\Delta \cdot ( U + \epsilon(\ell,h))]$$
for some distinct $g,h \in G_{\ell}$ and some $(m, \ast)$-diagram $U$. Next, since $[A]$, $[B]$ and the endpoints of $e$ define a square in $X$, we deduce from Lemma \ref{lem:square} that there exist words $x,y \in \Sigma^+$, a permutation $(m \ell, xy)$-diagram $R$, a unitary $(x, \ast)$-diagram $E$ and a unitary $(y,\ast)$-diagram $F$ such that $[B]=[A \cdot R \cdot (E+\epsilon(y))]$ and such that the endpoints of $e$ have the form $[A \cdot R \cdot (E+F)]$ and $[A \cdot R \cdot ( \epsilon(x)+ Y )]$. (There is another possible case, where $[B]=[A \cdot R ( \epsilon(x)+F)]$ and where the endpoints of $e$ have the form $[A \cdot R \cdot ( E+F)]$ and $[A \cdot R \cdot (E+ \epsilon(y))]$; we only consider the first situation, the argument for the second one being the same.) From the equalities
$$\begin{array}{lcl} [\Delta \cdot (U+ \epsilon(\ell,g)) \cdot R \cdot (E+ \epsilon(y))] & = & [B] = [\Delta \cdot (U+ \epsilon(\ell,h))] \\ \\ & = & [\Delta \cdot (U+ \epsilon(\ell,g)) \cdot (\epsilon(u)+ \epsilon(\ell,g^{-1}h)) ] \end{array}$$
it follows that $[R \cdot (E + \epsilon(y))] = [\epsilon(u)+ \epsilon(\ell,g^{-1}h)]$. As a consequence, there exist words $p,q \in \Sigma^+$ such that $E= \epsilon(p)+ \epsilon(\ell,g^{-1}h)+ \epsilon(q)$ and such that the letter $\ell$ in the word $\mathrm{bot}^-(R)=xy=p \ell q y$ labels the rightmost wire which is connected to the top side of the frame of $R$. We claim that there exist two permutation diagrams $R'$ and $R''$ such that
$$R \cdot (\epsilon(p \ell q)+F) = \left( R' \cdot ( \epsilon(pq)+F) + \epsilon(\ell)) \right) \cdot R''.$$
This assertion is justified by the fact that the wire labelled by $\ell$ in the concatenation $R \circ (\epsilon(p \ell q)+F)$ is not connected to the top side of the frame of $F$ (see Figure \ref{figure1}). Next, notice that $R \cdot (E+F)= R \cdot ( \epsilon(p)+ \epsilon(\ell,g^{-1}h) + \epsilon(q)+F)$ is obtained from $R \cdot (\epsilon (p \ell q)+F)$ just by right-multiplying by $g^{-1}h$ the second coordinate of the letter of $\Sigma(\mathcal{G})$ labelling the wire corresponding to $\ell$. Therefore, the previous equality automatically gives
$$R \cdot (E+F) = \left( R' \cdot ( \epsilon(pq)+F) + \epsilon(\ell , g^{-1}h)) \right) \cdot R''.$$
Finally, we deduce that the endpoints of $e$ are
$$[A \cdot R \cdot (\epsilon(x)+F)] = \left[\Delta \cdot \left( R' \cdot (\epsilon(pq)+F) + \epsilon(\ell) \right) \right]$$
and
$$[A \cdot R \cdot (E+F)]= \left[ \Delta \cdot \left( R' \cdot (\epsilon(pq)+F) + \epsilon( \ell, g^{-1}h) \right) \right].$$
The desired conclusion follows by noticing that $R' \cdot \epsilon(pq+F)$ is a $(m,\ast)$-diagram. 
\begin{figure}
\begin{center}
\includegraphics[trim={0 11cm 30cm 0},clip,scale=0.4]{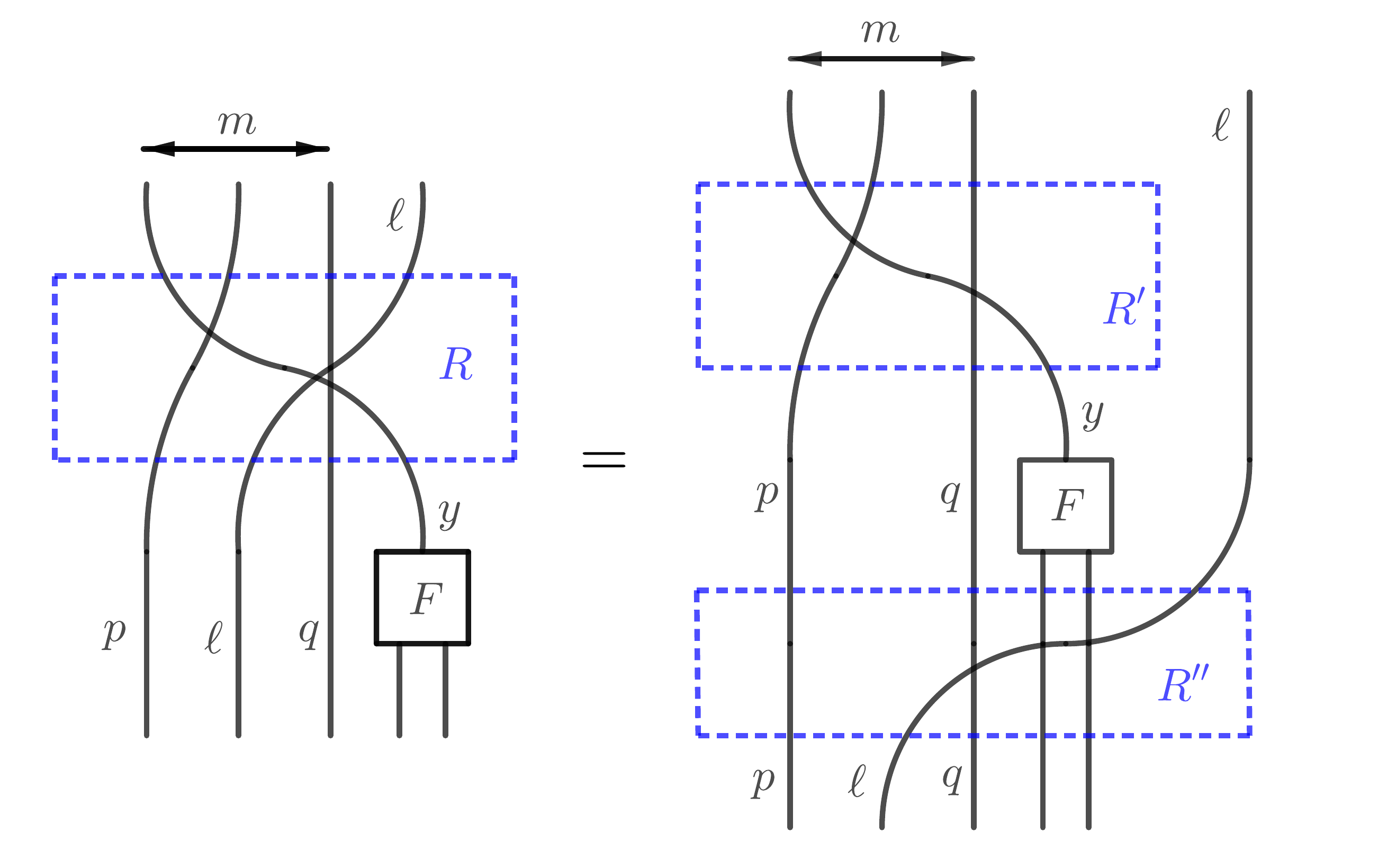}
\caption{}
\label{figure1}
\end{center}
\end{figure}

\medskip \noindent
Conversely, we want to prove that an edge $e$ of $X$ whose endpoints have the form 
$$[\Delta \cdot (U+ \epsilon(\ell,g))] \ \text{and} \ [\Delta \cdot ( U + \epsilon(\ell,h))],$$
for some distinct $g,h \in G_{\ell}$ and some $(m, \ast)$-diagram $U$, must be dual to our linear hyperplane $J$. Decompose $U$ as a concatenation $U_1 \circ \cdots \circ U_n$ such that $\# U_i=1$ for every $1 \leq i \leq n$. For every $1 \leq i \leq n$, let $e_i$ denote the edge linking the vertices
$$[\Delta \cdot ( U_1 \circ \cdots \circ U_i+ \epsilon( \ell, g))] \ \text{and} \ [\Delta \cdot ( U_1 \circ \cdots \circ U_i+ \epsilon( \ell, h))];$$
also, let $e_0$ denote the edge linking the vertices
$$[\Delta \cdot (\epsilon(m) + \epsilon( \ell, g))] \ \text{and} \ [\Delta \cdot ( \epsilon(m) + \epsilon( \ell, h))].$$
Notice that $e_0$ is contained into our pin, that $e_n=e$, and that, for every $0 \leq i \leq n-1$, the edges $e_i$ and $e_{i+1}$ are parallel in some square of $X$. Consequently, $e$ is indeed dual to $J$. 
\end{proof}

\begin{proof}[Proof of Proposition \ref{prop:rotative}.]
Let $J$ be a linear hyperplane of $X$. Fix a vertex $v \in N(J)$. Let $\Delta, m, \ell$ be respectively the diagram, the word and the letter provided by Lemma \ref{lem:linearhyp}. A consequence of this lemma is that the cliques dual to $J$ are precisely the
$$C(U)= \{ [ \Delta \cdot (U+ \epsilon(\ell, g))] \mid g \in G_{\ell} \}$$
where $U$ runs over the braided $(m,\ast)$-diagrams. Fix some braided $(m,\ast)$-diagram, and set $\mathrm{bot}^-(U)=z$. Let $D$ be an element of the stabiliser of $C(U)$. This means that, for every $g \in G_{\ell}$, there exist a permutation $(z \ell,z \ell)$-diagram $P$ and an element $h \in G_{\ell}$ such that
$$D \cdot \Delta \cdot (U+ \epsilon( \ell, g)) = \Delta ( U \cdot \epsilon (\ell,h) ) \cdot P.$$
In particular, this condition implies that there exist some permutation $(z \ell, z \ell)$-diagram $Q$ and two elements $d_1,d_2 \in G_{\ell}$ such that
$$D = \Delta \cdot (U+ \epsilon( \ell,d_1)) \cdot Q \cdot ( U^{-1}+ \epsilon( \ell, d_2)) \cdot \Delta^{-1}.$$
Thus, our condition becomes: for every $g \in G_{\ell}$, there exist a permutation $(z \ell,z \ell)$-diagram $P$ and an element $h \in G_{\ell}$ such that
$$(U+ \epsilon( \ell, d_1)) \cdot Q \cdot ( \epsilon(z)+ \epsilon( \ell, d_2g)) = (U+ \epsilon( \ell, h)) \cdot P.$$
Another equivalent formulation is: for every $g \in G_{\ell}$, there exists some $h \in G_{\ell}$ such that $(\epsilon(z)+ \epsilon(\ell,h)) \cdot Q \cdot (\epsilon(z)+ \epsilon(\ell, g))$ is a permutation $(z \ell, z \ell)$-diagram. We claim that this condition implies that $Q$ splits as a sum $Q' + \epsilon(\ell)$, where $Q'$ is permutation $(z,z)$-diagram. Indeed, the diagram $(\epsilon(z)+ \epsilon(\ell,h)) \cdot Q \cdot (\epsilon(z)+ \epsilon(\ell, g))$ is obtained from the permutation diagram $Q$ first by replacing the label of the rightmost wire connected to the top side of the frame with $(\ell,h)$, and next by multiplying by $h$ the second coordinate of the letter of $\Sigma(\mathcal{G})$ labelling the rightmost wire connected to the bottom side of the frame. The only possibility to get a permutation diagram is that $h=g^{-1}$ and that the rightmost wire connected to the top side of the frame is the same as the rightmost wire connected to the bottom side of the frame. The latter assertion precisely means that $Q$ decomposes as a sum $Q' + \epsilon(\ell)$ for some permutation $(z,z)$-diagram. 

\medskip \noindent
Thus, we have proved that the stabiliser of $C(U)$ is contained into 
$$\{ \Delta \cdot ( UQU^{-1}+ \epsilon(\ell,g)) \cdot \Delta^{-1} \mid g \in G_{\ell}, \ \text{$Q$ permutation $(z,z)$-diagram} \}.$$
The reverse inclusion is clear, so the previous set turns out to be equal to the stabiliser of $C(U)$. Consequently:

\begin{fact}\label{fact:rotativestab}
An element of the rotative-stabiliser
$$\mathrm{stab}_{\circlearrowleft} (J) = \bigcap\limits_{\text{$U$ $(m,\ast)$-diagram}} \mathrm{stab}(C(U))$$
decomposes as $\Delta \cdot (P+ \epsilon(\ell,g)) \cdot \Delta^{-1}$ for some element $g \in G_{\ell}$ and some $(m,m)$-diagram $P$ satisfying the condition: for every word $z \in \Sigma^+$ and every $(m,z)$-diagram $U$, there exists a permutation $(z,z)$-diagram $Q$ such that $P=UQU^{-1}$. 
\end{fact}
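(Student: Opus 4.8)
The plan is to read \textbf{Fact \ref{fact:rotativestab}} straight off the description of $\mathrm{stab}(C(U))$ just obtained, first by specialising to the pin containing $v$ and then by letting the clique vary. With the choice of $\Delta, m, \ell$ fixed above, the pin $C(\epsilon(m))=\{[\Delta\cdot(\epsilon(m)+\epsilon(\ell,g))]\mid g\in G_\ell\}$ is one of the cliques dual to $J$, so any $D\in\mathrm{stab}_\circlearrowleft(J)=\bigcap_U\mathrm{stab}(C(U))$ lies in $\mathrm{stab}(C(\epsilon(m)))$. Applying the displayed formula for the stabiliser of a clique with $U=\epsilon(m)$ — so that $z=\mathrm{bot}^-(\epsilon(m))=m$ and $UQU^{-1}=Q$ — I would get
$$D=\Delta\cdot(P+\epsilon(\ell,g))\cdot\Delta^{-1}$$
for some $g\in G_\ell$ and some permutation $(m,m)$-diagram $P$, which is already the shape claimed in the statement (a permutation $(m,m)$-diagram being in particular an $(m,m)$-diagram).

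Next I would verify that this particular $P$ satisfies the conjugacy condition. Fix a word $z\in\Sigma^+$ and an $(m,z)$-diagram $U$. Since $D\in\mathrm{stab}_\circlearrowleft(J)\subset\mathrm{stab}(C(U))$, the same formula provides a permutation $(z,z)$-diagram $Q$ and an element $g'\in G_\ell$ with $D=\Delta\cdot(UQU^{-1}+\epsilon(\ell,g'))\cdot\Delta^{-1}$. Comparing the two expressions for $D$ and cancelling the invertible diagram $\Delta$ on the left and $\Delta^{-1}$ on the right (valid in the groupoid of braided diagrams over $(\mathcal{P},\mathcal{G})$ modulo dipoles), I obtain
$$P+\epsilon(\ell,g)=UQU^{-1}+\epsilon(\ell,g')$$
modulo dipoles. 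It then remains to separate this into its two halves: because the two summands of a sum share no wire and no transistor and reducing a sum creates no new adjacency, the reduction of a sum is the sum of the reductions, and inside a reduced diagram of the form $D_1+D_2$ with $D_1$ an $(m,m)$-diagram and $D_2$ an $(\ell,\ell)$-diagram the part connected to the first $|m|$ positions of the top of the frame is canonically $\overline{D_1}$ and the part connected to the last position is $\overline{D_2}$. Applying this to both sides gives $P=UQU^{-1}$ modulo dipoles and $\overline{\epsilon(\ell,g)}=\overline{\epsilon(\ell,g')}$, hence $g=g'$. As $Q$ is a permutation $(z,z)$-diagram and $z,U$ were arbitrary, $P$ meets the required condition, completing the proof.

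The only step that needs a little care — the ``hard part'', such as it is — is the uniqueness of the sum decomposition of a reduced diagram, used to pass from the single equality to the two separate ones. I would justify it exactly as indicated above: a sum is formed by identifying the right side of one frame with the left side of another, so no wire is attached across the identified sides and no dipole straddles them; hence reduction operates independently on the two halves, and the partition of a reduced diagram $D_1+D_2$ into the component attached to the first $|m|$ top positions of the frame and the component attached to the last $|\ell|$ top positions is well defined and recovers $\overline{D_1}$ and $\overline{D_2}$. Everything else is a routine substitution into the clique-stabiliser formula already established in the proof.
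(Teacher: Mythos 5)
Your argument is correct and follows the paper's route: the paper deduces this Fact immediately from the clique-stabiliser computation $\mathrm{stab}(C(U))=\{\Delta\cdot(UQU^{-1}+\epsilon(\ell,g))\cdot\Delta^{-1}\}$, exactly as you do by specialising to $U=\epsilon(m)$ and then intersecting over all $(m,\ast)$-diagrams $U$. Your explicit justification that the two representations of the same element force $P=UQU^{-1}$ (via reduction of a sum being the sum of the reductions and the uniqueness of the sum decomposition of a reduced diagram) is precisely the step the paper leaves implicit, and it is sound.
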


\noindent
This condition can be restated as: for every word $z \in \Sigma^+$ and every $(m,z)$-diagram $U$, $U^{-1}PU$ is a permutation $(z,z)$-diagram. It follows from the condition (+) of our proposition that such a braided diagram must be trivial. Consequently, the rotative-stabiliser $\mathrm{stab}_{\circlearrowleft}(J)$ must be contained into
$$\{ \Delta \cdot ( \epsilon(m)+ \epsilon( \ell,g) ) \cdot \Delta^{-1} \mid g \in G_{\ell} \}.$$
The reverse inclusion is clear, so the previous set turns out to be equal to $\mathrm{stab}_{\circlearrowleft}(J)$. We record this statement for future use:

\begin{fact}\label{fact:RotativeStabWithPlus}
Let $J$ be a hyperplane. Fix a vertex $v \in N(J)$. If $\Delta, m, \ell$ are respectively the diagram, the word and the letter provided by Lemma \ref{lem:linearhyp}, then $$\mathrm{stab}_\circlearrowleft (J)= \{ \Delta \cdot (\epsilon(m)+ \epsilon(\ell,g) ) \cdot \Delta^{-1} \mid g \in G_\ell \}.$$
\end{fact}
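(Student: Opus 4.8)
The plan is to read off Fact~\ref{fact:RotativeStabWithPlus} from the structural description of $\mathrm{stab}_\circlearrowleft(J)$ already obtained in Fact~\ref{fact:rotativestab}, using hypothesis~(+) to kill the ``extra'' factor, and then to verify the (easy) reverse inclusion by a direct diagram computation. So let $D \in \mathrm{stab}_\circlearrowleft(J)$. By Fact~\ref{fact:rotativestab} we may write $D = \Delta \cdot (P + \epsilon(\ell,g)) \cdot \Delta^{-1}$ for some $g \in G_\ell$ and some braided $(m,m)$-diagram $P$ such that $U^{-1}PU$ is a permutation $(z,z)$-diagram for every word $z \in \Sigma^+$ and every braided $(m,z)$-diagram $U$. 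I would then argue that hypothesis~(+) applies to this pair $(m,\ell)$: first, $G_\ell$ is non-trivial, since otherwise the pin dual to $J$ (which, by the choice of $\Delta,m,\ell$ in Lemma~\ref{lem:linearhyp}, has the form $\{[\Delta \cdot (\epsilon(m)+\epsilon(\ell,g))] \mid g \in G_\ell\}$) would be a single vertex and could not support a hyperplane; second, there is at least one braided $(w,m\ell)$-diagram, namely $\Delta$ itself, which is a vertex of $X_b(\mathcal{P},\mathcal{G},w)$, hence a braided $(w,\ast)$-diagram with $\mathrm{bot}^-(\Delta)=m\ell$ (forgetting the second coordinates of its wires gives a diagram over $\mathcal{P}$). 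Condition~(+) therefore rules out a non-trivial such $P$, so $P = \epsilon(m)$ and $D = \Delta \cdot (\epsilon(m) + \epsilon(\ell,g)) \cdot \Delta^{-1}$; this proves the inclusion ``$\subseteq$''.

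For the reverse inclusion, fix $g \in G_\ell$ and set $D = \Delta \cdot (\epsilon(m) + \epsilon(\ell,g)) \cdot \Delta^{-1}$; I would check that $D$ stabilises every clique dual to $J$. By Lemma~\ref{lem:linearhyp} (as recorded at the start of the proof of Proposition~\ref{prop:rotative}), these cliques are precisely the sets $C(U) = \{[\Delta \cdot (U + \epsilon(\ell,h))] \mid h \in G_\ell\}$, where $U$ ranges over braided $(m,\ast)$-diagrams. Since $D \cdot \Delta = \Delta \cdot (\epsilon(m)+\epsilon(\ell,g))$, and since $\epsilon(m) \cdot U = U$ while $\epsilon(\ell,g) \circ \epsilon(\ell,h) = \epsilon(\ell,gh)$, the compatibility of $+$ with concatenation gives $(\epsilon(m)+\epsilon(\ell,g)) \circ (U + \epsilon(\ell,h)) = U + \epsilon(\ell,gh)$, which is already reduced; hence $D \cdot [\Delta \cdot (U + \epsilon(\ell,h))] = [\Delta \cdot (U + \epsilon(\ell,gh))] \in C(U)$. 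As $h \mapsto gh$ is a bijection of $G_\ell$, the element $D$ permutes $C(U)$, so $D \in \mathrm{stab}(C(U))$; intersecting over all braided $(m,\ast)$-diagrams $U$ yields $D \in \mathrm{stab}_\circlearrowleft(J)$, which is the inclusion ``$\supseteq$''.

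The only point that is not purely formal is the verification that hypothesis~(+) is genuinely applicable to the specific $(m,\ell)$ produced by Lemma~\ref{lem:linearhyp} --- namely the two observations that $G_\ell \neq 1$ (forced by $J$ being an honest hyperplane, i.e.\ its dual pin carrying at least one edge) and that some braided $(w,m\ell)$-diagram exists (witnessed by $\Delta$). Everything else is bookkeeping in the diagram calculus set up in Section~\ref{section:diagramsPG} together with the earlier structural results; in particular, once Fact~\ref{fact:rotativestab} is in hand, this statement is essentially a corollary, and the ``hard work'' --- the description of linear hyperplanes and the computation of clique stabilisers --- has already been done.
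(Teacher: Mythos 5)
Your argument is correct and is essentially the paper's own: the inclusion $\subseteq$ is obtained exactly as in the text by combining Fact \ref{fact:rotativestab} with condition (+) to force $P$ to be trivial, and the reverse inclusion (which the paper dismisses as clear) is your direct computation $D\cdot[\Delta\cdot(U+\epsilon(\ell,h))]=[\Delta\cdot(U+\epsilon(\ell,gh))]\in C(U)$ for every $(m,\ast)$-diagram $U$. Your explicit checks that (+) is applicable ($G_\ell\neq 1$ since the dual pin carries an edge, and $\Delta$ itself witnesses a braided $(w,m\ell)$-diagram after forgetting second coordinates) are correct details the paper leaves implicit.
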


\noindent
In order to conclude that this subgroup acts freely and and transitively on the set of fibers of $J$, it is sufficient to show that it acts freely and transitively on the vertices of the clique $C(\epsilon(m))$. This is clear since
$$\Delta \cdot (\epsilon(m)+ \epsilon( \ell,g)) \cdot \Delta^{-1} \cdot [ \Delta \cdot (\epsilon(m)+ \epsilon(\ell,h))] = [\Delta \cdot ( \epsilon(m)+ \epsilon(\ell,gh))]$$
for every $g,h \in G_{\ell}$. 
\end{proof}

\begin{remark}\label{remark:notrotative}
Our condition (+) in the statement of Proposition \ref{prop:rotative} turns out to be necessary, ie., if it is not satisfied then the action $D_b(\mathcal{P}, \mathcal{G},w) \curvearrowright X_b(\mathcal{P}, \mathcal{G},w)$ is not $\mathcal{J}$-transitive: the action $\mathrm{stab}_{\circlearrowleft}(J) \curvearrowright \mathcal{S}(J)$ is transitive for every linear hyperplane $J$, but is not free for some $J$. An example where the condition (+) is not satisfied is the following. Let $\mathcal{P}$ be the semigroup presentation $\langle a,p \mid a=ap \rangle$, $w=p^2a$ our baseword, and $\mathcal{G}=\{ G_a= \mathbb{Z}, G_p= \{ 1 \} \}$ our collection of groups. Let $P$ denote the permutation $(p^2,p^2)$-diagram which switches only the two braids labelled by $p$. By noticing that $P$ is the unique non-trivial braided $(p^2,\ast)$-diagram, it is clear that, for every word $z \in \Sigma^+$ and every braided $(m,z)$-diagram $U$, $U^{-1}PU$ is a permutation $(z,z)$-diagram. Therefore, the condition (+) does not hold.
\end{remark}

\begin{proof}[Proof of Theorem \ref{thm:split}.]
In order to show that Theorem \ref{thm:splittingthmQm}, it remains to show that:

\begin{claim}\label{claim:HypsCommute}
Let $J_1$ and $J_2$ be two transverse linear hyperplanes of $X_b(\mathcal{P}, \mathcal{G},w)$. Every element of $\mathrm{stab}_\circlearrowleft(J_1)$ commutes with every element of $\mathrm{stab}_\circlearrowleft(J_2)$. 
\end{claim}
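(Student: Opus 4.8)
The plan is to derive the claim from the explicit ``linear'' description of rotative stabilisers of linear hyperplanes recorded in Fact~\ref{fact:RotativeStabWithPlus}. Since $J_1$ and $J_2$ are transverse, there is a prism $C_1 \times C_2$ in $X$ with $C_i \subset J_i$; I fix a corner $v$ of this prism, so that $v \in N(J_1) \cap N(J_2)$. Applying Lemma~\ref{lem:linearhyp} and Fact~\ref{fact:RotativeStabWithPlus} first to $J_1$ at $v$ and then to $J_2$ at $v$, I obtain braided diagrams $\Delta_1, \Delta_2$ with $[\Delta_1] = [\Delta_2] = v$, words $m_1, m_2 \in \Sigma^+$ and letters $\ell_1, \ell_2 \in \Sigma$ such that
$$\mathrm{stab}_{\circlearrowleft}(J_i) = \{ \Delta_i \cdot (\epsilon(m_i) + \epsilon(\ell_i, g)) \cdot \Delta_i^{-1} \mid g \in G_{\ell_i} \}, \quad i = 1,2.$$
As $[\Delta_1] = [\Delta_2]$, I may write $\Delta_2 = \Delta_1 \cdot P$ for a permutation diagram $P$, and then (using associativity of $\cdot$) every element of $\mathrm{stab}_{\circlearrowleft}(J_2)$ takes the form $\Delta_1 \cdot L_2 \cdot \Delta_1^{-1}$ with $L_2 := P \cdot (\epsilon(m_2) + \epsilon(\ell_2, h)) \cdot P^{-1}$, while every element of $\mathrm{stab}_{\circlearrowleft}(J_1)$ takes the form $\Delta_1 \cdot L_1 \cdot \Delta_1^{-1}$ with $L_1 := \epsilon(m_1) + \epsilon(\ell_1, g)$.

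Next I would check that $L_2$ is again a linear diagram: since $P$ has no transistors, neither does $L_2$, and $L_2$ carries exactly one non-trivial group element, so $L_2 = \epsilon(a) + \epsilon(\ell_2, h) + \epsilon(b)$ with $a \ell_2 b = \mathrm{bot}^-(\Delta_1) = m_1 \ell_1$; concretely, $L_2$ marks the $k$-th wire of $\mathrm{bot}(\Delta_1)$, where $k$ is the position carried by $P$ to the last one. Meanwhile $L_1$ marks the last ($n$-th) wire of $\mathrm{bot}(\Delta_1)$. The decisive point is that $k \neq n$: the clique of $J_i$ through $v$ is exactly the pin obtained by marking the distinguished wire (take $U = \epsilon(m_i)$ in the notation of the proof of Proposition~\ref{prop:rotative}), so if $k = n$ these two pins, hence two cliques, would coincide, which would force $J_1 = J_2$ because a clique determines its hyperplane --- contradicting that transverse hyperplanes are distinct (a geodesic joining two opposite corners of the prism $C_1 \times C_2$ would otherwise cross one and the same hyperplane twice, against Theorem~\ref{thm:MainQM}).

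Finally, two linear $(m_1 \ell_1, m_1 \ell_1)$-diagrams whose marked wires are distinct commute: their concatenation contains no transistor, hence no dipole, so it is already reduced, and it is obtained by multiplying the second coordinates of the labels wire by wire; since the two non-trivial contributions lie on different wires, both orders of concatenation produce the same diagram. Thus $L_1 \cdot L_2 = L_2 \cdot L_1$, and conjugating by $\Delta_1$ gives, for all $g \in G_{\ell_1}$ and $h \in G_{\ell_2}$,
$$(\Delta_1 L_1 \Delta_1^{-1})(\Delta_1 L_2 \Delta_1^{-1}) = \Delta_1 (L_1 L_2) \Delta_1^{-1} = \Delta_1 (L_2 L_1) \Delta_1^{-1} = (\Delta_1 L_2 \Delta_1^{-1})(\Delta_1 L_1 \Delta_1^{-1}),$$
which is the claim. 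I expect the only genuinely delicate step to be the bookkeeping in the middle paragraph --- recognising $L_2$ as a linear diagram with an explicitly located marked wire, and deducing $k \neq n$ from transversality; after that the commutation reduces to a one-line wire-by-wire computation.
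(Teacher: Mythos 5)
Your proof is correct, and it reaches the commutation the same way the paper ultimately does (explicit rotative stabilisers from Fact~\ref{fact:RotativeStabWithPlus}, realised as linear diagrams conjugated by a common base diagram, supported on distinct wires), but the route to ``distinct wires'' is genuinely different. The paper applies Lemma~\ref{lem:square} to an induced square of the prism $C_1\times C_2$ at the corner vertex: the square decomposition $[A\cdot P\cdot(U+\epsilon(v))]$, $[A\cdot P\cdot(\epsilon(u)+V)]$, $[A\cdot P\cdot(U+V)]$ hands you directly a single diagram $\Delta$ with $\mathrm{bot}^-(\Delta)=m\ell_1\ell_2$ and the two stabilisers supported on the last two wires, so distinctness is automatic. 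You instead apply Lemma~\ref{lem:linearhyp} and Fact~\ref{fact:RotativeStabWithPlus} to each hyperplane separately, normalise at the common vertex by a permutation diagram, and rule out $k=n$ by observing that equality would make the two cliques through $v$ coincide, hence force $J_1=J_2$, which is incompatible with transversality. What your approach buys is that you bypass Lemma~\ref{lem:square} entirely; what it costs is that you must invoke the extra fact that a hyperplane is never transverse to itself, which the paper never states explicitly and which your parenthetical only sketches.

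That sketch is fine in substance but worth making airtight: for the length-two path across a square of the prism to be a geodesic crossing $J$ twice, you need the square to be induced, i.e.\ its opposite corners to be non-adjacent. This follows from the quasi-median axioms: a single diagonal would create an induced $K_4^-$, and both diagonals would create a triangle having an edge in the clique $C_1$ but a third vertex outside it, contradicting that cliques are gated and hence contain their triangles (Lemma~\ref{lem:GatedChepoi}); once the square is induced, both edges of the path are dual to $J_1=J_2$ and Theorem~\ref{thm:MainQM} gives the contradiction. With that point spelled out (or, alternatively, with Lemma~\ref{lem:ACliques} used to note that the two cliques of the prism at $v$ would have to coincide, collapsing the prism), your argument is complete, and the final wire-by-wire commutation and conjugation steps are exactly as in the paper.
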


\noindent
Because $J_1$ and $J_2$ are transverse, there exists a prism $C_1 \times C_2$ such that $C_1 \subset J_1$ and $C_2 \subset J_2$. Let $v$ denote the unique vertex of $C_1 \cap C_2$. As a consequence of Lemmas \ref{lem:square} and \ref{lem:linearhyp} and of Fact \ref{fact:RotativeStabWithPlus}, there exist a diagram $\Delta$, a word $m \in \Sigma^+$ and two letters $\ell_1,\ell_2 \in \Sigma$ satisfying $[\Delta]=v$ and $\mathrm{bot}^-(\Delta)=m \ell_1 \ell_2$ such that 
$$\mathrm{stab}_\circlearrowleft (J_1) = \{ \Delta \cdot (\epsilon(m) + \epsilon(\ell_1,g)+ \epsilon(\ell_2) ) \cdot \Delta^{-1} \mid g \in G_{\ell_1} \}$$
and
$$\mathrm{stab}_\circlearrowleft (J_2) = \{ \Delta \cdot (\epsilon(m \ell_1) + \epsilon(\ell_2,g) ) \cdot \Delta^{-1} \mid g \in G_{\ell_2} \}.$$
Our claim now clearly follows.

\medskip \noindent
Thus, by applying Theorem \ref{thm:splittingthmQm} thanks Proposition \ref{prop:rotative} and Claim \ref{claim:HypsCommute}, it follows that the braided picture product $D_b(\mathcal{P},\mathcal{G},w)$ splits as a semidirect product $\Gamma \rtimes \mathrm{stab}(Y)$ where $\Gamma$ is a graph product of groups which belong to $\mathcal{G}$ and where $Y$ is the maximal subgraph of $X$ containing $\epsilon(w)$ and disjoint from any linear hyperplane. Since an edge is dual to a linear hyperplane if and only if it is labelled by a linear diagram, it follows that $Y$ is the subgraph containing all the braided $(w,\ast)$-diagram whose wires are labelled by letters of $\Sigma(\mathcal{G})$ with trivial second coordinates. Consequently, $\mathrm{stab}(Y)$ is the subgroup of all the braided $(w,w$)-diagram whose wires are labelled by letters of $\Sigma(\mathcal{G})$ with trivial second coordinates, ie., $\mathrm{stab}(Y)$ is the canonical copy of $D_b(\mathcal{P},w)$ in $D_b(\mathcal{P},\mathcal{G},w)$. 
\end{proof}

\subsection{Planar and annular picture products}

\noindent
Let $\mathcal{P}= \langle \Sigma \mid \mathcal{R} \rangle$ be a semigroup presentation and $\mathcal{G}$ a collection of groups indexed by $\Sigma$. As we defined braided diagrams over $(\mathcal{P},\mathcal{G})$ as braided diagrams over $\mathcal{P}(\mathcal{G})$, we may define \emph{planar} (resp. \emph{annular}) \emph{diagrams over $(\mathcal{P}, \mathcal{G})$} as planar (resp. annular) diagrams over $\mathcal{P}(\mathcal{G})$. All the terminology about braided diagrams overs $(\mathcal{P},\mathcal{G})$ introduced in Section \ref{section:diagramsPG} applies to planar and annular diagrams over $(\mathcal{P}, \mathcal{G})$, except for the sum since the sum of two annular diagrams is not necessarily annular. 

\begin{definition}
Let $\mathcal{P}= \langle \Sigma \mid \mathcal{R} \rangle$ be a semigroup presentation, $\mathcal{G}$ a collection of groups indexed by $\Sigma$, and $w \in \Sigma^+$ a baseword. The \emph{planar picture product} $D(\mathcal{P},\mathcal{G},w)$ is the group of all planar $(w,w)$-diagrams over $(\mathcal{P},\mathcal{G})$, modulo dipoles, endowed with the concatenation. Similarly, the \emph{annular product} $D_a(\mathcal{P}, \mathcal{G},w)$ is the group of all annular $(w,w)$-diagrams over $(\mathcal{P}, \mathcal{G})$, modulo dipoles, endowed with the concatenation.
\end{definition}

\noindent
In the same way that $D(\mathcal{P},w) \subset D_a(\mathcal{P},w) \subset D_b(\mathcal{P},w)$, one has
$$D(\mathcal{P}, \mathcal{G},w) \subset D_a(\mathcal{P}, \mathcal{G},w) \subset D_b(\mathcal{P},\mathcal{G},w).$$
It is worth noticing that a prefix of a planar (resp. annular) diagram remains planar (resp. annular), so that it follows from Lemma \ref{lem:Xgeod} that the subgraph $X(\mathcal{P}, \mathcal{G}) \subset X_b(\mathcal{P},\mathcal{G})$ (resp. $X_a(\mathcal{P}, \mathcal{G}) \subset X_b(\mathcal{P}, \mathcal{G})$) generated by planar (resp. annular) diagrams is convex. Moreover, the connected component $X(\mathcal{P}, \mathcal{G},w)$ (resp. $X_a(\mathcal{P}, \mathcal{G},w)$) containing the trivial diagram $\epsilon(w)$ is clearly $D(\mathcal{P}, \mathcal{G},w)$-invariant (resp. $D_a(\mathcal{P}, \mathcal{G},w)$). Consequently, we get an action of any planar (resp. annular) picture product on a quasi-median graph.

\medskip \noindent
Let $\mathcal{J}$ denote the collection of hyperplanes of $X(\mathcal{P}, \mathcal{G},w)$ (resp. $X_a(\mathcal{P},\mathcal{G},w)$) coming from linear hyperplanes of $X_b(\mathcal{P}, \mathcal{G},w)$. Interestingly, by following the proof of Proposition \ref{prop:rotative}, it can be proved that these actions are rotative without any additional assumption.

\begin{prop}
The planar picture product $D(\mathcal{P},\mathcal{G},w)$ (resp. the annular picture product $D_a(\mathcal{P},\mathcal{G},w)$) acts $\mathcal{J}$-rotatively on $X(\mathcal{P}, \mathcal{G},w)$ (resp. $X_a(\mathcal{P},\mathcal{G},w)$).
\end{prop}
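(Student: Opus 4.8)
The plan is to reproduce the proof of Proposition~\ref{prop:rotative} almost word for word, working inside the subgraph $X(\mathcal{P},\mathcal{G},w)$ (resp. $X_a(\mathcal{P},\mathcal{G},w)$) instead of $X_b(\mathcal{P},\mathcal{G},w)$, and replacing systematically ``braided diagram'' by ``planar diagram'' (resp. ``annular diagram''). The first step is to check that the preliminary machinery survives this restriction. Since a prefix of a planar (resp. annular) diagram is again planar (resp. annular), Lemma~\ref{lem:Xgeod} shows that $X(\mathcal{P},\mathcal{G},w)$ (resp. $X_a(\mathcal{P},\mathcal{G},w)$) is a convex subgraph of the quasi-median graph $X_b(\mathcal{P},\mathcal{G},w)$; in particular it is itself quasi-median, and its geodesics, squares, cliques and hyperplanes are the traces of those of $X_b(\mathcal{P},\mathcal{G},w)$. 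Consequently the descriptions of squares (Lemma~\ref{lem:square}) and of linear hyperplanes (Lemma~\ref{lem:linearhyp}) hold in the restricted graph, with ``diagram'' read as ``planar (resp. annular) diagram''. Carrying out the clique-stabiliser computation of the proof of Proposition~\ref{prop:rotative} then yields the exact analogue of Fact~\ref{fact:rotativestab}: given $J\in\mathcal{J}$, after choosing a vertex $v=[\Delta]\in N(J)$ with $\mathrm{bot}^-(\Delta)=m\ell$ as in Lemma~\ref{lem:linearhyp}, every element of $\mathrm{stab}_{\circlearrowleft}(J)$ is of the form $\Delta\cdot(P+\epsilon(\ell,g))\cdot\Delta^{-1}$ for some $g\in G_{\ell}$ and some planar (resp. annular) $(m,m)$-diagram $P$ such that $U^{-1}PU$ is a permutation $(z,z)$-diagram for every planar (resp. annular) $(m,z)$-diagram $U$.

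The point of the proposition is that here $P$ is forced to be trivial without any analogue of condition (+), and proving this is the next step. Specialising $U=\epsilon(m)$ in the displayed condition shows that $P$ itself is a permutation $(m,m)$-diagram, hence contains no transistor, so that $P+\epsilon(\ell,g)$ is a transistor-free diagram whose underlying permutation of its $|m|+1$ wires fixes the rightmost one, namely the $\ell$-wire coming from $\epsilon(\ell,g)$. On the other hand, $\Delta$, $\Delta^{-1}$ and $\Delta\cdot(P+\epsilon(\ell,g))\cdot\Delta^{-1}$ are all planar (resp. annular), and a concatenation of planar (resp. annular) diagrams --- reduced or not --- is planar (resp. annular); hence
$$P+\epsilon(\ell,g)=\Delta^{-1}\cdot\big(\Delta\cdot(P+\epsilon(\ell,g))\cdot\Delta^{-1}\big)\cdot\Delta$$
is planar (resp. annular). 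Now a transistor-free planar diagram realises the identity permutation, since two wires inducing an inversion would be forced to cross; and a transistor-free annular diagram realises a cyclic rotation of its wires, which in the present situation fixes the $\ell$-wire and is therefore the identity. In both cases $P+\epsilon(\ell,g)$ induces the identity permutation, so $P=\epsilon(m)$ is trivial, and
$$\mathrm{stab}_{\circlearrowleft}(J)=\big\{\,\Delta\cdot(\epsilon(m)+\epsilon(\ell,g))\cdot\Delta^{-1}\ \big|\ g\in G_{\ell}\,\big\}.$$

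It remains to check that this subgroup acts freely and transitively on the set of fibers of $J$. Exactly as at the end of the proof of Proposition~\ref{prop:rotative}, it suffices to check this on the clique $C(\epsilon(m))=\{[\Delta\cdot(\epsilon(m)+\epsilon(\ell,g))]\mid g\in G_{\ell}\}$ --- which is a clique dual to $J$ inside the restricted graph, all the diagrams involved being planar (resp. annular), and whose vertices are in bijection with the fibers of $J$ via the projection onto it (Theorem~\ref{thm:MainQM}). Both properties follow at once from the identity
$$\Delta\cdot(\epsilon(m)+\epsilon(\ell,g))\cdot\Delta^{-1}\cdot[\Delta\cdot(\epsilon(m)+\epsilon(\ell,h))]=[\Delta\cdot(\epsilon(m)+\epsilon(\ell,gh))],\qquad g,h\in G_{\ell}.$$
The main obstacle is the middle step: in the braided setting a non-trivial permutation $(m,m)$-diagram $P$ (for instance a transposition) may well satisfy ``$U^{-1}PU$ is a permutation for every $U$'', which is exactly why condition (+) had to be imposed in Proposition~\ref{prop:rotative}; what makes the planar and annular cases work unconditionally is the rigidity of transistor-free diagrams in the plane and in the annulus, combined with the remark that conjugating back by the planar (resp. annular) diagram $\Delta$ keeps $P+\epsilon(\ell,g)$ inside the relevant class.
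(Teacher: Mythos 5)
Your proposal is correct and follows essentially the same route as the paper: the paper likewise observes that condition (+) was only needed to force the permutation diagram $P$ of Fact \ref{fact:rotativestab} to be trivial, and replaces it by the observation that if $P+U$ is planar (resp. annular) with $P$ a permutation diagram and $U$ a linear diagram, then $P$ is trivial (planar permutation diagrams are trivial; in the annular case one cuts the annulus along a wire of $U$). Your version merely rephrases this rigidity as ``transistor-free planar diagrams induce the identity, transistor-free annular diagrams induce a rotation fixing the $\ell$-wire'' and makes explicit the conjugation by $\Delta^{\pm 1}$ showing $P+\epsilon(\ell,g)$ is planar (resp. annular), a step the paper leaves implicit.
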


\begin{proof}
Let $J \in \mathcal{J}$. For convenience, let $S$ (resp. $S_a$, $S_b$) denote the rotative stabiliser of $J$ in $D(\mathcal{P}, \mathcal{G},w)$ (resp. in $D_a(\mathcal{P}, \mathcal{G},w)$, $D_b(\mathcal{P}, \mathcal{G},w)$). Notice that $S$ (resp. $S_a$) is the subgroup of all the planar (annular) diagrams of $S_b$. Notice that, during the proof of Proposition \ref{prop:rotative}, the condition (+) is only used to show that the braided diagram $P$ in Fact \ref{fact:rotativestab} has to be trivial. Therefore, the condition (+) can be replaced with the following fact to deduce that the actions we consider are rotative.

\begin{fact}
Let $P$ be a permutation diagram and $U$ a linear diagram. If $P+U$ is planar or annular, necessarily $P$ is trivial. 
\end{fact}

\noindent
The statement is clear if $P+U$ is planar, since $P$ must be planar as well and any planar permutation diagram is necessarily trivial. Next, suppose that $P+U$ is annular, and represent this diagram on an annulus so that any two distinct wires are disjoint. Cutting this annulus along a wire of $U$ shows that $P$ must be a planar permutation diagram, and a fortiori has to be trivial. 
\end{proof}

\noindent
Finally, by reproducing the proof of Theorem \ref{thm:split}, one gets:

\begin{thm}\label{thm:splittingpa}
The planar picture product $D(\mathcal{P},\mathcal{G},w)$ (resp. the annular picture product $D_a(\mathcal{P},\mathcal{G},w)$) splits as a semidirect product $\Gamma \rtimes D(\mathcal{P}, w)$ (resp. $\Gamma \rtimes D_a(\mathcal{P},w)$) where $\Gamma$ is a graph product of groups which belong to $\mathcal{G}$ and where $D(\mathcal{P},w)$ (resp. $D_a(\mathcal{P},w)$) coincides with the subgroup whose diagrams have their wires labelled by letters of $\Sigma(\mathcal{G})$ with trivial second coordinates. 
\end{thm}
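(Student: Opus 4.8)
The plan is to reproduce the proof of Theorem \ref{thm:split} almost verbatim, working inside the convex subgraph $X(\mathcal{P},\mathcal{G},w) \subset X_b(\mathcal{P},\mathcal{G},w)$ (resp. $X_a(\mathcal{P},\mathcal{G},w)$) instead of the full braided graph. First I would record the standard fact that a convex subgraph of a quasi-median graph is again quasi-median: the triangle and quadrangle conditions are inherited because geodesics between vertices of the subgraph remain inside it, and neither $K_4^-$ nor $K_{3,2}$ can occur as an induced subgraph since it would already be induced in $X_b(\mathcal{P},\mathcal{G},w)$. Combined with the observation already made in the text that prefixes of planar (resp. annular) diagrams stay planar (resp. annular), this gives that $X(\mathcal{P},\mathcal{G},w)$ (resp. $X_a(\mathcal{P},\mathcal{G},w)$) is quasi-median and that the planar (resp. annular) picture product acts on it. The action is $\mathcal{J}$-rotative, where $\mathcal{J}$ is the family of hyperplanes coming from the linear hyperplanes of $X_b(\mathcal{P},\mathcal{G},w)$, by the preceding proposition.

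The only genuinely new point to verify is the commutation hypothesis of Theorem \ref{thm:splittingthmQm}, namely that $\mathrm{stab}_\circlearrowleft(J_1)$ and $\mathrm{stab}_\circlearrowleft(J_2)$ commute for transverse $J_1,J_2 \in \mathcal{J}$. For this I would first note that Lemmas \ref{lem:square} and \ref{lem:linearhyp}, together with Facts \ref{fact:rotativestab} and \ref{fact:RotativeStabWithPlus}, transfer without change to $X(\mathcal{P},\mathcal{G},w)$ (resp. $X_a(\mathcal{P},\mathcal{G},w)$): by convexity, every square of this graph is a square of $X_b(\mathcal{P},\mathcal{G},w)$ and every geodesic between two of its vertices is a geodesic of $X_b(\mathcal{P},\mathcal{G},w)$, so the structural descriptions of squares and of edges dual to a linear hyperplane apply directly, and all the elementary pieces $P$, $U$, $V$, $\Delta$ occurring there, being prefixes or suffixes of planar (resp. annular) diagrams, are planar (resp. annular); moreover the computation of $\mathrm{stab}_\circlearrowleft(J)$ in Fact \ref{fact:RotativeStabWithPlus} used condition (+) only to force the middle diagram $P$ to be trivial, and the preceding proposition already supplies exactly this in the planar/annular setting. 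Then, exactly as in Claim \ref{claim:HypsCommute}, one picks the common vertex $v=[\Delta]$ of a prism $C_1 \times C_2$ with $C_1 \subset J_1$, $C_2 \subset J_2$, arranges $\mathrm{bot}^-(\Delta)=m\ell_1\ell_2$, and reads off
$$\mathrm{stab}_\circlearrowleft(J_1) = \{ \Delta \cdot (\epsilon(m) + \epsilon(\ell_1,g) + \epsilon(\ell_2)) \cdot \Delta^{-1} \mid g \in G_{\ell_1} \},$$
$$\mathrm{stab}_\circlearrowleft(J_2) = \{ \Delta \cdot (\epsilon(m\ell_1) + \epsilon(\ell_2,g)) \cdot \Delta^{-1} \mid g \in G_{\ell_2} \},$$
and these commute since the underlying linear diagrams are disjoint sums.

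With all hypotheses of Theorem \ref{thm:splittingthmQm} verified, it yields $D(\mathcal{P},\mathcal{G},w) = \mathrm{Rot}(\mathcal{J}) \rtimes \mathrm{stab}(Y)$ (resp. with $D_a$), where $Y$ is the maximal subgraph containing $\epsilon(w)$ and disjoint from every hyperplane of $\mathcal{J}$, and $\mathrm{Rot}(\mathcal{J})$ is a graph product of the rotative stabilisers; by Fact \ref{fact:RotativeStabWithPlus} each $\mathrm{stab}_\circlearrowleft(J)$ is a conjugate of a copy of some $G_\ell \in \mathcal{G}$, so $\mathrm{Rot}(\mathcal{J})$ is a graph product $\Gamma$ of groups belonging to $\mathcal{G}$. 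Finally I would identify $\mathrm{stab}(Y)$: an edge is dual to a hyperplane of $\mathcal{J}$ exactly when it is labelled by a linear diagram, so $Y$ consists of the planar (resp. annular) $(w,\ast)$-diagrams all of whose wires carry letters of $\Sigma(\mathcal{G})$ with trivial second coordinate, whence $\mathrm{stab}(Y)$ is the subgroup of $(w,w)$-diagrams with trivial second coordinates, i.e. the canonical copy of $D(\mathcal{P},w)$ (resp. $D_a(\mathcal{P},w)$). This is the asserted splitting. The bulk of the work — and the only place that requires care — is the bookkeeping that convexity of $X(\mathcal{P},\mathcal{G},w)$ (resp. $X_a(\mathcal{P},\mathcal{G},w)$) inside $X_b(\mathcal{P},\mathcal{G},w)$ genuinely lets Lemmas \ref{lem:square}, \ref{lem:linearhyp} and Facts \ref{fact:rotativestab}, \ref{fact:RotativeStabWithPlus} be reused unchanged; in the annular case one must also keep in mind that the sum operation is not internal to annular diagrams, so the sums appearing in those statements must be read as occurring inside a bona fide annular diagram rather than as free-standing annular diagrams.
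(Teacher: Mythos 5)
Your proposal is correct and takes essentially the same route as the paper: the paper's proof is precisely ``reproduce the proof of Theorem \ref{thm:split}'', with condition (+) replaced by the preceding rotativity proposition for the planar/annular actions, which is exactly what you do. Your additional bookkeeping (convexity of $X(\mathcal{P},\mathcal{G},w)$ and $X_a(\mathcal{P},\mathcal{G},w)$ making Lemmas \ref{lem:square}, \ref{lem:linearhyp} and Facts \ref{fact:rotativestab}, \ref{fact:RotativeStabWithPlus} transfer, and the caveat about sums in the annular case) is consistent with, and merely makes explicit, what the paper leaves implicit.
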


\begin{remark}
It is worth noticing that, according to \cite[Section 10.1]{Qm}, planar picture products coincide with diagram products introduced in \cite{MR1725439}. Moreover, the quasi-median graph $X(\mathcal{P},\mathcal{G},w)$ is the same as the graph which we introduced in \cite{Qm} (with a slightly, but equivalent, language). Also, Theorem \ref{thm:splittingpa} for planar picture products is (a weak version of) \cite[Theorem 10.58]{Qm}. 
\end{remark}

\section{Universal picture products}\label{section:universalpictureproducts}

\subsection{Main embedding}

\noindent
In this section, our goal is to introduce a class of braided picture products and to show that any braided diagram group embeds into one group of this collection. The braided picture products we are interested in are the following:

\begin{definition}
For every cardinal $\alpha$, let $\mathscr{V}_{\alpha}$ denote the braided picture product $D_b(\mathcal{Q}, \mathcal{G},x)$ where $\mathcal{Q}$ is the semigroup presentation $\langle x \mid x=x^2 \rangle$ and where $\mathcal{G}$ contains a single free group of rank $\alpha$. 
\end{definition}

\noindent
The next theorem shows that any braided diagram group embeds into $\mathscr{V}_\alpha$ for some $\alpha$. Moreover, when braided diagram groups and the $\mathscr{V}_\alpha$'s are endowed with natural metrics, our embedding turns out to be biLipschitz. These metrics come from two length functions, the first one defined for braided diagram groups and the second one for the $\mathscr{V}_\alpha$'s specifically. For convenience, we refer to both of them as \emph{diagram lengths}. In a braided diagram group, the \emph{diagram length} of a reduced diagram corresponds to the number of transistors it contains. And in $\mathscr{V}_\alpha$, once a free basis of the group of $\mathcal{G}$ is fixed, we define the \emph{diagram length} of a reduced braided diagram of $\mathscr{V}_{\alpha}$ as the sum of its number of transistors with the sum of all the lengths of the second coordinates of the letters of $\Sigma(\mathcal{G})$ labelling its wires.

\begin{thm}\label{thm:universal}
Let $\mathcal{P} = \langle \Sigma \mid \mathcal{R} \rangle$ be a semigroup presentation and $w \in \Sigma^+$ a baseword. If $\kappa$ denotes the cardinality of $\mathcal{R}$, then the braided diagram group $D_b(\mathcal{P},w)$ embeds into $\mathscr{V}_{\alpha}$ for every $\alpha \geq \kappa$. Moreover, if $\mathcal{R}$ is finite, then the embedding is biLipschitz with respect to diagram lengths. 
\end{thm}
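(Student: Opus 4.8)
The plan is to build an explicit embedding $D_b(\mathcal{P},w)\hookrightarrow \mathscr{V}_\alpha$ by encoding every letter of $\Sigma$ and every relation of $\mathcal{R}$ into the single ``expansion'' relation $x=x^2$ of $\mathcal{Q}$, using the free group factor of $\mathcal{G}$ to record which letter and which relation a given wire or transistor came from. More precisely, fix a free basis $\{e_r\mid r\in\mathcal{R}\}$ of the free group $F_\alpha$ of $\mathcal{G}$ (this is where $\alpha\geq\kappa=|\mathcal{R}|$ is needed). First I would set up a coding of words: to a word $u=s_1\cdots s_n\in\Sigma^+$ associate a dyadic subdivision of $[0,1]$ into $n$ pieces, i.e.\ a $(x,x^n)$-diagram (a ``tree'' over $\mathcal{Q}$) with $n$ leaves, and decorate the $i$-th leaf-wire with a linear diagram whose second coordinate is a fixed element $f(s_i)\in F_\alpha$ depending only on the letter $s_i$ (e.g.\ $f(s_i)=e_{r}^{k}$ where $r,k$ encode the letter in some injective way; the key requirement is that distinct letters get distinct decorations and that no decoration is trivial). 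This sends the baseword $w$ to a fixed $(x,\ast)$-diagram $W$ over $(\mathcal{Q},\mathcal{G})$, and conjugating by $W$ lets us assume the basepoint of $\mathscr{V}_\alpha$ is $W$ itself — or, equivalently, work in $D_b(\mathcal{Q},\mathcal{G},\mathrm{bot}^-(W))=\mathscr{V}_\alpha$ via the groupoid picture of Section~\ref{section:QMgeom}.

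Next I would define the map on diagrams. A braided $(w,w)$-diagram $\Delta$ over $\mathcal{P}$ is built from permutation diagrams, linear-type pieces, and transistor diagrams over $\mathcal{P}$; I send each transistor of $\Delta$ coming from a relation $r:(u=v)\in\mathcal{R}$ to a small gadget over $(\mathcal{Q},\mathcal{G})$ that (i) contracts the $|u|$ incoming wires to a single $x$-wire via a binary ``cotree'', (ii) passes through a linear diagram whose second coordinate is $e_r$, recording the relation used, and (iii) expands back out to $|v|$ wires via a binary tree, the leaves again decorated by $f(v_1),\dots,f(v_{|v|})$; permutation diagrams over $\mathcal{P}$ map to the induced permutation diagrams over $(\mathcal{Q},\mathcal{G})$ (blown up according to the leaf-counts of the words being permuted). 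Concatenation is respected by construction, so this yields a homomorphism $\Phi:D_b(\mathcal{P},w)\to\mathscr{V}_\alpha$. Injectivity is where the free-group decorations do the work: from a reduced diagram of $\mathscr{V}_\alpha$ in the image, the $e_r$-decorations on the central wires of the gadgets let us read off, unambiguously, which transistor of $\mathcal{P}$ each gadget corresponds to, and the $f(s_i)$-decorations on the leaves let us recover the words on each level; thus $\Phi$ is determined and one can reconstruct $\Delta$ from $\Phi(\Delta)$ after reduction. The main subtlety — and I expect this to be the core of the argument — is that dipole reduction must be compatible on both sides: a dipole in $\Delta$ (a relation applied and then immediately cancelled) must map to a configuration in $\Phi(\Delta)$ that reduces away, and conversely no spurious dipoles should appear in $\Phi(\Delta)$ that are not visible in $\Delta$. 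This is exactly where condition ``no decoration trivial'' and ``distinct $e_r$'' are used: a dipole over $(\mathcal{Q},\mathcal{G})$ requires the intermediate wires to carry trivial second coordinates, so the $e_r$-labels block any accidental cancellation, and the only dipoles that survive in the image are the images of dipoles of $\Delta$. I would verify this by tracking, for a single dipole, the gadget-by-gadget picture (as in Figure~\ref{figure10}-style diagram chasing), noting that the two stacked gadgets for $r:(u=v)$ and its reverse use the basis element $e_r$ with opposite contribution only if the decorations match, which forces the underlying $\mathcal{P}$-dipole condition.

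For the biLipschitz statement, assume $\mathcal{R}$ is finite, so each word $u$ appearing as a side of a relation, as well as $w$, has bounded length; hence each transistor of $\mathcal{P}$ is encoded by a gadget with a bounded number of transistors (at most $|u|+|v|-2$) and a bounded number of nontrivial-decorated wires (at most $|u|+|v|$, each decoration of bounded word-length in $F_\alpha$), and the leaf-decorations $f(s_i)$ likewise have bounded word-length. Therefore if $\Delta$ is reduced with $N$ transistors, its diagram length (number of transistors) is $N$, while $\mathrm{diagram\ length}(\Phi(\Delta))$ — counting transistors of the reduction of $\Phi(\Delta)$ plus total word-length of second coordinates — is at most $CN+C$ for a constant $C$ depending only on $\mathcal{P}$. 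For the reverse inequality I would argue that each transistor and each nontrivially-decorated wire of the reduced image must be ``charged'' to a transistor of $\Delta$: indeed the reduction of $\Phi(\Delta)$ is $\Phi(\bar\Delta)$ where $\bar\Delta$ is the reduction of $\Delta$ (this is the compatibility-of-reductions point above), and $\Phi(\bar\Delta)$ visibly contains at least one $e_r$-decorated wire per transistor of $\bar\Delta$, so $N=\#\bar\Delta\leq \mathrm{diagram\ length}(\Phi(\bar\Delta))=\mathrm{diagram\ length}(\Phi(\Delta))$. Combining the two bounds gives the biLipschitz equivalence with constants depending only on $\mathcal{P}$. The one point that needs care here is that reduction of $\Phi(\Delta)$ genuinely equals $\Phi$ of the reduction — i.e.\ that $\Phi$ descends correctly to the quotient by dipoles and does not lose or create length — which again reduces to the single-dipole analysis above; once that is in hand, both inequalities are straightforward counting.
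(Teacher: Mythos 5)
Your gadget (contract the $|u|$ incoming wires to a single $x$-wire, pass through a wire decorated by a basis element $e_r$ recording the relation, expand to $|v|$ wires) is essentially the paper's block $\Gamma_{|u|}^{-1}\cdot\epsilon(x,R^{\pm 1})\cdot\Gamma_{|v|}$, but the extra letter-decorations $f(s_i)$ you place on the leaf wires break the construction: the map you define does not descend to the diagram group. The point is the one you yourself invoke, turned against you: a dipole over $(\mathcal{Q},\mathcal{G})$ can only be reduced if the wires joining the two transistors carry \emph{trivial} second coordinates. Now take a genuine dipole of $\Delta$, i.e.\ a transistor $T_1$ for $r:(u=v)$ immediately followed by its inverse $T_2$. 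In your image, the expansion tree at the bottom of the gadget for $T_1$ is glued to the contraction cotree at the top of the gadget for $T_2$ along wires decorated by $f(v_1),\dots,f(v_{|v|})\neq 1$, so \emph{no} dipole reduction is available there, and the central wires carrying $e_r$ and $e_r^{-1}$ are separated by those transistors and can never merge. Concretely, for $\mathcal{P}=\langle a\mid a=a^2\rangle$ and $\Delta$ the $(a,a)$-dipole, $\Delta$ is trivial modulo dipoles while $\Phi(\Delta)$ reduces to a diagram that still contains two transistors (with the blocking labels $f(a)$ in between); hence $\Phi$ is not constant on classes modulo dipoles, equivalently it is not a homomorphism on $D_b(\mathcal{P},w)$. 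The same flaw invalidates the step your biLipschitz argument leans on, namely ``the reduction of $\Phi(\Delta)$ is $\Phi(\bar\Delta)$''. (There is also a minor set-theoretic snag: an injective, nowhere-trivial $f:\Sigma\to\mathbb{F}_\alpha$ need not exist, e.g.\ when $\mathcal{R}=\emptyset$ or when $\Sigma$ has larger cardinality than $\mathbb{F}_\alpha$.)

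The fix is exactly what the paper does: decorate \emph{only} the central wire of each gadget (by the basis element $R^{\pm 1}$ of the relation) and relabel every other wire by $x$ with trivial second coordinate, i.e.\ forget the letters of $\Sigma$ altogether. Then the image of a dipole collapses, since the intermediate trees cancel and the central labels multiply to $RR^{-1}=1$, so $\Psi$ is a well-defined morphism. Injectivity does not need to ``read off the words from leaf decorations'': since we are dealing with a group homomorphism it suffices to show the kernel is trivial, and the paper gets this from a block-tracking argument, showing that when $\Delta$ is reduced, reducing the dipoles of transistors in $\Psi(\Delta)$ preserves the one-block-per-transistor structure (compatibly with the orders $\prec$ and $\prec_o$) and terminates in a reduced diagram still containing one $R^{\pm 1}$-decorated wire per transistor of $\Delta$. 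That count simultaneously gives $\mathrm{length}(\Delta)\le\mathrm{length}(\Psi(\Delta))\le(K+1)\,\mathrm{length}(\Delta)$ with $K=\max\{|u|,|v| : u=v\in\mathcal{R}\}$, which is the biLipschitz statement; this careful compatibility-of-reductions analysis is precisely the content your proposal asserts but, with the leaf decorations in place, cannot deliver.
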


\begin{proof}
Recall that a transistor $T$ in some braided diagram over $\mathcal{P}$ is labelled by a relation $u=v$ such that either $u=v$ or $v=u$ belongs to $\mathcal{R}$ (more precisely, $u$ and $v$ are respectively the top and bottom labels of the transistor). In the former case, we will say that $T$ is labelled by $u=v$, and in the latter case, that $T$ is labelled by $(v=u)^{-1}$. Therefore, transistors are labelled by $\mathcal{R} \sqcup \mathcal{R}^{-1}$. Moreover, since $\alpha \geq \kappa$, we can consider $\mathcal{R}$ as a subset of a free basis of the free group of $\mathcal{G}= \{ \mathbb{F}_{\alpha} \}$, so that the transistors of any braided diagram over $\mathcal{P}$ will be labelled by elements of $\mathbb{F}_{\alpha}$.

\medskip \noindent
Let $\{ \Gamma_n \mid n \geq 0 \}$ be the collection of braided diagrams over $\mathcal{Q}$ constructed by induction in the following way:
\begin{itemize}
	\item set $\Gamma_0= \epsilon(x)$;
	\item construct $\Gamma_{i+1}$ from $\Gamma_i$ by adding a positive transistor $T$ and by gluing the top endpoint of the wire which connected to the top side of $T$ to the bottom endpoint of the leftmost wire of $\Gamma_i$ which is connected to the bottom side of the frame.
\end{itemize}
\begin{figure}
\begin{center}
\includegraphics[trim={0 17cm 50cm 0},clip,scale=0.4]{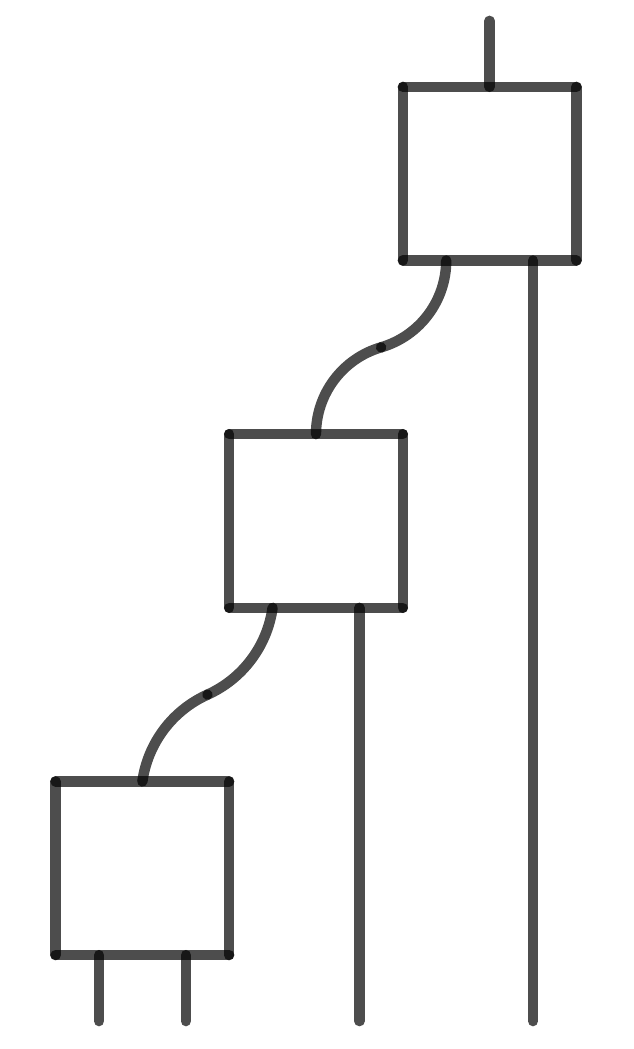}
\caption{The braided diagram $\Gamma_3$.}
\label{figure13}
\end{center}
\end{figure}
For instance, $\Gamma_3$ is illustrated by Figure \ref{figure13}. Now, let $D_b(\mathcal{P})$ denote the groupoid of braided diagrams over $\mathcal{P}$, modulo dipoles, endowed with concatenation; and $D_b(\mathcal{Q}, \mathcal{G})$ the groupoid of braided diagrams over $(\mathcal{Q}, \mathcal{G})$, modulo dipoles, endowed with concatenation. We want to define an injective morphism of groupoids
$$\Psi : D_b(\mathcal{P}) \to D_b(\mathcal{Q},\mathcal{G}).$$
Because $D_b(\mathcal{P})$ is generated by the permutation and transistor diagrams over $\mathcal{P}$, it is sufficient to define their images under $\Psi$. If $P$ is a permutation diagram, let $\Psi(P)$ be the braided diagram over $\mathcal{Q}$ obtained from $P$ by relabelling all the wires of $P$ by $x$. If $T$ is a transistor diagram, decompose it as a sum $\epsilon(a)+ T'+ \epsilon(b)$, where $T'$ is a transistor diagram all of whose wires are connected to the transistor of $T'$. Notice that this decomposition is unique. Let $t$ and $m$ denote respectively the length of the word $\mathrm{top}(T')$ and $\mathrm{bot}(T')$, and let $R^{\pm 1} \in \mathcal{R} \sqcup \mathcal{R}^{-1}$ be the ordered relation corresponding to $T'$ (which can be thought of as an element of $\mathbb{F}_\alpha$ as explained above). Now define 
$$\Psi(T)= \epsilon \left( x^{\mathrm{length}(a)} \right) + \Gamma_t^{-1} \cdot \epsilon \left( x,R^{\pm 1} \right) \cdot \Gamma_m + \epsilon \left( x^{\mathrm{length}(b)} \right)$$
where $\epsilon(x,R^{\pm 1})$ is a single wire labelled by the letter $(x,R^{\pm 1}) \in \Sigma(\mathcal{G})$. In other words, if $\Delta$ is a braided diagram over $\mathcal{P}$, the diagram $\Psi(\Delta)$ is obtained from $\Delta$ by relabelling all its wires by $x$ and by replacing each of its transistors with a block 
$$\epsilon(x^n) + \Gamma_p^{-1} \cdot \epsilon \left( x,R^{\pm 1} \right) \cdot \Gamma_q + \epsilon(x^m)$$
for some integers $n,m,p,q \geq 0$ and some relation $R \in \mathcal{R}$. Such a block will be called a \emph{$(p,q)$-block}, or a \emph{$(\ast,q)$-block} (resp. a \emph{$(p, \ast)$-block}) if we do not want to mention $p$ (resp. $q$) explicitly. See Figure \ref{figure14} for an example with respect to the semigroup presentation $\langle a,b,p \mid ap^2=a, pb=bp \rangle$. As a consequence, the diagram $\Psi(\Delta)$ does not depend on the decomposition of $\Delta$ as a reduced concatenation of permutation and transistor diagrams we chose to compute $\Psi(\Delta)$. We also have to verify that, if $\Delta_1$ and $\Delta_2$ are equal modulo dipoles, then so are $\Psi(\Delta_1)$ and $\Psi(\Delta_2)$. For that purpose, it is sufficient to show that replacing the transistors of a dipole by blocks as above produces a diagram which is trivial modulo dipoles. But a dipole can be written as 
$$(\epsilon(a) + T + \epsilon(b)) \cdot (\epsilon(a)+T^{-1}+ \epsilon(b)),$$
so that its image under $\Psi$ is
$$\left((\epsilon ( x^{n} ) + \Gamma_p^{-1} \cdot \epsilon(x,R) \cdot \Gamma_q + \epsilon(x^m) \right) \cdot \left((\epsilon ( x^{n} ) + \Gamma_p^{-1} \cdot \epsilon(x,R^{-1}) \cdot \Gamma_q + \epsilon(x^m) \right),$$
which is also equal to
$$\epsilon(x^n) + \Gamma_p^{-1} \cdot \epsilon(x, RR^{-1}) \cdot \Gamma_q + \epsilon(x^m),$$
and which is of course trivial modulo dipoles. 
\begin{figure}
\begin{center}
\includegraphics[trim={0 10cm 30cm 0},clip,scale=0.4]{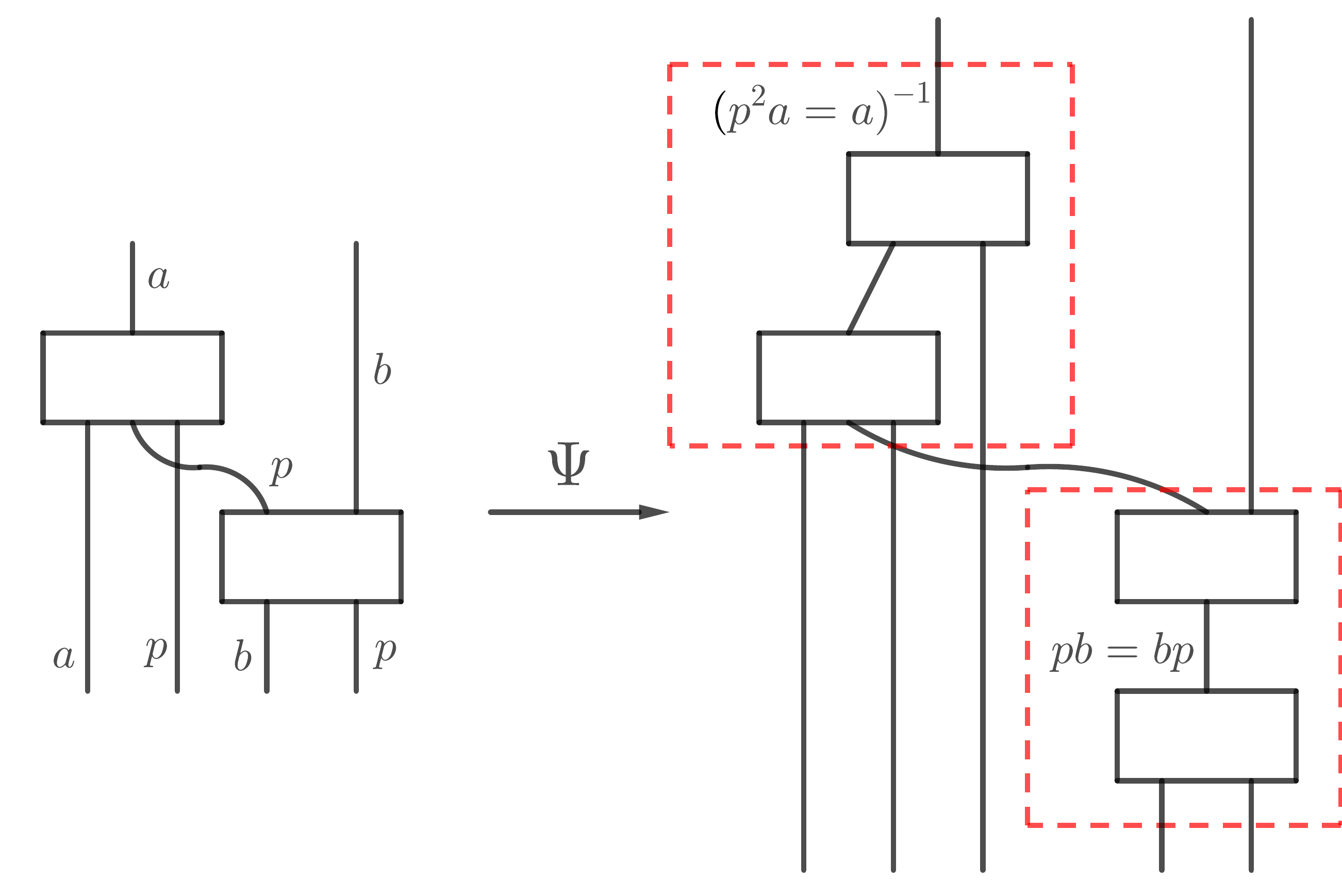}
\caption{The image under $\Psi$ of a braided diagram. Blocks are in red.}
\label{figure14}
\end{center}
\end{figure}

\medskip \noindent
Now, our goal is to show that $\Psi$ is injective.

\medskip \noindent
Let $\Delta$ be a braided diagram over $\mathcal{P}$ which contains at least one transistor. So, as described above, $\Psi(\Delta)$ is obtained from $\Delta$ by replacing its transistors with blocks. If $B_1$ and $B_2$ are two blocks of $\Psi(\Delta)$, we write $B_1 \prec B_2$ to mean that there exists a wire connected to the top side of $B_1$ which is also connected to the bottom side of $B_2$. Moreover, we write $B_1 \prec_o B_2$ if all the wires connected to the top side of $B_1$ are connected to the bottom side of $B_2$ in the same left-to-right order; the relation $\prec_o$ is defined similarly on the transistors of $\Delta$. Notice that there exists a bijection from the set of transistors of $\Delta$ to the set of blocks of $\Psi(\Delta)$ which respects the relations $\prec$ and~$\prec_o$. 

\medskip \noindent
Let $\Psi_0, \ldots, \Psi_m$ be a sequence of diagrams such that 
\begin{itemize}
	\item $\Psi_0=\Psi(\Delta)$;
	\item $\Psi_{i+1}$ is obtained from $\Psi_i$ by reducing a dipole of transistors for every $1 \leq i \leq m-1$;
	\item $\Psi_m$ does not contain dipoles of transistors.
\end{itemize}
We claim that each $\Psi_i$ can be obtained from $\Delta$ by replacing each of its transistors by a block 
$$\epsilon(x^n) + \Gamma_p^{-1} \cdot \epsilon(x,R^{\pm 1}) \cdot \Gamma_q \cdot \epsilon(x^m)$$
for some integers $n,m,p,q \geq 0$ and some relation $R \in \mathcal{R}$, such that there exists a bijection from the set of transistors of $\Delta$ to the set of blocks of $\Psi_i$ which respects the relations $\prec$ and $\prec_o$. We argue by induction on $i$. We already know that our assertion holds for $i=0$. Fix some $0 \leq i \leq n-1$, suppose that our assertion holds for $\Psi_i$, and consider $\Psi_{i+1}$. Because the blocks of $\Psi_i$ are reduced, $\Psi_{i+1}$ is obtained from $\Psi_i$ by reducing a dipole defined by two transistors which belong to two different blocks $B_1$ and $B_2$. Of course, $B_1$ and $B_2$ are $\prec$-related, say $B_1 \prec B_2$. Notice that $\prec$ defines a total order on the transistors of each block, so our dipole must be defined by the $\prec$-minimal transistor of $B_2$ and the $\prec$-maximal transistor of $B_1$. The situation is illustrated by Figure \ref{figure2}. Clearly, the blocks of $\Psi_i$ define blocks of $\Psi_{i+1}$ so that, by applying our induction hypothesis to $\Psi_i$, we get a bijection from the set of transistors of $\Delta$ to the set of blocks of $\Psi_{i+1}$ which respects the relations $\prec$ and $\prec_o$. This concludes the proof of our intermediate claim. 
\begin{figure}
\begin{center}
\includegraphics[trim={0 7cm 25cm 0},clip,scale=0.4]{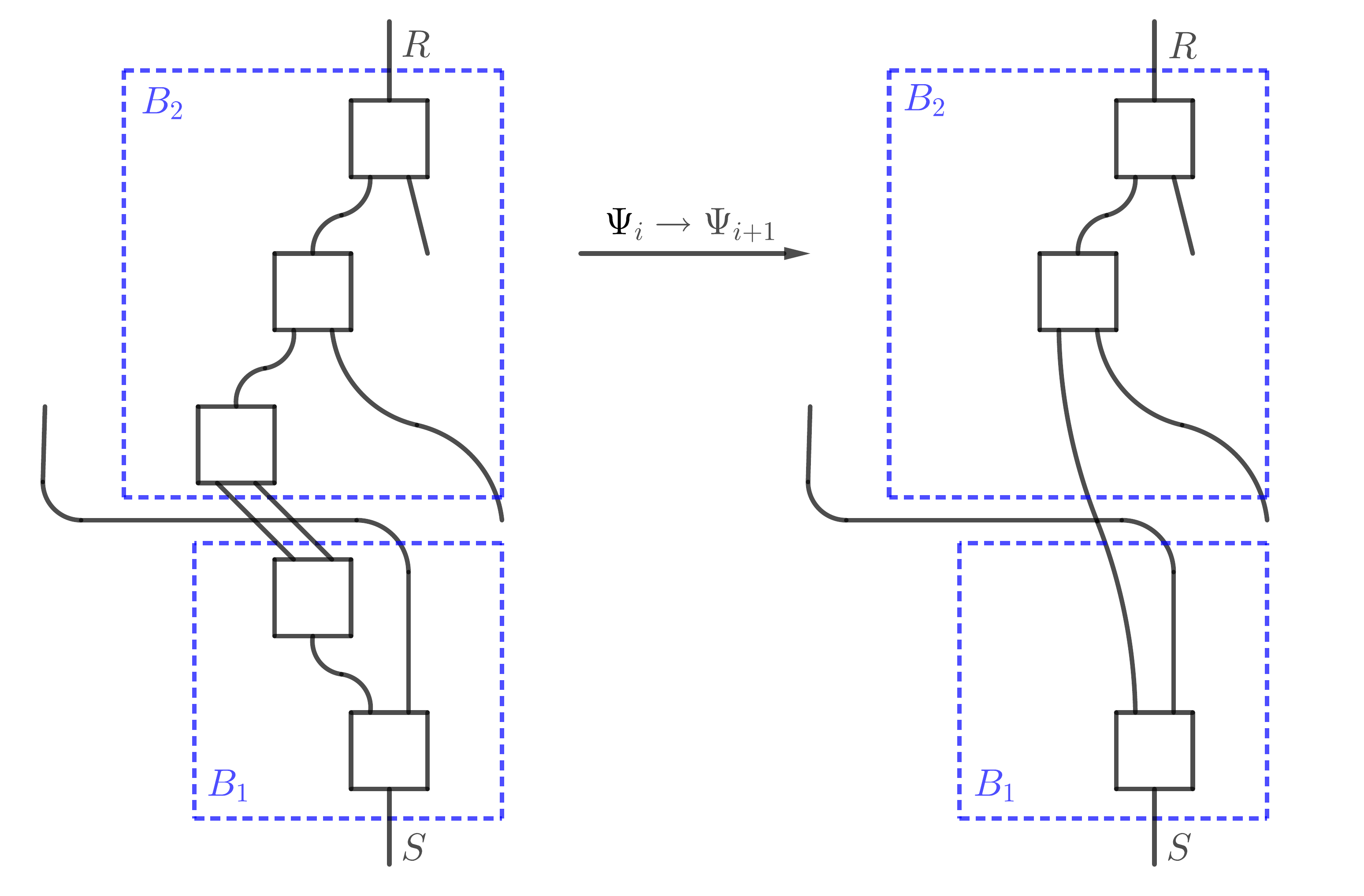}
\caption{}
\label{figure2}
\end{center}
\end{figure}

\medskip \noindent
Now consider $\Psi_m$, and suppose that it is not reduced. Because it does not contain dipoles of transistors, it has to contain a $(\ast,0)$-blocks $B_1$ and a $(0,\ast)$-block $B_2$, labelled by $R$ and $R^{-1}$ for some $R \in \mathcal{R}$, such that $B_2 \prec B_1$. Notice that the block $B_1$ has a unique wire connected to its bottom side, which is also the unique wire connected to the top side of the block $B_2$. A fortiori, $B_2 \prec_o B_1$ and $B_2 \prec B$ implies $B=B_1$ for every block $B$. Therefore, if $T_1,T_2$ denote the transistor of $\Delta$ associated tot $B_1,B_2$ respectively, then $T_2 \prec_0 T_1$, $T_2 \prec T$ implies $T=T_1$ for every transistor $T$, and $T_1,T_2$ are labelled by $R,R^{-1}$. This precisely means that $T_1$ and $T_2$ define a dipole in $\Delta$.

\medskip \noindent
Thus, we have proved that, if $\Delta$ is reduced, then $\Psi_m$ must be reduced as well. A fortiori, the diagram length of $\Psi(\Delta)$ is bounded from below by the number of blocks of $\Psi_m$, since each contains a wire labelled by a letter of $\Sigma(\mathcal{G})$ with a non-trivial second coordinate. Since the number of blocks of $\Psi_m$ is equal to the number of transistors of $\Delta$, it follows that 
$$\mathrm{length}(\Psi(\Delta)) \geq \mathrm{length}(\Delta).$$
Consequently, $\Psi(\Delta)$ is non-trivial if $\Delta$ contains at least one transistor. Otherwise, if $\Delta$ is a permutation diagram, then it is clear from the construction of $\Psi$ that $\Psi(\Delta)$ is trivial if and only if $\Delta$ is trivial. This proves that $\Psi$ is injective. Therefore, $\Psi$ naturally induces an injective morphism $D_b(\mathcal{P},w) \to D_b(\mathcal{Q}, \mathcal{G},x) \simeq \mathscr{V}_{\alpha}$, concluding the proof of the first statement of our theorem. 

\medskip \noindent
Now, notice that the diagram length of $\Psi(\Delta)$ is bounded from above by the product of the number of transistors of $\Psi$ with the maximal diagram length of a block. But the diagram length of a block is at most $K+1$ where $K= \max \{ |u| , |v| \mid u=v \in \mathcal{R} \}$, which is finite if $\mathcal{R}$ is finite itself. Therefore,
$$\mathrm{length}(\Delta) \leq \mathrm{length} (\Psi(\Delta)) \leq (K+1) \cdot \mathrm{length}( \Delta),$$
which concludes the proof of the second assertion of our theorem.
\end{proof}

\subsection{Universal planar and annular products}

\noindent
It is worth noticing that, thanks to Theorem \ref{thm:universal}, we can introduce a collection of planar (resp. annular) picture products and show that any planar (resp. annular) diagram group embeds into one of these products. We begin by defining this family of groups. 

\begin{definition}
For every cardinal $\alpha$, let $\mathscr{T}_{\alpha}$ and $\mathscr{F}_{\alpha}$ denote respectively the annular picture product $D_a(\mathcal{Q},\mathcal{G},x)$ and the planar picture product $D(\mathcal{Q}, \mathcal{G},x)$, where $\mathcal{Q}$ is the semigroup presentation $\langle x \mid x=x^2 \rangle$ and where $\mathcal{G}$ contains a single free group of rank $\alpha$. 
\end{definition}

\noindent
Now, the analogue of Theorem \ref{thm:universal} is:

\begin{thm}\label{thm:universalpa}
Let $\mathcal{P}= \langle \Sigma \mid \mathcal{R} \rangle$ be a semigroup presentation and $w \in \Sigma^+$ a baseword. If $\kappa$ denotes the cardinality of $\mathcal{R}$, then the planar diagram group $D(\mathcal{P},w)$ (resp. the annular diagram group $D_a(\mathcal{P},w)$) embeds into $\mathscr{F}_{\alpha}$ (resp. $\mathscr{T}_{\alpha}$) for every $\alpha \geq \kappa$. Moreover, if $\mathcal{R}$ is finite, the embedding is biLipschitz with respect to the diagram lengths. 
\end{thm}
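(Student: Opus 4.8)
The plan is to show that the groupoid morphism $\Psi\colon D_b(\mathcal{P})\to D_b(\mathcal{Q},\mathcal{G})$ constructed in the proof of Theorem~\ref{thm:universal} restricts to the subgroupoids of planar (resp. annular) diagrams, so that the induced embedding $D_b(\mathcal{P},w)\hookrightarrow \mathscr{V}_\alpha$ restricts to an embedding $D(\mathcal{P},w)\hookrightarrow \mathscr{F}_\alpha$ (resp. $D_a(\mathcal{P},w)\hookrightarrow \mathscr{T}_\alpha$); all the algebraic and metric content will then be inherited from Theorem~\ref{thm:universal}.

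First I would record that each auxiliary diagram $\Gamma_t$ over $\mathcal{Q}$ is planar: it is obtained inductively by adding a transistor and gluing its top wire to the leftmost wire connected to the bottom of the frame, hence can be drawn in the plane as a ``left-combed'' caterpillar tree with pairwise disjoint wires. Therefore $\Gamma_t^{-1}$ is planar as well, and so is every block $\epsilon(x^n)+\Gamma_p^{-1}\cdot\epsilon(x,R^{\pm 1})\cdot\Gamma_q+\epsilon(x^m)$, being a sum and concatenation of planar diagrams; moreover a block with $p$ wires on its top side and $q$ wires on its bottom side admits a planar embedding in a rectangle meeting the top (resp. bottom) side of the rectangle in its $p$ top wires (resp. $q$ bottom wires), in the prescribed left-to-right order.

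Next I would prove the key claim that $\Psi$ preserves planarity and annularity. Let $\Delta$ be a planar diagram over $\mathcal{P}$ and fix a planar embedding of $\Delta$; relabelling all wires of $\Delta$ by $x$ does not affect this embedding. Each transistor $T$ of $\Delta$, with its canonical decomposition $\epsilon(a)+T'+\epsilon(b)$, occupies a rectangle meeting $|\mathrm{top}(T')|$ wires on its top side and $|\mathrm{bot}(T')|$ wires on its bottom side; one excises this rectangle and glues in its place the rectangular planar embedding of the block that $\Psi$ substitutes for $T$. Since the numbers of wires and their orders match on both the top and bottom sides, this cut-and-paste produces a planar embedding of $\Psi(\Delta)$, so $\Psi(\Delta)\in D(\mathcal{Q},\mathcal{G},x)=\mathscr{F}_\alpha$. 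The annular case is identical: an embedding of $\Delta$ into an annulus restricts to a planar embedding on a disk neighbourhood of each transistor, and performing the same local surgeries inside those disks yields an annular embedding of $\Psi(\Delta)$, whence $\Psi(\Delta)\in D_a(\mathcal{Q},\mathcal{G},x)=\mathscr{T}_\alpha$.

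It then remains only to transfer the conclusions of Theorem~\ref{thm:universal}. Injectivity of the restrictions is automatic from the inclusions $D(\mathcal{P},w)\subset D_a(\mathcal{P},w)\subset D_b(\mathcal{P},w)$ together with the injectivity of $\Psi$ on $D_b(\mathcal{P},w)$. For the metric statement, reducing a dipole preserves planarity and annularity, so the reduction of an element of $\mathscr{F}_\alpha$ (resp. $\mathscr{T}_\alpha$) inside the planar (resp. annular) picture product agrees with its reduction inside $\mathscr{V}_\alpha$; hence its diagram length is the same whichever group one works in, and the two-sided estimate $\mathrm{length}(\Delta)\le \mathrm{length}(\Psi(\Delta))\le (K+1)\,\mathrm{length}(\Delta)$ established in the proof of Theorem~\ref{thm:universal} applies verbatim. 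I expect the only delicate point to be the geometric cut-and-paste argument showing that block substitution preserves a planar or annular embedding --- in the annular case one must check that the surgery can be carried out inside disjoint disk neighbourhoods of the transistors without creating wire crossings --- but this is routine, and everything quantitative comes for free from Theorem~\ref{thm:universal}.
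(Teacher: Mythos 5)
Your proposal is correct and follows essentially the same route as the paper: restrict the embedding $\Psi$ from Theorem \ref{thm:universal}, observe that it sends planar (resp. annular) diagrams to planar (resp. annular) diagrams so that its image lies in $\mathscr{F}_\alpha$ (resp. $\mathscr{T}_\alpha$), and inherit injectivity and the biLipschitz estimate from the fact that the inclusions $D(\mathcal{P},w)\subset D_b(\mathcal{P},w)$ and $\mathscr{F}_\alpha\subset\mathscr{V}_\alpha$ (resp. the annular analogues) are isometric for diagram lengths. The only difference is that you spell out, via the cut-and-paste of planar blocks, the preservation of planarity and annularity that the paper dismisses with a ``clearly'', which is a harmless (and welcome) elaboration.
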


\begin{proof}
Let $\Psi : D_b(\mathcal{P},w) \hookrightarrow \mathscr{V}_{\alpha}$ denote the embedding constructed in the proof of Theorem \ref{thm:universal}. Clearly, $\Psi$ sends planar diagrams to planar diagrams (resp. annular diagrams to annular diagrams). Therefore, $\Psi$ embeds $D(\mathcal{P},w)$ (resp. $D_a(\mathcal{P},w)$ into the subgroups of planar (resp. annular) diagrams of $\mathscr{V}_{\alpha}$, which is precisely $\mathscr{F}_{\alpha}$ (resp. $\mathscr{T}_{\alpha}$). This proves the first assertion of our theorem. The second one follows from Theorem \ref{thm:universal} and from the observation that the embedding $D(\mathcal{P},w) \subset D_b(\mathcal{P},w)$ and $\mathscr{F}_{\alpha} \subset \mathscr{V}_{\alpha}$ (resp. $D_a (\mathcal{P},w) \subset D_b(\mathcal{P},w)$ and $\mathscr{T}_{\alpha} \subset \mathscr{V}_{\alpha}$) are isometric with respect to diagram lengths. 
\end{proof}

\begin{remark}
In the context of planar diagram groups, examples of universal groups were also constructed in \cite{MR1396957} and \cite{MR2193190, MR2193191}
\end{remark}

\section{Applications}\label{section:applications}

\noindent
By combining the constructions detailed in Sections \ref{section:pictureproducts} and \ref{section:universalpictureproducts}, we are now able to state and prove the main result of our paper:

\begin{thm}\label{thm:sequence}
Any braided (resp. planar, annular) diagram group $D$ decomposes as a short exact sequence
$$1 \to R \to D \to S \to 1$$
for some subgroup $R$ of a right-angled Artin group and some subgroup $S$ of Thompson's group $V$ (resp. $F$, $T$). 
\end{thm}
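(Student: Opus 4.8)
The plan is to reduce to the universal picture products $\mathscr{V}_\alpha$ (resp. $\mathscr{F}_\alpha$, $\mathscr{T}_\alpha$) and to read off the short exact sequence from their semidirect-product decompositions. Consider first the braided case: let $D = D_b(\mathcal{P},w)$ with $\mathcal{P}= \langle \Sigma \mid \mathcal{R} \rangle$, and put $\kappa = |\mathcal{R}|$. Choose a cardinal $\alpha \geq \kappa$. By Theorem \ref{thm:universal}, $D$ embeds into $\mathscr{V}_\alpha = D_b(\mathcal{Q},\mathcal{G},x)$, where $\mathcal{Q}= \langle x \mid x=x^2 \rangle$ and $\mathcal{G}= \{\mathbb{F}_\alpha\}$. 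If $\alpha = 0$ then $\mathscr{V}_0 = D_b(\mathcal{Q},x) \cong V$ by Example \ref{ex:V} and we are done with $R$ trivial; so assume $\alpha \geq 1$.

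Next I would verify that $(\mathcal{Q},\mathcal{G},x)$ satisfies condition $(+)$ of Theorem \ref{thm:split}. Here $m = x^k$ for some $k \geq 1$ and $\ell = x$. Suppose $P$ were a non-trivial braided $(x^k,x^k)$-diagram such that $U^{-1}PU$ is a permutation diagram for every $U$. Taking $U$ trivial shows $P$ is itself a permutation diagram, hence realises a non-trivial permutation $\sigma$ of $\{1,\dots,k\}$, so $\sigma(i) \neq i$ for some $i$. Let $U$ be the $(x^k, x^{k+1})$-diagram consisting of a single caret expanding the $i$-th wire. Then in $U^{-1} \circ P \circ U$ the merge of $U^{-1}$ (at top positions $i,i+1$) and the caret of $U$ (at bottom position $i$) do not cancel, because $P$ routes the merged wire to position $\sigma(i) \neq i$; thus the reduction of $U^{-1}PU$ still contains transistors and is not a permutation diagram, a contradiction. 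So $(+)$ holds, and Theorem \ref{thm:split} gives $\mathscr{V}_\alpha \cong \Gamma \rtimes D_b(\mathcal{Q},x)$, where $\Gamma$ is a graph product of copies of $\mathbb{F}_\alpha$ and $D_b(\mathcal{Q},x) \cong V$. Since $\mathbb{F}_\alpha$ is the graph product of $\alpha$ copies of $\mathbb{Z}$ over the edgeless graph, blowing up each vertex of the defining graph of $\Gamma$ into an anticlique of $\alpha$ vertices (joined to all former neighbours) exhibits $\Gamma$ as a right-angled Artin group $A$.

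Finally, let $\pi : \mathscr{V}_\alpha \to V$ be the retraction onto the second factor of $A \rtimes V$, so that $\ker \pi = A$. Composing $\pi$ with the embedding $D \hookrightarrow \mathscr{V}_\alpha$ and setting $S = \pi(D) \leq V$ and $R = D \cap A$ yields the desired exact sequence $1 \to R \to D \to S \to 1$, with $R$ a subgroup of the right-angled Artin group $A$. The planar (resp. annular) case runs identically, using Theorem \ref{thm:universalpa} in place of Theorem \ref{thm:universal}, Theorem \ref{thm:splittingpa} in place of Theorem \ref{thm:split} — for which $\mathcal{J}$-rotativity holds with no hypothesis, so no analogue of $(+)$ is required — and $D(\mathcal{Q},x) \cong F$ by Example \ref{ex:F} (resp. $D_a(\mathcal{Q},x) \cong T$ by Example \ref{ex:T}). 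The only step that is not pure bookkeeping is the verification of condition $(+)$ above; the rest is assembling results already established.
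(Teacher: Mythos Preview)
Your proposal is correct and follows essentially the same route as the paper: embed $D$ into $\mathscr{V}_\alpha$ via Theorem~\ref{thm:universal}, verify condition~(+) for $(\mathcal{Q},\mathcal{G},x)$ by the same ``expand a wire that $P$ moves'' argument, apply Theorem~\ref{thm:split} to split $\mathscr{V}_\alpha$ as (graph product of free groups)$\,\rtimes V$, and restrict the projection to $D$; the planar and annular cases go through Theorems~\ref{thm:universalpa} and~\ref{thm:splittingpa} exactly as you say. Your extra remarks (the $\alpha=0$ case, and the explicit blow-up showing a graph product of free groups is a RAAG) are harmless elaborations of points the paper leaves implicit.
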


\begin{proof}
Write $D=D_b(\mathcal{P},w)$ for some semigroup presentation $\mathcal{P}= \langle \Sigma \mid \mathcal{R} \rangle$ and some baseword $w \in \Sigma^+$. Let $\kappa$ denote the cardinality of $\mathcal{R}$. According to Theorem \ref{thm:universal}, $D$ embeds into $\mathscr{V}_{\kappa}$. Now, we want to apply Theorem \ref{thm:split} to $\mathscr{V}_{\kappa}$. So we need to verify that the condition (+) holds.

\medskip \noindent
From now on, we work with the semigroup presentation $\mathcal{Q}= \langle x \mid x=x^2 \rangle$, the baseword $x$, and the collection of groups $\mathcal{G}=\{ \mathbb{F}_{\kappa} \}$. Fix some $m \geq 1$. If $P$ is a braided $(x^m,x^m)$-diagram such that, for every $n \geq 1$ and every braided $(x^m,x^n)$-diagram $U$, $U^{-1}PU$ is a permutation $(x^n,x^n)$-diagram, we claim that $P$ must be trivial. First of all, notice that $P=P^{-1}PP$ must be a permutation diagram. If $P$ is non-trivial, there must exist some $1 \leq k \leq m$ such that $k$th wire $b_k$ connected to the bottom side of the frame of $P$ (in the left-to-right ordering) is different from the $k$th wire $t_k$ connected to the top side of the frame. Let $U$ be the transistor $(x^m,x^{m+1})$-diagram whose unique transistor corresponds to the transformation $x \to x^2$ and such that the wire connected to its top side is the $k$th wire connected to the top side of the frame. Notice that, in the concatenation $U^{-1} \circ P \circ U$, the wire connected to the top side of the transistor of $U$ is $b_k$, which is different to the wire $t_k$, coinciding with the wire connected to the bottom side of the transistor of $U^{-1}$. As a consequence, the two transistors of $U^{-1}PU$ do not define a dipole, so that $U^{-1}PU$ cannot be a permutation diagram. This proves that the condition (+) holds. 

\medskip \noindent
Consequently, Theorem \ref{thm:split} applies, so that $\mathscr{V}_{\kappa}$ splits as a semidirect product $\Gamma \rtimes D_b(\mathcal{Q},x)$ where $\Gamma$ is a graph product of free groups, so a right-angled Artin group, and $D_b(\mathcal{Q},x) \simeq V$. The restriction to $D \subset \mathscr{V}_{\kappa}$ of the projection $\mathscr{V}_{\kappa} \to V$ provides the desired short exact sequence. 

\medskip \noindent
In the context of planar and annular diagram groups, it is sufficient to combine Theorems \ref{thm:universalpa} and \ref{thm:splittingpa}.
\end{proof}

\begin{remark}
In the context of planar diagram groups, the previous theorem was proved in \cite{MR2193191}. This implies, for instance, that planar diagram groups are locally indicable since right-angled Artin groups and Thompson's group $F$ are locally indicable themselves. 
\end{remark}

\noindent
A first consequence of Theorem \ref{thm:sequence} provides restrictions on diagrams groups which do not contain non-abelian free subgroups (extending \cite[Theorem 7.5]{MR2193191} from the context of planar diagram groups).

\begin{prop}
A subgroup of a braided (resp. planar, annular) diagram group without non-abelian free subgroups is (free abelian)-by-(a subgroup of $V$ (resp. $F$, $T$)). 
\end{prop}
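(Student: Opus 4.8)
The plan is to reduce the statement to the known structure of subgroups of right-angled Artin groups. Write the given diagram group as $D=D_b(\mathcal{P},w)$ (resp. $D(\mathcal{P},w)$, $D_a(\mathcal{P},w)$) for some semigroup presentation $\mathcal{P}=\langle \Sigma \mid \mathcal{R}\rangle$ and baseword $w$, and apply Theorem \ref{thm:sequence} to obtain a short exact sequence $1 \to R \to D \to S \to 1$ with projection $\pi : D \to S$, where $R$ is a subgroup of a right-angled Artin group $A$ and $S$ is a subgroup of $V$ (resp. $F$, $T$). Given a subgroup $H \leq D$ with no non-abelian free subgroup, I would intersect this sequence with $H$ to get $1 \to R\cap H \to H \to \pi(H) \to 1$, in which $R\cap H = H \cap \ker\pi$ is normal in $H$ and $\pi(H)$ is a subgroup of $S \leq V$ (resp. $F$, $T$). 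So everything reduces to showing that $R\cap H$ is free abelian.

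Now $R\cap H$ is a subgroup of the right-angled Artin group $A$, and it contains no non-abelian free subgroup since it is a subgroup of $H$. The key input is the classical fact that a subgroup of a right-angled Artin group is either free abelian or contains a copy of $F_2$. I would recall the argument: by Baudisch's theorem, any two elements of $A$ generate a subgroup that is free or free abelian; hence if two elements of $R\cap H$ failed to commute they would generate a copy of $F_2$ inside $H$, which is excluded. Therefore $R\cap H$ is abelian, and being torsion-free (as a subgroup of a RAAG) and contained in a group acting freely, properly and cocompactly on the CAT(0) cube complex dual to its Salvetti complex, it is free abelian (of finite rank) by the flat torus / solvable subgroup theorem for CAT(0) groups. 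Thus $R\cap H$ is free abelian and $H$ is (free abelian)-by-(a subgroup of $V$), as claimed. The braided, planar and annular cases are treated uniformly: the only difference is which Thompson group appears as $S$, and Theorem \ref{thm:sequence} already records all three.

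I expect no genuinely new difficulty here; the work is bookkeeping around the cited facts. One must only check that ``no $F_2$'' passes from $H$ to $R\cap H$ (immediate) and that the extension is genuine, i.e. $R\cap H \trianglelefteq H$ (immediate from normality of $\ker\pi$). If a self-contained treatment were desired, the sole nontrivial ingredient to reprove would be Baudisch's lemma that two-generated subgroups of right-angled Artin groups are free or free abelian; the rest of the argument is formal.
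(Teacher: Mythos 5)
Your argument is correct and is essentially the paper's own proof: apply Theorem \ref{thm:sequence}, restrict the resulting short exact sequence to $H$, and invoke the dichotomy that a subgroup of a right-angled Artin group either contains a non-abelian free group or is free abelian (the paper cites Baudisch for precisely this). The only caveat is your flat-torus/CAT(0) detour for ``abelian implies free abelian of finite rank'': the right-angled Artin group produced by Theorem \ref{thm:sequence} may be infinitely generated (infinite graph, infinite-rank free vertex groups), so the cocompactness hypothesis is not available and the finite-rank conclusion is unwarranted --- it is cleaner to quote the cited dichotomy directly, as the paper does.
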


\begin{proof}
Let $D$ be a braided (resp. planar, annular) diagram group and let $H$ be a subgroup of $D$ which does not contain any non-abelian free subgroup. By applying Theorem \ref{thm:sequence}, it follows that $H$ splits as an exact sequence $1 \to R \to H \to S \to 1$ where $R$ is a subgroup of a right-angled Artin group and where $S$ is a subgroup of Thompson's group $V$ (resp. $F$, $T$). But it follows from \cite{MR634562} that a subgroup of a right-angled Artin groups either is free abelian or contains a non-abelian free subgroup, so that $R$ is necessarily free abelian. Our proposition follows. 
\end{proof}

\noindent
Another interesting consequence of Theorem \ref{thm:sequence} is that simple diagram groups embed into some Thompson's group (extending \cite[Theorem 7.2]{MR2193191} from the context of planar diagram groups). Such a result motivates the study of simple subgroups of Thompson's groups.

\begin{prop}\label{prop:simpleembed}
Every simple subgroup of a braided (resp. planar, annular) diagram group is isomorphic to a subgroup of Thompson's group $V$ (resp. $F$, $T$).
\end{prop}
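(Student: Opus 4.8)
The plan is to read the statement straight off the short exact sequence furnished by Theorem~\ref{thm:sequence}, the only external ingredient being the (standard) fact that a right-angled Artin group contains no nontrivial simple subgroup.

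First, write the ambient diagram group as $D$ and apply Theorem~\ref{thm:sequence}, which gives a short exact sequence
\[
1 \longrightarrow R \longrightarrow D \overset{\pi}{\longrightarrow} S \longrightarrow 1
\]
with $R$ a subgroup of a right-angled Artin group and $S$ a subgroup of Thompson's group $V$ (resp. $F$, $T$), according to whether $D$ is braided (resp. planar, annular). Let $H \le D$ be a simple subgroup. The key observation is that $H \cap R$ is exactly the kernel of the restriction $\pi|_H \colon H \to S$, hence a normal subgroup of $H$; so by simplicity of $H$ we have the dichotomy $H \cap R = \{1\}$ or $H \cap R = H$.

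In the first case $\pi|_H$ is injective, so $H$ embeds into $S$ and therefore into Thompson's group $V$ (resp. $F$, $T$), which is the desired conclusion. The only real content is to dispose of the second case, where $H \le R$. Here I would argue that a subgroup of a right-angled Artin group is residually finite (right-angled Artin groups are linear, hence residually finite, and residual finiteness passes to subgroups), and that a residually finite simple group is necessarily finite: given $1 \ne h \in H$, a homomorphism of $H$ onto a finite group not killing $h$ has proper, hence (by simplicity) trivial, kernel, so $H$ itself embeds into a finite group. Since right-angled Artin groups are torsion-free, this forces $H = \{1\}$, which is trivially (isomorphic to) a subgroup of $V$ (resp. $F$, $T$); under the convention that simple groups are nontrivial, this case simply does not occur. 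The three statements are handled uniformly, since Theorem~\ref{thm:sequence} already treats all of them at once.

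I do not anticipate a genuine obstacle: essentially all the difficulty has been absorbed into Theorem~\ref{thm:sequence}. The one point one must not overlook is that the second alternative of the dichotomy is not a priori impossible and has to be eliminated explicitly --- for which residual finiteness of right-angled Artin groups (one could equally invoke residual torsion-free nilpotence, or bi-orderability) together with their torsion-freeness is exactly what is needed.
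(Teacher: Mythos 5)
Your proposal is correct and follows essentially the same route as the paper: restrict the short exact sequence of Theorem~\ref{thm:sequence} to the simple subgroup $H$, use simplicity to get the dichotomy $H\cap R=\{1\}$ or $H\le R$, and rule out the latter. The only (minor) difference is how the second case is excluded: you invoke residual finiteness of right-angled Artin groups together with their torsion-freeness, while the paper notes that non-trivial subgroups of right-angled Artin groups surject onto $\mathbb{Z}$, so no non-trivial simple group embeds there; both arguments are valid.
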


\begin{proof}
Let $D$ be a braided (resp. planar, annular) diagram group and let $H$ be a simple subgroup of $D$. By applying Theorem \ref{thm:sequence}, it follows that $H$ splits as an exact sequence $1 \to R \to H \to S \to 1$ where $R$ is a subgroup of a right-angled Artin group and where $S$ is a subgroup of Thompson's group $V$ (resp. $F$, $T$). Because $H$ is simple, necessarily $H$ embeds into a right-angled Artin group or into Thompson's group $V$ (resp. $F$, $T$). But a non-trivial simple group cannot embed into a right-angled Artin group, for instance because non-trivial subgroups of right-angled Artin groups maps onto $\mathbb{Z}$, so our proposition follows. 
\end{proof}

\noindent
For instance, the previous proposition can be used to prove that some Thompson-like groups are quite different from braided diagram groups. This is the case for the higher dimensional Thompson's groups $nV$ introduced in \cite{HigherDimThompson}.

\begin{cor}
For every $n \geq 2$, $nV$ does not embed into any braided diagram group.
\end{cor}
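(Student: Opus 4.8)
The plan is to combine Proposition \ref{prop:simpleembed} with two well-known facts about the groups $nV$: that they are simple (for $n \geq 2$ this is due to Brin), and that $nV$ does not embed into $V$ for $n \geq 2$. First I would recall that $nV$ is simple, so that if $nV$ were a subgroup of some braided diagram group $D$, then Proposition \ref{prop:simpleembed} would force $nV$ to embed into Thompson's group $V$. Thus it suffices to rule out an embedding $nV \hookrightarrow V$.

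For the non-embedding $nV \not\hookrightarrow V$, I would invoke a known obstruction. One clean route: Bleak--Lanoue showed that $nV$ and $mV$ are isomorphic only when $n=m$, and more relevantly, it is known (e.g.\ via work on the structure of subgroups of $V$, or via the fact that $2V$ contains a copy of a group that cannot sit inside $V$) that $2V$ does not embed into $V$; since $2V$ embeds into $nV$ for every $n \geq 2$, this handles all $n$. Alternatively, and perhaps most self-contained, one can use a finiteness/homological invariant: $V$ has a specific well-understood collection of finite subgroups and, more decisively, one can cite that $nV$ is not isomorphic to a subgroup of $V$ because of the Bieri--Strebel type or the Brown finiteness properties, or because $nV$ for $n\geq 2$ contains the right-angled Artin group or lamplighter-type subgroups obstructing the embedding. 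I would pick whichever citation the paper's bibliography already supports; the cleanest is to cite the result that $2V \not\hookrightarrow V$ directly.

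The key steps, in order, are therefore: (1) assume for contradiction that $nV$ embeds into a braided diagram group $D$ for some $n \geq 2$; (2) apply Proposition \ref{prop:simpleembed}, using that $nV$ is simple, to conclude $nV$ embeds into $V$; (3) observe that $2V \leq nV$, so $2V$ embeds into $V$; (4) derive a contradiction from the known fact that $2V$ is not a subgroup of $V$. This yields the corollary.

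The main obstacle is step (4): pinning down a citable, correct statement that $2V$ (equivalently $nV$, $n\geq 2$) does not embed into $V$. This is not entirely trivial since $V$ has very many subgroups, but it is established in the literature on Brin--Thompson groups; if no direct citation is available, a fallback is to use an invariant preserved by the embedding produced in Theorem \ref{thm:sequence} — but since $nV$ is simple the embedding lands \emph{in $V$ itself}, so one genuinely needs an intrinsic obstruction to $nV \hookrightarrow V$, which is the real content to be supplied by reference.
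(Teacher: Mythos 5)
Your reduction is exactly the paper's: $nV$ is simple \cite{HigherDimThompson}, so Proposition \ref{prop:simpleembed} forces any embedding into a braided diagram group to land inside $V$, and everything then hinges on showing $nV \not\hookrightarrow V$. But that last step is precisely where your argument stops: you defer it to an unspecified citation, and the fallback obstructions you float would not work. Finiteness properties cannot distinguish $nV$ from subgroups of $V$, since $nV$ is of type $F_\infty$ just like $V$; ``lamplighter-type'' subgroups are no obstruction either, as such wreath products do embed into $V$; and ``contains a right-angled Artin group'' is only an obstruction for the right RAAG, since $V$ contains plenty of them. So as written, step (4) is a genuine gap rather than a routine reference.

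The paper closes this gap concretely: $nV$ contains the free product $\mathbb{Z}^2 \ast \mathbb{Z}$ (Corwin's thesis \cite{CorwinThesis}, reproved in \cite{RAAGnV} for $n \geq 5$ and in \cite{KatonV} for all $n \geq 2$), while $\mathbb{Z}^2 \ast \mathbb{Z}$ is not a subgroup of $V$ by \cite[Theorem 1.5]{FreeProductsInV}. This is also the correct way to make your own suggestion rigorous, since the statement ``$2V \not\hookrightarrow V$'' in the literature is itself obtained by exactly this combination; note too that with this obstruction you do not need your intermediate step $2V \leq nV$, as the containment $\mathbb{Z}^2 \ast \mathbb{Z} \leq nV$ is known directly for every $n \geq 2$. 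To complete your proof, replace step (4) by: $\mathbb{Z}^2 \ast \mathbb{Z} \leq nV$ but $\mathbb{Z}^2 \ast \mathbb{Z} \not\leq V$, a contradiction.
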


\begin{proof}
Proposition \ref{prop:simpleembed} implies that, if $nV$ embeds into a braided diagram group, then it must embed into Thompson's group $V$ since $nV$ is simple according to \cite{HigherDimThompson}. But $nV$ does not embed into $V$, for instance because $nV$ contains the free product $\mathbb{Z}^2 \ast \mathbb{Z}$ (initially proved in \cite{CorwinThesis}, and reproved in \cite{RAAGnV} for $n \geq 5$ and in \cite{KatonV} for $n \geq 2$), which is not a subgroup of $V$ according to \cite[Theorem 1.5]{FreeProductsInV}. 
\end{proof}

\noindent
Now, we will deduce from Theorem \ref{thm:sequence} some criteria to show that a diagram group embeds into some Thompson's group and we will apply them to the examples we mentioned in Section \ref{section:ex}. 

\begin{prop}\label{prop:embed1}
A braided (resp. planar, annular) diagram group all of whose non-trivial normal subgroups contain a non-trivial simple group is isomorphic to a subgroup of $V$ (resp. $F$, $V$). 
\end{prop}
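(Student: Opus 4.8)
The plan is to apply Theorem~\ref{thm:sequence} and then exploit simplicity to collapse the extension. Let $D$ be a braided (resp. planar, annular) diagram group whose non-trivial normal subgroups all contain a non-trivial simple group, and write the short exact sequence $1 \to R \to D \to S \to 1$ provided by Theorem~\ref{thm:sequence}, where $R$ is a subgroup of a right-angled Artin group and $S$ is a subgroup of $V$ (resp. $F$, $T$). The projection $D \to S$ has kernel $R$, which is normal in $D$. The whole argument hinges on showing $R = \{1\}$, for then $D \cong S$ embeds into the appropriate Thompson's group. (Note: the statement as phrased says ``$V$ (resp. $F$, $V$)'' for the three cases; I will prove the sharper conclusion $V$, $F$, $T$ respectively, which is what Theorem~\ref{thm:sequence} gives, and this of course implies the stated weaker claim since $T \subset V$.)

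First I would suppose for contradiction that $R \neq \{1\}$. Since $R$ is a non-trivial normal subgroup of $D$, the hypothesis yields a non-trivial simple subgroup $H \leq R$. Now $H$ is in particular a simple subgroup of the diagram group $D$, so by Proposition~\ref{prop:simpleembed} the group $H$ is isomorphic to a subgroup of $V$ (resp. $F$, $T$). But simultaneously $H \leq R$, and $R$ is a subgroup of a right-angled Artin group, so $H$ is a non-trivial subgroup of a right-angled Artin group. This is the contradiction: a non-trivial right-angled Artin group (indeed any of its non-trivial subgroups) surjects onto $\mathbb{Z}$ — each vertex-group is $\mathbb{Z}$ and one may kill all but one generator, restricting to any non-trivial subgroup still leaves a homomorphism to $\mathbb{Z}$ with non-trivial image by \cite{MR634562}, or more elementarily because the commutator subgroup of a RAAG is not the whole group and subgroups inherit the residual structure — so $H$ admits a non-trivial abelian quotient, contradicting simplicity of $H$ (a non-trivial simple group has no non-trivial abelian quotient unless it is $\mathbb{Z}/p$, but $\mathbb{Z}/p$ has torsion while subgroups of RAAGs are torsion-free, so even that case is excluded). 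Hence $R = \{1\}$.

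The main (and essentially only) subtlety is the final ping-pong between the two a priori incompatible constraints on $H$: being simple forces no non-trivial abelian quotient, while living inside a right-angled Artin group forces such a quotient. This is exactly the mechanism already used in the proof of Proposition~\ref{prop:simpleembed}, so the argument is short; I would simply cite \cite{MR634562} for the fact that a non-trivial subgroup of a right-angled Artin group maps onto $\mathbb{Z}$. Once $R=\{1\}$ is established, $D \xrightarrow{\ \sim\ } S \hookrightarrow V$ (resp. $F$, $T$) and we are done. The only bookkeeping is to make sure the three cases (braided/planar/annular) are handled uniformly, which they are, since both Theorem~\ref{thm:sequence} and Proposition~\ref{prop:simpleembed} are stated in all three cases with matching correspondences $V/F/T$.
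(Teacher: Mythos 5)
Your argument is correct and is essentially the paper's own proof: apply Theorem~\ref{thm:sequence}, note that a non-trivial normal kernel $R$ would contain a non-trivial simple group, which cannot embed into a right-angled Artin group since non-trivial subgroups of right-angled Artin groups map onto $\mathbb{Z}$, hence $R=\{1\}$ and $D$ embeds into the corresponding Thompson's group. (Your invocation of Proposition~\ref{prop:simpleembed} is superfluous, and your observation that the annular case actually yields $T$ rather than $V$ is a harmless sharpening of the stated conclusion.)
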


\begin{proof}
By noticing that a non-trivial simple group cannot embed into a right-angled Artin group, for instance because non-trivial subgroups of right-angled Artin groups map onto $\mathbb{Z}$, it follows that in our situation the kernel of short exact sequence provided by Theorem \ref{thm:sequence} must be trivial. Our proposition follows. 
\end{proof}

\begin{cor}
For every $n,r \geq 1$, $F_{n,r}$ (resp. $T_{n,r}$) embeds into $F$ (resp. $T_n$).
\end{cor}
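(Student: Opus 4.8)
The plan is to deduce this from Proposition~\ref{prop:embed1}. By Example~\ref{ex:ThompsonsVariations}, $F_{n,r}=D(\mathcal{P}_n,x^r)$ is a planar diagram group and $T_{n,r}=D_a(\mathcal{P}_n,x^r)$ is an annular diagram group, where $\mathcal{P}_n=\langle x\mid x=x^n\rangle$. It therefore suffices to verify the hypothesis of Proposition~\ref{prop:embed1}, namely that every non-trivial normal subgroup of $F_{n,r}$ (resp. $T_{n,r}$) contains a non-trivial simple subgroup; granting this, Proposition~\ref{prop:embed1} yields the embeddings $F_{n,r}\hookrightarrow F$ and $T_{n,r}\hookrightarrow T$.

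The required normal-subgroup statement will come from the structure theory of the Higman--Thompson groups. Write $G$ for $F_{n,r}$ (resp. $T_{n,r}$) and $D=[G,G]$ for its commutator subgroup. First I would recall that $D$ is an infinite simple group; this is classical (Brown, Bieri--Strebel, Brin--Guzman). The one point demanding a short but genuine argument is that $D$ has \emph{trivial centralizer} in $G$: this follows from the support description of these groups, since $D$ consists precisely of the elements which are trivial near the distinguished endpoints of $[0,1]$ (resp. $\mathbb{S}^1$), and any $g\in G$ commuting with every element of $D$ must preserve the support of each such element, which forces $g$ to be the identity. (Alternatively, this can be read off from the known lattice of normal subgroups of $G$.)

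With these facts in hand, the verification is an elementary manipulation. Let $1\neq N\trianglelefteq G$. Since $N$ and $D$ are both normal in $G$, we have $[N,D]\subseteq N\cap D$. If $N\cap D\neq 1$, then $N\cap D$ is a non-trivial normal subgroup of the simple group $D$, so $N\cap D=D$, whence $D\subseteq N$ and $D$ is the desired simple subgroup. If instead $N\cap D=1$, then $[N,D]=1$, so $N\subseteq C_G(D)=1$, contradicting $N\neq1$. Hence every non-trivial normal subgroup of $G$ contains the simple group $D$, and Proposition~\ref{prop:embed1} applies.

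The main obstacle is simply isolating the two inputs about $D=[G,G]$ --- its simplicity and the triviality of its centralizer in $G$ --- the first being available in the literature and the second reducing to the standard support argument sketched above; everything else is the one-line normal-subgroup computation.
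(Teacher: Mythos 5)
Your reduction is the same as the paper's: realize $F_{n,r}$ and $T_{n,r}$ as planar and annular diagram groups via Example \ref{ex:ThompsonsVariations} and feed them into Proposition \ref{prop:embed1}. The paper verifies the hypothesis of Proposition \ref{prop:embed1} by a single citation of Brown: every non-trivial normal subgroup of $F_{n,r}$ contains its commutator subgroup, which is simple, and every non-trivial normal subgroup of $T_{n,r}$ contains its \emph{second} commutator subgroup, which is simple. Your monolith computation (either $N\cap D\neq 1$, and then $D\subseteq N$ by simplicity, or $N\cap D=1$, and then $[N,D]=1$ forces $N\subseteq C_G(D)=1$) is correct as a piece of group theory, so the only real difference is how you source the two inputs about $D$.

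Two of your supporting claims need repair. First, the description of $D=[G,G]$ as ``precisely the elements trivial near the endpoints'' is false for $F_{n,r}$ as soon as $n\geq 3$: the abelianization of $F_{n,r}$ is $\mathbb{Z}^n$, whereas the germs at the two endpoints only account for a $\mathbb{Z}^2$ quotient, so the commutator subgroup has infinite index in the subgroup of elements supported away from the endpoints; and for $T_{n,r}$ the circle has no endpoints at all, so the description is vacuous. The conclusion $C_G(D)=1$ does survive, but argue it via small supports: $D$ contains non-trivial elements supported in arbitrarily small $n$-adic intervals (commutators of non-commuting elements supported there), and if $g\neq 1$ moves a point $x$, choose a small interval $U\ni x$ with $gU\cap U=\emptyset$ and a non-trivial $h\in D$ supported in $U$; then $ghg^{-1}$ is supported in $gU$, so $ghg^{-1}\neq h$. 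Second, for $T_{n,r}$ you assert as classical that the \emph{first} derived subgroup is simple; the reference the paper relies on only gives simplicity of the second derived subgroup of $T_{n,r}$, so you should either supply a precise reference valid for all $n,r$, or simply run your argument with $D=T_{n,r}''$: it is simple by Brown, its centralizer is trivial by the same small-support argument, and the monolith computation is unchanged. With these repairs your proof goes through and is essentially the paper's, with Brown's normal-subgroup theorems replaced by the simplicity of a derived subgroup plus a standard centralizer argument --- a trade that does not make the argument more elementary, but does make explicit why every non-trivial normal subgroup contains the simple subgroup.
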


\begin{proof}
According to \cite[Theorem 4.13]{BrownFiniteness}, every non-trivial normal subgroup of $F_{n, r}$ contains the commutator subgroup, which is simple. The conclusion follows from Proposition \ref{prop:embed1} since we saw in Example \ref{ex:ThompsonsVariations} that $F_{n,r}$ is a planar diagram group. Similarly, according to \cite[Theorem 4.15]{BrownFiniteness}, every non-trivial normal subgroup of $T_{n,r}$ contains the second commutator subgroup, which is simple. The conclusion follows from Proposition \ref{prop:embed1} since we saw in Example \ref{ex:ThompsonsVariations} that $T_{n,r}$ is an annular diagram group. 
\end{proof}

\begin{prop}\label{prop:embed2}
A braided diagram all of whose non-trivial normal subgroups have torsion is isomorphic to a subgroup of $V$. 
\end{prop}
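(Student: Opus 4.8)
The plan is to run the same argument as in Proposition \ref{prop:embed1}, replacing the input ``contains a non-trivial simple group'' by ``has torsion''. So I would start by invoking Theorem \ref{thm:sequence} to obtain a short exact sequence
$$1 \to R \to D \to S \to 1$$
where $R$ is (isomorphic to) a subgroup of a right-angled Artin group and $S$ is a subgroup of Thompson's group $V$. The goal is to show that $R$ is trivial, for then the composite $D \xrightarrow{\ \sim\ } S \hookrightarrow V$ gives the desired embedding.

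Next I would observe that $R$, being the kernel of a homomorphism, is a normal subgroup of $D$. Suppose for contradiction that $R \neq 1$. Then by hypothesis $R$ has torsion, i.e.\ it contains a non-trivial element of finite order. But $R$ embeds into a right-angled Artin group, and right-angled Artin groups are torsion-free (they are subgroups of iterated graph products of copies of $\mathbb{Z}$, or alternatively they act freely on CAT(0) cube complexes); hence every subgroup of a right-angled Artin group is torsion-free. This contradicts the existence of a non-trivial torsion element in $R$. Therefore $R = 1$, and $D \cong S \leq V$, which is exactly the claim.

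There is no serious obstacle here: the only external fact needed is that right-angled Artin groups (hence their subgroups) are torsion-free, which is standard. If one wants to be self-contained one can note that this torsion-freeness also follows from the observation used in Proposition \ref{prop:embed1}, namely that every non-trivial subgroup of a right-angled Artin group surjects onto $\mathbb{Z}$ (hence cannot consist entirely of torsion, and in particular has no non-trivial torsion element lying in a finite subgroup). Thus the entire argument is a one-line deduction from Theorem \ref{thm:sequence} once the torsion-freeness of $R$ is recorded.
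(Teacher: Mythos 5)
Your proof is correct and is essentially the paper's own argument: apply Theorem \ref{thm:sequence} and use that right-angled Artin groups (hence the normal subgroup $R$) are torsion-free to force $R=1$. The paper states exactly this in one line, so nothing further is needed.
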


\begin{proof}
Because right-angled Artin groups are torsion-free, our proposition follows directly from Theorem \ref{thm:sequence}.
\end{proof}

\noindent
As an application, we are able to recover a result proved in \cite{HigmanBook}.

\begin{cor}
For every $n \geq 2$ and $r \geq 1$, Higman's group $V_{n,r}$ embeds into $V$. 
\end{cor}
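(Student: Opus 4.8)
The plan is to apply Proposition \ref{prop:embed2}, so the only work is to recognise $V_{n,r}$ as a braided diagram group all of whose non-trivial normal subgroups have torsion. The first point is immediate: by Example \ref{ex:ThompsonsVariations}, $V_{n,r}$ is isomorphic to the braided diagram group $D_b(\mathcal{P}_n,x^r)$ associated to the semigroup presentation $\mathcal{P}_n=\langle x\mid x=x^n\rangle$, so it is indeed a braided diagram group.

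For the second point, I would invoke Higman's analysis of the normal subgroup structure of $V_{n,r}$ from \cite{HigmanBook}: there is a simple subgroup $M\leq V_{n,r}$ (namely $V_{n,r}$ itself, or its commutator subgroup, which has index at most two) that is contained in every non-trivial normal subgroup of $V_{n,r}$. It then suffices to check that $M$ contains a torsion element. This is clear, since $V_{n,r}$ acts on a Cantor-type space by right-continuous maps that can realise arbitrary finite permutations of the pieces of a common subdivision; in particular $V_{n,r}$ contains a copy of the alternating group $A_5$, which is perfect and hence lies in the commutator subgroup, a fortiori in $M$. Consequently every non-trivial normal subgroup of $V_{n,r}$ contains torsion.

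With these two observations the hypotheses of Proposition \ref{prop:embed2} are met, and it yields at once an embedding $V_{n,r}\hookrightarrow V$. There is essentially no obstacle here; the only point requiring care is that one must produce torsion inside \emph{every} non-trivial normal subgroup of $V_{n,r}$, not merely inside $V_{n,r}$ itself — which is exactly why Higman's (quasi-)simplicity result is used rather than the bare existence of finite subgroups.
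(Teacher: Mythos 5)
Your proposal is correct and follows essentially the same route as the paper: identify $V_{n,r}$ as the braided diagram group $D_b(\mathcal{P}_n,x^r)$ of Example \ref{ex:ThompsonsVariations} and combine Higman's result on the simple commutator subgroup (of index at most two) with Proposition \ref{prop:embed2}. The only cosmetic differences are that the paper dispatches the case $n$ even directly via Proposition \ref{prop:simpleembed}, and for $n$ odd it avoids the slightly stronger claim that every non-trivial normal subgroup contains the simple subgroup by arguing the dichotomy (a normal subgroup disjoint from $S_{n,r}$ is finite, hence already torsion), while your $A_5$ observation makes explicit the torsion element the paper leaves implicit.
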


\begin{proof}
We saw in Example \ref{ex:ThompsonsVariations} that Higman's group $V_{n,r}$ is a braided diagram group. Also, Higman proved in \cite{HigmanBook} that $V_{n,r}$ is simple if $n$ is even, and contains a simple subgroup $S_{n,r}$ of index two if $n$ is odd. Therefore, a direct application of Proposition~\ref{prop:simpleembed} shows that $V_{n,r}$ embeds into $V$ if $n$ is even. Suppose now that $n$ is odd. If $N$ is a normal subgroup of $V_{n,r}$, then either $N$ is disjoint from $S_{n,r}$, so that it is necessarily finite, or it contains $S_{n,r}$, so that it has to contain a finite-order element. It follows from Proposition~\ref{prop:embed2} that $V_{n,r}$ embeds into $V$. 
\end{proof}

\noindent
In our next application, we extend \cite[Theorem 4]{BMN} where it is proved that $QV_{2,1,0}$ embeds into $V$.

\begin{cor}
For every $n \geq 2$, $r \geq 1$ and $p \geq 0$, $QV_{n,r,p}$ embeds into $V$.
\end{cor}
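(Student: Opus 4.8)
The plan is to combine Theorem~\ref{thm:sequence} with the criterion of Proposition~\ref{prop:embed2}. First I would recall from Section~\ref{section:ex} that $QV_{n,r,p}$ is isomorphic to the braided diagram group $D_b(\mathcal{P}_n, x^r a^p)$ with $\mathcal{P}_n = \langle x, a \mid x = x^n a \rangle$, so that Theorem~\ref{thm:sequence} yields a short exact sequence $1 \to R \to QV_{n,r,p} \to S \to 1$ with $R$ a subgroup of a right-angled Artin group and $S$ a subgroup of $V$. Since right-angled Artin groups are torsion-free, the kernel $R$ is torsion-free; so if we can show that every non-trivial normal subgroup of $QV_{n,r,p}$ contains an element of finite order (equivalently: if we can apply Proposition~\ref{prop:embed2}), then $R = 1$ and $QV_{n,r,p}$ embeds into $V$.

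The substantial step is therefore the analysis of normal subgroups of $QV_{n,r,p}$. Here I would work with the quasi-automorphism description on $\mathcal{T}_{n,r,p}$. The quasi-automorphisms of finite support form a finite normal subgroup $F_0$: one checks that a finitely supported bijection of $\mathcal{T}_{n,r,p}$ respecting adjacency outside a finite set can only permute the $p$ isolated vertices (moving any tree vertex drags along its infinite subtree), so $F_0 \cong \mathrm{Sym}(p)$. On the other hand $QV_{n,r,p}$ contains a Higman--Thompson-type simple subgroup $\Sigma$, which is rich in finite-order elements (finite cyclic groups cyclically permuting cylinders). The key claim I would establish is a dichotomy: every non-trivial normal subgroup $N$ of $QV_{n,r,p}$ either is contained in $F_0$, or contains $\Sigma$. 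Granting this, if $N \subset F_0$ then $N$ is a non-trivial finite group, hence has torsion; and if $N \supset \Sigma$ then $N$ contains the torsion elements of $\Sigma$; so in all cases the hypothesis of Proposition~\ref{prop:embed2} holds.

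The hard part will be proving this dichotomy uniformly in $(n, r, p)$. When $N \cap F_0 \neq 1$ one is done immediately; when $N \cap F_0 = 1$ one has $[N, F_0] \subset N \cap F_0 = 1$, so $N$ centralises $F_0$, and one must argue that the centraliser of $F_0$ inside $QV_{n,r,p}$ is controlled by the ``$V_{n,r}$-part'', whose non-trivial normal subgroups are finite or contain the simple group $S_{n,r}$ by Higman's work~\cite{HigmanBook}. For $p = 0$ this is essentially Higman's theorem, and the cases $QV_{2,0,0}$, $QV_{2,0,1}$ are treated in~\cite{QuasiAuto}; the general statement should be assembled from~\cite{QuasiAuto, FarleyQuasiAuto}. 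Once the dichotomy is available the corollary is immediate, and it extends~\cite[Theorem~4]{BMN} (the case $QV_{2,1,0}$). A slightly cleaner variant would instead verify that every non-trivial normal subgroup of $QV_{n,r,p}$ contains a non-trivial \emph{simple} subgroup --- using that every non-trivial subgroup of $\mathrm{Sym}(p)$ contains a subgroup of prime order by Cauchy's theorem, together with simplicity of $S_{n,r}$ --- and then invoke Proposition~\ref{prop:embed1} rather than Proposition~\ref{prop:embed2}.
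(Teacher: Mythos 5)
Your reduction is the same as the paper's: identify $QV_{n,r,p}$ with the braided diagram group $D_b(\mathcal{P}_n,x^ra^p)$ and apply Proposition~\ref{prop:embed2} (i.e.\ Theorem~\ref{thm:sequence} plus torsion-freeness of right-angled Artin groups). The gap is in the only substantial step, the claim that every non-trivial normal subgroup of $QV_{n,r,p}$ has torsion. Your identification $F_0\cong\mathrm{Sym}(p)$ is false: a quasi-automorphism only has to preserve adjacency and the orderings at all but \emph{finitely many} vertices, so \emph{every} finitely supported permutation of the vertex set $\mathcal{T}_{n,r,p}^{(0)}$ lies in $QV_{n,r,p}$ --- the argument ``moving a tree vertex drags along its infinite subtree'' is only valid for honest tree automorphisms, not quasi-automorphisms. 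Hence the finitely supported elements form an infinite, locally finite normal subgroup (the finitary symmetric group on a countable vertex set), not a copy of $\mathrm{Sym}(p)$. With this corrected, your proposed dichotomy ``$N\subset F_0$ or $N\supset\Sigma$'' is false as you stated it: the finitary alternating group on the vertex set is a non-trivial normal subgroup of $QV_{n,r,p}$ that is neither contained in $\mathrm{Sym}(p)$ nor contains a Higman--Thompson simple group (it is torsion). Moreover, for $p\le 1$ your $F_0$ is trivial, so the branch ``$N\cap F_0\neq 1$'' never applies and the centraliser condition is vacuous; the remaining step (``the centraliser of $F_0$ is controlled by the $V_{n,r}$-part'') is never proved and is only deferred to references, so the argument does not close.

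The fix is short and is essentially the paper's argument, which exploits precisely the feature you missed: given $1\neq g\in N$, choose a vertex $o$ with $g\cdot o\neq o$, a vertex $q\neq o$, and a finitely supported $\sigma\in QV_{n,r,p}$ with $\sigma(o)=g^{-1}q$ and $\sigma(q)=o$. The commutator $g^{-1}\sigma^{-1}g\sigma$ lies in $N$ by normality, fixes every vertex outside $\supp(\sigma)\cup g^{-1}\supp(\sigma)$, hence is finitely supported and of finite order, and it sends $o$ to $g^{-1}o\neq o$, hence is non-trivial. Equivalently, in your language: with the correct (infinite) $F_0$, a non-trivial $N$ cannot centralise $F_0$ (the centraliser of the finitary symmetric group of an infinite set is trivial), and $[N,F_0]\subset N\cap F_0$, so $N\cap F_0\neq 1$ and $N$ has torsion. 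No structure theory of the normal subgroups of $QV_{n,r,p}$, and no appeal to Higman's classification for $V_{n,r}$, is needed; your alternative route via Proposition~\ref{prop:embed1} would require the unproved dichotomy and is not necessary.
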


\begin{proof}
Let $N$ be a non-trivial normal subgroup of $QV_{n,r,p}$. Fix a non-trivial element $g \in N$. Let $\sigma \in QV_{n,r,p}$ be an element whose support is finite. Then, for every vertex $x \in T_{n,r,p}$ outside $\mathrm{supp}(\sigma) \cup g^{-1} \mathrm{supp}(\sigma)$, one has $g^{-1} \sigma^{-1} g \sigma \cdot x = x$. Consequently, the element $g^{-1} \sigma^{-1} g \sigma$ has finite support, and so it defines an element of finite order of $N$. We claim that $\sigma$ can be chosen so that this element is moreover non-trivial. Because $g$ is non-trivial, there must exist a vertex $o \in T_{n,r,p}$ such that $g \cdot o \neq o$. Fix a vertex $q \in T_{n,r,p}$ distinct from $o$. Now let $\sigma \in QV_{n,r,p}$ be an element whose support is finite and such that $\sigma(o)=g^{-1}q$ and $\sigma(q) = o$. Then $g^{-1} \sigma^{-1} g \sigma \cdot o = g^{-1}o \neq o$, showing that $g^{-1} \sigma^{-1} g \sigma$ is non-trivial, desired.


\medskip \noindent
Thus, we have proved that any non-trivial normal subgroup of $QV_{n,r,p}$ contains a non-trivial finite-order element. Our corollary follows from Proposition \ref{prop:embed2}. 
\end{proof}

\noindent
Our following corollary can be shown by reproducing the previous proof word for word. It extends \cite[Proposition 2.6]{RoverThesis} proved for Houghton's groups $H_n$.

\begin{cor}
For every $n \geq 1$ and $p \geq 0$, the generalised Houghton group $H_{n,p}$ embeds into $V$.
\end{cor}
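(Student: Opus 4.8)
The plan is to reproduce the argument used in the preceding corollary (for $QV_{n,r,p}$) almost word for word. Recall from Section~\ref{section:ex} that $H_{n,p}$ is a braided diagram group, namely $D_b(\mathcal{P}_n, ra^p)$, and that it is concretely the group of bijections of the vertex set $R_{n,p}^{(0)}$ of $R_{n,p}$ ($n$ rays together with $p$ isolated vertices) which preserve adjacency outside a finite set. By Proposition~\ref{prop:embed2}, which asserts that a braided diagram group all of whose non-trivial normal subgroups contain torsion embeds into $V$, it is enough to show that every non-trivial normal subgroup $N \trianglelefteq H_{n,p}$ contains a non-trivial element of finite order.

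First I would exploit that $H_{n,p}$ contains the finitary symmetric group on $R_{n,p}^{(0)}$: any permutation of $R_{n,p}^{(0)}$ moving only finitely many vertices preserves adjacency off its (finite) support, hence lies in $H_{n,p}$, and being a permutation of a finite set it has finite order. Fix $g \in N \setminus \{1\}$. For any finitely supported $\sigma \in H_{n,p}$, normality gives $g^{-1}\sigma^{-1}g\sigma \in N$, and one checks immediately that this element fixes every vertex outside the finite set $\supp(\sigma) \cup g^{-1}\supp(\sigma)$; thus it is finitely supported, hence of finite order. It remains only to choose $\sigma$ so that it does not centralise $g$, which is where the small amount of work lies.

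Since $g \neq 1$, there is a vertex $o$ with $g\cdot o \neq o$; as $R_{n,p}^{(0)}$ is infinite we may pick a vertex $q \notin \{o, g\cdot o\}$ and let $\sigma$ be the transposition of $o$ and $q$, an element of $H_{n,p}$ with support $\{o,q\}$. A short case analysis on the value of $g\cdot q$ then shows that $g^{-1}\sigma^{-1}g\sigma \cdot o \neq o$ in every case: if $g\cdot q \notin\{o,q\}$ the image of $o$ is $q$; if $g\cdot q = o$ it is $g^{-1}\cdot q$; if $g\cdot q = q$ it is $g^{-1}\cdot o$; and none of these equals $o$, using $q\neq o$, $q \neq g \cdot o$ and $g\cdot o\neq o$. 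Hence $g^{-1}\sigma^{-1}g\sigma$ is a non-trivial finite-order element of $N$, and Proposition~\ref{prop:embed2} yields $H_{n,p} \hookrightarrow V$. There is no real obstacle here beyond bookkeeping; the only point to watch is the choice $q \notin \{o, g\cdot o\}$ (rather than merely $q \neq o$), which guarantees that the commutator genuinely moves $o$. For $n = 1$ the group $H_{1,p}$ is itself a finitary symmetric group on a countable set, so the conclusion is immediate, but the argument above applies unchanged.
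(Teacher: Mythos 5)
Your proof is correct and follows essentially the same route as the paper, which simply reproduces the $QV_{n,r,p}$ argument: conjugate a non-trivial $g$ in a normal subgroup against a finitely supported permutation to produce a non-trivial finite-order element, then invoke Proposition \ref{prop:embed2}. Your only deviation is the cleaner choice of $\sigma$ as the transposition $(o\,q)$ with $q \notin \{o, g\cdot o\}$ together with the three-case check, which is a harmless (indeed slightly more careful) variant of the paper's choice of $\sigma$.
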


\noindent
Let us conclude our paper with a remark about the distortions of our embeddings. 

\begin{remark}\label{remark:distortion}
Let $D$ be a diagram group defined from a finite semigroup presentation. If one of Propositions \ref{prop:simpleembed}, \ref{prop:embed1} or \ref{prop:embed2} applies, then we get an embedding into some Thompson's group which is quasi-isometric with respect to diagram lengths. Following \cite{MR2271228}, we say that a diagram group satisfies \emph{Property B} if the diagram length is quasi-isometric to a word length (with respect to a finite generating set). According to \cite{BurilloPropB} and \cite{ThasPropB}, Thompson's groups $F$ and $T$ satisfy Property B (with respect to their usual descriptions as diagram groups; see Examples \ref{ex:F} and \ref{ex:T}). Therefore, if $D$ is a planar or annular diagram group satisfying Property B, then our embeddings are quasi-isometric with respect to word lengths. For Thompson's group $V$ (with respect to its usual description as a diagram group, see Example \ref{ex:V}) the situation is more complicated: according to \cite{VandPropB}, it does not satisfy Property B, but the distortion between diagram and word lengths is asymptotically bounded by $n \cdot \log(n)$. Therefore, if $D$ is a braided diagram group satisfying Property B, then our embeddings into $V$ have compression one. 
\end{remark}

\noindent
This observation motivates the following question:

\begin{question}
Do the diagram groups mentioned in Section \ref{section:ex} satisfy Property B?
\end{question}

\appendix

\section{About hyperplanes in quasi-median graphs}

\noindent
This appendix is dedicated to the proof of Theorem \ref{thm:MainQM} used in Section \ref{section:DecompositionTheorem}. Our goal here is to expose a short argument based on published articles, and we refer to the thesis  \cite[Propositions 2.15 and 2.30, Corollary 2.21]{Qm} for a (longer) self-contained argument. 

\begin{thm}\label{thm:GenQM}
Let $X$ be a quasi-median graph. Fix a clique $C \subset X$ and let $J$ denote the hyperplane which contains it. The following assertions hold:
\begin{itemize}
	\item[(i)] If $p : X \to C$ denotes the projection onto $C$, then $\{p^{-1}(x) \mid x \in C\}$ coincides with the collection of the sectors delimited by $J$. Moreover, such a sector is gated.
	\item[(ii)] The neighborhood $N(J)$ of $J$ is a gated subgraph.
	\item[(iii)] The fibers of $J$ are gated subgraphs.
	\item[(iv)] Every geodesic of $X$ crosses $J$ at most once. 
\end{itemize}
\end{thm}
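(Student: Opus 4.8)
The plan is to reduce all four assertions to a single combinatorial description of the edges dual to $J$ in terms of the projection onto $C$. By \cite[Theorem 1]{quasimedian} cliques of $X$ are gated, so the projection $p \colon X \to C$ is well defined: $p(x)$ is the unique vertex of $C$ at minimal distance from $x$, it lies on a geodesic from $x$ to every vertex of $C$, and since $C$ is complete one has $d(x,y) = d(x,p(x)) + 1$ for every $y \in C \setminus \{p(x)\}$, while $p(x') = p(x)$ whenever $x'$ lies on a geodesic from $x$ to $p(x)$. The backbone of the proof will be the following claim: \emph{an edge $\{a,b\}$ of $X$ is dual to $J$ if and only if $p(a) \neq p(b)$; and in that case $d(a,p(a)) = d(b,p(b))$ and either $\{a,b\} \subset C$ or $\{a,p(a),b,p(b)\}$ spans an induced square.}

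I would prove this claim in two steps. For the ``if'' direction I would induct on $d(a,p(a)) + d(b,p(b))$: if this integer is zero then $\{a,b\} \subset C$; otherwise, choosing a neighbour $a'$ of $a$ with $d(a',p(a)) = d(a,p(a)) - 1$ (so $p(a') = p(a)$) and feeding the configuration into the triangle and quadrangle conditions, while invoking the absence of induced $K_4^-$ to rule out degenerate positions, one produces a neighbour $b'$ of $b$ with $p(b') = p(b)$, $d(b',p(b)) = d(b,p(b)) - 1$, and such that $\{a,b\}$ and $\{a',b'\}$ are opposite sides of a square; the induction hypothesis applies to $\{a',b'\}$, and $\{a,b\}$ inherits duality to $J$. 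For the ``only if'' direction one checks that the property ``$p$ takes distinct values on the two endpoints of the edge'' is preserved by the two moves generating the hyperplane relation: for opposite sides of a square it is immediate, and for two edges of a common clique $C'$ it follows from the auxiliary fact that the restriction of $p$ to any clique of $X$ is constant or injective, which is again forced by weak modularity together with the exclusion of $K_4^-$ and $K_{3,2}$. This ``ladder of squares'' induction is the one genuinely non-formal point, and I expect it to be the main obstacle; nothing beyond the gatedness of cliques enters the proof except through it. (Alternatively, one may deduce the claim, and much of the theorem, from the isometric embedding of a quasi-median graph into a Hamming graph, under which $J$ corresponds to one of the factors.)

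Granting the claim, the four assertions become formal. For (i): by the claim no edge of $X \backslash \backslash J$ joins two of the sets $p^{-1}(y)$, $y \in C$, while conversely every edge between two distinct such sets is dual to $J$; moreover each $p^{-1}(y)$ is connected --- indeed gated, the gate of a vertex $x$ being read off from $p(x)$ and the distance formula above --- so the family $\{p^{-1}(y) \mid y \in C\}$ is exactly the collection of connected components of $X \backslash \backslash J$, which is disconnected since $C$ contains an edge. For (ii) and (iii): using the claim one identifies $N(J)$ with the subgraph on the vertices having a neighbour across $J$, and the fibers with the subgraphs $N(J) \cap p^{-1}(y)$; their gatedness is then obtained from that of the cliques and of the sets $p^{-1}(y)$ exactly as in the median case, which is where \cite{quasimedian} (and, triangle-free, \cite{mediangraphs, BC}) can be quoted.

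Finally, for (iv): if a geodesic $\gamma$ crossed $J$ twice, then between the two crossing edges it would contain a subpath whose two endpoints lie in a common sector $p^{-1}(y)$ while an interior vertex lies in a different sector; since $p^{-1}(y)$ is gated, hence convex, this is impossible --- and the complementary situation, in which the sectors met at consecutive crossings are pairwise distinct, is ruled out by a short comparison of distances to $C$ via the claim (equivalently, because under the Hamming embedding a geodesic alters each coordinate at most once). This completes the reduction.
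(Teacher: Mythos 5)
Your backbone claim is exactly the paper's Proposition \ref{prop:RefHypProj} (an edge is dual to $J$ if and only if its endpoints have distinct projections onto $C$), and your treatment of (i) from it matches the paper. But the paper obtains this statement by citing \cite{QMandTreeLike}, whereas you propose to prove it from scratch, and your sketch does not carry the weight where it matters. The ``if'' direction can indeed be run as you say (the quadrangle condition applied to $p(b)$ produces the next rung of the ladder, and distance bookkeeping shows the new edge again has distinct projections), although your auxiliary statement that $\{a,p(a),b,p(b)\}$ spans a square is false as soon as $d(a,C)\geq 2$. The real gap is the ``only if'' direction: preservation of the property ``distinct projections'' under the square move is \emph{not} immediate. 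If $\{a,b\}$ and $\{a',b'\}$ are opposite sides of a square with $p(a)\neq p(b)$, ruling out $p(a')=p(b')$ is essentially the statement that an edge with equal projections is never dual to $J$ (equivalently, that $J$ does not self-intersect or self-osculate), which is the substantive half of Proposition \ref{prop:RefHypProj}; equidistance of endpoints plus nonexpansiveness of the gate map do not settle it, and the case analysis quickly requires a further induction with the forbidden subgraphs. Your ``constant or injective on cliques'' fact is fine (triangle condition plus $K_4^-$), but the square case is where the content lies, and it is the very point you label immediate.

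Two further steps are asserted rather than proved. For (ii), gatedness of $N(J)$ is not a formal consequence of gatedness of cliques and of the fibers $p^{-1}(y)$, and it is not contained in \cite{quasimedian}; the paper needs a genuine argument here (connectivity, ``contains its triangles'' via Lemma \ref{lem:AppendixSquare} and the triangle condition, local convexity via Lemma \ref{lem:AppendixSquare} applied twice and the quadrangle condition, then Chepoi's criterion, Lemma \ref{lem:GatedChepoi}, and Corollary \ref{cor:GatedInGated} for (iii)). For (iv), convexity of sectors only excludes a geodesic returning to a sector it has left; the remaining case, where the two crossings separate three pairwise distinct sectors, is precisely the hard one, and it is \emph{not} ruled out by comparing distances to $C$: crossing edges have equidistant endpoints, so $d(\cdot,C)$ imposes no constraint. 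The paper handles this by a shortening induction using Lemma \ref{lem:AppendixSquare} together with the already-established convexity of $N(J)$. Your parenthetical fallback (isometric embedding into a Hamming graph with $J$ a coordinate) would work, but it presupposes the identification of combinatorial hyperplanes with the coordinate classes of that embedding, which is a theorem of the same depth as the one you are trying to avoid citing; as written, the proposal leaves Proposition \ref{prop:RefHypProj} (only if), assertion (ii), and assertion (iv) without proof.
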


\noindent
We refer to Section \ref{section:QMgeom} for the definition of quasi-median graphs, and to Section \ref{section:DecompositionTheorem} for the definitions of gated subgraphs, hyperplanes, etc. Also, recall from \cite[Theorem 1]{quasimedian} that, in quasi-median graphs, cliques are gated. The first preliminary result needed to prove Theorem \ref{thm:GenQM} is the following. 

\begin{lemma}\label{lem:GatedChepoi}\emph{\cite{ChepoiRussian}}
Let $X$ be a quasi-median graph and $Y \subset X$ a connected induced subgraph. Then $Y$ is gated if and only if it \emph{locally convex} (i.e., if three consecutive vertices of an induced 4-cycle belongs to $Y$ then the entire 4-cycle must be included into $Y$) and if $Y$ \emph{contains its triangles} (i.e., any triangle having of its edges in $Y$ must be included into~$Y$). 
\end{lemma}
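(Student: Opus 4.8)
The plan is to prove the two implications separately, treating the forward (``only if'') direction as a short warm-up and concentrating the real work on the converse. Throughout I use that a gate, when it exists, is unique and coincides with the nearest point of $Y$, and that $X$ is weakly modular (triangle and quadrangle conditions) with no induced $K_4^-$ or $K_{3,2}$.

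For necessity, assume $Y$ is gated. To see that $Y$ contains its triangles, take a triangle with an edge $\{a,b\}\subseteq Y$ and third vertex $c$, and suppose $c\notin Y$. Let $p$ be the gate of $c$. Some geodesic from $c$ to $a$ passes through $p$; since $d(c,a)=1$ this geodesic is the single edge $ca$, so $p\in\{c,a\}$, and $c\notin Y$ forces $p=a$. The same reasoning with $b$ gives $p=b$, contradicting $a\neq b$; hence $c\in Y$. Local convexity is obtained identically: for an induced $4$-cycle $(v_1,v_2,v_3,v_4)$ with $v_1,v_2,v_3\in Y$ and $v_4\notin Y$, the gate of $v_4$ would have to equal both of its cycle-neighbours $v_1$ and $v_3$ (each at distance $1$), which is impossible since $v_1\neq v_3$.

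For sufficiency I would fix $x\in X$ and prove, by induction on $m:=d(x,Y)$, that the point of $Y$ nearest $x$ is a gate (uniqueness then comes for free). The case $m=0$ is trivial. For $m\geq 1$ choose a neighbour $x'$ of $x$ with $d(x',Y)=m-1$; by induction $x'$ has a gate $g$, with $d(x',g)=m-1$ and $d(x',z)=(m-1)+d(g,z)$ for every $z\in Y$. A one-line estimate, $m=d(x,Y)\leq d(x,g)\leq d(x,x')+d(x',g)=m$, gives $d(x,g)=m$, so $g$ is nearest to $x$ and the whole statement reduces to the claim that $d(x,z)=m+d(g,z)$ for all $z\in Y$, i.e. that $x'$ lies on a geodesic from $x$ to every vertex of $Y$. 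If this fails, choose a violator $z$ minimising $k:=d(g,z)$; necessarily $k\geq 1$. Since $x$ and $x'$ are adjacent their distances to $z$ differ by exactly $1$, and since $d(x,z)\leq d(x,x')+d(x',z)=m+k$, a parity argument pins the values down to $d(x',z)=m+k-1$ and $d(x,z)=m+k-2$, so $x$ is a length-$2$ shortcut from $x'$ towards $z$.

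The main obstacle is to rule out this shortcut, and this is the technical heart of Chepoi's argument; everything preceding it is routine bookkeeping. The plan is to march the shortcut back towards $g$: repeatedly apply the triangle and quadrangle conditions to fill squares along a geodesic from $z$ to $g$, at each stage invoking \emph{contains its triangles} and \emph{local convexity} to force the newly produced vertices to stay inside $Y$, and using the exclusion of the induced subgraphs $K_4^-$ and $K_{3,2}$ to keep the local configuration rigid (in particular to locate, in $Y$, a neighbour of $z$ strictly closer to $g$). One then reaches a violator nearer to $g$, contradicting the minimality of $k$, or else exhibits one of the two forbidden subgraphs outright. I expect the delicate point to be precisely guaranteeing at each step that the vertices manufactured by weak modularity lie in $Y$ rather than merely in $X$; this is exactly where the two hypotheses on $Y$ are consumed. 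A convenient by-product of the same induction is that geodesics between vertices of $Y$ stay in $Y$, i.e. $Y$ is convex, which is what Theorem \ref{thm:GenQM} ultimately exploits.
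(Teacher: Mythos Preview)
The paper does not prove this lemma: it is stated with a citation to \cite{ChepoiRussian} and then used as a black box in the appendix. So there is no in-paper proof to compare against, and I can only assess your argument on its own terms.

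Your necessity direction is correct and cleanly written. Your overall strategy for sufficiency (induct on $m=d(x,Y)$, take the gate $g$ of a neighbour $x'$ with $d(x',Y)=m-1$, and show $g$ is a gate for $x$ by considering a minimal violator $z$) is the right shape. However, there is a concrete error in the case analysis. You claim that ``a parity argument pins the values down to $d(x',z)=m+k-1$ and $d(x,z)=m+k-2$''. There is no parity available: quasi-median graphs are not bipartite in general (they contain triangles). From $|d(x,z)-d(x',z)|\leq 1$, $d(x',z)=m+k-1$, and $d(x,z)<m+k$ you only obtain $d(x,z)\in\{m+k-2,\,m+k-1\}$. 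The case $d(x,z)=d(x',z)$ is genuinely possible and must be treated separately; this is exactly where the triangle condition is used, producing a common neighbour of $x$ and $x'$ closer to $z$, and the two cases then feed into different parts of the filling argument.

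Beyond this, the remainder of your sufficiency direction is a plan rather than a proof. ``March the shortcut back towards $g$'' and ``invoke \emph{contains its triangles} and \emph{local convexity}'' are precisely the steps that have to be executed, and as you acknowledge, the delicate issue is ensuring that the vertices manufactured by weak modularity land in $Y$. As written this is a reasonable outline of Chepoi's argument but not yet a proof.
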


\noindent
A straightforward consequence of the previous lemma (which can also be proved directly from the definition) is:

\begin{cor}\label{cor:GatedInGated}
Let $X$ be a quasi-median graph and $Y \subset Z \subset X$ two subgraphs. Assume that $Z$ is gated, so that $Z$ is a quasi-median graph in its own right. If $Y$ is gated in $Z$, then it is gated in $X$. 
\end{cor}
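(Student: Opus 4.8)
The plan is to apply the Chepoi characterization (Lemma \ref{lem:GatedChepoi}) twice, once inside $Z$ and once inside $X$, the bridge being the fact that a gated subgraph is automatically locally convex and closed under taking triangles (the ``only if'' direction of that same lemma). First I would record that $Y$ is a connected induced subgraph of $X$: it is one of $Z$ by hypothesis, and $Z$ is an induced subgraph of $X$, so the notion of ``induced $4$-cycle'' and ``triangle'' is the same whether computed in $Z$ or in $X$ once all the vertices involved lie in $Z$. Next, since $Z$ is gated in $X$, the easy direction of Lemma \ref{lem:GatedChepoi} shows that $Z$ is locally convex in $X$ and contains its triangles in $X$; and since $Y$ is gated in $Z$ (which is a quasi-median graph in its own right, being gated in $X$), the same direction shows that $Y$ is locally convex in $Z$ and contains its triangles in $Z$.

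Then I would carry out the transfer. For local convexity, take an induced $4$-cycle $(a,b,c,d)$ of $X$ with $a,b,c\in Y$. As $a,b,c\in Z$ and $Z$ is locally convex in $X$, the vertex $d$ also lies in $Z$; hence $(a,b,c,d)$ is an induced $4$-cycle of $Z$ with three consecutive vertices in $Y$, and local convexity of $Y$ in $Z$ forces $d\in Y$. For triangles, the argument is identical: a triangle of $X$ having an edge in $Y\subset Z$ lies entirely in $Z$ because $Z$ contains its triangles, whence it is a triangle of $Z$ with an edge in $Y$, so it lies in $Y$. Thus $Y$ is locally convex in $X$ and contains its triangles in $X$, and Lemma \ref{lem:GatedChepoi} applied to $Y\subset X$ yields that $Y$ is gated in $X$.

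There is no real obstacle here; the only point requiring a little care is the observation, used implicitly above, that an induced $4$-cycle (resp. a triangle) of $X$ all of whose vertices lie in $Z$ is an induced $4$-cycle (resp. a triangle) of $Z$ as well, which holds precisely because $Z$ is an \emph{induced} subgraph of $X$. Everything else is a direct combination of Lemma \ref{lem:GatedChepoi} with the elementary implication ``gated $\Rightarrow$ locally convex and triangle-closed''.
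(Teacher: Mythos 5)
Your proof is correct and follows exactly the route the paper intends: the paper leaves Corollary \ref{cor:GatedInGated} as a ``straightforward consequence'' of Lemma \ref{lem:GatedChepoi}, and your argument is precisely the double application of that lemma (gated $\Rightarrow$ locally convex and triangle-closed for $Z$ in $X$ and for $Y$ in $Z$, then transfer back to $X$ using that $Z$ is induced). The care you take with the induced-subgraph point is exactly the only detail worth checking, and it is handled correctly.
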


\noindent
The following alternative characterisation of hyperplanes, essentially contained in \cite{QMandTreeLike}, will be fundamental in our argument:

\begin{prop}\label{prop:RefHypProj}
Let $X$ be a quasi-median graph. Fix a clique $C \subset X$ and let $J$ denote its associated hyperplane. An edge $[a,b]$ belongs to $J$ if and only if the projections of $a$ and $b$ onto $C$ are distinct.
\end{prop}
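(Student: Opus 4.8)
The plan is to prove the two implications separately, in each case reducing to the (already available) fact that intervals of a quasi-median graph are median. Write $p=p_C$ for the projection onto the gated clique $C$. The basic input is that $p$ almost preserves edges: for an edge $[a,b]$, the vertices $p(a)$ and $p(b)$ are equal or adjacent, and if $p(a)\neq p(b)$ then $d(a,C)=d(b,C)$ and $d(a,p(b))=d(a,C)+1$. Indeed, since $p(a)$ is a gate, a geodesic from $a$ to $p(b)$ runs through $p(a)$, so $d(a,p(b))=d(a,p(a))+d(p(a),p(b))$; comparing this with $d(a,p(b))\leq d(a,b)+d(b,p(b))$ and with the symmetric inequality gives $d(p(a),p(b))\leq 1$ and, when $p(a)\neq p(b)$, $d(a,p(a))=d(b,p(b))$, the last equality because $p(a)$ is the \emph{unique} nearest point of $C$ to $a$.

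For the implication ``$p(a)\neq p(b)\Rightarrow [a,b]\in J$'', put $v=p(b)$ and let $I=I(a,v)$. By the remark, $d(a,v)=d(a,C)+1=d(b,v)+1$ and $d(a,p(a))=d(a,v)-1$ with $p(a)\sim v$, so $a,b,p(a),v$ all lie in $I$. The interval $I$ is gated --- hence quasi-median by Corollary \ref{cor:GatedInGated} --- and triangle-free, hence median by \cite[Theorem 1]{quasimedian} and \cite[Proposition 3]{BC}; thus $I$ is the one-skeleton of a CAT(0) cube complex and its metric agrees with that of $X$. Let $h_1,h_2$ be the hyperplanes of $I$ dual to $[a,b]$ and to $[p(a),v]$ respectively. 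The geodesic $a\to p(a)\to\cdots\to v$ of $I$ crosses $h_1$ (since $h_1$ separates $a$ from $b$, hence from $v$) and crosses it only once, so either $h_1=h_2$ or $h_1$ is dual to an edge of the subgeodesic $a\to p(a)$; in the latter case the hyperplane-sets of $I$ satisfy $H(a,v)=\{h_1\}\sqcup H(b,v)=H(a,p(a))\sqcup\{h_2\}$ with $h_1\neq h_2$, forcing $h_2\in H(b,v)$ and hence $d(b,p(a))=d(b,C)-1$, impossible since $p(a)\in C$. Therefore $h_1=h_2$, so $[a,b]$ and $[p(a),p(b)]$ are joined by a finite chain of ``opposite edges of a square'' moves taking place inside $I\subseteq X$; since $[p(a),p(b)]$ is an edge of $C$, we conclude $[a,b]\in J$.

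For the converse, let $E_0$ be the set of edges $[x,y]$ with $p(x)\neq p(y)$. Then $E(C)\subseteq E_0$ trivially, and $E_0\subseteq J$ by the previous paragraph; since $J$ is by definition the smallest set of edges containing $E(C)$ and stable under the two elementary moves, it suffices to show that $E_0$ is stable under both. Stability under the common-clique move reduces to the claim that $p$ restricted to any clique $K$ is constant or injective: assuming $p(x)=p(y)$ for distinct $x,y\in K$, one proves $p(z)=p(x)$ for all $z\in K$ by induction on $d(x,C)+d(y,C)$, using the quadrangle condition to extract from a would-be third projection a vertex of $X$ strictly nearer to $C$ forming a $K_4$ with two of our vertices, and then the exclusion of $K_4^-$ (and, in the base cases, the maximality of $C$) to reach a contradiction; the base cases $d(x,C)+d(y,C)\leq 2$ are checked directly. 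Granting this, if $[a,b]\in E_0$ and $[a',b']$ lies in the same clique as $[a,b]$, then $p$ is non-constant, hence injective, on that clique, so $[a',b']\in E_0$.

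The stability of $E_0$ under the ``opposite edges of a square'' move is the main obstacle. Given an induced square $abb'a'$ with $[a,b]\|[a',b']$ and $p(a)\neq p(b)$, suppose for contradiction $p(a')=p(b')$. Then $p(a)\neq p(a')$ or $p(b)\neq p(b')$ (otherwise $p(a')=p(a)\neq p(b)=p(b')$), and by the symmetry exchanging $a\leftrightarrow b$, $a'\leftrightarrow b'$ we may assume $p(a)\neq p(a')$; by the implication already proved, $[a,b]\in J$ and $[a,a']\in J$. But in the prism $abb'a'=K_2\times K_2$ the edges $[a,b]$ and $[a,a']$ lie in transverse hyperplanes, so $J$ would be transverse to itself --- which cannot occur in a quasi-median graph, a minimal such square producing an induced $K_4^-$ or $K_{3,2}$ through the quadrangle condition. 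Hence $E_0$ is stable under this move as well, and the proposition follows. The delicate points, and where I expect most of the effort to lie, are precisely the clique sub-lemma and this last fact on self-transverse hyperplanes; an alternative route is to establish first the full hyperplane structure of $X$ (gatedness of neighborhoods and fibers, connectedness of sectors) and deduce the proposition as a corollary, but that amounts essentially to proving Theorem \ref{thm:GenQM} directly.
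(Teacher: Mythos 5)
Your forward direction (distinct projections $\Rightarrow$ the edge lies in $J$) is essentially sound and is genuinely different from the paper, which disposes of both implications in a few lines by importing the metric characterisation of the hyperplane relation from \cite[Corollary 4.5(i)]{QMandTreeLike}. But your converse direction has a real gap: the closure of $E_0$ under the opposite-edges-of-a-square move rests entirely on the assertion that two adjacent edges of an induced square cannot lie in the same hyperplane (no ``self-transverse'' hyperplanes). You do not prove this; the parenthetical about ``a minimal such square producing an induced $K_4^-$ or $K_{3,2}$ through the quadrangle condition'' is a claim that a proof exists, not a proof. That fact is a substantive piece of quasi-median hyperplane theory of essentially the same depth as the proposition you are proving: within this paper it only becomes available as a consequence of Theorem \ref{thm:GenQM}(iv) (a geodesic crosses a hyperplane at most once, applied to the length-two geodesic $b,a,a'$ of the induced square), and Theorem \ref{thm:GenQM} is proved \emph{from} Proposition \ref{prop:RefHypProj} --- so invoking it here is circular unless you give an independent argument, which is exactly where the difficulty you tried to avoid resurfaces. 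As you yourself flag, this is where the effort lies, and it is missing.

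Two further points. First, ``the interval $I$ is gated'' is false in general (an edge of a triangle is an interval and the third vertex has no gate in it), and Corollary \ref{cor:GatedInGated} does not yield what you use it for; what you actually need --- that $I(a,v)$ induces an isometrically embedded median subgraph --- is precisely the external statement \cite[Theorem 1]{quasimedian} (see also \cite{BC,mediangraphs}) that the paper itself invokes in its unnamed lemma of Section \ref{section:DecompositionTheorem}, so the forward direction is repairable, provided you also acknowledge the classical median-graph facts you use silently (distance equals the number of separating hyperplanes, geodesics cross each hyperplane once, square-connectedness of edges dual to a hyperplane). Second, your clique sub-lemma (the gate map to $C$ is constant or injective on every clique) is fine and easier than you expect: if $x,y,z$ lie in a clique with $p(x)=p(y)=c\neq c'=p(z)$, gatedness of $C$ forces $d(x,C)=d(y,C)=d(z,C)=k\geq 1$ and $d(z,c)=k+1$; the triangle condition applied to $c$ and the adjacent pair $x,y$ gives a common neighbour $w$ with $d(c,w)=k-1$, and excluding an induced $K_4^-$ on $\{x,y,z,w\}$ forces $z$ adjacent to $w$, whence $d(z,c)\leq k$, a contradiction --- no induction required. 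That, however, does not rescue the square-move step, so as written the converse implication is incomplete.
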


\begin{proof}
Assume that $[a,b] \subset J$. As a consequence of \cite[Corollary 4.5(i)]{QMandTreeLike}, there exists an edge $[p,q] \subset C$ such that $d(a,p)<d(a,q)$ and $d(b,q)<d(b,p)$. Let $a'$ denote the projection of $a$ onto $C$. If $a' \neq p$, then 
$$d(a,q)-1 = d(a,q)-d(a',q)=d(a,a') \leq d(a,p)-1 \leq d(a,q)-2$$
which is impossible. Therefore, $p$ is the projection of $a$ onto $C$. Similarly, $q$ is the projection of $b$ onto $C$. A fortiori, $a$ and $b$ have different projections onto $C$. Conversely, suppose that the projections $a'$ and $b'$ respectively of $a$ and $b$ are different. We have $d(a,a')<d(a,b')$ and $d(b,b')<d(b,a')$. It follows from \cite[Corollary 4.5(i)]{QMandTreeLike} that the edges $[a,b]$ and $[a',b']$ belong to the same hyperplane, namely $J$. 
\end{proof}

\noindent
Before turning to the proof of Theorem \ref{thm:GenQM}, let us mention and prove two last elementary lemmas:

\begin{lemma}\label{lem:ACliques}
Let $X$ be a quasi-median graph, $J$ a hyperplane and $C_1,C_2 \subset J$ two cliques. Either $C_1=C_2$ or $C_1 \cap C_2= \emptyset$.
\end{lemma}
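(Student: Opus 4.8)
The statement to prove is Lemma~\ref{lem:ACliques}: if $C_1,C_2$ are two cliques both contained in the same hyperplane $J$, then either $C_1=C_2$ or $C_1\cap C_2=\emptyset$. The approach is to argue by contradiction: suppose $C_1\neq C_2$ but they share a vertex $v$. Since cliques are maximal complete subgraphs, $C_1\neq C_2$ forces the existence of a vertex $a\in C_1\setminus C_2$ and a vertex $b\in C_2\setminus C_1$; both are adjacent to $v$. The plan is to feed this configuration into the structural results already available, chiefly Proposition~\ref{prop:RefHypProj} and the fact that cliques are gated \cite[Theorem 1]{quasimedian}, to derive an incompatibility.

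First I would fix the clique $C_2$ and use that it is gated, so every vertex of $X$ has a well-defined projection onto $C_2$. The vertex $v$ lies in $C_2$, so its projection onto $C_2$ is $v$ itself. Now consider the edge $[v,a]$ with $a\in C_1$: since $C_1\subset J$, the whole clique $C_1$ lies in $J$, so $[v,a]$ is an edge of $J$. The key point is to determine the projection of $a$ onto $C_2$. If that projection were some vertex $c\neq v$ of $C_2$, then by Proposition~\ref{prop:RefHypProj} (applied with the clique $C_2$) the edge $[v,a]$ would belong to the hyperplane containing $C_2$, which is exactly $J$ — this is consistent, not yet a contradiction, so I need a sharper argument. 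Instead, I would use that the projection of $a$ onto $C_2$ is the unique vertex of $C_2$ closest to $a$, and examine distances: $d(a,v)=1$, and $v\in C_2$, so the projection of $a$ onto $C_2$ is either $v$ (if $d(a,c)\geq 1$ for all $c\in C_2$) or some neighbor of $a$ in $C_2$ at distance $1$. But any vertex of $C_2$ at distance $1$ from $a$ together with $v$ and $a$ would create edges making $a$ adjacent to two vertices of $C_2$; combined with the $K_4^-$-free and $K_{3,2}$-free conditions (and the triangle condition), one shows $a$ would have to lie in $C_2$ by maximality — contradicting $a\notin C_2$. Hence the projection of $a$ onto $C_2$ is exactly $v$. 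By the symmetric argument the projection of $b$ onto $C_1$ is $v$ as well.

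Now I would derive the contradiction from the hyperplane structure. Consider the edge $[v,a]\subset J$ and the clique $C_2\subset J$. By Proposition~\ref{prop:RefHypProj} applied to $C_2$, since $[v,a]\subset J$ the projections of $v$ and $a$ onto $C_2$ must be distinct. But we just showed both project to $v$. This is the desired contradiction, so no such shared vertex $v$ can exist, and therefore $C_1\cap C_2=\emptyset$ whenever $C_1\neq C_2$.

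\textbf{Expected main obstacle.} The delicate step is the claim that a vertex $a$ adjacent to $v$ and to one other vertex of $C_2$ must actually lie in $C_2$. This is where the excluded induced subgraphs $K_4^-$ and $K_{3,2}$, together with the triangle condition of weak modularity, must be invoked carefully: one argues that $a$ is adjacent to \emph{every} vertex of $C_2$ (using that two adjacent vertices of $C_2$ together with $a$ form a triangle, and then propagating via the forbidden-subgraph conditions through the clique), so that $C_2\cup\{a\}$ is complete, contradicting maximality of $C_2$. Getting this propagation argument clean — rather than hand-waved — is the real content; the rest is bookkeeping with Proposition~\ref{prop:RefHypProj} and gatedness of cliques. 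An alternative, possibly cleaner route avoiding this step entirely: directly show that if $C_1$ and $C_2$ share a vertex and are both in $J$, then they are dual to edges forced into a common triangle, and then use that cliques are precisely the \emph{maximal} complete subgraphs to conclude $C_1=C_2$; I would write whichever of the two arguments turns out shorter once the forbidden-subgraph combinatorics are spelled out.
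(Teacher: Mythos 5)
Your proof is correct and follows essentially the same route as the paper: both apply Proposition \ref{prop:RefHypProj} to an edge of $C_1$ with respect to the clique $C_2$ and exploit the projection (gate) onto $C_2$. The only real difference is organisational: where you pin down the projection of $a$ onto $C_2$ via a $K_4^-$-plus-maximality argument and then contradict the proposition, the paper reads off from the proposition that the gate $y'$ of $y\in C_1$ is distinct from $x$ and then the gate equality $d(y,x)=d(y,y')+d(y',x)$ forces $y=y'\in C_2$, yielding $C_1\subseteq C_2$ directly rather than by contradiction.
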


\begin{proof}
Assume that $C_1 \cap C_2 \neq \emptyset$. Fix a vertex $x \in C_1 \cap C_2$. If $y \in C_1$ is a vertex distinct from $x$, then it follows from Proposition \ref{prop:RefHypProj} that $x$ and $y$ have distinct projections onto $C_2$. Of course, $x$ is the projection of $x$ onto $C_2$. If $y'$ denote the projection of $y$ onto $C_2$, then $d(y,y')+1 = d(y,y')+d(y',x)=d(y,x)=1$, hence $d(y,y')=0$. It follows that $y=y' \in C_2$. Thus, we have proved that $C_1 \subset C_2$. One shows similarly that $C_2 \subset C_1$, proving that $C_1=C_2$. 
\end{proof}

\begin{lemma}\label{lem:AppendixSquare}
Let $X$ be a quasi-median graph, $C_1,C_2 \subset X$ two distinct cliques included into the same hyperplane and $a_1 \in C_1$, $a_2 \in C_2$ two adjacent vertices. For every vertex $b_1 \in C_1 \backslash \{a_1\}$, there exists $b_2 \in C_2$ such that $a_1,a_2,b_1,b_2$ span an induced square.
\end{lemma}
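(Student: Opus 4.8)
The plan is to build the missing vertex $b_2$ as the projection (gate) of $b_1$ onto the clique $C_2$, and then to pin down the four relevant distances using only the gate property together with Lemma \ref{lem:ACliques} and Proposition \ref{prop:RefHypProj}. Since cliques are gated in quasi-median graphs, the projection $p : X \to C_2$ is well-defined, and I set $b_2 := p(b_1)$.

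First I would record the easy structural facts. By Lemma \ref{lem:ACliques} the cliques $C_1$ and $C_2$ are disjoint, so $a_1, b_1 \notin C_2$ and $a_2 \notin C_1$. Since $a_1 \sim a_2$ with $a_2 \in C_2$ and $a_1 \notin C_2$, the minimal distance from $a_1$ to $C_2$ equals $1$ and is realised by $a_2$; by uniqueness of the gate this forces $p(a_1) = a_2$, and symmetrically the gate of $a_2$ in $C_1$ is $a_1$. Applying Proposition \ref{prop:RefHypProj} to the edge $[a_1,b_1] \subset C_1 \subset J$ and the clique $C_2$ shows that $p(a_1) = a_2$ and $b_2 = p(b_1)$ are distinct, so $b_2 \in C_2 \setminus \{a_2\}$ and hence $b_2 \sim a_2$. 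Distinctness of the four vertices is then immediate: $a_1 \ne b_1$ is assumed, $a_2 \ne b_2$ was just shown, $a_1 \ne a_2$ since the graph has no loops, and $a_1, b_1 \notin C_2 \ni a_2, b_2$.

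The one point requiring an argument is that $b_1 \sim b_2$, and I would obtain it by computing $d(b_1, a_2)$ in two ways. Routing a geodesic from $b_1$ to $a_2 \in C_2$ through the gate $b_2 = p(b_1)$ gives $d(b_1, a_2) = d(b_1, b_2) + 1$; routing a geodesic from $a_2$ to $b_1 \in C_1$ through the gate $a_1$ of $a_2$ in $C_1$ gives $d(b_1, a_2) = d(a_2, a_1) + d(a_1, b_1) = 2$. Hence $d(b_1, b_2) = 1$, i.e.\ $b_1 \sim b_2$. The same device settles the "induced" requirement: $d(b_1, a_2) = 2$ already gives $b_1 \not\sim a_2$, and routing a geodesic from $a_1$ to $b_2 \in C_2$ through the gate $a_2 = p(a_1)$ gives $d(a_1, b_2) = 2$, so $a_1 \not\sim b_2$. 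Thus $a_1 \sim b_1 \sim b_2 \sim a_2 \sim a_1$ with both diagonals absent, which is exactly an induced square.

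I do not expect a serious obstacle: the only care needed is to invoke the gate property in the correct direction each time — the gate of $x$ in $Y$ lies on \emph{some} geodesic from $x$ to \emph{every} vertex of $Y$ — and to make sure the two evaluations of $d(b_1,a_2)$ are each genuinely forced rather than merely mutually consistent.
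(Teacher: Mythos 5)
Your proof is correct and follows essentially the same route as the paper's: you take $b_2$ to be the gate of $b_1$ in $C_2$, use Lemma \ref{lem:ACliques} and Proposition \ref{prop:RefHypProj} to see that $b_2 \neq a_2$ and that the gate of $a_1$ in $C_2$ is $a_2$, and then obtain $b_1 \sim b_2$ from a distance computation through gates. The only cosmetic difference is the final step: you rule out the diagonals by computing $d(b_1,a_2)=d(a_1,b_2)=2$ via the gate property, whereas the paper argues by contradiction that a diagonal edge would create a triangle with one side in a gated clique, forcing $C_1=C_2$; both are immediate consequences of gatedness.
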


\begin{proof}
Let $b_2$ denote the projection of $a_2$ onto $C_2$. Notice that, as a consequence of Lemma \ref{lem:ACliques}, $b_1$ has to be the projection of $a_1$ onto $C_2$. Because $a_1$ and $a_2$ must have distinct projections onto $C_2$ according to to Proposition \ref{prop:RefHypProj}, it follows that $b_1 \neq b_2$. Moreover, 
$$d(a_2,b_2)+1 = d(a_2,b_2)+d(b_2,b_1) = d(a_2,b_1)  \leq d(a_2,a_1)+d(a_1,b_1) = 2$$
hence $d(a_2,b_2)=1$, i.e., $a_2$ and $b_2$ are adjacent. Therefore, $a_1,a_2,b_1,b_2$ span a square. But if this square is not induced, namely if $a_2$ is adjacent to $b_1$ or if $a_1$ is adjacent to $b_2$, then it follows from the fact that the cliques $C_1$ and $C_2$ are gated (according to \cite[Theorem 1]{quasimedian}) that they contain their triangles and finally that $C_1=C_2$, contrary to our assumptions.
\end{proof}

\begin{proof}[Proof of Theorem \ref{thm:GenQM}.]
First, notice that, for every vertex $x \in C$, the subgraph $p^{-1}(x)$ is connected. Indeed, if $y \in p^{-1}(x)$, then $x$ coincides with the unique vertex of $C$ minimising the distance to $y$, so that $x$ has to be the projection of any vertex of any geodesic from $y$ to $x$. In other words, any geodesic from $y$ to $x$ is included into $p^{-1}(x)$, which implies the connectivity of $p^{-1}(x)$.

\medskip \noindent
Next, fix two distinct vertices $x,y \in C$. We claim that $p^{-1}(x)$ and $p^{-1}(y)$ are separated by $J$. So let $a \in p^{-1}(x)$ and $b \in p^{-1}(y)$ be two vertices. Considering a path from $a$ to $b$, there must exist two consecutive vertices $r$ and $s$ such that $p(r)=x$ and $p(s) \neq x$. It follows from Proposition \ref{prop:RefHypProj} that the edge $[r,s]$ belongs to $J$. Thus, we have proved that any path from $a$ to $b$ must intersect $J$, showing that $J$ separates $a$ and $b$, as desired. This concludes the proof of the first assertion of (i).

\medskip \noindent 
Now, let $S$ denote a sector delimited by $J$. According to Lemma \ref{lem:GatedChepoi}, it is sufficient to show that $S$ is locally convex and that it contains its triangles in order to deduce that it is a gated subgraph. So let $T$ be a triangle (resp. a 4-cycle) having one of its edges (resp. three consecutive vertices) in $S$. If $T$ is not included into $S$, then $J$ has to contain an edge of $T \cap Y$, which is not possible as $J$ cannot separate two vertices of $S$ by definition of sector. Thus, (i) is proved.

\medskip \noindent
Now, we turn to the proof of (ii). First, we claim that $N(J)$ is connected. So let $x_1,x_2 \in N(J)$ be two vertices. By definition of $N(J)$, there exist two edges $e_1,e_2 \subset J$ containing $x_1,x_2$ respectively. So there exists a sequence of edges from $e_1$ to $e_2$ so that any two consecutive edges either belong to a common triangle or are opposite edges of a square. The union of all these edges defines a connected subgraph of $N(J)$ containing $x_1$ and $x_2$. Next, we want to show that $N(J)$ contains its triangles. So let $T$ be a triangle having one of its edges in $N(J)$, say $[a,b]$. If $J$ contains an edge of $T$, then $T \subset J \subset N(J)$, so suppose that no edge of $T$ belongs to $J$. As a consequence of Lemma~\ref{lem:AppendixSquare}, there exist two edges $e,f \subset J$ which have respectively $a,b$ as an endpoint and which are opposite edges of a square $C$. See Figure \ref{appendix}. It follows from the triangle condition that $T \cup C$ lives inside a prism, which implies that $T \subset N(J)$. Finally, we claim that $N(J)$ is locally convex. So let $S$ be a 4-cycle having three consecutive vertices, say $a_1,a_2,a_3$, in $N(J)$. If $J$ contains an edge of $S$, then we clearly have $S \subset N(J)$, so assume that $J$ does not contain any edge of $S$. By applying Lemma \ref{lem:AppendixSquare} twice, we find that there exist three edges $e_1,e_2,e_3 \subset J$ having respectively $a_1,a_2,a_3$ as an endpoint such that $e_1,e_2$ and $e_2,e_3$ are opposite sides of two squares $C_1$ and $C_2$ respectively. See Figure \ref{appendix}. Let $b_1,b_2,b_3$ denote the endpoints of $e_1,e_2,e_3$ distinct from $a_1,a_2,a_3$ respectively. Also, let $a \in S$ denote the fourth vertex of $S$. Notice that $a$ and $b_1$ are not adjacent, since otherwise $J$ would contain the edge $[a,a_1]$ of $S$, hence $d(a,b_1)=2$. Similarly, $d(a,b_3)=2$. Next, because $J$ does not cross $S$, the vertices $a$ and $a_2$ have to belong to the same sector delimited by $J$, so that it follows from (i) that $a_2$ is the projection of $a$ onto the clique of $J$ containing $e_2$. We deduce that
$$d(a,b_2)=d(a,a_2)+d(a_2,b_2)=2+1=3.$$
As a consequence, the quadrangle condition applies, and it follows that $S \cup C_1 \cup C_2$ lives inside a subgraph which decomposes as the product of a square and an edge of $J$. We conclude that $S \subset N(J)$. Thus, we have shown that $N(J)$ is connected, that it is locally convex, and that it contains its triangles. Lemma \ref{lem:GatedChepoi} then implies (ii).
\begin{figure}
\begin{center}
\includegraphics[trim={0 16cm 35cm 0},clip,scale=0.45]{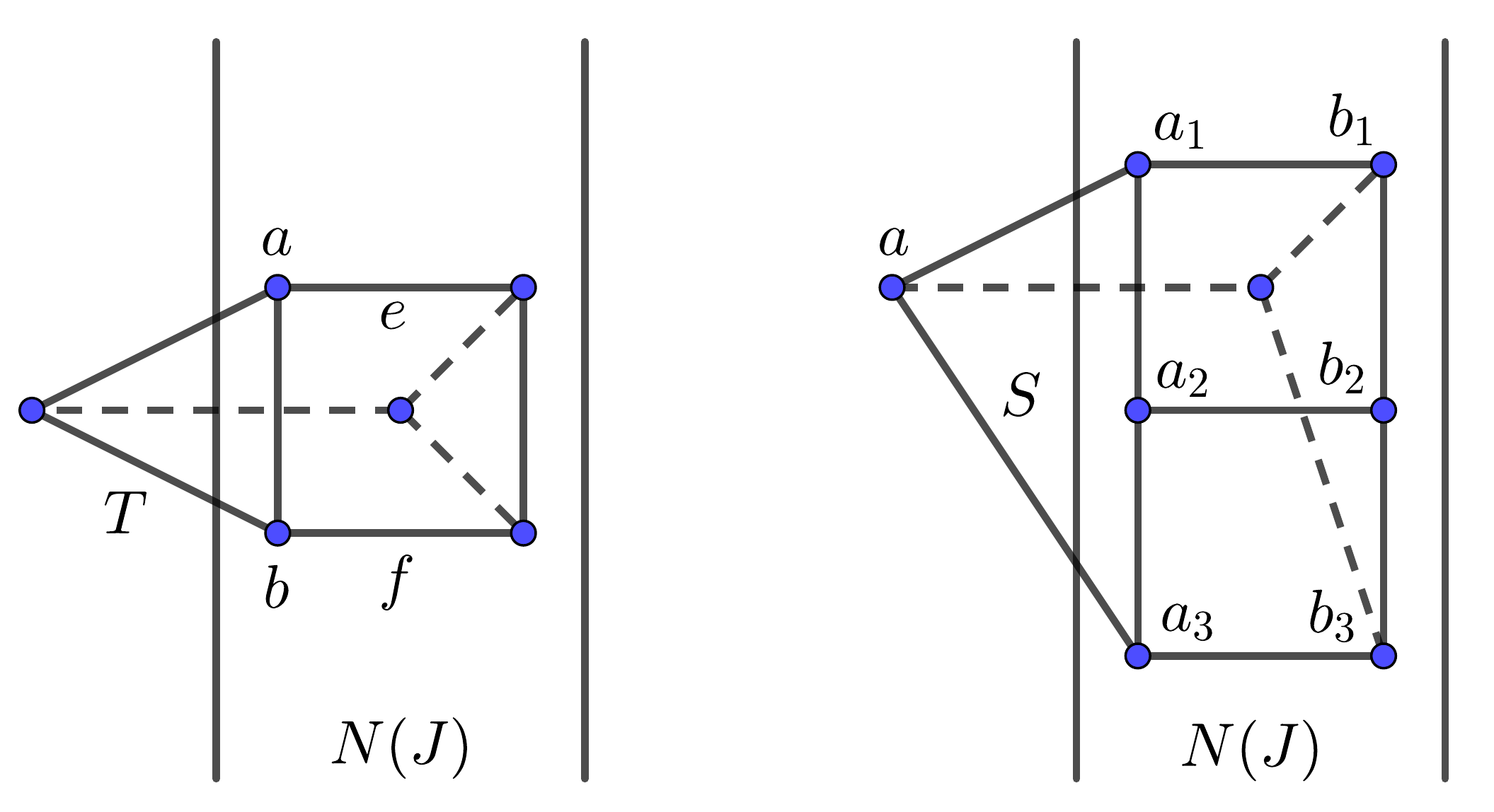}
\caption{Configurations from the proof of Theorem \ref{thm:GenQM}.}
\label{appendix}
\end{center}
\end{figure}

\medskip \noindent
Notice that (iii) is a consequence of (i) and (ii). Indeed, as $N(J)$ is a gated subgraph according to (i), it defines a quasi-median graph in its own right. Then the fibers of $J$ coincide with the sector delimited by $J$ inside $N(J)$. Then (iii) follows from (i), (ii) and Corollary \ref{cor:GatedInGated}.

\medskip \noindent
Now we turn to the proof of (iv). Let $\gamma$ be a path crossing $J$ twice. So there exist two distinct edges $e_1,e_2 \subset \gamma \cap J$. Let $\sigma$ denote the subsegment of $\gamma$ between $e_1$ and $e_2$. Without loss of generality, we suppose that $\sigma \cap J= \emptyset$. If $\sigma$ has length zero, then it follows from Lemma \ref{lem:ACliques} that $e_1$ and $e_2$ belong to a common clique of $J$. So either $e_1$ and $e_2$ define a backtrack, so that $\gamma$ can be shortened by removing $e_1 \cup e_2$, or $e_1$ and $e_2$ span a triangle, so that $\gamma$ can be shortened by replacing $e_1 \cup e_2$ with the third side of the triangle. Now suppose that $\sigma$ has positive length, and let $e$ denote the first edge of $\sigma$. If $e$ does not belong to $N(J)$, then $\gamma$ cannot be a geodesic since $N(J)$ is convex by (ii) and so it can be shortened. If $e$ belongs to $N(J)$, then there must exist a clique of $J$ containing the endpoint of $e$ which does not belong to $e_1$, and we deduce from Lemma \ref{lem:AppendixSquare} that $e_1$ and $e$ are two consecutive edges of a square $C$. Let $\gamma'$ denote the path obtained from $\gamma$ by replacing $e_1 \cup e$ with the path (of length two) passing through the edges of $C$ opposite to $e_1$ and $e$. Then $\gamma'$ contains two edges of $J$ such that the subsegment between them has length $< \mathrm{length}(\sigma)$. By induction, it follows that $\gamma'$ can be shortened. Therefore, we have shown that any path crossing twice $J$ can be shortened, concluding the proof of (iv). 
\end{proof}

\addcontentsline{toc}{section}{References}

\bibliographystyle{alpha}
{\footnotesize\bibliography{ThompsonFromQM}}

\end{document}